\DeclareFontFamily{OT1}{rsfs}{}
\DeclareFontShape{OT1}{rsfs}{n}{it}{<-> rsfs10}{}
\DeclareMathAlphabet{\curly}{OT1}{rsfs}{n}{it}
\newcommand{\eqnum}{\refstepcounter{equation}\textup{\tagform@{\theequation}}}
\newcommand\beq[1]{\begin{equation}\label{#1}}
\newcommand\eeq{\end{equation}}
\newcommand\beqa{\begin{eqnarray*}}
\newcommand\eeqa{\end{eqnarray*}}
\title[Hall-type algebras for categorical DT theories]{Hall-type algebras for 
categorical Donaldson-Thomas theories 
on local surfaces}
\date{}
\author{Yukinobu Toda}
\DeclareFontFamily{U}{rsfs}{%
\skewchar\font127}
\DeclareFontShape{U}{rsfs}{m}{n}{%
<-6>rsfs5<6-8.5>rsfs7<8.5->rsfs10}{}
\DeclareSymbolFont{rsfs}{U}{rsfs}{m}{n}
\DeclareRobustCommand*\rsfs{%
\@fontswitch\relax\mathrsfs}
\theoremstyle{plain}
\newtheorem{thm}{Theorem}[section]
\newtheorem{prop}[thm]{Proposition}
\newtheorem{lem}[thm]{Lemma}
\newtheorem{defi}[thm]{Definition}
\newtheorem{rmk}[thm]{Remark}
\newtheorem{cor}[thm]{Corollary}
\newtheorem{prop-defi}[thm]{Proposition-Definition}
\newtheorem{thm-defi}[thm]{Theorem-Definition}
\newtheorem{lem-defi}[thm]{Lemma-Definition}
\newtheorem{exam}[thm]{Example}
\newcommand{\aA}{\mathcal{A}}
\newcommand{\bB}{\mathcal{B}}
\newcommand{\cC}{\mathcal{C}}
\newcommand{\dD}{\mathcal{D}}
\newcommand{\eE}{\mathcal{E}}
\newcommand{\fF}{\mathcal{F}}
\newcommand{\hH}{\mathcal{H}}
\newcommand{\iI}{\mathcal{I}}
\newcommand{\lL}{\mathcal{L}}
\newcommand{\mM}{\mathcal{M}}
\newcommand{\nN}{\mathcal{N}}
\newcommand{\oO}{\mathcal{O}}
\newcommand{\pP}{\mathcal{P}}
\newcommand{\rR}{\mathcal{R}}
\newcommand{\sS}{\mathcal{S}}
\newcommand{\tT}{\mathcal{T}}
\newcommand{\uU}{\mathcal{U}}
\newcommand{\vV}{\mathcal{V}}
\newcommand{\wW}{\mathcal{W}}
\newcommand{\zZ}{\mathcal{Z}}
\newcommand{\fE}{\mathfrak{E}}
\newcommand{\ffF}{\mathfrak{F}}
\newcommand{\fH}{\mathfrak{H}}
\newcommand{\fI}{\mathfrak{I}}
\newcommand{\fM}{\mathfrak{M}}
\newcommand{\fN}{\mathfrak{N}}
\newcommand{\fP}{\mathfrak{P}}
\newcommand{\fS}{\mathfrak{S}}
\newcommand{\fU}{\mathfrak{U}}
\newcommand{\Supp}{\mathop{\rm Supp}\nolimits}
\newcommand{\Hom}{\mathop{\rm Hom}\nolimits}
\newcommand{\dotimes}{\stackrel{\textbf{L}}{\otimes}}
\newcommand{\dR}{\mathbf{R}}
\newcommand{\dL}{\mathbf{L}}
\newcommand{\Pic}{\mathop{\rm Pic}\nolimits}
\newcommand{\id}{\textrm{id}}
\newcommand{\ch}{\mathop{\rm ch}\nolimits}
\newcommand{\Ext}{\mathop{\rm Ext}\nolimits}
\newcommand{\Spec}{\mathop{\rm Spec}\nolimits}
\newcommand{\rank}{\mathop{\rm rank}\nolimits}
\newcommand{\Coh}{\mathop{\rm Coh}\nolimits}
\newcommand{\ev}{\mathop{\rm ev}\nolimits}
\newcommand{\pure}{\mathop{\rm pure}\nolimits}
\newcommand{\fin}{\mathop{\rm fin}\nolimits}
\newcommand{\pt}{\mathop{\rm pt}\nolimits}
\newcommand{\us}{\mathchar`-\rm{us}}
\newcommand{\sss}{\mathchar`-\rm{ss}}
\newcommand{\cneq}{\mathrel{\raise.095ex\hbox{:}\mkern-4.2mu=}}
\newcommand{\eqcn}{\mathrel{=\mkern-4.5mu\raise.095ex\hbox{:}}}
\newcommand{\Cok}{\mathop{\rm Cok}\nolimits}
\newcommand{\ext}{\mathop{\rm ext}\nolimits}
\newcommand{\perf}{\mathop{\rm perf}\nolimits}
\newcommand{\Imm}{\mathop{\rm Im}\nolimits}
 \newcommand{\RHom}{\mathop{\dR\mathrm{Hom}}\nolimits}
\newcommand{\Ker}{\mathop{\rm Ker}\nolimits}
\newcommand{\FM}{\mathop{\rm FM}\nolimits}
\newcommand{\vdim}{\mathop{\rm vdim}\nolimits}
\newcommand{\cl}{\mathop{\rm cl}\nolimits}
\newcommand{\Crit}{\mathop{\rm Crit}\nolimits}
\newcommand{\Dbc}{D^b_{\rm{coh}}}
\newcommand{\lgakko}{(\!(}
\newcommand{\rgakko}{)\!)}
 \renewcommand{\theequation}{%
   \thesection.\arabic{equation}}
\begin{document}

\begin{abstract}
We show that the categorified cohomological Hall 
algebra structures on surfaces constructed by Porta-Sala descend to 
those on Donaldson-Thomas categories 
on local surfaces introduced in the author's previous paper.
A similar argument also shows that
Pandharipande-Thomas categories on local 
surfaces admit actions of categorified COHA 
for zero dimensional sheaves on surfaces.
We also construct 
annihilator actions
of its simple operators, 
and show that their commutator in the K-theory
satisfies the 
relation similar to the one of Weyl algebras.
This result may be regarded as a categorification
of Weyl algebra action on homologies of Hilbert schemes of 
points on locally planar curves due to Rennemo, which is 
relevant for Gopakumar-Vafa formula of generating 
series of PT invariants.  
\end{abstract}

\maketitle

\setcounter{tocdepth}{1}
\tableofcontents

\section{Introduction}
\subsection{Motivation and Background}
The Donaldson-Thomas (DT for short) invariants 
virtually count stable coherent sheaves on Calabi-Yau (CY for short) 3-folds~\cite{MR1818182}, 
and play important roles in
the recent study of curve counting theories and mathematical physics.
The original DT invariants are integer valued invariants, 
which coincide with weighted Euler characteristics
 of Behrend functions~\cite{Beh}
on moduli spaces of stable sheaves on CY 3-folds.
The Behrend functions  
are point-wise Euler characteristics of
vanishing cycles, so locally 
admit natural refinements such as motivic 
vanishing cycles, perverse sheaves of vanishing cycles. 
Based on this fact,  
Kontsevich-Soibelman~\cite{K-S, MR2851153} 
proposed several refinements of DT invariants,
e.g. motivic DT invariants, cohomological 
DT invariants, whose 
foundations are
now available in~\cite{BDM, MR3353002}. 
Among them, cohomological DT invariants  
are expected to carry an algebra structure,
which globalize critical cohomological Hall algebras
for quivers with super-potentials constructed in~\cite{MR2851153, MR3667216}. 

In the author's previous paper~\cite{TocatDT}, 
we proposed further refinement of DT invariants to 
triangulated (or dg) categories, called 
\textit{categorical DT theories}
(or \textit{DT categories} for short). 
The DT categories should be constructed as gluing 
of locally defined categories of matrix factorizations, 
but their general construction is still beyond our 
scope (see
 also see~\cite[(J)]{Jslide}, \cite[Section~6.1]{MR3728637}). 
In~\cite{TocatDT} we  
constructed $\mathbb{C}^{\ast}$-equivariant DT categories in the special case of 
CY 3-folds, called \textit{local surfaces}, i.e. 
the total spaces of canonical line bundles on 
surfaces. 
In this case,
they are defined to be
the Verdier quotients of 
derived categories of coherent sheaves on derived 
moduli stacks of coherent sheaves on surfaces, by 
the subcategory of objects whose
singular supports (in the sense of Arinkin-Gaitsgory~\cite{MR3300415})
are contained in the unstable loci. 
Via Koszul duality, the DT category is locally equivalent to the 
category of $\mathbb{C}^{\ast}$-equivariant matrix factorizations. 
The DT categories are expected to recover
the cohomological 
DT invariants on local surfaces by taking the periodic cyclic 
homologies, so their Euler characteristics should recover the original DT invariants. 

The purpose of this paper is to 
construct Hall-type algebra structures on 
DT categories for local surfaces.  
Our construction is 
induced by the categorification of cohomological Hall algebra 
(COHA for short) structures
on surfaces constructed by Porta-Sala~\cite{PoSa}.
A new point in this paper is that 
their product structure is compatible with 
singular supports in a certain sense, so 
that it
descends to the product structure on 
DT categories. 
By taking the 
associated product on 
K-theory, we obtain a
globalization of 
K-theoretic Hall algebras for quivers with super-potentials
constructed by P{\u{a}}durairu~\cite{Tudor}.
Moreover by taking  
periodic cyclic homologies, 
we expect that our construction gives a 
critical 
COHA restricted 
to the semistable locus, giving a globalization of 
critical COHA defined for quivers with 
super-potentials~\cite{MR2851153, MR3667216}. 

One of our motivations of this study is
to construct some algebra actions on 
Pandharipande-Thomas (PT for short) categories for local surfaces, which 
are relevant for the Gopakumar-Vafa (GV for short) formula
of the generating series of PT invariants.
By a similar argument as above, 
we also show
that PT categories for local surfaces admit 
actions of
DT categories of zero dimensional sheaves. 
We also construct 
annihilator actions
of its simple operators, 
and show that their commutator
in the K-theory 
satisfies the 
relation similar to the one of Weyl algebras.
This result may be regarded as a categorification  
of Weyl algebra action on homologies of Hilbert schemes of 
points on locally planar curves due to Rennemo,
where he derived GV formula of 
the generating series of 
Euler characteristics of these Hilbert schemes. 
Therefore 
 we expect that this work may be relevant for 
categorical understanding of 
conjectural PT/GV formula, which 
will be pursued in a future work. 

\subsection{Categorical DT theory for local surfaces}
Let $S$ be a smooth projective surface over $\mathbb{C}$. 
We consider the total space of its canonical line bundle, 
which is a non-compact CY 3-fold
\begin{align}\label{intro:pi}
\pi \colon 
X\cneq \mathrm{Tot}_S(\omega_S) \to S.
\end{align}
Let $N(S)$ be the numerical Grothendieck group of $S$ 
and take $v \in N(S)$. 
For a choice of a stability condition $\sigma$ on 
the abelian category of compactly supported coherent sheaves on $X$
(e.g. Gieseker stability condition), 
we have the 
moduli stack
$\mM_{X}^{\sigma\mathchar`-\rm{ss}}(v)$
of $\sigma$-semistable sheaves 
$E$ on $X$
with $[\pi_{\ast}E]=v$, 
together with the commutative diagram
\begin{align}\label{intro:stack}
\xymatrix{
\mM_{X}^{\sigma\mathchar`-\rm{ss}}(v) 
\ar@<-0.3ex>@{^{(}->}[r]
\ar[d]_-{\pi_{\ast}} &
t_0(\Omega_{\fM_S(v)}[-1]) \ar[d] \ar@<-0.3ex>@{^{(}->}[r] 
& \Omega_{\fM_S(v)}[-1] \ar[d] \\
\mM_S(v) \ar@{=}[r] & \mM_S(v) \ar@<-0.3ex>@{^{(}->}[r] & \fM_S(v) . 
} 
\end{align}
Here $\fM_S(v)$ is the derived 
moduli stack of coherent sheaves $F$ on $S$ with $[F]=v$, 
$t_0(-)$ means the classical truncation with 
$\mM_S(v)=t_0(\fM_S(v))$, 
and $\Omega_{\fM_S(v)}[-1]$ is the $(-1)$-shifted cotangent stack 
over $\fM_S(v)$.
The top left horizontal arrow is an open immersion and 
the right horizontal arrows are closed immersions. 

In~\cite{TocatDT}, 
the $\mathbb{C}^{\ast}$-equivariant 
categorical DT theory (or simply DT category)
associated with the moduli stack $\mM_X^{\sigma \sss}(v)$
is defined to be the Verdier quotient
(see Definition~\ref{catDT:stable})
\begin{align}\label{intro:DTcat}
\mathcal{DT}^{\mathbb{C}^{\ast}}(\mM_{X}^{\sigma\mathchar`-\rm{ss}}(v))
\cneq \Dbc(\fM_S(v)^{\fin})/\cC_{\zZ_{\sigma \us}(v)^{\fin}}.
\end{align}
Here $\fM_S(v)^{\fin}$ is a  
derived open substack of 
$\fM_S(v)$ which is of finite type and 
contains the image of the map $\pi_{\ast}$ in 
 (\ref{intro:pi}).
Furthermore the subcategory
\begin{align*}
\cC_{\zZ_{\sigma \us}(v)^{\fin}}
\subset \Dbc(\fM_S(v)^{\fin})
\end{align*} 
consists of objects
$E \in \Dbc(\fM_S(v)^{\fin})$ satisfying that 
\begin{align*}
\mathrm{Supp}^{\rm{sg}}(E) \subset 
\zZ_{\sigma \us}(v)^{\fin} \cneq 
t_0(\Omega_{\fM_S(v)^{\fin}}[-1]) \setminus 
\mM_{X}^{\sigma\mathchar`-\rm{ss}}(v). 
\end{align*}
Here $\mathrm{Supp}^{\rm{sg}}(E)$ is the singular 
support of $E$ introduced by Arinkin-Gaitsgory~\cite{MR3300415}, 
following an 
earlier work by Benson-Iyengar-Krause~\cite{MR2489634}. 
Via Koszul duality, the category (\ref{intro:DTcat}) 
is locally equivalent to the category of $\mathbb{C}^{\ast}$-equivariant 
matrix factorizations of functions, whose critical loci 
locally describe $\mM_{X}^{\sigma\mathchar`-\rm{ss}}(v)$. 
If there is no strictly $\sigma$-semistable sheaves, 
the $\mathbb{C}^{\ast}$-rigidified version of the 
triangulated category (\ref{intro:DTcat})
is expected to recover the 
cohomological Donaldson-Thomas invariants associated with (\ref{intro:stack})
by taking its periodic cyclic homology. 
A key point of the construction (\ref{intro:DTcat}) is that, 
we capture (un)stable loci on moduli stacks of sheaves on 3-folds 
via derived structures of those on surfaces using 
singular supports. In other words, 
we define the category (\ref{intro:DTcat})
as if we have the `dimension reduction' 
for DT categories, similarly to the dimension 
reduction for critical COHA for some quivers with 
super-potentials coming from preprojective algebras
proved in~\cite{MR3727563, YangZhao}. 

\subsection{Categorified COHA}
Our first result is to show the existence of Hall-type 
algebra structure on the DT categories (\ref{intro:DTcat}).
Let us recall Porta-Sala's construction~\cite{PoSa} of 
categorified COHA for surfaces. 
We take $v_{\bullet}=(v_1, v_2, v_3) \in N(S)^{\times 3}$ with $v_2=v_1+v_3$.
Then the functor
\begin{align}\label{intro:catCOHA}
\Dbc(\fM_S(v_1)) \times \Dbc(\fM_S(v_3)) \to \Dbc(\fM_S(v_2))
\end{align}
is constructed
in~\cite{PoSa}
by using the following Hall-type diagram\footnote{More precisely 
Porta-Sala's construction work in a dg-categorical 
setting, where 
not only the stacks of exact sequences 
but also higher parts of the Waldhausen construction
are required in order to control the higher associativity. 
Porta and Sala pointed out that the same would apply to 
our situation, see Remark~\ref{rmk:higher}.
}
\begin{align}\label{intro:dia:extS}
\xymatrix{
\fM_S^{\ext}(v_{\bullet}) \ar[r]^-{\ev_2} \ar[d]_-{(\ev_1, \ev_3)} & \fM_S(v_2)
 \\
\fM_S(v_1) \times \fM_S(v_3). & 
}
\end{align}
Here $\fM_S^{\ext}(v_{\bullet})$ is the derived moduli
stack of short exact sequences of coherent sheaves on $S$
\begin{align*}
0 \to F_1 \to F_2 \to F_3 \to 0, \ [F_i]=v_i, \ \ev_i(F_{\bullet})=F_i. 
\end{align*}
The functor (\ref{intro:catCOHA}) 
is defined by pull-back/push-forward of the diagram (\ref{intro:dia:extS}), 
and regarded as a categorification of 
two dimensional cohomological Hall algebra by Kapranov-Vasserot~\cite{KaVa2}. 

We show that 
the product functor (\ref{intro:catCOHA}) 
descends to the functor on DT categories. 
In order to state the statement, 
we fix a polynomial $\overline{\chi} \in \mathbb{Q}[m]$
and denote by $N(S)_{\overline{\chi}} \subset N(S)$
the subgroup 
of $v \in N(S)$ whose reduced Hilbert polynomial 
is equal to $\overline{\chi}$, or $v=0$.  
Our first result is as follows. 
\begin{thm}\emph{(Theorem~\ref{thm:COHA})}\label{intro:thmA}
For $(v_1, v_2, v_3) \in N(S)_{\overline{\chi}}$ with 
$v_2=v_1+v_3$, the 
functor (\ref{intro:catCOHA})
descends to the functor
\begin{align*}
\mathcal{DT}^{\mathbb{C}^{\ast}}(\mM_{X}^{\sigma\mathchar`-\rm{ss}}(v_1))
\times \mathcal{DT}^{\mathbb{C}^{\ast}}(\mM_{X}^{\sigma\mathchar`-\rm{ss}}(v_3))\to \mathcal{DT}^{\mathbb{C}^{\ast}}(\mM_{X}^{\sigma\mathchar`-\rm{ss}}(v_2)). 
\end{align*}
In particular, the direct sum of the K-theory
\begin{align}\label{intro:K}
\bigoplus_{v \in N(S)_{\overline{\chi}}}
K(\mathcal{DT}^{\mathbb{C}^{\ast}}(\mM_{X}^{\sigma\mathchar`-\rm{ss}}(v)))
\end{align}
has a structure of an associative algebra. 
\end{thm}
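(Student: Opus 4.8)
The plan is to verify that the Porta--Sala functor (\ref{intro:catCOHA}) carries the subcategory $\cC_{\zZ_{\sigma \us}(v_1)^{\fin}} \boxtimes \Dbc(\fM_S(v_3)^{\fin})$ (and its symmetric counterpart) into $\cC_{\zZ_{\sigma \us}(v_2)^{\fin}}$, so that by the universal property of Verdier quotients the product descends. The essential tool is a singular-support estimate: for a correspondence $Z \xleftarrow{p} W \xrightarrow{q} Y$ of (quasi-smooth) derived stacks, pull-back $p^{!}$ and proper push-forward $q_{*}$ transform singular supports in a controlled way, governed by the conormal geometry of $p$ and $q$ (this is the Arinkin--Gaitsgory formalism, already invoked in the excerpt). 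Concretely, I would first compute $\Omega_{\fM_S^{\ext}(v_{\bullet})}[-1]$ and the maps on $(-1)$-shifted cotangents induced by $\ev_2$ and $(\ev_1,\ev_3)$, using that $\fM_S^{\ext}(v_{\bullet})$ is the Waldhausen-type stack of exact sequences and that the deformation theory of an exact sequence is built from those of $F_1, F_2, F_3$ together with the connecting $\Ext$'s.

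Second, I would identify the image in $t_0(\Omega_{\fM_S(v_2)^{\fin}}[-1])$ of the locus over which one can build an exact sequence with $F_1$ lying in $\zZ_{\sigma\us}(v_1)$ (equivalently, $\pi_{*}F_1$ is not $\sigma$-semistable as a sheaf on $X$), and check that this image is contained in $\zZ_{\sigma\us}(v_2)^{\fin}$. This is the geometric heart of the argument and should follow from a \emph{Harder--Narasimhan/seesaw} type statement: if $E_2$ on $X$ sits in an exact sequence $0 \to E_1 \to E_2 \to E_3 \to 0$ with $E_1$ not $\sigma$-semistable, then $E_2$ is not $\sigma$-semistable either, because a destabilizing subsheaf of $E_1$ (of larger reduced Hilbert polynomial, which exists since all $v_i$ share the slab $\overline{\chi}$) destabilizes $E_2$. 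Translating this through the identification of $t_0(\Omega_{\fM_S(v)^{\fin}}[-1])$ with the moduli stack of sheaves on $X$ (the left square of (\ref{intro:stack})), and through the correspondence (\ref{intro:dia:extS}), yields the required containment of singular supports.

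Third, granting the containment, the descent of the functor is formal: the composite
\[
\Dbc(\fM_S(v_1)^{\fin}) \times \Dbc(\fM_S(v_3)^{\fin}) \to \Dbc(\fM_S(v_2)^{\fin}) \to \mathcal{DT}^{\mathbb{C}^{\ast}}(\mM_{X}^{\sigma\sss}(v_2))
\]
kills the relevant subcategory in each variable, hence factors through the quotient in each variable, giving the displayed functor on DT categories. For the algebra structure on (\ref{intro:K}), I would pass to $K$-theory: each descended functor is exact (being a composite of pull-back along a quasi-smooth, hence finite-Tor-dimension, morphism and proper push-forward), so induces a bilinear map $K(\mathcal{DT}^{\mathbb{C}^{\ast}}(\mM_X^{\sigma\sss}(v_1))) \times K(\mathcal{DT}^{\mathbb{C}^{\ast}}(\mM_X^{\sigma\sss}(v_3))) \to K(\mathcal{DT}^{\mathbb{C}^{\ast}}(\mM_X^{\sigma\sss}(v_2)))$, and summing over $v \in N(S)_{\overline{\chi}}$ gives a product on (\ref{intro:K}). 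Associativity descends from the associativity of Porta--Sala's construction upstairs, which requires comparing the two iterated correspondences of three-step filtrations; modulo the higher-categorical subtleties flagged in the footnote (and in Remark~\ref{rmk:higher}), this is a base-change computation on the stack of three-step flags.

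The main obstacle I anticipate is the singular-support estimate in the second step: one must show not merely a set-theoretic inclusion of \emph{supports} of complexes, but that $\ev_{2*}(\ev_1,\ev_3)^{!}$ sends objects with singular support in the bad locus of one factor to objects with singular support in the bad locus of the target. This needs the functoriality of singular supports under the correspondence (\ref{intro:dia:extS}) together with an explicit understanding of how the conormal variety of $\ev_2$ sits inside $\Omega_{\fM_S(v_2)^{\fin}}[-1] \times_{\fM_S^{\ext}(v_{\bullet})} \Omega_{\fM_S^{\ext}(v_{\bullet})}[-1]$; the quasi-smoothness of $\ev_2$ and the fact that $(\ev_1,\ev_3)$ is (relatively) smooth or representable should make this tractable, but the bookkeeping with derived cotangent stacks of Waldhausen moduli is where the real work lies.
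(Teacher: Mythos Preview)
Your proposal is correct and follows essentially the same route as the paper. The paper packages the ``bookkeeping with derived cotangent stacks'' you anticipate into a clean identification $\mM_X^{\ext}(v_{\bullet}) \cong t_0(\Omega_{\ev}[-2])$ for $\ev=(\ev_1,\ev_2,\ev_3)$ (Proposition~\ref{prop:isomM}), after which the general Fourier--Mukai singular-support estimate (Proposition~\ref{prop:FM}) reduces everything to exactly the seesaw statement you describe (Lemma~\ref{lem:stability}); one small correction is that $(\ev_1,\ev_3)$ is quasi-smooth rather than smooth, but this is already enough for the estimate.
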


We note that if any $\sigma$-semistable sheaf on 
$X$ is push-forward to a $\sigma$-semistable sheaf on $S$, 
then we have 
\begin{align*}
\bigoplus_{v \in N(S)_{\overline{\chi}}}
K(\mathcal{DT}^{\mathbb{C}^{\ast}}(\mM_{X}^{\sigma\mathchar`-\rm{ss}}(v)))
=\bigoplus_{v \in N(S)_{\overline{\chi}}}
K(\mM_S^{\sigma\mathchar`-\rm{ss}}(v))
\end{align*}
and the algebra structure on it 
is essentially the same one 
in~\cite{PoSa}. 
For example as we will see in (\ref{intro:K0}), this happens 
when $\overline{\chi}\equiv 1$. 
However this is not the case in general, and 
our construction of the algebra (\ref{intro:K}) is new 
when the push-forward does not preserve
the $\sigma$-semistability. 

\subsection{Categorical PT theories on local surfaces}
By definition, a PT stable pair on $X$
consists of 
a pair
\begin{align}\label{intro:PTpair}
(F, s), \ s \colon \oO_X \to F
\end{align}
where $F$ is a compactly supported 
pure one dimensional coherent sheaf on $X$ and 
$s$ is surjective in dimension one. 
For $\beta \in \mathrm{NS}(S)$ and 
$n \in \mathbb{Z}$, we have the
moduli space of Pandharipande-Thomas stable pairs
\begin{align*}
P_n(X, \beta)
\end{align*}
which parametrizes 
pairs (\ref{intro:PTpair}) satisfying 
$\pi_{\ast}[F]=\beta$ and $\chi(F)=n$. 
The DT type invariants defined from $P_n(X, \beta)$, 
called \textit{PT invariants}, are defined in~\cite{MR2545686}
and play an important role in 
the recent study of curve counting invariants 
on CY 3-folds (see~\cite{MR2813335, MR2669709, MR2892766, MR3221298} 
for example).

The moduli space $P_n(X, \beta)$
fits into the commutative diagram
\begin{align*}
\xymatrix{
P_n(X, \beta) 
\ar@<-0.3ex>@{^{(}->}[r]
\ar[d]_-{\pi_{\ast}} & t_0(\Omega_{\fM_S^{\dag}(\beta, n)}[-1]) \ar[d] 
\ar@<-0.3ex>@{^{(}->}[r] & \Omega_{\fM_S^{\dag}(\beta, n)}[-1] \ar[d] 
\\
\mM_S^{\dag}(\beta, n) \ar@{=}[r] & \mM_S^{\dag}(\beta, n)
\ar@<-0.3ex>@{^{(}->}[r] & \fM_S^{\dag}(\beta, n). 
} 
\end{align*}
Here $\fM_S^{\dag}(\beta, n)$ is the derived moduli 
stack of 
pairs $(F, \xi)$, where $F$ is a one dimensional coherent 
sheaf on $S$ satisfying $[F]=\beta$, $\chi(F)=n$, 
and $\xi \colon \oO_S \to F$ is a morphism. 
Similarly to (\ref{intro:DTcat}),  
we also introduced the notion of $\mathbb{C}^{\ast}$-equivariant
categorical PT theory in~\cite{TocatDT} 
by the following 
(see Definition~\ref{def:catPT} for details)
\begin{align}\label{intro:catPT}
\dD \tT^{\mathbb{C}^{\ast}}(P_n(X, \beta)) \cneq 
D^b_{\rm{coh}}(\mathfrak{M}_S^{\dag}(\beta, n)^{\fin})/ 
\cC_{\zZ_{P\us}(\beta, n)^{\fin}}.
\end{align} 

On the other hand
if we take $\overline{\chi}$ to be the constant polynomial $1$,
then $N(S)_{\overline{\chi}}$ consists of $m[\mathrm{pt}]$ where 
$[\mathrm{pt}]$ is the numerical class of $\oO_x$ for $x \in S$. 
Then (\ref{intro:K}) is  
\begin{align}\label{intro:K0}
\bigoplus_{m\ge 0}
K(\mathcal{DT}^{\mathbb{C}^{\ast}}(\mM_{X}^{\sigma\mathchar`-\rm{ss}}(m[\mathrm{pt}])))
=\bigoplus_{m\ge 0} K(\mM_S(m[\mathrm{pt}])
\end{align} 
where $\mM_S(m[\mathrm{pt}])$ is the moduli stack of 
zero dimensional sheaves on $S$ with length $m$. 
The above algebra is nothing but the 
K-theoretic Hall algebra for zero dimensional sheaves on the surface $S$
constructed by Zhao~\cite{Zhao}, and is related to 
the shuffle algebra as proved in~\textit{loc.~cit.~}. 
We show that the above algebra acts on PT categories. 
\begin{thm}\emph{(Theorem~\ref{thm:PTaction})}\label{intro:thmB}
There exist functors
\begin{align}\label{intro:PTaction}
\dD \tT^{\mathbb{C}^{\ast}}(P_n(X, \beta))
\times \mathcal{DT}^{\mathbb{C}^{\ast}}(\mM_{X}^{\sigma\mathchar`-\rm{ss}}(m[\mathrm{pt}]))
\to 
\dD \tT^{\mathbb{C}^{\ast}}(P_{n+m}(X, \beta))
\end{align}
which induce the right action of the K-theoretic Hall algebra 
of zero dimensional sheaves (\ref{intro:K0})
to the following direct sum
\begin{align}\label{intro:catKPT}
\bigoplus_{n\in \mathbb{Z}}
K(\dD \tT^{\mathbb{C}^{\ast}}(P_n(X, \beta))). 
\end{align}
\end{thm}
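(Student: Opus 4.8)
The strategy is to mimic the proof of Theorem~\ref{intro:thmA} but with one of the two moduli stacks of sheaves replaced by the stack $\fM_S^{\dag}(\beta,n)$ of pairs $(F,\xi\colon\oO_S\to F)$, and the other kept as the stack $\fM_S(m[\pt])$ of zero-dimensional sheaves. First I would set up the pair-analogue of the Hall diagram \eqref{intro:dia:extS}: the derived moduli stack $\fM_S^{\dag,\ext}(\beta,n,m)$ parametrizing short exact sequences $0\to F'\to F\to Q\to 0$ together with a section $\xi\colon\oO_S\to F$, where $[Q]=m[\pt]$ and the composite $\oO_S\to F\to Q$ is required to vanish (equivalently $\xi$ lifts to $\oO_S\to F'$) --- this is the condition ensuring the pair structure is inherited by $F'$. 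This gives a correspondence with a forgetful-type map $(\ev^{\dag},\ev_Q)$ to $\fM_S^{\dag}(\beta,n')\times\fM_S(m[\pt])$ (with $n'=n-m$, since $\chi$ drops by $m$ along the quotient by a length-$m$ sheaf) and an evaluation map $\ev^{\dag}$ to $\fM_S^{\dag}(\beta,n)$; pull-back along the first and push-forward along the second defines the desired functor on the finite-type versions $D^b_{\coh}(\fM_S^{\dag}(-)^{\fin})$, after checking the relevant maps are quasi-smooth (for the pull-back, via the cotangent complex of $\fM_S^{\dag,\ext}$ over the product, as in the surface case) and proper on the support locus (using purity/one-dimensionality of $F$ and finite length of $Q$). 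The associativity and the algebra-action axioms will follow exactly as in Porta--Sala's Waldhausen-type formalism (cf.\ Remark~\ref{rmk:higher}); as there, this can be stated cleanly at the level of K-theory even if the full dg-enhancement is deferred.

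**Descent to PT categories.** The heart of the matter is showing the functor descends to the quotients \eqref{intro:catPT}, i.e.\ that it sends objects with singular support in the unstable locus $\zZ_{P\us}(\beta,n')^{\fin}$ (tensored with the zero-dimensional-sheaf stack, where everything is unstable-free) into objects with singular support in $\zZ_{P\us}(\beta,n)^{\fin}$. As in Theorem~\ref{intro:thmA} the key is a functoriality statement for singular supports under the correspondence: the push-forward along the quasi-smooth proper map $\ev^{\dag}$ sends singular support along $t_0(\Omega_{\fM_S^{\dag,\ext}}[-1])$ to its image in $t_0(\Omega_{\fM_S^{\dag}(\beta,n)}[-1])$ under the natural correspondence on $(-1)$-shifted cotangent stacks (pull back the conormal, then push forward along the quasi-smooth map, using Arinkin--Gaitsgory functoriality), and the pull-back along $(\ev^{\dag},\ev_Q)$ intersects with the preimage of the singular-support locus of the input. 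So I must verify the set-theoretic inclusion: if a point of $t_0(\Omega_{\fM_S^{\dag,\ext}}[-1])$ maps under $\ev^{\dag}$ to a point parametrizing a PT \emph{unstable} pair on $X$ --- meaning either $F$ is not pure one-dimensional or the section $\oO_X\to F$ fails to be surjective in dimension one --- then it must already be unstable in the input, i.e.\ the sub-pair $(F',\xi)$ with $[F']=(\beta,n')$ is PT-unstable. Concretely: a sheaf $F$ on $X$ (via the $(-1)$-shifted cotangent interpretation, i.e.\ a sheaf on $S$ with a Higgs-type field) sitting in $0\to F'\to F\to Q\to 0$ with $F'$ pure one-dimensional and $Q$ zero-dimensional has torsion subsheaf $T(F)\subseteq Q$, and surjectivity in dimension one of $\oO_X\to F$ is equivalent to that of $\oO_X\to F/T(F)=F'/T(F')$-type data, so PT-stability of $F$ is equivalent to PT-stability of the "saturation'' --- but the saturation can only differ from $F'$ inside $Q$, which is exactly the data being quotiented. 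This "dimension-reduction compatible with the pair section'' bookkeeping is the step I expect to be the main obstacle: one must be careful that the vanishing condition $\oO_S\to F\to Q=0$ imposed on $\fM_S^{\dag,\ext}$ is precisely what makes the unstable loci match up, and that it behaves well in families over the derived stack (so that the singular-support computation, not just the set-theoretic one, goes through).

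**Remaining steps.** Granting the descent, the functor \eqref{intro:PTaction} is defined, and \eqref{intro:catKPT} inherits a right module structure over \eqref{intro:K0} by applying $K(-)$ and invoking the (already-established) associativity/compatibility of the multiplication on $\bigoplus_m K(\mathcal{DT}^{\CC}(\mM_X^{\sigma\sss}(m[\pt])))$ from Theorem~\ref{intro:thmA}; one checks the module axiom $a\cdot(b\cdot c)=a\cdot(bc)$ by the standard comparison of the two iterated correspondences $\fM_S^{\dag,\ext,\ext}$ (sections plus a two-step filtration), which agree up to canonical equivalence --- again formally the same as in the surface COHA case. I would also record that for $\overline{\chi}\equiv 1$ the zero-dimensional stack has no unstable locus so $\mathcal{DT}^{\CC}(\mM_X^{\sigma\sss}(m[\pt]))=D^b_{\coh}(\mM_S(m[\pt]))$ via \eqref{intro:K0}, which makes the $K$-theory algebra in the statement literally Zhao's K-theoretic Hall algebra, so \eqref{intro:catKPT} becomes a genuine module over it. The only genuinely new ingredient beyond Theorem~\ref{intro:thmA} is the pair-decorated Hall correspondence and the verification that its unstable-locus bookkeeping is compatible with singular supports; everything else is a transcription of the arguments already in place.
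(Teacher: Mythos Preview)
Your overall architecture matches the paper's: a pair-decorated extension stack $\fM_S^{\ext,\dag}(v_\bullet)$ with quasi-smooth projection to $\fM_S^{\dag}(v_1)\times\fM_S(v_3)$ and proper evaluation $\ev_2^\dag$ to $\fM_S^{\dag}(v_2)$, identification of the $(-2)$-shifted conormal of the total evaluation with the stack $\mM_X^{\ext,\dag}(v_\bullet)$ of exact sequences $0\to E_1\to E_2\to Q[-1]\to 0$ in $\aA_X$ (the paper's Proposition~\ref{prop:ev+}, which you gesture at but do not isolate as a separate step), and then Arinkin--Gaitsgory functoriality for the descent.

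However, in the descent step you translate the categorical requirement into the wrong set-theoretic inclusion. You correctly say the functor must send $\cC_{\zZ_{P\us}(\beta,n')^{\fin}}$ into $\cC_{\zZ_{P\us}(\beta,n)^{\fin}}$, but the singular-support estimate for a push--pull (Proposition~\ref{prop:FM}, Corollary~\ref{cor:induce2}) then requires
\[
(\ev_1^{X,\dag},\ev_3^{X,\dag})^{-1}\bigl(\zZ_{P\us}(v_1)\times\mM_X(v_3)\bigr)\subset(\ev_2^{X,\dag})^{-1}\bigl(\zZ_{P\us}(v_2)\bigr),
\]
i.e.\ \emph{if the sub-pair $E_1$ is PT-unstable then the middle pair $E_2$ is PT-unstable}; contrapositively, $E_2$ PT-stable $\Rightarrow$ $E_1$ PT-stable (this is Lemma~\ref{lem:PT}). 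You state the reverse: ``if $F$ is PT-unstable then the sub-pair $(F',\xi)$ is PT-unstable.'' That implication is in fact false in general: with $F'$ pure one-dimensional and $x\notin\Supp(F')$, the extension $F=F'\oplus\oO_x$ is not pure, so $(F,\xi)$ is PT-unstable while $(F',\xi)$ can be PT-stable; your ``saturation'' sketch thus cannot yield an equivalence. The paper's Lemma~\ref{lem:PT} argues in the correct direction by starting from a PT-stable $E_2$, using the cohomology long exact sequence to exhibit $F_1$ as a subsheaf of the pure sheaf $F_2$ (hence pure), and observing that the two pairs agree away from $\Supp(Q)$ so $s_1$ is surjective in dimension one. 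With this correction, the remainder of your outline goes through.
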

In~\cite{TocatDT}, 
we also defined MNOP categories 
associated with moduli spaces of 
one or zero dimensional subschemes in $X$. 
In Section~\ref{sec:other}, we also show that 
the algebra (\ref{intro:K0}) acts on 
MNOP categories from the \textit{left} (not right). 
Since the PT moduli spaces and MNOP moduli spaces are related by 
wall-crossing, the fact that 
the direction of the action changes 
may be an incarnation of wall-crossing phenomena in terms of 
actions of Hall-type algebras. 

We will also consider the algebra (\ref{intro:K})
for one dimensional semistable sheaves on 
$X$, and show that it acts on the left/right on 
DT type categories associated with stable 
D0-D2-D6 bound states. 
Similarly to above, 
the direction of the action
of DT type categories of one dimensional semistable sheaves also 
changes
when we crosses the wall of weak stability 
conditions on the category of D0-D2-D6 bound states.
Since the wall-crossing in D0-D2-D6 bound states is relevant 
for GV formula of generating series of PT invariants (see~\cite{MR2892766}), 
the above observation may be relevant for categorical understanding of 
GV formula. This direction of research will be pursued in future.

\subsection{Commutator relations of Hecke actions}
Since we have $\mM_S([\pt])=S \times B\mathbb{C}^{\ast}$, 
we have the decomposition
\begin{align*}
K(\mM_S([\pt]))=\bigoplus_{k \in \mathbb{Z}}K(S)_k
\end{align*}
where $K(S)_k$ is the $\mathbb{C}^{\ast}$-weight $k$-part, 
which is isomorphic to $K(S)$.  
Therefore the action (\ref{intro:PTaction}) on the K-theory 
for $m=1$ and weight $k$-part induces the 
creation operators
\begin{align*}
\mu_{\eE, k}^+ \colon 
K(\dD \tT^{\mathbb{C}^{\ast}}(P_n(X, \beta)))
\to K(\dD \tT^{\mathbb{C}^{\ast}}(P_{n+1}(X, \beta)))
\end{align*}
for each $\eE \in K(S)$. 
We also construct 
annihilator operators
\begin{align*}
\mu_{\eE, k}^- \colon 
K(\dD \tT^{\mathbb{C}^{\ast}}(P_{n+1}(X, \beta)))
\to K(\dD \tT^{\mathbb{C}^{\ast}}(P_{n}(X, \beta)))
\end{align*}
and form the 
following maps
of degree $\pm 1$
\begin{align*}
\mu_{\eE}^{\pm}(z)
\cneq \sum_{k\in\mathbb{Z}} \frac{\mu_{\eE, k}^{\pm}}{z^{k}} \colon \bigoplus_{n \in \mathbb{Z}}
K(\dD \tT^{\mathbb{C}^{\ast}}(P_n(X, \beta)))
\to \bigoplus_{n\in \mathbb{Z}}
K(\dD \tT^{\mathbb{C}^{\ast}}(P_{n}(X, \beta)))\{z\}. 
\end{align*}
We then compute the 
commutator relation of the above operators.
For a fixed $\beta$, we set
\begin{align*}
\fM_S^{\dag}(\beta) \cneq \coprod_{n \in \mathbb{Z}} \fM_S^{\dag}(\beta, n)
\end{align*}
and we denote by 
\begin{align*}
(\oO_{S \times \fM_S^{\dag}(\beta)} \to \ffF(\beta))
\in \mathrm{Perf}\left(
S \times
\fM_S^{\dag}(\beta) \right)
\end{align*}
the universal pair. 
Let $p_{\fM} \colon S \times \fM_S^{\dag}(\beta) \to \fM_S^{\dag}(\beta)$
be the projection. We have the following result. 

\begin{thm}\emph{(Theorem~\ref{thm:relation})}\label{intro:thmC}
For a K-group element $(-)$ 
represented by perfect complexes, 
we have the following commutator relation
\begin{align}\label{intro:commu}
[\mu_{\eE_1}^+(z), \mu_{\eE_2}^-(w)](-)=(-) \otimes 
p_{\fM\ast}\left(
(\eE_1 \otimes \eE_2 \otimes \omega_S) \boxtimes 
\frac{h^+(z)-h^-(w)}{q_S-1}
\delta\left(\frac{w}{z}\right) \right). 
\end{align}
Here $\delta(x)=\sum_{k \in \mathbb{Z}}x^k$, 
$h^{\pm}(z)$ is the expansion of the following 
rational function at $z=\infty$ and $z=0$
\begin{align*}
h(z)=\left(1-\frac{1}{z} \right)\wedge^{\bullet} \left(\frac{(q_S^{-1}-1)\ffF(\beta)}{z}\right).
\end{align*}
Also $q_S$ is the pull-back of the 
class $[\omega_S] \in K(S)$. 
\end{thm}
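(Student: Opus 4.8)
The plan is to realise both families of operators as push-forward/pull-back functors along explicit derived Hecke correspondences, to reduce the commutator to a self-intersection computation on a double Hecke stack via base change, and to extract the answer from the relative tangent complex of the Hecke map.

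First I would fix, for each $n$, the derived Hecke stack $\hH_n$ parametrising short exact sequences of pairs $0 \to (F_1, \xi_1) \to (F_2, \xi_2) \to (\oO_x, 0) \to 0$ with $[F_i] = \beta$ and $\chi(F_1) = n$, together with the evaluation maps $p_n \colon \hH_n \to \fM_S^{\dag}(\beta, n)$ remembering $(F_1, \xi_1)$, $q_n \colon \hH_n \to \fM_S^{\dag}(\beta, n+1)$ remembering $(F_2, \xi_2)$, and $\rho_n \colon \hH_n \to S$ remembering $x$; this is precisely the Hall-type diagram underlying Theorem~\ref{thm:PTaction} with $m = 1$. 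On K-theory $\mu_{\eE}^+(z)$ is then $q_{n\ast}(p_n^{\ast}(-) \otimes \rho_n^{\ast}\eE \otimes \ell(z))$, where $\ell(z) = \sum_k (\text{weight-}k\text{ line})\, z^{-k}$ encodes the $\mathbb{C}^{\ast}$-grading on $\mM_S([\pt]) = S \times B\mathbb{C}^{\ast}$, while $\mu_{\eE}^-(w) = p_{n\ast}(q_n^{\ast}(-) \otimes \rho_n^{\ast}\eE \otimes \ell(w) \otimes \omega_{q_n})$ is the same correspondence read backwards, additionally twisted by the relative dualising complex $\omega_{q_n}$ of $q_n$ --- the twist making $\mu^-$ pass from $X = \mathrm{Tot}_S(\omega_S)$ to $S$ and descend to the PT categories, and the ultimate source of the $\omega_S$ in the statement.

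Composing and applying base change along the derived fiber products, $\mu_{\eE_2}^-(w)\circ\mu_{\eE_1}^+(z)$ on the $P_n$-summand becomes push-pull along $\hH_n \times_{\fM_S^{\dag}(\beta, n+1)} \hH_n$ and $\mu_{\eE_1}^+(z)\circ\mu_{\eE_2}^-(w)$ push-pull along $\hH_{n-1} \times_{\fM_S^{\dag}(\beta, n-1)} \hH_{n-1}$, each carrying the base-change correction $\lambda_{-1}$ of the conormal of $\TT_{q_n}$, resp.\ $\TT_{p_{n-1}}$. I would split each double Hecke stack into its diagonal, where the two modifications coincide and hence the two support points agree, and the open complement. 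On the complement a point is equivalently a double flag $G \subset A, A' \subset B$ with $B/G$ of length two: the first double Hecke stack recovers it with $B$ the distinguished sheaf and $G = A \cap A'$, the second with $G$ the distinguished sheaf and $B$ the pushout of $A \leftarrow G \to A'$, and these two recoveries give a canonical identification of the two complements. Checking that this identification is an equivalence of \emph{derived} stacks --- in particular over the locus where the two support points collide --- and that it carries the twists $\rho^{\ast}\eE_i$, $\ell(z)$, $\ell(w)$ and, crucially, the two base-change corrections onto each other (this last matching being exactly where Serre duality on $S$, hence a residual $\omega_S$-twist that cancels against the $\omega_{q_n}$ built into $\mu^-$, intervenes) is the technical heart of the proof and the step I expect to be the main obstacle; granting it, the off-diagonal contributions cancel in the commutator.

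It remains to evaluate the two diagonal contributions. The diagonal of $\hH_n \times_{\fM_S^{\dag}(\beta, n+1)} \hH_n$ is $\hH_n$, embedded with normal complex $\TT_{q_n}$, so the K-theoretic self-intersection formula gives its contribution to $\mu_{\eE_2}^-(w)\circ\mu_{\eE_1}^+(z)$ as $(-) \mapsto p_{n\ast}(p_n^{\ast}(-) \otimes \rho_n^{\ast}(\eE_1 \otimes \eE_2) \otimes \lambda_{-1}(\TT_{q_n}^{\vee}) \otimes \ell(z) \otimes \ell(w) \otimes \omega_{q_n})$, and symmetrically for the other ordering with $\TT_{p_{n-1}}$. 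Over $(F_1 \subset F_2, x)$ the relative tangent complex $\TT_{q_n}$ is $\RHom_S(\oO_x, F_1)$ together with a one-dimensional contribution from the section $\xi$; equivalently it is the $\omega_S$-twisted half of the $X$-side complex $\RHom_X(\oO_x, -)$, which in K-theory reads $\chi_S(\oO_x, (1 - q_S^{-1})F_1)$. Resolving $\oO_x$ by Koszul, inserting the weight $z$ of the point sheaf, and tracking the section factor produces exactly $h(z) = (1 - 1/z)\wedge^{\bullet}((q_S^{-1}-1)\ffF(\beta)/z)$, with the two orderings forcing the expansions $h^+$ at $z = \infty$ and $h^-$ at $w = 0$. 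The residual $\mathbb{C}^{\ast}$-equivariant push-forward in the fiber direction of $X \to S$ contributes a geometric series summing to $(q_S - 1)^{-1}$ and imposes the resonance $\delta(w/z)$; pushing the remaining classes down $p_n$ and then along $S \times \fM_S^{\dag}(\beta) \to \fM_S^{\dag}(\beta)$ assembles the stated formula. Finally, since the right-hand side is tensoring by the pull-back of a perfect complex from $\fM_S^{\dag}(\beta)$, an operation preserving singular supports, the identity descends to the Verdier quotients defining $\dD\tT^{\mathbb{C}^{\ast}}(P_n(X, \beta))$.
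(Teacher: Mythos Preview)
Your overall architecture --- set up the Hecke correspondence, form the two double Hecke stacks, identify them on the open complement of the diagonal, and compute the residual diagonal piece --- matches the paper, and your off-diagonal identification is exactly Lemma~\ref{lem:dia:equiv}. However, you misidentify the technical heart: in the paper that identification is a short elementary argument (and is only used at the level of classical stacks and K-classes, so no derived matching is needed), whereas the real work is the diagonal computation in Proposition~\ref{prop:comm}.

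The gap is in your diagonal step. The commutator is a difference of push--pulls along two \emph{different} double Hecke stacks; neither composition individually splits in K-theory as ``diagonal plus off-diagonal'', so there is no well-defined ``diagonal contribution of $\mu^-\mu^+$'' to which a self-intersection formula for $\hH_n \hookrightarrow \hH_n \times_{\fM_S^{\dag}(\beta,n+1)} \hH_n$ could apply. (Also, in the derived setting base change is exact, so the ``base-change correction $\lambda_{-1}$'' you insert earlier does not arise.) The paper avoids this entirely: Lemma~\ref{lem:diagonal} shows only that the commutator on perfect objects has the form $(-)\otimes(\id,\Delta)_{\ast}\gamma$, and then $\gamma$ is computed by evaluating on the class $1=[\oO]$. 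That evaluation uses the two projectivization descriptions of $\fH ecke(v_{\bullet})$ in Lemmas~\ref{fiber:pi} and~\ref{lem:proper} together with the push-forward identity of Lemma~\ref{lem:relation:K}, after which the commutator becomes the difference between two orders of expansion of a rational function, and the residue identity (\ref{residue}) with $\xi^S(x)=\wedge^{\bullet}(-x\oO_{\Delta})$ picks out the poles at $x=1,\,q_S^{-1}$ and produces the $\delta(w/z)/(q_S-1)$ and $h^{\pm}$ factors. Your hoped-for derivation of these factors from $\mathbb{T}_{q_n}$ and an ``equivariant push-forward along $X\to S$'' does not correspond to any step actually present; the $(q_S-1)^{-1}$ and the delta function come from Negu\c{t}'s pole analysis, not from a geometric self-intersection.
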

The proof for Theorem~\ref{intro:thmC}
much relies on arguments of Negut~\cite{Negut}, 
where he studied Hecke operators for K-groups of 
moduli spaces of stable sheaves on surfaces,
and computed several commutator relations. 
Contrary to the case of~\cite{Negut}, our 
moduli stacks may be singular so we 
can prove the identity (\ref{intro:commu}) only for elements 
represented by a perfect complex.
Also it seems likely that, following the arguments of~\cite{Negut},
we can compute more commutator relations 
such as $[\mu_{\eE_1}^{+}(z), \mu_{\eE_2}^+(w)]$, 
$[\mu_{\eE_1}^{-}(z), \mu_{\eE_2}^+(-)]$.
We will not pursue these computations in this paper. 

As a corollary of Theorem~\ref{intro:thmC},
we have the following relation for $k=0$
(see Corollary~\ref{cor:com:k=0})
 \begin{align}\label{intro:relation}
[\mu_{\eE_1, 0}^{+}, \mu_{\eE_2, 0}^-](-) = 
(-)\otimes p_{\fM\ast}((\eE_1 \otimes \eE_2 \otimes \omega_S)
\boxtimes \ffF(\beta)^{\vee}).
\end{align}
The 
existence of operators $\mu_{\eE, 0}^{\pm}$ satisfying 
the 
relation (\ref{intro:relation})
may be regarded as 
a categorification of Weyl algebra action on homologies of Hilbert schemes of 
points $C^{[n]}$ on locally 
planar curves $C$ constructed by Rennemo~\cite{MR3807309} (see Remark~\ref{rmk:planar}), 
which is an 
analogy of Grojnowski and Nakajima's 
Heisenberg action 
for homologies of Hilbert schemes of points on 
surfaces~\cite{MR1386846, MR1441880}. 
The moduli space of stable pairs $P_n(X, \beta)$ is 
much more generalized version of $C^{[n]}$, so we expect 
a similar Weyl algebra 
action on the vanishing cycle cohomology of $P_n(X, \beta)$. 
The 
operators $\mu_{\eE}^{\pm}$ together with the 
relation (\ref{intro:relation}) gives a further categorification 
of such expected Weyl algebra action.  
Using the Weyl-algebra action, the GV form of the generating 
series of $\chi(C^{[n]})$ is derived in~\cite{MR3807309}, 
see~\cite[Section~1.4]{MR3807309} for details. 
We expect that the result of Theorem~\ref{intro:thmC} is relevant 
for the categorification of GV form of PT categories, 
which will be pursued in a future work. 

\subsection{Notation and convention}
In this paper, all the schemes or derived stacks are defined 
over $\mathbb{C}$. For a scheme or a derived stack $A$ and a 
quasi-coherent sheaf $\fF$ on it, we denote by $S(\fF)
=\mathrm{Sym}_{A}^{\bullet}(\fF)$
its symmetric product. 
For a derived stack $\fM$, we always denote by $t_0(\fM)$
its classical truncation. 
For a triangulated category $\dD$ and a set of objects
$\sS \subset \dD$, 
we denote by $\langle \sS \rangle_{\mathrm{ex}}$ the extension 
closure, i.e. the smallest extension closed subcategory 
which contains $\sS$.

\subsection{Acknowledgements}
The author is grateful to Francesco Sala 
and Mauro Porta
for asking a
question
 whether DT categories admit 
Hall-type algebra structures
when the previous paper~\cite{TocatDT} was posted on 
arXiv, and 
also several useful comments for the first 
draft of this paper. 
The author is also grateful to Andrei Negut for 
useful comments, and Tasuki Kinjo
for valuable discussions. 
The author is supported by World Premier International Research Center
Initiative (WPI initiative), MEXT, Japan, and Grant-in Aid for Scientific
Research grant (No.~19H01779) from MEXT, Japan.

\section{Singular supports of coherent sheaves and Fourier-Mukai transforms}
The notion of singular supports for (ind) coherent sheaves on 
quasi-smooth derived stacks was 
developed by Arinkin-Gaitsgory~\cite{MR3300415}, in order to 
formulate a categorical 
geometric Langlands conjecture. 
In the author's previous paper~\cite{TocatDT}, we used 
singular supports to capture (un)stable sheaves 
on 3-folds from the derived geometry on moduli 
stacks of sheaves the surface. 
In this section, we review the theory of singular supports
and see how they interact with Fourier-Mukai transforms. 
\subsection{Singular supports of coherent sheaves}
Let $A$ be an affine $\mathbb{C}$-scheme and 
$V \to A$ a vector  bundle on it. 
For a section $s \colon A \to V$, 
we consider the affine derived scheme $\mathfrak{U}$
given by the derived zero locus of $s$
\begin{align}\label{frak:U}
\mathfrak{U}=\Spec \rR(V \to A, s)
\end{align}
where $\rR(V \to A, s)$ is the Koszul complex
\begin{align*}
\rR(V\to A, s) \cneq \left( \cdots \to \bigwedge^2 V^{\vee} 
\stackrel{s}{\to} V^{\vee} \stackrel{s}{\to} \oO_A  \right). 
\end{align*}
The classical truncation of $\mathfrak{U}$
is the closed subscheme of $A$
given by  
\begin{align*}
\uU \cneq t_0(\mathfrak{U})=(s=0) \subset A.
\end{align*}

On the other hand, let $w \colon V^{\vee} \to \mathbb{C}$
be the function defined by 
\begin{align*}
w(x, v)=\langle s(x), v\rangle, \ 
x \in A, \ v \in V^{\vee}|_{x}.
\end{align*}
Then its critical locus is the 
classical truncation of $(-1)$-shifted cotangent 
scheme over $\mathfrak{U}$
(or called dual obstruction cone, see~\cite{MR3607000})
\begin{align*}
\mathrm{Crit}(w)=
t_0(\Omega_{\mathfrak{U}}[-1])=
\Spec S(\hH^1(\mathbb{T}_{\mathfrak{U}})). 
\end{align*}
Below we take the fiberwise weight two 
$\mathbb{C}^{\ast}$-action on
the total space of the bundle 
$V^{\vee} \to A$, so that $w$ is of weight two. 

Let $\mathrm{HH}^{\ast}(\mathfrak{U})$ be the 
Hochschild cohomology
\begin{align*}
\mathrm{HH}^{\ast}(\mathfrak{U})
 \cneq \Hom_{\mathfrak{U} \times \mathfrak{U}}^{\ast}
(\Delta_{\ast}\oO_{\mathfrak{U}}, \Delta_{\ast}\oO_{\mathfrak{U}}).
\end{align*}
Here $\Delta \colon \mathfrak{U} \to \mathfrak{U} \times \mathfrak{U}$ is the diagonal. 
Then it is shown in~\cite[Section~4]{MR3300415}
that there 
exists a
canonical map 
$\hH^1(\mathbb{T}_{\mathfrak{U}}) \to \mathrm{HH}^2(\mathfrak{U})$.
So for $F \in \Dbc(\mathfrak{U})$, we have 
the map of 
graded rings
\begin{align}\label{ssuport:map}
\oO_{\mathrm{Crit}(w)}=
S(\hH^1(\mathbb{T}_{\mathfrak{U}})) \to \mathrm{HH}^{2\ast}(\mathfrak{U}) 
\to 
\Hom^{2\ast}(F, F).  
\end{align}
Here the last arrow is defined 
by taking Fourier-Mukai transforms associated with 
morphisms $\Delta_{\ast}\oO_{\mathfrak{U}} \to \Delta_{\ast}\oO_{\mathfrak{U}}[2\ast]$. 
The above map 
 defines the 
$\mathbb{C}^{\ast}$-equivariant 
$\oO_{\mathrm{Crit}(w)}$-module 
structure on $\Hom^{2\ast}(\fF, \fF)$, 
which is finitely generated  by~\cite[Theorem~4.1.8]{MR3300415}. 
The \textit{singular support} of $F$
\begin{align}\label{def:sing:supp}
\Supp^{\rm{sg}}(F) \subset \mathrm{Crit}(w)
\end{align}
is defined to be the support of $\Hom^{2\ast}(F, F)$ as a graded
$\oO_{\mathrm{Crit}(w)}$-module. 
Note that the singular support is a conical 
(i.e. $\mathbb{C}^{\ast}$-invariant)
 closed subscheme of 
$\mathrm{Crit}(w)$. 
For a conical
 closed subset $Z \subset \mathrm{Crit}(w)$, we denote by 
\begin{align*}
\cC_Z \subset \Dbc(\mathfrak{U})
\end{align*}
the triangulated subcategory of objects $F \in \Dbc(\mathfrak{U})$ whose 
singular supports are contained in $Z$. 
Via Koszul duality, 
we have the following relation with 
the category of matrix factorizations
(see~\cite[Corollary~2.11]{TocatDT})
\begin{align}\label{equiv:MF}
\Dbc(\fU)/\cC_Z \stackrel{\sim}{\to}
\mathrm{MF}^{\mathbb{C}^{\ast}}(V^{\vee} \setminus Z, w). 
\end{align}
Here the right hand side is the 
triangulated category of $\mathbb{C}^{\ast}$-equivariant 
matrix factorizations for 
$w \colon V^{\vee}\setminus Z \to \mathbb{C}$. 
\subsection{Quasi-smooth derived stacks}
Below, we denote by $\mathfrak{M}$ a
derived Artin stack over $\mathbb{C}$.
This means that 
$\mathfrak{M}$ is a contravariant
$\infty$-functor from 
the $\infty$-category of 
affine derived schemes over $\mathbb{C}$ to 
the $\infty$-category of 
simplicial sets 
\begin{align*}
\mathfrak{M} \colon 
dAff^{op} \to SSets
\end{align*}
satisfying some conditions (see~\cite[Section~3.2]{MR3285853} for details). 
Here $dAff^{\rm{op}}$ is defined to be the
$\infty$-category of 
commutative simplicial $\mathbb{C}$-algebras, 
which is equivalent to the $\infty$-category of 
commutative differential graded 
$\mathbb{C}$-algebras with non-positive degrees. 
All derived stacks considered in this paper are locally of finite
presentation. 
The classical truncation of $\mathfrak{M}$ is denoted by 
\begin{align*}
\mM \cneq t_0(\mathfrak{M}) \colon 
Aff^{op} \hookrightarrow 
dAff^{op} \to SSets
\end{align*}
where the first arrow is a natural functor 
from the category of affine schemes
to affine derived schemes. 

Following~\cite{MR3285853}, we define the 
dg-category of quasi-coherent sheaves on $\mathfrak{M}$
as 
\begin{align}\label{limit:L}
L_{\rm{qcoh}}(\mathfrak{M}) \cneq
\lim_{\mathfrak{U} \to \mathfrak{M}} L_{\rm{qcoh}}(\mathfrak{U}).
\end{align}
Here $\fU=\Spec A$ is an 
affine derived scheme 
for a cdga $A$.
The category
$L_{\rm{qcoh}}(\mathfrak{U})$ 
is defined to be the 
dg-category of dg-modules over $A$ 
localized
by quasi-isomorphisms
(see~\cite[Section~2.4]{MR2762557}), 
so that its 
homotopy category is equivalent to 
the derived category $D_{\rm{qcoh}}(\mathfrak{U})$
of dg-modules over $A$. 
The limit in (\ref{limit:L})
is taken 
for all the diagrams
\begin{align}\label{dia:smooth}
\xymatrix{
\mathfrak{U} \ar[rd]_{\alpha} \ar[rr]^-{f} & & \mathfrak{U}' 
\ar[ld]^-{\alpha'} \\
& \mathfrak{M}, & 
}
\end{align}
where $f$ is a 0-representable smooth 
morphism, and 
$\alpha' \circ f$ is equivalent to $\alpha$. 
The homotopy category of $L_{\rm{qcoh}}(\mathfrak{M})$
is denoted by 
$D_{\rm{qcoh}}(\mathfrak{M})$. We have the 
triangulated subcategory
\begin{align*}
D^b_{\rm{coh}}(\mathfrak{M}) \subset D_{\rm{qcoh}}(\mathfrak{M})
\end{align*}
consisting of objects which have bounded coherent 
cohomologies. 
We note that there 
is a bounded t-structure on $D_{\rm{coh}}^b(\mathfrak{M})$
whose heart coincides with 
$\Coh(\mM)$. 

A morphism of derived stacks $f \colon \fM \to \fN$ is 
called \textit{quasi-smooth} if 
$\mathbb{L}_f$ is perfect 
such that for any point $x \to \mM$
the restriction $\mathbb{L}_f|_{x}$ is 
of cohomological 
amplitude $[-1, 1]$.
Here 
$\mathbb{L}_f$ is the $f$-relative cotangent complex. 
A derived stack 
$\mathfrak{M}$ over $\mathbb{C}$
is called \textit{quasi-smooth}
if $\fM \to \Spec \mathbb{C}$ is quasi-smooth. 
By~\cite[Theorem~2.8]{MR3352237}, 
the quasi-smoothness of $\mathfrak{M}$ is equivalent to 
that $\mM$ 
is a 1-stack, 
and 
any point of $\mathfrak{M}$ lies 
in the image of a $0$-representable 
smooth morphism 
\begin{align}\label{map:alpha}
\alpha \colon \mathfrak{U} \to \mathfrak{M}
\end{align}
where $\mathfrak{U}$ is an affine derived scheme 
of the form (\ref{frak:U}). 
Furthermore following~\cite[Definition~1.1.8]{MR3037900}, 
a derived stack $\mathfrak{M}$ is called 
\textit{QCA (quasi-compact and with affine automorphism groups)}
if the following conditions hold:
\begin{enumerate}
\item $\mathfrak{M}$ is quasi-compact;
\item The automorphism groups of its geometric points are affine;
\item The classical inertia stack $I_{\mM} \cneq \Delta
 \times_{\mM \times \mM} \Delta$
is of finite presentation over $\mM$. 
\end{enumerate}
Below our derived stack $\fM$ always satisfies (ii), (iii), 
but we often encount derived stacks which are not of finite type 
and in this case (i) may not be satisfied. 
In such a case we may take a derived open substack 
$\fM^{\fin} \subset \fM$ of finite type, 
and then $\fM^{\fin}$ is QCA. 

For a quasi-smooth derived stack 
$\fM$ and an object 
$\fE \in \mathrm{Perf}(\fM)$, we 
set
\begin{align*}
\rho \colon \mathbb{V}(\fE) \cneq 
\Spec_{\fM}S(\fE) \to \fM. 
\end{align*}
If $\fE|_{x}$ is of cohomological amplitude $[-1, 1]$
for any point $x \to \mM$, 
then $\mathbb{V}(\fE)$ is quasi-smooth and $\rho$ is a 
quasi-smooth morphism. 
We also set
\begin{align*}
\rho' \colon 
\mathbb{P}(\fE) \cneq 
(\mathbb{V}(\fE) \setminus 0_{\fM})/\mathbb{C}^{\ast}
\to \fM.
\end{align*}
Here $\mathbb{C}^{\ast}$ acts by weight $m$ on 
$S^m(\fE)$, and $0_{\fM}$ is the zero section of 
$\rho$. 
Under the above situation, 
$\mathbb{P}(\fE)$ is quasi-smooth and 
$\rho'$ is a quasi-smooth morphism. 
If furthermore 
$\fE|_{x}$ is of cohomological amplitude $[-1, 0]$
for any point $x \to \mM$, 
then $\rho'$ is a proper morphism, i.e. 
$\rho'$ is a representable morphism such that any 
pull-back by $U \to \fM$ for a scheme $U$ is 
a proper morphism of schemes.

\subsection{Singular supports for quasi-smooth derived stacks}
Let $\Omega_{\fM}[-1]$ be the $(-1)$-shifted cotangent stack over $\fM$, i.e. 
\begin{align*}
\Omega_{\fM}[-1]=
\mathbb{V}(\mathbb{T}_{\fM}[1]). 
\end{align*}
Let $\fM_1$, $\fM_2$ be
quasi-smooth derived stacks 
with truncations $\mM_i=t_0(\fM_i)$. 
Let $f \colon \fM_1 \to \fM_2$ be a morphism. 
Then 
the morphism 
$f^{\ast}\mathbb{L}_{\fM_2} \to \mathbb{L}_{\fM_1}$
induces the diagram
\begin{align}\label{diagram:induced}
\xymatrix{
t_0(\Omega_{\fM_1}[-1]) \ar[d] & f^{\ast}t_0(\Omega_{\fM_2}[-1]) \ar[d] \ar[l]_-{f^{\diamondsuit}} 
\ar[r]^-{f^{\spadesuit}} 
\ar@{}[rd]|\square
& 
t_0(\Omega_{\fM_2}[-1]) \ar[d] \\
\mM_1 & \mM_1 \ar@{=}[l] \ar[r]_-{f} & \mM_2. 
}
\end{align}
It is easy to see that $f$ is quasi-smooth if and only
if $f^{\diamondsuit}$ is a closed immersion, 
$f$ is smooth if and only if $f^{\diamondsuit}$ is an isomorphism. 
Let $\Omega_f[-2]$ is the
 $(-2)$-shifted conormal stack
\begin{align*}
\Omega_{f}[-2] \cneq 
\mathbb{V}(\mathbb{T}_f[2]).
\end{align*}
Here $\mathbb{T}_f$ is the dual of $\mathbb{L}_f$. 
We have the following lemma. 
\begin{lem}\label{lem:conormal}
We have the isomorphism over $\mM_1$
\begin{align}\label{conormal:shift}
t_0(\Omega_{f}[-2]) \stackrel{\cong}{\to}
(f^{\diamondsuit})^{-1}(0_{\mM_1}).
\end{align}
Here $0_{\mM_1}$ is the zero section of the 
left horizontal arrow in (\ref{diagram:induced}). 
\end{lem}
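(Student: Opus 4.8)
The plan is to verify the isomorphism by computing both sides fiberwise over a point of $\mM_1$, and then to observe that the identification is natural enough to glue. Since the statement is local on $\fM_1$ in the smooth topology, I would first reduce to the case where $\fM_1 = \mathfrak{U}$ is an affine derived scheme of the form (\ref{frak:U}), i.e. a derived zero locus $\Spec \rR(V \to A, s)$, and where $\fM_2$ is likewise affine of this form; a morphism $f$ of such models can be taken (after further localization) to be compatible with the Koszul presentations. In this local model everything becomes a statement about the cohomology sheaves $\hH^0(\mathbb{T}_f)$ and $\hH^1(\mathbb{T}_f)$ of the relative tangent complex, which are the quasi-coherent sheaves on $\mM_1$ whose total spaces define $t_0(\Omega_f[-2]) = \mathbb{V}(\mathbb{T}_f[2])$ — here only $\hH^{-2}$ of the shifted complex $\mathbb{T}_f[2]$, i.e. $\hH^0(\mathbb{T}_f)$, contributes, since $\mathbb{L}_f$ has amplitude $[-1,1]$ so $\mathbb{T}_f$ has amplitude $[-1,1]$ and $\mathbb{T}_f[2]$ has amplitude $[-3,-1]$, and $\mathbb{V}(-)$ of a perfect complex of amplitude $[-1,1]$ only sees $\hH^{-1}$ after the relevant shift. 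Wait — more carefully: $\mathbb{V}(\fE) = \Spec_{\fM} S(\fE)$ and its classical truncation depends only on $\hH^0$ of $\fE$ at the level of the underlying classical scheme, so $t_0(\Omega_f[-2]) = \Spec_{\mM_1} S(\hH^0(\mathbb{T}_f[2])) = \Spec_{\mM_1} S(\hH^{-2}(\mathbb{T}_f))$; since $\mathbb{T}_f$ sits in degrees $[-1,1]$ this is $\Spec_{\mM_1}S(0)=\mM_1$ — which is wrong. I must instead recall the sign conventions: $\Omega_{\fM}[-1] = \mathbb{V}(\mathbb{T}_{\fM}[1])$ has classical truncation $\Spec_{\mM} S(\hH^1(\mathbb{T}_{\fM}))$ as stated in the excerpt, so $\mathbb{V}(\fE)$ for $\fE$ of amplitude $[-1,1]$ has $t_0$ equal to $\Spec_{t_0} S(\hH^1(\fE))$; hence $t_0(\Omega_f[-2]) = \Spec_{\mM_1} S(\hH^2(\mathbb{T}_f[2])) = \Spec_{\mM_1} S(\hH^0(\mathbb{T}_f))$, the cotangent-cone built from coinvariant directions of $f$.

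With the correct reading in hand, the key computation is the following. Pulling back the defining sequence $f^{\ast}\mathbb{L}_{\fM_2} \to \mathbb{L}_{\fM_1} \to \mathbb{L}_f$ and dualizing gives an exact triangle $\mathbb{T}_f[-1] \to f^{\ast}\mathbb{T}_{\fM_2} \to \mathbb{T}_{\fM_1}$, hence (after the shift $[2]$, resp. $[1]$) a map whose classical truncation over $\mM_1$ realizes $t_0(\Omega_{\fM_2}[-1]) \times_{\mM_2} \mM_1$ inside $t_0(\Omega_{\fM_1}[-1])$ via $f^{\diamondsuit}$, with the "extra" directions $\hH^0(\mathbb{T}_f)$ — i.e. the kernel/cokernel data measuring failure of smoothness — being exactly the fiber of $f^{\diamondsuit}$ over the zero section. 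Concretely, $f^{\diamondsuit}$ sends a covector $\xi \in \hH^1(f^{\ast}\mathbb{T}_{\fM_2})$ to its image in $\hH^1(\mathbb{T}_{\fM_1})$, and this image is zero precisely when $\xi$ lifts to $\hH^0(\mathbb{T}_f)$ along the connecting map in the long exact sequence of cohomology sheaves of the triangle above; that long exact sequence gives $\hH^0(\mathbb{T}_f) \hookrightarrow \hH^1(\mathbb{T}_f[-1]) \to \hH^1(f^{\ast}\mathbb{T}_{\fM_2}) \to \hH^1(\mathbb{T}_{\fM_1})$ — reading indices, $\hH^0(\mathbb{T}_f) = \ker(\hH^1(f^{\ast}\mathbb{T}_{\fM_2}) \to \hH^1(\mathbb{T}_{\fM_1}))$ as sheaves on $\mM_1$, which upon applying $\Spec_{\mM_1}S(-)$ is exactly the asserted isomorphism $t_0(\Omega_f[-2]) \cong (f^{\diamondsuit})^{-1}(0_{\mM_1})$.

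The remaining work is to check that this fiberwise identification of sheaves is the restriction of a globally defined map of derived stacks over $\mM_1$ — i.e. that the closed immersion $f^{\diamondsuit}$ of (\ref{diagram:induced}), which is a closed immersion precisely because $f$ is quasi-smooth so $\mathbb{T}_f$ has the right amplitude, has its scheme-theoretic fiber over $0_{\mM_1}$ equal (not just set-theoretically) to $\mathbb{V}(\hH^0(\mathbb{T}_f))$. This follows from base change: $(f^{\diamondsuit})^{-1}(0_{\mM_1})$ is computed as the fiber product $f^{\ast}t_0(\Omega_{\fM_2}[-1]) \times_{t_0(\Omega_{\fM_1}[-1])} \mM_1$, and writing the linear-algebra of the triangle $\mathbb{T}_f[-1] \to f^{\ast}\mathbb{T}_{\fM_2} \to \mathbb{T}_{\fM_1}$ at the level of $\Spec_{\mM_1}S(-)$ identifies this with $\Spec_{\mM_1}S(\hH^0(\mathbb{T}_f)) = t_0(\mathbb{V}(\mathbb{T}_f[2])) = t_0(\Omega_f[-2])$. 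I expect the main obstacle to be purely bookkeeping: getting all the degree shifts and the $\hH^0$-versus-$\hH^1$ conventions consistent with the excerpt's normalization $\Omega_{\fM}[-1] = \mathbb{V}(\mathbb{T}_{\fM}[1])$ and with the direction of the closed immersion $f^{\diamondsuit}$, so that the square in (\ref{diagram:induced}) is genuinely cartesian with the claimed fiber — once the conventions are pinned down, the algebra is the elementary exact-sequence computation above.
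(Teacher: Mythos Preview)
Your strategy---identify both sides fiberwise via the long exact sequence attached to the cotangent triangle---is exactly the paper's approach. However, your index bookkeeping goes wrong and you end up computing the wrong sheaf.

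First, the lemma does \emph{not} assume $f$ is quasi-smooth; only $\fM_1,\fM_2$ are quasi-smooth. Hence $\mathbb{L}_f$, being the cone of a map between complexes of amplitude $[-1,1]$, has amplitude $[-2,1]$, and dually $\mathbb{T}_f$ has amplitude $[-1,2]$. Your first instinct for the truncation rule was in fact correct: $t_0(\mathbb{V}(\fE))=\Spec_{\mM_1}S(\hH^0(\fE))$ in general (this is precisely how $t_0(\Omega_{\fM}[-1])=\Spec S(\hH^0(\mathbb{T}_\fM[1]))=\Spec S(\hH^1(\mathbb{T}_\fM))$ arises in the excerpt). Applied here, $t_0(\Omega_f[-2])=\Spec_{\mM_1}S(\hH^0(\mathbb{T}_f[2]))=\Spec_{\mM_1}S(\hH^2(\mathbb{T}_f))$, which is the paper's formula. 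Your ``corrected'' rule $\hH^1(\fE)$ is wrong, and your final answer $\hH^0(\mathbb{T}_f)$ is the wrong sheaf: for instance when $f$ is smooth, $\hH^0(\mathbb{T}_f)$ is the relative tangent sheaf, whereas both sides of the lemma reduce to the zero section $\mM_1$.

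Second, dualizing the triangle (\ref{distinct:L}) gives $\mathbb{T}_f\to\mathbb{T}_{\fM_1}\to f^\ast\mathbb{T}_{\fM_2}$, not the rotation you wrote; the map $f^{\diamondsuit}$ is induced (contravariantly via $\Spec S(-)$) by $\hH^1(\mathbb{T}_{\fM_1})\to\hH^1(f^\ast\mathbb{T}_{\fM_2})$, so the preimage of the zero section corresponds to the \emph{cokernel} of this map, which by the long exact sequence is $\hH^2(\mathbb{T}_f)$. Equivalently, and this is how the paper phrases it, in terms of cotangent complexes the fiber at $p$ is
\[
\ker\bigl(\hH^{-1}(\mathbb{L}_{\fM_2}|_{f(p)})\to\hH^{-1}(\mathbb{L}_{\fM_1}|_p)\bigr)=\hH^{-2}(\mathbb{L}_f|_p).
\]
Once these indices are repaired, your argument coincides with the paper's.
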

\begin{proof}
By the definition of $\Omega_f[-2]$, we have
\begin{align*}
t_0(\Omega_f[-2])=\Spec_{\mM_1}S(\hH^2(\mathbb{T}_f)). 
\end{align*}
Therefore for each $p \in \mM_1$, the fiber of 
$t_0(\Omega_f[-2]) \to \mM_1$ at $p$ is 
$\hH^{-2}(\mathbb{L}_f|_{p})$. 
On the other hand, we 
have the distinguished triangle 
\begin{align}\label{distinct:L}
f^{\ast}\mathbb{L}_{\fM_2} \to \mathbb{L}_{\fM_1} \to 
\mathbb{L}_{f}. 
\end{align}
By restricting it to $p$ and the associated long 
exact sequence of cohomologies, 
we have the exact sequence
\begin{align*}
0 \to \hH^{-2}(\mathbb{L}_f|_{p}) \to \hH^{-1}(\mathbb{L}_{\fM_2}|_{f(p)}) 
\to \hH^{-1}(\mathbb{L}_{\fM_1}|_{p}). 
\end{align*}
Therefore the fibers of both sides of (\ref{conormal:shift}) are naturally 
identified. 
\end{proof}

The smooth morphism (\ref{map:alpha})
induces the diagram
\begin{align}\label{map:t0omega}
\xymatrix{
t_0(\Omega_{\mathfrak{U}}[-1]) &
\ar[l]_-{\alpha^{\diamondsuit}}^-{\cong} 
t_0(\alpha^{\ast}\Omega_{\mathfrak{M}}[-1])
\ar[r]^-{\alpha^{\spadesuit}} &
t_0(\Omega_{\mathfrak{M}}[-1]).
}
\end{align}
A closed substack 
of $t_0(\Omega_{\fM}[-1])$
is called \textit{conical} if it is closed under the
fiberwise $\mathbb{C}^{\ast}$-action on 
$t_0(\Omega_{\fM}[-1])$. 
For a conical closed substack 
$\zZ \subset t_0(\Omega_{\fM}[-1])$, 
we have the conical closed subscheme
\begin{align*}
\alpha^{\ast}\zZ \cneq 
\alpha^{\diamondsuit}(\alpha^{\spadesuit})^{-1}(\zZ) \subset 
t_0(\Omega_{\mathfrak{U}}[-1])=\Crit(w). 
\end{align*}
We define 
\begin{align*}
\cC_{\zZ} \subset \Dbc(\fM)
\end{align*}
to be
the triangulated subcategory consisting of objects 
whose singular supports are contained in $\zZ$,
i.e. those of objects $E \in \Dbc(\fM)$ such that 
for any map $\alpha$ as in (\ref{map:alpha}), we have 
\begin{align*}
\Supp^{\rm{sg}}(\alpha^{\ast}E) \subset \alpha^{\ast}\zZ. 
\end{align*}
 
Below we sometimes take a derived open substack 
$\fM^{\fin} \subset \fM$ and work on 
$\fM^{\fin}$. In this case, 
for a conical closed substack $\zZ \subset t_0(\Omega_{\fM}[-1])$, 
we set
\begin{align*}
\zZ^{\fin} \cneq \zZ \times_{\mM} \mM^{\fin} 
\subset t_0(\Omega_{\fM^{\fin}}[-1])
\end{align*}
where $\mM^{\fin}$ is the classical truncation of $\fM^{\fin}$. 
Then we have the subcategory 
$\cC_{\zZ^{\fin}} \subset \Dbc(\fM^{\fin})$, and the 
 quotient category
\begin{align*}
\Dbc(\fM^{\fin})/\cC_{\zZ^{\fin}}
\end{align*}
is our model for the definition of DT category in~\cite{TocatDT}. 
By the equivalence (\ref{equiv:MF}), the above quotient category 
may be regarded as a gluing of matrix factorizations. 
\subsection{Functoriality of singular supports}
Let $\fM_1$, $\fM_2$ be quasi-smooth 
derived stacks, and take a morphism
\begin{align*}
f \colon \fM_1 \to \fM_2
\end{align*}
First suppose that 
$f$ is a  quasi-smooth morphism. 
Then we have the pull-back functor
(see~\cite[Section~4.2]{PoSa})
\begin{align}\label{f:back}
f^{\ast} \colon \Dbc(\fM_2) \to \Dbc(\fM_1). 
\end{align} 
In this case, the morphism
$f^{\diamondsuit}$ in the diagram (\ref{diagram:induced}) 
is a closed immersion. 
Therefore for any 
conical closed substack $\zZ_2 \subset t_0(\Omega_{\fM_2}[-1])$, 
we have the conical closed substack
\begin{align}\label{f:diamond}
f^{\diamondsuit}(f^{\spadesuit})^{-1}(\zZ_2) \subset
t_0(\Omega_{\fM_1}[-1]). 
\end{align}

\begin{lem}\label{lem:back}
Suppose that 
a conical closed substack 
$\zZ_1 \subset t_0(\Omega_{\fM_1}[-1])$
contains (\ref{f:diamond}). 
Then the functor (\ref{f:back}) sends $\cC_{\zZ_2}$ to $\cC_{\zZ_1}$. 
\end{lem}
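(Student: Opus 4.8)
The plan is to reduce the statement to the affine (Koszul-dual) picture, where singular supports are defined via the explicit map \eqref{ssuport:map}, and then check compatibility of that map with the pull-back functor. First I would fix a smooth atlas $\alpha_1 \colon \fU_1 \to \fM_1$ with $\fU_1$ an affine derived scheme of the form \eqref{frak:U}; by choosing a factorization through a smooth cover of $\fM_2$, I may arrange a commutative square relating $\alpha_1$ to a smooth atlas $\alpha_2 \colon \fU_2 \to \fM_2$, together with an induced morphism $g \colon \fU_1 \to \fU_2$ of affine derived schemes which is again quasi-smooth (being a base change of $f$ along a smooth morphism). Since singular support is defined atlas-locally, it suffices to show that for $E \in \cC_{\zZ_2}$, the pull-back $g^{\ast}(\alpha_2^{\ast}E)$ has singular support contained in $\alpha_1^{\ast}\zZ_1$ inside $\Crit(w_1) = t_0(\Omega_{\fU_1}[-1])$, knowing that $\alpha_2^{\ast}E$ has singular support in $\alpha_2^{\ast}\zZ_2$.

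The key step is then purely local: for a quasi-smooth morphism $g \colon \fU_1 \to \fU_2$ of affine derived schemes of the form \eqref{frak:U} and $F_2 \in \Dbc(\fU_2)$, one has
\begin{align*}
\Supp^{\rm{sg}}(g^{\ast}F_2) \subset g^{\diamondsuit}(g^{\spadesuit})^{-1}\bigl(\Supp^{\rm{sg}}(F_2)\bigr)
\end{align*}
as conical closed subschemes of $\Crit(w_1)$. I would prove this by tracking the $\oO_{\Crit(w_i)}$-module structures through \eqref{ssuport:map}: the canonical map $\hH^1(\mathbb{T}_{\fU_2}) \to \mathrm{HH}^2(\fU_2)$ is natural, so it is compatible via $g^{\ast}\mathbb{L}_{\fU_2}\to\mathbb{L}_{\fU_1}$ with the one for $\fU_1$, and Fourier--Mukai pull-back gives a ring map $\Hom^{2\ast}(F_2,F_2)\to\Hom^{2\ast}(g^{\ast}F_2,g^{\ast}F_2)$ intertwining the two actions along the algebra map $\oO_{\Crit(w_1)}\to\oO_{\Crit(w_2)}$ dual to $(g^{\spadesuit},g^{\diamondsuit})$. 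Hence $\Hom^{2\ast}(g^{\ast}F_2,g^{\ast}F_2)$ is a module over $\oO_{\Crit(w_1)}$ whose support pushes forward into $\Supp^{\rm{sg}}(F_2)$; chasing the two maps in \eqref{diagram:induced} (for the affine $g$) then yields the displayed inclusion. This is essentially the statement of~\cite[Section~7]{MR3300415} specialized to our $g$, so I would cite it if available and otherwise carry out the short diagram chase. Combining with the hypothesis $\zZ_1 \supset f^{\diamondsuit}(f^{\spadesuit})^{-1}(\zZ_2)$, and noting $\alpha_1^{\ast}\zZ_1 \supset g^{\diamondsuit}(g^{\spadesuit})^{-1}(\alpha_2^{\ast}\zZ_2)$ by compatibility of the atlas pullbacks with $f^{\diamondsuit},f^{\spadesuit}$, gives $\Supp^{\rm{sg}}(g^{\ast}\alpha_2^{\ast}E)\subset \alpha_1^{\ast}\zZ_1$, which is exactly the condition that $f^{\ast}E \in \cC_{\zZ_1}$.

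The main obstacle I anticipate is the bookkeeping around atlases: one must verify that for a given atlas $\alpha_1\colon\fU_1\to\fM_1$ there is a compatible atlas of $\fM_2$ so that $g=f\circ(-)$ descends to a morphism of affine derived schemes of the special form \eqref{frak:U}, and that the two conical substacks $\alpha_i^{\ast}\zZ_i$ and the maps $f^{\diamondsuit},f^{\spadesuit}$ versus $g^{\diamondsuit},g^{\spadesuit}$ fit into a single commuting cube. This is routine given that $f^{\diamondsuit}$ is a closed immersion (as noted right after \eqref{f:diamond}, using quasi-smoothness of $f$) and that smooth base change preserves the Koszul presentation, but it requires care to phrase correctly; everything else is formal naturality of Hochschild cohomology and Fourier--Mukai functors.
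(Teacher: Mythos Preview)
Your approach is correct in principle, but the paper takes a much shorter route. Rather than reducing to atlases and tracking the Hochschild actions by hand, the paper simply cites the $f^{!}$-version of the statement, which is already proved as~\cite[Lemma~8.4.2]{MR3300415}: the functor $f^{!}$ sends $\cC_{\zZ_2}$ to $\cC_{\zZ_1}$. It then observes that for a quasi-smooth morphism $f$ one has $f^{!}(-) \cong f^{\ast}(-)\otimes\omega_f[\vdim f]$ for a line bundle $\omega_f$ (see~\cite[(3.12)]{MR3037900}), and tensoring by a line bundle preserves each $\cC_{\zZ}$. So $f^{\ast}$ inherits the same support estimate from $f^{!}$ in one line.

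Your strategy essentially reproves the Arinkin--Gaitsgory result from scratch in the affine model, which is why you end up worrying about constructing compatible Koszul presentations on both sides and matching up the cubes of maps $f^{\diamondsuit},f^{\spadesuit},g^{\diamondsuit},g^{\spadesuit}$. That bookkeeping can be done (and is done inside~\cite{MR3300415}), but it is exactly what the paper avoids by invoking the $f^{!}$-statement as a black box. The paper's trick of passing through $f^{!}$ buys brevity and avoids all the atlas-compatibility checks; your approach would be preferable only if one wanted a self-contained argument not relying on the ind-coherent/$f^{!}$ formalism, at the cost of redoing a nontrivial portion of~\cite[Section~8]{MR3300415}.
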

\begin{proof}
The $f^{!}$-version is proved in~\cite[Lemma~8.4.2]{MR3300415}, i.e. 
the functor $f^{!}$ sends $\cC_{\zZ_2}$ to $\cC_{\zZ_1}$. 
As $f$ is quasi-smooth, we have 
$f^{!}(-)=f^{\ast}(-) \otimes \omega_{f}[\vdim f]$ where $\omega_f$ is a 
$f$-relative canonical line bundle of $f$ and $\vdim f$ is the virtual dimension of $f$ (see~\cite[(3.12)]{MR3037900}). 
Since $\otimes \omega_f[\vdim]$ 
sends $\cC_{\zZ_1}$ to $\cC_{\zZ_1}$, we obtain the lemma. 
\end{proof}

Next suppose that $f$ is a proper morphism.
Then we have the push-forward functor (see~\cite[Section~4.2]{PoSa})
\begin{align}\label{f:push}
f_{\ast} \colon \Dbc(\fM_1) \to \Dbc(\fM_2). 
\end{align}
In this case, the morphism 
$f^{\spadesuit}$ in the diagram (\ref{diagram:induced})
is a proper morphism of schemes. 
Therefore for any conical closed
substack $\zZ_1 \subset t_0(\Omega_{\fM_1}[-1])$, 
we have the 
conical closed substack 
\begin{align}\label{f:spade}
f^{\spadesuit} (f^{\diamondsuit})^{-1}(\zZ_1) \subset 
t_0(\Omega_{\fM_2}[-1]). 
\end{align}
The following lemma is proved in~\cite[Lemma~8.4.5]{MR3300415}. 
\begin{lem}\label{lem:fpush}
Suppose that 
a conical closed substack 
$\zZ_2 \subset t_0(\Omega_{\fM_2}[-1])$
contains (\ref{f:spade}). 
Then the functor (\ref{f:push}) sends 
$\cC_{\zZ_1}$ to $\cC_{\zZ_2}$. 
\end{lem}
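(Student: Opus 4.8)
The plan is to reduce this to the corresponding statement for morphisms of (derived) schemes, which is exactly \cite[Lemma~8.4.5]{MR3300415}. That reference asserts, for a proper morphism $f$, that $f_{\ast}$ carries objects with singular support in a conical closed subset $\zZ_1$ to objects with singular support in the image of $\zZ_1$ under the correspondence on singular loci induced by $f$; so the only things to verify are that this image coincides with our $f^{\spadesuit}(f^{\diamondsuit})^{-1}(\zZ_1)$ of (\ref{f:spade}), that $f_{\ast}$ preserves $\Dbc$ rather than merely $\mathrm{IndCoh}$, and that the passage to derived stacks is harmless. The first is a diagram chase: the correspondence used in \emph{loc.\ cit.} to transport singular supports is, at the level of classical truncations, precisely the diagram (\ref{diagram:induced}) attached to $f^{\ast}\mathbb{L}_{\fM_2}\to\mathbb{L}_{\fM_1}$, so their ``image of $\zZ_1$'' is by construction $f^{\spadesuit}(f^{\diamondsuit})^{-1}(\zZ_1)$. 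For the second, after replacing $\fM_1,\fM_2$ by finite-type open substacks (so that they are QCA) and using that $f$ is proper, $f_{\ast}$ restricts to a functor $\Dbc(\fM_1)\to\Dbc(\fM_2)$ which agrees with the $\mathrm{IndCoh}$-pushforward on coherent objects; for the third, one may argue exactly as for the pullback version in Lemma~\ref{lem:back}, invoking the stacky form of \cite{MR3300415}.

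If one prefers to stay with the affine models (\ref{frak:U}), here is the reduction I would carry out. Since singular support, and hence each subcategory $\cC_{\zZ}$, is local in the smooth topology of the target, it suffices to fix a smooth morphism $\alpha_2\colon\fU_2\to\fM_2$ with $\fU_2$ affine of the form (\ref{frak:U}) and to show $\Supp^{\rm{sg}}(\alpha_2^{\ast}f_{\ast}E)\subset\alpha_2^{\ast}\zZ_2$ for $E\in\cC_{\zZ_1}$. A proper morphism is in particular representable, so the base change $\fM_1'\cneq\fM_1\times_{\fM_2}\fU_2$ is a quasi-smooth derived scheme; write $g\colon\fM_1'\to\fU_2$ for the (proper) base change of $f$ and $\beta\colon\fM_1'\to\fM_1$ for the (smooth) base change of $\alpha_2$. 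Proper base change gives $\alpha_2^{\ast}f_{\ast}E\simeq g_{\ast}\beta^{\ast}E$. By Lemma~\ref{lem:back}, $\beta^{\ast}E$ lies in $\cC_{\zZ_1'}$ with $\zZ_1'\cneq\beta^{\diamondsuit}(\beta^{\spadesuit})^{-1}(\zZ_1)$ (and $\beta^{\diamondsuit}$ is an isomorphism since $\beta$ is smooth). A diagram chase with the base-change square relating the diagrams (\ref{diagram:induced}) for $f$ and for $g$ shows $g^{\spadesuit}(g^{\diamondsuit})^{-1}(\zZ_1')\subset\alpha_2^{\ast}\zZ_2$. Since $g$ is now a proper morphism of quasi-smooth derived schemes, the schematic case \cite[Lemma~8.4.5]{MR3300415} applies and yields $\Supp^{\rm{sg}}(g_{\ast}\beta^{\ast}E)\subset\alpha_2^{\ast}\zZ_2$, as required.

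The only genuinely non-formal ingredient is the schematic statement itself, whose proof in \cite{MR3300415} ultimately identifies, via the Koszul-duality equivalence (\ref{equiv:MF}), the pushforward $g_{\ast}$ with a pushforward of matrix factorizations along the proper map $g^{\spadesuit}$ and tracks supports through it; I would simply cite it. The point I expect to need the most care in a write-up is the base-change compatibility of (\ref{diagram:induced}), i.e.\ that forming the $(-1)$-shifted cotangent stacks and the maps $f^{\diamondsuit},f^{\spadesuit}$ commutes with smooth base change on the target — a routine but slightly lengthy check at the level of cotangent complexes. Everything else (representability of proper morphisms, proper base change for $\Dbc$, the $\mathrm{IndCoh}$-versus-coherent comparison under the QCA hypothesis, and the stability of $\cC_{\zZ_1}$ under the line-bundle twist relating $f^{!}$ and $f^{\ast}$ for $\beta$) is standard.
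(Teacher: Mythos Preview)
Your proposal is correct and takes essentially the same approach as the paper: the paper gives no proof at all beyond the sentence ``The following lemma is proved in~\cite[Lemma~8.4.5]{MR3300415}.'' Your detailed reduction to the affine case via smooth base change is valid but more than is needed, since Arinkin--Gaitsgory's Section~8 already develops singular support and its functoriality for quasi-smooth derived stacks (not just schemes), so the citation applies directly without the intermediate reduction.
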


Let $\fN$ be another quasi-smooth derived stack 
with 
a diagram
\begin{align}\label{diagram:N}
\fM_1 \stackrel{f_1}{\leftarrow} \fN \stackrel{f_2}{\to} \fM_2. 
\end{align}
Suppose that $f_1$ is quasi-smooth and $f_2$ is a 
proper morphism. 
Then for any $\pP \in \mathrm{Perf}(\fN)$, 
we have the functor
\begin{align}\label{FM:P}
\FM_{\pP}(-) =f_{2\ast}(f_1^{\ast}(-) \otimes \pP) \colon 
\Dbc(\fM_1) \to \Dbc(\fM_2). 
\end{align}
Let $f$ be the morphism
\begin{align*}
f=(f_1, f_2) \colon \fN \to \fM_1 \times \fM_2. 
\end{align*}
Then we have the diagram
\begin{align}\label{diagram:fN}
\xymatrix{
& & t_0(\Omega_f[-2])
\ar@{}[dd]|\square
 \ar[ld]_-{g_1} \ar[rd]^-{g_2} \ar@/_30pt/[lldd]_-{h_1} 
\ar@/^30pt/[rrdd]^-{h_2} & & \\
& f_1^{\ast}t_0(\Omega_{\fM_1}[-1]) \ar[ld]_-{f^{\spadesuit}_1} 
\ar[rd]^-{f^{\diamondsuit}_1} & & f_2^{\ast}t_0(\Omega_{\fM_2}[-1]) \ar[ld]_-{f^{\diamondsuit}_2}
 \ar[rd]^-{f^{\spadesuit}_2} & \\
t_0(\Omega_{\fM_1}[-1]) & & t_0(\Omega_{\fN}[-1]). & & t_0(\Omega_{\fM_2}[-1]).
}
\end{align}
Here the middle square is Cartesian by the isomorphism 
(\ref{conormal:shift}). 
Note that $g_2$ is a closed immersion as $f^{\diamondsuit}_1$ is,
and $f^{\spadesuit}_2$ is proper as $f_2$ is. 
Therefore $h_2$ is also proper. 
We obtain the following commutative diagram
\begin{align}\label{dia:evf}
\xymatrix{
t_0(\Omega_{\fM_1}[-1]) \ar[d] & t_0(\Omega_{f}[-2]) 
\ar[r]^-{h_2} \ar[l]_-{h_1} \ar[d] & 
t_0(\Omega_{\fM_2}[-1]) \ar[d] \\
\mM_1 & \nN \ar[r]^-{f_2} \ar[l]_-{f_1} & \mM_2. 
}
\end{align}
Here $\mM_i=t_0(\fM_i)$ and $\nN=t_0(\fN)$. 
For 
a conical closed substack $\zZ_1 \subset t_0(\Omega_{\fM_1}[-1])$, we 
have the following conical closed substack
\begin{align}\label{Z:h}
h_2 (h_1)^{-1}(\zZ_1) \subset t_0(\Omega_{\fM_2}[-1]). 
\end{align}

\begin{prop}\label{prop:FM}
Suppose that 
a conical closed substack 
$\zZ_2 \subset t_0(\Omega_{\fM_2}[-1])$
contains (\ref{Z:h}). 
Then the functor (\ref{FM:P})
sends $\cC_{\zZ_1}$ to $\cC_{\zZ_2}$. 
\end{prop}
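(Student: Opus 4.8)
The plan is to write the Fourier--Mukai functor as the composite $\FM_{\pP}=f_{2\ast}\circ(-\otimes\pP)\circ f_1^{\ast}$ and to propagate a bound on the singular support through each of the three factors: Lemma~\ref{lem:back} handles the quasi-smooth pull-back $f_1^{\ast}$, an elementary observation handles tensoring with the perfect complex $\pP$, and Lemma~\ref{lem:fpush} handles the proper push-forward $f_{2\ast}$. Concretely, set
\begin{align*}
\zZ_{\fN}\cneq f_1^{\diamondsuit}(f_1^{\spadesuit})^{-1}(\zZ_1)\subset t_0(\Omega_{\fN}[-1]);
\end{align*}
this is a conical closed substack because $f_1$ is quasi-smooth, so $f_1^{\diamondsuit}$ is a closed immersion by the discussion around (\ref{diagram:induced}), and all arrows there are $\mathbb{C}^{\ast}$-equivariant. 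Since $\zZ_{\fN}$ is precisely the substack (\ref{f:diamond}) attached to the quasi-smooth morphism $f_1$, Lemma~\ref{lem:back} shows that $f_1^{\ast}$ carries $\cC_{\zZ_1}\subset\Dbc(\fM_1)$ into $\cC_{\zZ_{\fN}}\subset\Dbc(\fN)$.

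The next step is to check that for any conical closed substack $\zZ\subset t_0(\Omega_{\fN}[-1])$ the functor $-\otimes\pP$ preserves $\cC_{\zZ}$; this is the one input not literally supplied by Lemmas~\ref{lem:back} and~\ref{lem:fpush}. Working on a smooth chart as in (\ref{map:alpha}), $\pP$ becomes a finite complex of vector bundles, so, since $\cC_{\zZ}$ and its affine-local avatars $\cC_{Z}$ are triangulated subcategories, closed under shifts and cones, it suffices to know that tensoring with a single vector bundle does not change $\Supp^{\rm{sg}}$; this holds because such a tensoring alters $\Hom^{2\ast}(F,F)$ only by an operation linear over the structure sheaf of the classical truncation, hence leaves its support over $\Crit(w)$ unchanged. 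This is exactly the fact already used for the line bundle $\omega_f$ in the proof of Lemma~\ref{lem:back}, and it gives that $-\otimes\pP$ carries $\cC_{\zZ_{\fN}}$ into itself.

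It remains to invoke Lemma~\ref{lem:fpush} for the proper morphism $f_2$: it sends $\cC_{\zZ_{\fN}}$ into $\cC_{\zZ_2}$ as soon as $\zZ_2$ contains $f_2^{\spadesuit}(f_2^{\diamondsuit})^{-1}(\zZ_{\fN})$, so the final step is to verify the identity
\begin{align*}
f_2^{\spadesuit}(f_2^{\diamondsuit})^{-1}(\zZ_{\fN})=h_2\bigl(h_1^{-1}(\zZ_1)\bigr),
\end{align*}
whose right-hand side is exactly the locus (\ref{Z:h}). This is a short chase in the diagram (\ref{diagram:fN}): since its middle square is Cartesian, with $g_1,g_2$ the two projections of the fibre product of $f_1^{\diamondsuit}$ and $f_2^{\diamondsuit}$ over $t_0(\Omega_{\fN}[-1])$, one has $(f_2^{\diamondsuit})^{-1}(f_1^{\diamondsuit}(A))=g_2(g_1^{-1}(A))$ for every closed substack $A\subset f_1^{\ast}t_0(\Omega_{\fM_1}[-1])$; applying this with $A=(f_1^{\spadesuit})^{-1}(\zZ_1)$ and using $h_i=f_i^{\spadesuit}\circ g_i$ yields the displayed equality, and every substack that occurs is closed because $f_1^{\diamondsuit}$ and $g_2$ are closed immersions and $f_2^{\spadesuit}$ (hence $h_2$) is proper. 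Since $\zZ_2\supseteq h_2(h_1^{-1}(\zZ_1))$ by hypothesis, Lemma~\ref{lem:fpush} applies, and composing the three steps shows that $\FM_{\pP}$ sends $\cC_{\zZ_1}$ into $\cC_{\zZ_2}$, as required. The only point needing real care is keeping the Cartesian identification of Lemma~\ref{lem:conormal} compatible with these image and preimage operations on conical closed substacks — in particular, checking that the sign implicit in that identification is harmless since every substack in play is conical — but this is bookkeeping rather than a genuine obstacle.
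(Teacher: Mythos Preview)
Your proof is correct and follows essentially the same approach as the paper's own argument: decompose $\FM_{\pP}$ as $f_{2\ast}\circ(-\otimes\pP)\circ f_1^{\ast}$, apply Lemma~\ref{lem:back}, then the observation that $\otimes\pP$ preserves $\cC_{\wW}$ for perfect $\pP$, then Lemma~\ref{lem:fpush}, and finally identify $f_2^{\spadesuit}(f_2^{\diamondsuit})^{-1}f_1^{\diamondsuit}(f_1^{\spadesuit})^{-1}(\zZ_1)=h_2 h_1^{-1}(\zZ_1)$ via the Cartesian middle square of~(\ref{diagram:fN}). Your version is simply more explicit about the diagram chase and the justification for the perfect-tensor step; the paper compresses these into one displayed line.
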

\begin{proof}
Note that since $\pP$ is perfect, 
the functor $\otimes \pP$ 
takes $\cC_{\wW}$ to $\cC_{\wW}$
for any conical closed substack 
$\wW \subset t_0(\Omega_{\fN}[-1])$. 
Therefore by Lemma~\ref{lem:back} and Lemma~\ref{lem:fpush}, 
the functor (\ref{FM:P}) sends $\cC_{\zZ_1}$ to $\cC_{\zZ_2'}$
where 
$\zZ_2'$ is 
\begin{align*}
\zZ_2'=f^{\spadesuit}_2(f^{\diamondsuit}_2)^{-1} f^{\diamondsuit}_1 
(f^{\spadesuit}_1)^{-1}(\zZ_1) 
=f^{\spadesuit}_2 g_2 (g_1)^{-1} (f^{\spadesuit}_1)^{-1}(\zZ_1)=h_2(h_1)^{-1}(\zZ_1). 
\end{align*}
By the assumption we have $\zZ_2' \subset \zZ_2$, 
hence $\cC_{\zZ_2'} \subset \cC_{\zZ_2}$. Therefore the proposition holds. 
\end{proof}

In the situation above, let 
$\fM_i^{\circ} \subset \fM_i$ for $i=1, 2$ be 
derived open substacks satisfying 
\begin{align*}
\fN^{\circ} \cneq f_2^{-1}(\fM_2^{\circ}) \subset f_1^{-1}(\fM_1^{\circ}). 
\end{align*}
Then a diagram (\ref{diagram:N})
restricts to the diagram
\begin{align}\label{diagram:N2}
\fM_1^{\circ} \stackrel{f_1^{\circ}}{\leftarrow}
 \fN^{\circ} \stackrel{f_2^{\circ}}{\to} \fM_2^{\circ}. 
\end{align}
Note that $f^{\circ}_1$ is quasi-smooth and 
$f^{\circ}_2$ is proper. 
Therefore for $\pP^{\circ} \cneq \pP|_{\fN^{\circ}}$, we
have the functor
\begin{align*}
\mathrm{FM}_{\pP^{\circ}}(-) 
=f_{2\ast}^{\circ}(f_1^{\circ \ast}(-) \otimes \pP^{\circ})
 \colon \Dbc(\fM_1^{\circ}) \to \Dbc(\fM_2^{\circ}). 
\end{align*}
For conical closed substacks 
$\zZ_i \subset t_0(\Omega_{\fM_i}[-1])$, 
we set 
$\zZ_i^{\circ} \cneq \zZ_i \times_{\mM_i} \mM_i^{\circ}$
where $\mM_i^{\circ}=t_0(\fM_i^{\circ})$. 
\begin{lem}\label{lem:restrict}
Under the same assumption of Proposition~\ref{prop:FM}, 
the functor $\mathrm{FM}_{\pP^{\circ}}$ sends 
$\cC_{\zZ_1^{\circ}}$ to $\cC_{\zZ_2^{\circ}}$. 
\end{lem}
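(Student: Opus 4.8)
The plan is to reduce Lemma~\ref{lem:restrict} to Proposition~\ref{prop:FM} applied to the restricted diagram (\ref{diagram:N2}) itself. First I would observe that (\ref{diagram:N2}) is again of the shape treated in Proposition~\ref{prop:FM}: the morphism $f_1^{\circ}$ is quasi-smooth, $f_2^{\circ}$ is proper, and $\pP^{\circ}=\pP|_{\fN^{\circ}}$ is perfect. Here quasi-smoothness of $f_1^{\circ}$ is inherited from $f_1$ because the open immersions $\fN^{\circ}\hookrightarrow\fN$ and $\fM_1^{\circ}\hookrightarrow\fM_1$ are formally \'etale, so that $\mathbb{L}_{f_1^{\circ}}=\mathbb{L}_{f_1}|_{\fN^{\circ}}$; properness of $f_2^{\circ}$ holds because $\fN^{\circ}=f_2^{-1}(\fM_2^{\circ})$ exhibits $f_2^{\circ}$ as a base change of $f_2$. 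Moreover, since $\fN^{\circ}\subset f_1^{-1}(\fM_1^{\circ})$ we have $(f_1,f_2)^{-1}(\fM_1^{\circ}\times\fM_2^{\circ})=\fN^{\circ}$, so $f^{\circ}\cneq(f_1^{\circ},f_2^{\circ})$ is the base change of $f=(f_1,f_2)$ along the open immersion $\fM_1^{\circ}\times\fM_2^{\circ}\hookrightarrow\fM_1\times\fM_2$.

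Next I would check that the diagram (\ref{dia:evf}), together with the maps $h_1,h_2$ appearing in it, is compatible with restriction to these open substacks. Since cotangent complexes restrict along open immersions, $t_0(\Omega_{\fM_i^{\circ}}[-1])=t_0(\Omega_{\fM_i}[-1])\times_{\mM_i}\mM_i^{\circ}$, and because $f^{\circ}$ is a base change of $f$ one likewise gets $t_0(\Omega_{f^{\circ}}[-2])=t_0(\Omega_{f}[-2])\times_{\nN}\nN^{\circ}$; under these identifications the maps $h_1^{\circ},h_2^{\circ}$ in the analogue of (\ref{dia:evf}) for (\ref{diagram:N2}) are just $h_1,h_2$ restricted over the relevant open loci. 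Using $\zZ_i^{\circ}=\zZ_i\times_{\mM_i}\mM_i^{\circ}$ and the hypothesis $h_2(h_1)^{-1}(\zZ_1)\subset\zZ_2$ of Proposition~\ref{prop:FM}, it then follows that
\begin{align*}
h_2^{\circ}(h_1^{\circ})^{-1}(\zZ_1^{\circ})=\bigl(h_2(h_1)^{-1}(\zZ_1)\bigr)\times_{\mM_2}\mM_2^{\circ}\subset\zZ_2\times_{\mM_2}\mM_2^{\circ}=\zZ_2^{\circ},
\end{align*}
where for the first equality one uses that a point of $h_1^{-1}(\zZ_1)$ lying over $\nN^{\circ}$ is carried by $h_2$ to a point over $\mM_2^{\circ}$ (since the right square of (\ref{dia:evf}) commutes and $f_2(\nN^{\circ})\subset\mM_2^{\circ}$). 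Hence the restricted diagram satisfies the hypothesis of Proposition~\ref{prop:FM}, and applying that proposition gives exactly that $\mathrm{FM}_{\pP^{\circ}}$ sends $\cC_{\zZ_1^{\circ}}$ to $\cC_{\zZ_2^{\circ}}$.

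The only genuinely delicate point is this compatibility of the $(-1)$-shifted cotangent stacks, of the $(-2)$-shifted conormal stack $t_0(\Omega_f[-2])$, and of the maps $h_1,h_2$ with passage to open substacks; everything else is formal once one records that open immersions have vanishing relative cotangent complex and that $f^{\circ}$ is honestly the base change of $f$. As an alternative to invoking Proposition~\ref{prop:FM} as a black box, one could instead rerun its proof verbatim with $f_1^{\circ},f_2^{\circ},\pP^{\circ}$ in place of $f_1,f_2,\pP$, applying Lemma~\ref{lem:back} and Lemma~\ref{lem:fpush} to the restricted morphisms and using that $\otimes\pP^{\circ}$ preserves all the subcategories $\cC_{\wW}$.
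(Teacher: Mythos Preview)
Your proof is correct and follows essentially the same approach as the paper: both arguments observe that $f^{\circ}$ is the base change of $f$ along the open immersion $\fM_1^{\circ}\times\fM_2^{\circ}\hookrightarrow\fM_1\times\fM_2$, deduce that $t_0(\Omega_{f^{\circ}}[-2])=t_0(\Omega_f[-2])\times_{\nN}\nN^{\circ}$ and that $h_1^{\circ},h_2^{\circ}$ are the restrictions of $h_1,h_2$, verify $h_2^{\circ}(h_1^{\circ})^{-1}(\zZ_1^{\circ})\subset\zZ_2^{\circ}$ by an elementary set-theoretic computation, and then apply Proposition~\ref{prop:FM}. You supply a bit more justification for the quasi-smoothness and properness of $f_1^{\circ},f_2^{\circ}$, which the paper leaves implicit.
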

\begin{proof}
Similarly to (\ref{dia:evf}), the diagram (\ref{diagram:N2})
 induces the diagram 
\begin{align}\label{dia:evf2}
\xymatrix{
t_0(\Omega_{\fM_1^{\circ}}[-1]) \ar[d] & t_0(\Omega_{f^{\circ}}[-2]) 
\ar[r]^-{h_2^{\circ}} \ar[l]_-{h_1^{\circ}} \ar[d] & 
t_0(\Omega_{\fM_2^{\circ}}[-1]) \ar[d] \\
\mM_1^{\circ} & \nN^{\circ} \ar[r]^-{f_2^{\circ}}
 \ar[l]_-{f_1^{\circ}} & \mM_2^{\circ}. 
}
\end{align}
Here 
$\nN^{\circ}=t_0(\fN^{\circ})$ and 
$f^{\circ}=(f_1^{\circ}, f_2^{\circ}) \colon 
\nN^{\circ} \to \fM_1^{\circ} \times \fM_2^{\circ}$. 
As we have the Cartesian square
\begin{align*}
\xymatrix{
\fN^{\circ} 
\ar[r]^-{f^{\circ}} \ar[d] \ar@{}[rd]|\square
& \fM_1^{\circ} \times \fM_2^{\circ} \ar[d] \\
\fN \ar[r]^-{f} & \fM_1 \times \fM_2. 
}
\end{align*}
we have 
$t_0(\Omega_{f^{\circ}}[-2])=t_0(\Omega_f[-2]) \times_{\nN} \nN^{\circ}$.
It follows that the right square of (\ref{dia:evf2})
is obtained from the right square of (\ref{dia:evf})
via $(-) \times_{\mM_2}\mM_2^{\circ}$.
Therefore we have
\begin{align*}
h_2^{\circ}(h_1^{\circ})^{-1}(\zZ_1^{\circ}) 
=h_2^{\circ}(h_1^{-1}(\zZ_1) \times_{\nN} \nN^{\circ})
=h_2(h_1^{-1}(\zZ_1) \times_{\nN} \nN^{\circ})\times_{\mM_2}\mM_2^{\circ}
\subset \zZ_2 \times_{\mM_2} \mM_2^{\circ}=\zZ_2^{\circ}. 
\end{align*}
The lemma now follows from Proposition~\ref{prop:FM}. 
\end{proof}

\section{Hall-type algebras for DT categories}
In this section, 
we prove Theorem~\ref{intro:thmA}.
We first recall the relevant 
constructions of derived moduli stacks of 
sheaves and their extensions, 
and then recall the definition of  
DT categories introduced in~\cite{TocatDT}.
They are defined in terms of
derived categories of coherent sheaves on derived 
moduli spaces of sheaves on surfaces, together with 
the notion of singular supports. 
We then show that the categorified COHA structure 
by Porta-Sala~\cite{PoSa}
for derived moduli spaces of sheaves on surfaces
is compatible with singular supports, 
so that it descends to the algebra structure on 
DT categories. 

\subsection{Derived moduli stacks of coherent sheaves on surfaces}
Let $S$ be a smooth projective surface over $\mathbb{C}$. 
We consider the 
derived Artin stack
\begin{align}\label{dstack:MS}
\mathfrak{M}_S \colon dAff^{op} \to SSets
\end{align}
which sends an affine derived scheme 
$T$ to the $\infty$-groupoid of 
perfect complexes on $T \times S$, 
whose restriction to $t_0(T) \times S$ is a 
flat family of coherent sheaves on $S$ over $t_0(T)$. 
Note that we have an open immersion
\begin{align*}
\mathfrak{M}_S \subset \mathfrak{Perf}_S
\end{align*}
where $\mathfrak{Perf}_S$ is
the derived moduli stack of perfect complexes on $S$ 
constructed in~\cite{MR2493386}. 
Since
any object in $\Coh(S)$ is perfect as $S$ is smooth,
the derived Artin stack $\mathfrak{M}_S$ is 
the derived  
moduli stack of objects in 
$\Coh(S)$. 
The classical truncation of 
$\mathfrak{M}_S$ is denoted by 
$\mM_S=t_0(\mathfrak{M}_S)$. 

Let $N(S)$ be the numerical Grothendieck group of 
$S$
\begin{align}\label{def:N(S)}
N(S) \cneq K(S)/\equiv
\end{align}
where $F_1, F_2 \in K(S)$
satisfies $F_1 \equiv F_2$ if $\ch(F_1)=\ch(F_2)$. 
Then $N(S)$ is a finitely generated free abelian 
group. 
We have the decompositions into open and closed substacks
\begin{align*}
\mathfrak{M}_S=\coprod_{v \in N(S)} \mathfrak{M}_S(v), \ 
\mM_S=\coprod_{v \in N(S)} \mM_S(v)
\end{align*}
where each component 
corresponds to sheaves $F$ on $S$
with $[F]=v$. 
We denote by 
\begin{align}\label{F:universal}
\mathfrak{F}(v) \in \mathrm{Perf}(S \times \mathfrak{M}_S(v)). 
\end{align}
the universal object on the component $\fM_S(v)$. 

We also define the derived moduli stack 
of exact sequences of coherent sheaves on $S$, 
following~\cite[Section~3]{PoSa}.
It is given by the derived Artin stack 
\begin{align}\notag
\mathfrak{M}_S^{\rm{ext}} \colon dAff^{op} \to SSets
\end{align}
which sends an affine derived scheme 
$T$ to the $\infty$-groupoid of fiber 
sequences of 
perfect complexes on $T \times S$, 
\begin{align}\label{seq:F}
\ffF_1 \to \ffF_2 \to \ffF_3
\end{align}
whose restrictions to $t_0(T) \times S$ are 
flat families of exact sequences of 
coherent sheaves on $S$ over $t_0(T)$. 
The classical truncation of $\fM_S^{\ext}$ is denoted by 
$\mM_S^{\ext} \cneq t_0(\fM_S^{\ext})$. 
We have the decompositions into open and closed substacks
\begin{align*}
\fM_S^{\ext}=\coprod_{v_{\bullet}=(v_1, v_2, v_3)} 
\fM_S^{\ext}(v_{\bullet}), \ 
\mM_S^{\ext}=\coprod_{v_{\bullet}=(v_1, v_2, v_3)} \mM_S^{\ext}(v_{\bullet})
\end{align*}
where each component corresponds to 
exact sequences $0\to F_1 \to F_2 \to F_3 \to 0$
with $[F_i]=v_i$. 

By sending a sequence (\ref{seq:F}) to $\ffF_i$, 
we have the evaluation morphisms
\begin{align*}
\mathrm{ev}_i \colon \mathfrak{M}_S^{\rm{ext}}(v_{\bullet})
\to \mathfrak{M}_S(v_i), \ 1\le i\le 3. 
\end{align*}
Below we use the following diagram
\begin{align}\label{dia:extS}
\xymatrix{
\fM_S^{\ext}(v_{\bullet}) \ar[r]^-{\ev_2} \ar[d]_-{(\ev_1, \ev_3)} & \fM_S(v_2)
 \\
\fM_S(v_1) \times \fM_S(v_3). & 
}
\end{align}
Then the vertical map in (\ref{dia:extS})
is described as the relative spectrum (see~\cite[Proposition~3.8]{PoSa})
\begin{align}\label{MSext:spec}
\mathfrak{M}_S^{\ext}(v_{\bullet})
=\mathbb{V}(\hH om_{p_{\mathfrak{M} \times \mathfrak{M}}}
(\mathfrak{F}(v_3), \mathfrak{F}(v_1)[1])^{\vee}).
\end{align}
Here we have denoted by $\ffF(v_i)$
the pull-back of the universal object (\ref{F:universal})
by the projection
\begin{align*}
S \times \fM_S(v_1) \times \fM_S(v_3) \to S \times \fM_S(v_i)
\end{align*}
and $p_{\fM \times \fM}$ is the projection to 
$\fM_S(v_1) \times \fM_S(v_3)$. 
Since for $(F_1, F_3) \in \mM_S(v_1) \times \mM_S(v_3)$
the cohomological amplitude of 
$\RHom(F_3, F_1)[1]$
is $[-1, 1]$, the morphism $(\ev_1, \ev_3)$ is quasi-smooth. 
In particular, we have the well-defined functor
\begin{align*}
(\ev_1, \ev_3)^{\ast} \colon 
\Dbc(\mathfrak{M}_S(v_1) \times \mathfrak{M}_S(v_3)) \to 
\Dbc(\mathfrak{M}_S^{\ext}(v_{\bullet})). 
\end{align*}

On the other hand, the 
horizontal morphism in (\ref{dia:extS})
is a proper morphism. 
Indeed for a point $x \to \mathfrak{M}_S(v_2)$
corresponding to coherent sheaf 
$F_2$ on $S$, the 
fiber product 
\begin{align*}
\mathfrak{M}_S^{\ext}(v_{\bullet})
 \times_{\ev_2, \mathfrak{M}_S(v_2)} x
\end{align*} 
is the derived Quot scheme 
parameterizing 
quotients $F_2 \twoheadrightarrow F_3$
with $[F_3]=v_3$. 
Therefore we have the well-defined functor
\begin{align*}
\ev_{2\ast} \colon \Dbc(\mathfrak{M}_S^{\ext}(v_{\bullet})) \to 
\Dbc(\mathfrak{M}_S(v_2)). 
\end{align*}

As we mentioned in the introduction, 
the following composition 
\begin{align}\label{cat:COHA}
\Dbc(\mathfrak{M}_S(v_1)) \times 
\Dbc(\mathfrak{M}_S(v_3)) 
\stackrel{\boxtimes}{\to}
\Dbc(\mathfrak{M}_S(v_1) \times \mathfrak{M}_S(v_3))
\stackrel{\ev_{2\ast}(\ev_1, \ev_3)^{\ast}}{\longrightarrow}
\Dbc(\mathfrak{M}_S(v_2))
\end{align}
was considered in~\cite{PoSa}, as a 
categorification of 
cohomological Hall algebra on surfaces~\cite{KaVa2}. 

\subsection{Moduli stacks of coherent sheaves on local surfaces}
We next consider similar (but underived)
moduli stacks on the 3-fold $X$, defined by
\begin{align*}
X \cneq \mathrm{Tot}_S(\omega_S) \stackrel{\pi}{\to} S. 
\end{align*}
Here $\pi$ is the projection. 
Note that $X$ is a non-compact CY 3-fold, 
called \textit{local surface}. 
We denote by 
$\Coh_{\rm{cpt}}(X) \subset \Coh(X)$
the subcategory of compactly supported coherent 
sheaves on $X$. 
We consider the 
classical Artin stack
\begin{align*}
\mM_X : 
Aff^{\rm{op}} \to Groupoid
\end{align*}
whose $T$-valued points for $T \in Aff$
form the groupoid of $T$-flat families of 
objects in 
$\Coh_{\rm{cpt}}(X)$. 
We have the decomposition
into open and closed substacks
\begin{align*}
\mM_X=\coprod_{v \in N(S)} \mM_X(v)
\end{align*}
where each component corresponds to 
objects $E \in \Coh_{\rm{cpt}}(X)$ with 
$[\pi_{\ast}E]=v$. 

We have the natural push-forward morphism
\begin{align*}
\pi_{\ast} \colon \mM_X(v) \to \mM_S(v), \ 
E \mapsto \pi_{\ast}E
\end{align*}
which realizes $\mM_X(v)$ as 
the dual obstruction cone over $\mM_S(v)$, i.e. 
$\mM_X(v)$ is
the classical truncation of the 
$(-1)$-shifted cotangent stack
of $\fM_S(v)$ (see~\cite[Lemma~5.1]{TocatDT})
\begin{align}\label{shifted:cot}
\mM_X(v)=t_0(\Omega_{\mathfrak{M}_S(v)}[-1]). 
\end{align}

Similarly we consider the 
classical Artin stack
of short exact sequences of 
compactly supported coherent sheaves on $X$.
It is given by the 2-functor
\begin{align*}
\mM_X ^{\ext} \colon  
Aff^{\rm{op}} \to Groupoid
\end{align*}
whose $T$-valued points for $T \in Aff$
form the groupoid of 
exact sequences of coherent sheaves on $X \times T$, 
\begin{align*}
0 \to \eE_1 \to \eE_3 \to \eE_3 \to 0, \ 
\eE_i \in \mM_X(T).
\end{align*}
We have the decomposition into open and closed substacks
\begin{align*}
\mM_X^{\ext}=\coprod_{v_{\bullet}=(v_1, v_2, v_3)}
\mM_X^{\ext}(v_{\bullet})
\end{align*}
where each component corresponds to 
exact sequences $0 \to E_1 \to E_2 \to E_3 \to 0$
with $[\pi_{\ast}E_i]=v_i$. 
We have the evaluation morphisms
\begin{align*}
\ev_{i}^X \colon \mM_X^{\ext}(v_{\bullet}) \to \mM_X(v_i), \ 
E_{\bullet} \mapsto E_i
\end{align*}
and obtain the diagram
\begin{align}\label{dia:extX}
\xymatrix{
\mM_X^{\ext}(v_{\bullet})
 \ar[r]^-{\ev_2^X} \ar[d]_-{(\ev_1^X, \ev_3^X)} & \mM_X(v_2) \\
\mM_X(v_1) \times \mM_X(v_3). &
}
\end{align}
We also have the morphism given by the push-forward along 
$\pi$
\begin{align}\label{def:push:XS}
\pi_{\ast} \colon \mM_X^{\ext}(v_{\bullet})
 \to \mM_S^{\ext}(v_{\bullet}), \ 
E_{\bullet} \mapsto \pi_{\ast}E_{\bullet}
\end{align}
Here note that $\pi_{\ast}$ preserves the 
exact sequences of coherent sheaves 
as $\pi$ is affine. 

\subsection{The relation of $\mM_X^{\ext}$ and $\mM_S^{\ext}$}
Here we relate $\mM_X^{\ext}$ and 
$\fM_S^{\ext}$ as a $(-2)$-shifted 
conormal stack. 
Let $\ev$ be the morphism
\begin{align*}
\ev=(\ev_1, \ev_2, \ev_3)
 \colon \fM_S^{\ext}(v_{\bullet}) \to 
\fM_S(v_1) \times \fM_S(v_2) \times \fM_S(v_3). 
\end{align*}
We have the $(-2)$-shifted conormal stack 
and its truncation
\begin{align*}
\Omega_{\ev}[-2] \to \fM_S^{\ext}(v_{\bullet}), \ 
t_0(\Omega_{\ev}[-2]) \to \mM_S^{\ext}(v_{\bullet}). 
\end{align*}
\begin{prop}\label{prop:isomM}
There is an isomorphism over $\mM_S^{\ext}(v_{\bullet})$
\begin{align}\label{isom:ext}
\mM_X^{\ext}(v_{\bullet}) \stackrel{\cong}{\to} t_0(\Omega_{\ev}[-2]). 
\end{align}
\end{prop}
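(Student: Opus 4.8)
The plan is to produce the isomorphism (\ref{isom:ext}) by first giving a pointwise / local description of both sides over $\mM_S^{\ext}(v_\bullet)$ and then checking that the two descriptions glue. For a point of $\mM_S^{\ext}(v_\bullet)$ corresponding to an exact sequence $0 \to F_1 \to F_2 \to F_3 \to 0$ on $S$, the fibre of $\mM_X^{\ext}(v_\bullet)$ consists of the data of $\omega_S$-twisted endomorphisms $\theta_i \colon F_i \to F_i \otimes \omega_S$ (equivalently, $\Coh_{\rm cpt}(X)$ structures on $\pi^* F_i$) that are compatible with the maps of the exact sequence; equivalently it is the fibre of $t_0(\Omega_{\fM_X(v_\bullet)}[-1])$ along the sub-locus cut out by requiring the Higgs fields to respect the filtration. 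So the first step is to set up precisely the functor-of-points description of both stacks and identify the fibre of $\mM_X^{\ext}(v_\bullet)$ with the appropriate kernel inside $\bigoplus_i t_0(\Omega_{\fM_S(v_i)}[-1])$ pulled back to $\mM_S^{\ext}(v_\bullet)$.

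The second step is to reinterpret the right-hand side. By definition $\Omega_{\ev}[-2] = \mathbb{V}(\mathbb{T}_{\ev}[2])$, so by Lemma~\ref{lem:conormal} its truncation $t_0(\Omega_{\ev}[-2])$ sits inside $\ev^* t_0(\Omega_{\fM_S(v_1)\times\fM_S(v_2)\times\fM_S(v_3)}[-1])$ as $(\ev^\diamondsuit)^{-1}(0)$, i.e.\ the locus of cotangent vectors of the product that pull back to zero on $\fM_S^{\ext}(v_\bullet)$. Concretely, using $\mM_S(v) = t_0(\Omega_{\fM_S(v)}[-1])$ style identifications, a point of $t_0(\Omega_{\ev}[-2])$ over $(F_1,F_2,F_3)$ is a triple of $\omega_S$-valued ``co-Higgs'' classes in $\bigoplus_i \Ext^1(F_i, F_i)^\vee \cong \bigoplus_i \Hom(F_i, F_i\otimes\omega_S)$ (using Serre duality on $S$) lying in the kernel of the map dual to $\mathbb{T}_{\ev}$; unwinding the long exact sequence from the triangle $\ev^*\mathbb{L}_{\prod \fM_S(v_i)} \to \mathbb{L}_{\fM_S^{\ext}(v_\bullet)} \to \mathbb{L}_{\ev}$ (as in the proof of Lemma~\ref{lem:conormal}) shows that this kernel is exactly the condition that the three Higgs fields are compatible along $F_1 \hookrightarrow F_2 \twoheadrightarrow F_3$. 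Thus the fibres of the two sides match canonically.

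The third step is to upgrade this fibrewise matching to an actual isomorphism of stacks over $\mM_S^{\ext}(v_\bullet)$, and here I would argue globally rather than pointwise to avoid gluing headaches: on $\mM_S^{\ext}(v_\bullet)$ one has the universal exact sequence $\ffF_1 \to \ffF_2 \to \ffF_3$, and $\mM_X^{\ext}(v_\bullet)$ is the relative spectrum over $\mM_S^{\ext}(v_\bullet)$ of the symmetric algebra on a coherent sheaf built functorially from $\RHom$-complexes of the $\ffF_i$'s (the analogue of (\ref{shifted:cot}) and (\ref{MSext:spec}) in the filtered setting, compatible with the push-forward morphism (\ref{def:push:XS})). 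On the other side, $t_0(\Omega_{\ev}[-2]) = \Spec_{\mM_S^{\ext}(v_\bullet)} S(\hH^2(\mathbb{T}_{\ev}))$ by the computation in Lemma~\ref{lem:conormal}. So the task reduces to producing a canonical isomorphism of coherent sheaves on $\mM_S^{\ext}(v_\bullet)$ between $\hH^2(\mathbb{T}_{\ev})$ and the sheaf defining $\mM_X^{\ext}(v_\bullet)$; this follows by taking $\hH^2$ of the dual of the cotangent triangle for $\ev$ and comparing with the obstruction-theory description of $\mM_X^{\ext}$, using that $\mathbb{L}_{\fM_S^{\ext}(v_\bullet)/\prod\fM_S(v_i)}$ restricted appropriately is governed by $\RHom(\ffF_3,\ffF_1)$ (cf.\ (\ref{MSext:spec})) and Grothendieck--Serre duality on $S$ relating $\RHom$ of the $\ffF_i$'s to the Higgs-field deformation spaces.

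The main obstacle I anticipate is the third step: making the global identification of the structure sheaf of $\mM_X^{\ext}(v_\bullet)$ with $S(\hH^2(\mathbb{T}_{\ev}))$ genuinely canonical and functorial, rather than merely fibrewise, because it requires carefully tracking the filtration-compatibility condition through the cotangent triangle for $\ev$ and matching it against the moduli description of sheaves on $X$ in exact sequences. The fibrewise picture in steps one and two is essentially linear algebra and Serre duality; the real work is checking that all the identifications (the presentation of $\mM_X^{\ext}$ as a relative spectrum, the identification of the defining sheaf with $\hH^2(\mathbb{T}_{\ev})$, and compatibility with the structure maps to $\mM_S^{\ext}(v_\bullet)$) are compatible in families, which I expect to do by reducing to the universal exact sequence and invoking the cotangent-complex formalism already used in Lemma~\ref{lem:conormal} and in (\ref{MSext:spec}).
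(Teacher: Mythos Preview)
Your proposal is correct and follows essentially the same approach as the paper: identify the fibre of each side over a point of $\mM_S^{\ext}(v_\bullet)$ as compatible Higgs data $(\phi_i \colon F_i \to F_i \otimes \omega_S)$ via Serre duality and the cotangent triangle for $\ev$, then pass to $T$-valued points. Two small remarks: your $\Ext^1(F_i,F_i)^\vee$ should read $\Ext^2(F_i,F_i)^\vee$ (the fibre of $t_0(\Omega_{\fM_S(v_i)}[-1])$ is $\hH^1(\mathbb{T}_{\fM_S(v_i)})^\vee$), and the paper streamlines the comparison by observing that $\phi_1,\phi_3$ are uniquely determined by $\phi_2$, so both sides become the single kernel $\Ker\bigl(\Hom(F_2, F_2\otimes\omega_S) \xrightarrow{\,j\circ(-)\circ i\,} \Hom(F_1, F_3\otimes\omega_S)\bigr)$, which makes the match with $\hH^{-2}(\mathbb{L}_{\ev}|_p)$ immediate from the triangle $\ev_2^\ast\mathbb{L}_{\fM_S(v_2)} \to (\ev_1,\ev_3)^\ast(\hH om_{p_{\fM\times\fM}}(\ffF_3,\ffF_1)[1])^\vee \to \mathbb{L}_{\ev}$.
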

\begin{proof}
Let us take a point 
$p \in \mM_S^{\ext}(v_{\bullet})$ represented by 
an exact sequence 
$0 \to F_1 \to F_2 \to F_3 \to 0$
on $S$. The fiber of the morphism (\ref{def:push:XS})
at this point is identified with the stack of 
morphisms $\phi_i \colon F_i \to F_i \otimes \omega_S$
for $i=1, 2, 3$ which fit into the 
commutative diagram
\begin{align}\label{diagram:F}
\xymatrix{
0 \ar[r] & F_1 \ar[r]^-{i} \ar[d]_-{\phi_1} & 
F_2 \ar[r]^-{j} \ar[d]_-{\phi_2} & F_3 \ar[r] \ar[d]_-{\phi_3} & 0 \\
0 \ar[r] & F_1 \otimes \omega_S \ar[r]^-{i}  & 
F_2 \otimes \omega_S \ar[r]^-{j}  & F_3 \otimes \omega_S \ar[r]  & 0
}
\end{align}
Indeed such $\phi_i$ determines an $\oO_X$-module structure on 
$F_i$, and 
the diagram (\ref{diagram:F})
is an exact sequence of $\oO_X$-modules. 
From the diagram (\ref{diagram:F}), 
$\phi_1$, $\phi_3$ are uniquely determined by $\phi_2$. 
Conversely given $\phi_2$, 
it induces $\phi_1$, $\phi_3$ in the diagram (\ref{diagram:F})
if and only if the 
composition 
\begin{align*}
F_1 \stackrel{i}{\to} F_2 \stackrel{\phi_2}{\to}
F_2 \otimes \omega_S \stackrel{j}{\to} F_3 \otimes \omega_S
\end{align*}
is zero. 
Therefore the fiber of the morphism (\ref{def:push:XS})
is identified with 
\begin{align}\label{fiber:p}
\pi_{\ast}^{-1}(p)
=\Ker(\Hom(F_2, F_2 \otimes \omega_S) 
\stackrel{j \circ (-) \circ i}{\longrightarrow} 
\Hom(F_1, F_3 \otimes \omega_S)).
\end{align}

On the other hand, we have the commutative diagram
\begin{align*}
\xymatrix{
(\ev_1, \ev_3)^{\ast}\mathbb{L}_{\fM_S(v_1) \times \fM_S(v_3)}
 \ar@{=}[r] \ar[d] & 
(\ev_1, \ev_3)^{\ast}\mathbb{L}_{\fM_S(v_1) \times \fM_S(v_3)} \ar[d] &  \\
\ev^{\ast}\mathbb{L}_{\fM_S(v_1) \times \fM_S(v_2) \times \fM_S(v_3)} 
\ar[r] \ar[d] & 
\mathbb{L}_{\fM_S^{\ext}(v_{\bullet})} \ar[r] \ar[d] & \mathbb{L}_{\ev} \\
\ev_2^{\ast}\mathbb{L}_{\fM_S(v_2)} & (\ev_1, \ev_3)^{\ast}
(\hH om_{p_{\fM \times \fM}}(\ffF_3, \ffF_1)[1])^{\vee} &
}
\end{align*}
Here each horizontal and vertical sequences are distinguished 
triangle. The middle horizontal sequence is 
obtained by the description (\ref{MSext:spec}).
By taking the cone, 
we have the distinguished triangle
\begin{align*}
\ev_2^{\ast}\mathbb{L}_{\fM_S(v_2)} 
\to (\ev_1, \ev_3)^{\ast}
(\hH om_{p_{\fM \times \fM}}(\ffF_3, \ffF_1)[1])^{\vee} \to 
\mathbb{L}_{\ev}. 
\end{align*} 
By restricting it to $p$ and 
the associated exact sequence of cohomologies, we see that
\begin{align*}
\hH^{-2}(\mathbb{L}_{\ev}|_{p}) &=
\Ker(\Ext_S^2(F_2, F_2)^{\vee} \to \Ext_S^2(F_3, F_1)^{\vee}) \\
&=\Ker(\Hom(F_2, F_2 \otimes \omega_S) \stackrel{\eta}{\to} 
\Hom(F_1, F_3 \otimes \omega_S)). 
\end{align*}
Here the second identity is due to Serre duality, and 
one can check that the map $\eta$
is identified with the map in 
(\ref{fiber:p}). 
Since the left hand side is 
the fiber of $t_0(\Omega_{\ev}[-2]) \to \mM_S^{\ext}$ at $p$, 
we have the 
equivalence of $\mathbb{C}$-valued points 
\begin{align*}
\mM_X^{\ext}(v_{\bullet})(\mathbb{C}) \stackrel{\sim}{\to} 
t_0(\Omega_{\ev}[-2])(\mathbb{C}).
\end{align*}
It is straightforward to generalize the above arguments for 
$T$-valued point of $\mM_S^{\ext}(v_{\bullet})$ for 
a $\mathbb{C}$-scheme $T$, 
and therefore we obtain the isomorphism 
 (\ref{isom:ext}). 
\end{proof}

Let us take derived open substacks
$\fM_S(v_i)^{\fin} \subset \fM_S(v_i)$ 
of finite type satisfying 
\begin{align}\label{cond:ev}
\fM_S^{\ext}(v_{\bullet})^{\fin} \cneq 
\ev_2^{-1}(\fM_S(v_2)^{\fin}) 
\subset (\ev_1, \ev_3)^{-1}(\fM_S(v_1)^{\fin} \times \fM_S(v_3)^{\fin}). 
\end{align}
Then the diagram (\ref{dia:extS}) restricts to the diagram
\begin{align}\notag
\xymatrix{
\fM_S^{\ext}(v_{\bullet})^{\fin} \ar[r]^-{\ev_2}
 \ar[d]_-{(\ev_1, \ev_3)} & \fM_S(v_2)^{\fin} \\
\fM_S(v_1)^{\fin} \times \fM_S(v_3)^{\fin}. & 
}
\end{align}
Note that the vertical arrow is quasi-smooth and 
the horizontal arrow is proper. 
Therefore we have the induced functor
\begin{align}\label{cat:COHA1.5}
\ev_{2\ast}(\ev_1, \ev_3)^{\ast}
 \colon \Dbc(\fM_S(v_1)^{\fin}) \times \Dbc(\fM_S(v_3)^{\fin})
\to \Dbc(\fM_S(v_2)^{\fin}). 
\end{align}
We have the following corollary of Proposition~\ref{prop:isomM}. 

\begin{cor}\label{cor:induce}
For conical closed substacks 
$\zZ_i \subset \mM_X(v_i)$, 
suppose that 
\begin{align}\label{assum:Z}
(\ev_1^X, \ev_3^X)^{-1}((\zZ_1 \times \mM_X) \cup 
(\mM_X \times \zZ_3)) \subset
(\ev_2^X)^{-1}(\zZ_2).
\end{align}
Then the functor (\ref{cat:COHA1.5}) descends to the functor
\begin{align}\label{cat:COHA2}
\Dbc(\fM_S(v_1)^{\fin})/\cC_{\zZ_1^{\fin}} \times 
\Dbc(\fM_S(v_3)^{\fin})/\cC_{\zZ_3^{\fin}} \to 
\Dbc(\fM_S(v_2)^{\fin})/\cC_{\zZ_2^{\fin}}. 
\end{align}
\end{cor}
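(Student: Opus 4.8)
The plan is to realize the functor (\ref{cat:COHA1.5}) as an exterior product followed by a Fourier--Mukai type functor, and then to apply Lemma~\ref{lem:restrict}, using Proposition~\ref{prop:isomM} to convert the hypothesis (\ref{assum:Z}) into the containment of conical substacks required by Proposition~\ref{prop:FM}. Concretely, I would apply Lemma~\ref{lem:restrict} to the diagram
\[
\fM_S(v_1) \times \fM_S(v_3) \xleftarrow{(\ev_1, \ev_3)} \fM_S^{\ext}(v_{\bullet}) \xrightarrow{\ev_2} \fM_S(v_2),
\]
whose left arrow is quasi-smooth and right arrow is proper, with perfect kernel $\pP \cneq \oO_{\fM_S^{\ext}(v_{\bullet})}$ and the open substacks $\fM_S(v_1)^{\fin} \times \fM_S(v_3)^{\fin}$, $\fM_S(v_2)^{\fin}$, $\fM_S^{\ext}(v_{\bullet})^{\fin}$, for which condition (\ref{cond:ev}) is exactly the hypothesis of that lemma. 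Since $(\ev_1, \ev_3)^{\ast}(E_1 \boxtimes E_3) = \ev_1^{\ast}E_1 \otimes \ev_3^{\ast}E_3$, the functor (\ref{cat:COHA1.5}) is the composition of $\boxtimes$ with $\mathrm{FM}_{\pP|_{\fM_S^{\ext}(v_{\bullet})^{\fin}}}$.

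I would then record how $\boxtimes$ interacts with singular supports. In a chart of the form (\ref{frak:U}), the K\"unneth identity $\Hom^{2\ast}(E_1 \boxtimes E_3, E_1 \boxtimes E_3) \cong \Hom^{2\ast}(E_1, E_1) \otimes_{\mathbb{C}} \Hom^{2\ast}(E_3, E_3)$ is compatible with the module structures over $\oO_{\mathrm{Crit}(w_1)} \otimes_{\mathbb{C}} \oO_{\mathrm{Crit}(w_3)} = \oO_{\mathrm{Crit}(w_1 \boxplus w_3)}$ coming from (\ref{ssuport:map}), so $\Supp^{\mathrm{sg}}(E_1 \boxtimes E_3) = \Supp^{\mathrm{sg}}(E_1) \times \Supp^{\mathrm{sg}}(E_3)$. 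Since $t_0(\Omega_{\fM_S(v_1)^{\fin} \times \fM_S(v_3)^{\fin}}[-1]) = t_0(\Omega_{\fM_S(v_1)^{\fin}}[-1]) \times t_0(\Omega_{\fM_S(v_3)^{\fin}}[-1])$, this gives $E_1 \boxtimes E_3 \in \cC_{(\zZ_1 \times \mM_X(v_3))^{\fin}}$ whenever $E_1 \in \cC_{\zZ_1^{\fin}}$, and symmetrically $E_1 \boxtimes E_3 \in \cC_{(\mM_X(v_1) \times \zZ_3)^{\fin}}$ whenever $E_3 \in \cC_{\zZ_3^{\fin}}$.

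The remaining ingredient is the identification of the conormal data attached to this Fourier--Mukai diagram. The morphism $f \cneq ((\ev_1, \ev_3), \ev_2)$ has the same relative cotangent complex as $\ev = (\ev_1, \ev_2, \ev_3)$, so $t_0(\Omega_f[-2]) = t_0(\Omega_{\ev}[-2]) \cong \mM_X^{\ext}(v_{\bullet})$ by Proposition~\ref{prop:isomM}. Tracing through the construction (\ref{diagram:fN})--(\ref{dia:evf}) and comparing it with the fiberwise description in the proof of Proposition~\ref{prop:isomM}, the maps $h_1, h_2$ of (\ref{dia:evf}) are then identified, under this isomorphism, with $(\ev_1^X, \ev_3^X)$ and $\ev_2^X$; in other words the diagram (\ref{dia:evf}) for this $f$ is the diagram (\ref{dia:extX}). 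Granting this, hypothesis (\ref{assum:Z}) gives
\[
h_2 (h_1)^{-1}(\zZ_1 \times \mM_X(v_3)) = \ev_2^X\bigl((\ev_1^X, \ev_3^X)^{-1}(\zZ_1 \times \mM_X(v_3))\bigr) \subset \ev_2^X (\ev_2^X)^{-1}(\zZ_2) \subset \zZ_2,
\]
and likewise $h_2(h_1)^{-1}(\mM_X(v_1) \times \zZ_3) \subset \zZ_2$, so Lemma~\ref{lem:restrict} shows that $\mathrm{FM}_{\pP|_{\fM_S^{\ext}(v_{\bullet})^{\fin}}}$ sends $\cC_{(\zZ_1 \times \mM_X(v_3))^{\fin}}$ and $\cC_{(\mM_X(v_1) \times \zZ_3)^{\fin}}$ into $\cC_{\zZ_2^{\fin}}$. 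Together with the previous paragraph, the functor (\ref{cat:COHA1.5}), which is exact in each variable, sends $\cC_{\zZ_1^{\fin}} \times \Dbc(\fM_S(v_3)^{\fin})$ and $\Dbc(\fM_S(v_1)^{\fin}) \times \cC_{\zZ_3^{\fin}}$ into $\cC_{\zZ_2^{\fin}}$, and the universal property of the Verdier quotient, applied in each variable, produces the descended functor (\ref{cat:COHA2}).

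The step I expect to be the main obstacle is the identification of the abstractly defined maps $h_1, h_2$ of (\ref{dia:evf}) with the evaluation maps $\ev_1^X, \ev_2^X, \ev_3^X$ on $\mM_X^{\ext}(v_{\bullet})$: one must match the Cartesian squares and the isomorphisms (\ref{conormal:shift}) appearing in (\ref{diagram:fN}) with the fiberwise analysis underlying Proposition~\ref{prop:isomM}. Once both composites are seen to be morphisms over $\mM_S^{\ext}(v_{\bullet})$ inducing the same linear map on every geometric fiber, they must coincide; the exterior-product computation of singular supports and the bookkeeping of the finite-type restrictions are then routine, the latter being handled entirely by Lemma~\ref{lem:restrict}.
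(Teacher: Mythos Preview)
Your proposal is correct and follows essentially the same approach as the paper: both apply Proposition~\ref{prop:FM} and Lemma~\ref{lem:restrict} to the diagram (\ref{dia:extS}), use Proposition~\ref{prop:isomM} to identify $t_0(\Omega_{\ev}[-2])$ with $\mM_X^{\ext}(v_{\bullet})$, and check that under this identification the maps $h_1, h_2$ of (\ref{dia:evf}) become $(\ev_1^X, \ev_3^X)$ and $\ev_2^X$, so that (\ref{assum:Z}) supplies the containment (\ref{Z:h}). The paper's proof is terser---it records the identification of (\ref{dia:evf}) with (\ref{dia:extX}) as something ``one can check'' and leaves the behaviour of $\boxtimes$ on singular supports implicit---whereas you spell out the K\"unneth argument and flag the identification of $h_1, h_2$ as the point requiring care; but the logical skeleton is the same.
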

\begin{proof}
By (\ref{diagram:fN}), the diagram (\ref{dia:extS}) induces the diagram
\begin{align*}
\xymatrix{
t_0(\Omega_{\ev}[-1]) \ar[r] \ar[d] & t_0(\Omega_{\fM_S(v_2)}[-1]) \\
t_0(\Omega_{\fM_S(v_1)}[-1]) \times
t_0(\Omega_{\fM_S(v_3)}[-1]). &
}
\end{align*}
Under the isomorphisms (\ref{shifted:cot}), (\ref{isom:ext}),
one can check that 
the above diagram is identified with
the diagram (\ref{dia:extX}).
Therefore by Proposition~\ref{prop:FM}
and Lemma~\ref{lem:restrict},  
the assumption (\ref{assum:Z}) implies that 
the functor (\ref{cat:COHA1.5})
restricts to the functors
 \begin{align*}
\cC_{\zZ_1^{\fin}} \times \Dbc(\fM_S(v_3)^{\fin}) \to 
\cC_{\zZ_2^{\fin}}, \ 
\Dbc(\fM_S(v_1)^{\fin}) \times \cC_{\zZ_3^{\fin}} 
\to \cC_{\zZ_2^{\fin}}.
\end{align*}
Therefore the functor (\ref{cat:COHA1.5}) descends 
to the functor (\ref{cat:COHA2}). 
\end{proof}

\subsection{Categorical DT theory of stable sheaves on local surfaces}\label{subsec:catDT:sigma}
Let us take an element
\begin{align}\label{sigma:BH}
\sigma=B+iH \in \mathrm{NS}(S)_{\mathbb{C}}
\end{align}
such that $H$ is an ample class. 
For an object $E \in \Coh_{\rm{cpt}}(X)$, 
its $B$-twisted reduced Hilbert polynomial is defined by
\begin{align}\label{reduced:Hilb}
\overline{\chi}_{\sigma}(E, m) \cneq 
\frac{\chi(\pi_{\ast}E \otimes \oO_S(-B+mH))}{c} \in \mathbb{Q}[m]. 
\end{align}
Here $c$ is the coefficient of the highest degree term of 
the polynomial $\chi(\pi_{\ast}E \otimes \oO_S(-B+mH))$ in $m$. 
Note that the polynomial
$\overline{\chi}_{\sigma}(E, m)$ 
is determined by the numerical class of $E$, so 
it extends to the map
\begin{align*}
\overline{\chi}_{\sigma}(-, m) \colon 
N(S) \to \mathbb{Q}[m]
\end{align*}
such that $\overline{\chi}_{\sigma}(E, m)=\overline{\chi}_{\sigma}([E], m)$. 

By definition, 
an 
object $E \in \Coh_{\rm{cpt}}(X)$ is 
$\sigma$-\textit{(semi)stable} if it is a pure dimensional 
sheaf, and
for any subsheaf $0 \neq E' \subsetneq E$ we have
\begin{align*}
\overline{\chi}_{\sigma}(E', m) <
 (\le) \overline{\chi}_{\sigma}(E, m), \ m\gg 0.
\end{align*}
We have the open substack and 
the conical closed substack
\begin{align}\label{open:ss}
\mM_{X}^{\sigma\mathchar`-\rm{ss}}(v)
\subset \mM_X(v), \ 
\zZ_{\sigma\mathchar`-\rm{us}}(v)
\subset \mM_X(v)
\end{align}
where $\mM_{X}^{\sigma\mathchar`-\rm{ss}}(v)$
corresponds to $\sigma$-semistable sheaves
and $\zZ_{\sigma\mathchar`-\rm{us}}(v)$
is its complement. 
The moduli stack
$\mM_{X}^{\sigma\mathchar`-\rm{ss}}(v)$
is of finite 
type, while $\mathfrak{M}_S(v)$ is not 
of finite type in general, so not necessary QCA. 
Therefore we take a derived open substack
$\mathfrak{M}_{S}(v)^{\fin} \subset \mathfrak{M}_S(v)$
of finite type 
satisfying the condition  
\begin{align}\label{take:open}
\mM_{X}^{\sigma\mathchar`-\rm{ss}}(v) \subset 
\pi_{\ast}^{-1}(\mM_{S}(v)^{\fin}) 
=t_0(\Omega_{\mathfrak{M}_{S}(v)^{\fin}}[-1]). 
\end{align}
The $\mathbb{C}^{\ast}$-equivariant 
categorical DT theory for $\mM_{X}^{\sigma\mathchar`-\rm{ss}}(v)$
is defined as follows: 
\begin{defi}\label{catDT:stable}
The $\mathbb{C}^{\ast}$-equivariant 
categorical DT theory for 
the moduli stack $\mM_{X}^{\sigma\mathchar`-\rm{ss}}(v)$ 
is defined by 
\begin{align*}
&\mathcal{DT}^{\mathbb{C}^{\ast}}(\mM_{X}^{\sigma\mathchar`-\rm{ss}}(v)) \cneq 
\Dbc(\fM_S(v)^{\fin})/\cC_{\zZ_{\sigma\mathchar`-\rm{us}}(v)^{\fin}}. 
\end{align*}
\end{defi}

\begin{rmk}\label{rmk:open:independent}
By~\cite[Lemma~3.10]{TocatDT},  
the categorical DT theories in Definition~\ref{catDT:stable}
are independent of 
a choice of a finite type open substack 
$\mathfrak{M}_S(v)^{\fin}$ 
of $\mathfrak{M}_S(v)$ 
satisfying (\ref{take:open}), up to equivalence. 
We use the QCA condition for the above independence. 
\end{rmk}

\subsection{Categorical COHA for categorical DT theories}
For each polynomial 
$\overline{\chi} \in \mathbb{Q}[m]$, we set
\begin{align*}
N(S)_{\overline{\chi}} \cneq 
\{ v\in N(S) : \overline{\chi}_{\sigma}(v, m)=\overline{\chi}\} \cup \{0\}. 
\end{align*}
The following is the main result in this section. 
\begin{thm}\label{thm:COHA}
For $v_{\bullet}=(v_1, v_2, v_3) \in N(S)_{\overline{\chi}}^{\times 3}$
with $v_2=v_1+v_3$, 
the functor (\ref{cat:COHA}) descends to the functor
\begin{align}\label{COHA:DT}
\mathcal{DT}^{\mathbb{C}^{\ast}}(\mM_{X}^{\sigma\mathchar`-\rm{ss}}(v_1))
\times 
\mathcal{DT}^{\mathbb{C}^{\ast}}(\mM_{X}^{\sigma\mathchar`-\rm{ss}}(v_2))
\to \mathcal{DT}^{\mathbb{C}^{\ast}}(\mM_{X}^{\sigma\mathchar`-\rm{ss}}(v_3)). 
\end{align}
\end{thm}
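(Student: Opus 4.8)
The plan is to deduce the statement from Corollary~\ref{cor:induce}, applied to the conical closed substacks $\zZ_i \cneq \zZ_{\sigma\us}(v_i) \subset \mM_X(v_i)$ that enter Definition~\ref{catDT:stable}. First one has to pin down compatible finite type derived open substacks. I would fix $\fM_S(v_2)^{\fin} \subset \fM_S(v_2)$ subject to (\ref{take:open}); then $\ev_2^{-1}(\fM_S(v_2)^{\fin}) \subset \fM_S^{\ext}(v_{\bullet})$ is of finite type, because $\ev_2$ is proper (its fibres are derived Quot schemes), so its image under $(\ev_1, \ev_3)$ is quasi-compact, and I would enlarge $\fM_S(v_1)^{\fin}$ and $\fM_S(v_3)^{\fin}$ so that they contain the two projections of this image together with $\pi_{\ast}(\mM_X^{\sigma\sss}(v_1))$ and $\pi_{\ast}(\mM_X^{\sigma\sss}(v_3))$. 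This yields a choice meeting (\ref{cond:ev}) and (\ref{take:open}) simultaneously, so that (\ref{cat:COHA}) restricts to the functor (\ref{cat:COHA1.5}); by Remark~\ref{rmk:open:independent} the DT categories produced this way are the intended ones.

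Granting this, Corollary~\ref{cor:induce} reduces the theorem to verifying the inclusion (\ref{assum:Z}) for $\zZ_i = \zZ_{\sigma\us}(v_i)$, namely
\begin{align*}
(\ev_1^X, \ev_3^X)^{-1}\bigl((\zZ_{\sigma\us}(v_1) \times \mM_X(v_3)) \cup (\mM_X(v_1) \times \zZ_{\sigma\us}(v_3))\bigr) \subset (\ev_2^X)^{-1}(\zZ_{\sigma\us}(v_2)).
\end{align*}
Since the $\cC_{\zZ}$ construction only depends on the underlying closed subsets, and these are determined by their $\mathbb{C}$-points, it suffices to check this on $\mathbb{C}$-points, where by the modular description of $\mM_X^{\ext}(v_{\bullet})$ it translates into: for every short exact sequence $0 \to E_1 \to E_2 \to E_3 \to 0$ in $\Coh_{\rm{cpt}}(X)$ with $[\pi_{\ast}E_i]=v_i$, if $E_1$ or $E_3$ is not $\sigma$-semistable then neither is $E_2$ --- equivalently, by contraposition, if $E_2$ is $\sigma$-semistable then $E_1$ and $E_3$ are both $\sigma$-semistable.

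The crucial point is that this last claim is exactly what the hypothesis $v_{\bullet}\in N(S)_{\overline{\chi}}^{\times 3}$ buys: away from the trivial cases $v_i=0$ it forces $\pi_{\ast}E_1,\pi_{\ast}E_2,\pi_{\ast}E_3$ to share the reduced Hilbert polynomial $\overline{\chi}$, hence the dimension $d$. I would then run the standard Gieseker-stability argument. The subsheaf $E_1\subset E_2$ is pure since $E_2$ is, so any $0\neq E_1'\subset E_1$ has $\dim E_1'=d$ and, by $\sigma$-semistability of $E_2$, $\overline{\chi}_{\sigma}(E_1',m)\le\overline{\chi}_{\sigma}(E_2,m)=\overline{\chi}=\overline{\chi}_{\sigma}(E_1,m)$ for $m\gg 0$; hence $E_1$ is $\sigma$-semistable. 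If $E_3$ had a nonzero subsheaf of dimension $<d$, its preimage in $E_2$ would have reduced Hilbert polynomial eventually exceeding $\overline{\chi}$ (the correction term from the lower dimensional part is eventually positive), contradicting $\sigma$-semistability of $E_2$, so $E_3$ is pure; and for $0\neq E_3'\subset E_3$ the quotient $E_2\twoheadrightarrow E_3/E_3'$ gives $\overline{\chi}_{\sigma}(E_3/E_3',m)\ge\overline{\chi}$ for $m\gg 0$, so the see-saw inequality applied to $0\to E_3'\to E_3\to E_3/E_3'\to 0$ (all of dimension $d$) yields $\overline{\chi}_{\sigma}(E_3',m)\le\overline{\chi}=\overline{\chi}_{\sigma}(E_3,m)$, so $E_3$ is $\sigma$-semistable. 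This establishes (\ref{assum:Z}), and Corollary~\ref{cor:induce} then produces the descended functor.

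I expect the main obstacle to be organisational rather than conceptual: one has to choose the finite type open substacks $\fM_S(v_i)^{\fin}$ so that all the required compatibilities hold at once, and appeal to Remark~\ref{rmk:open:independent} to see this is harmless. The mathematical heart is short --- it is the observation that, because $\sigma$-semistable sheaves of a fixed reduced Hilbert polynomial form an abelian category closed under subobjects and quotient objects, the Porta-Sala product respects the unstable loci, hence, via the identifications (\ref{shifted:cot}), (\ref{isom:ext}) and the matching of the conormal-stack diagram with (\ref{dia:extX}), respects singular supports. This is also why the restriction to a single $\overline{\chi}$ cannot be dropped in the statement.
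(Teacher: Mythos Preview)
Your proposal is correct and follows essentially the same route as the paper: reduce to Corollary~\ref{cor:induce}, then verify the inclusion (\ref{assum:Z}) for $\zZ_i=\zZ_{\sigma\us}(v_i)$ by the observation (Lemma~\ref{lem:stability}) that in a short exact sequence with all three numerical classes in $N(S)_{\overline{\chi}}$, the middle term is $\sigma$-semistable iff both outer terms are. You supply more detail than the paper on the choice of finite-type opens and on the purity/see-saw argument behind Lemma~\ref{lem:stability}, but the structure of the proof is identical.
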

\begin{proof}
By Corollary~\ref{cor:induce},
it is enough to show that 
\begin{align*}
(\ev_1^{X}, \ev_3^{X})^{-1}((\zZ_{\sigma\mathchar`-\rm{us}}(v_1) \times \mM_X(v_3)) 
\cup 
(\mM_X(v_1) \times \zZ_{\sigma\mathchar`-\rm{us}}(v_3))) \subset
(\ev_2^{X})^{-1}(\zZ_{\sigma\mathchar`-\rm{us}}(v_2)).
\end{align*}
The above inclusion follows from
Lemma~\ref{lem:stability}
below. Therefore 
we obtain the induced functor (\ref{COHA:DT}). 
\end{proof}

We have used the following lemma, whose proof is obvious 
from the definition of stability condition. 
\begin{lem}\label{lem:stability}
For $v_{\bullet} \in N(S)_{\overline{\chi}}^{\times 3}$
and a point  
of $\mM_X^{\ext}(v_{\bullet})$
corresponding to an exact sequence
on $X$
\begin{align*}
0 \to E_1 \to E_2 \to E_3 \to 0
\end{align*}
the object
$E_2$ is $\sigma$-semistable 
if and only if both of $E_1$, $E_3$ are 
$\sigma$-semistable. 
\end{lem}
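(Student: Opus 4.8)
The plan is to run the standard argument that $\sigma$-semistability is preserved and reflected under extensions, which works here precisely because all of $v_1, v_2, v_3$ have the same reduced Hilbert polynomial $\overline{\chi}$ (the cases where some $v_i=0$ being trivial, so I assume $v_1, v_3 \neq 0$). I would first record the elementary input: since $\pi$ is affine, $\pi_{\ast}$ is exact, so $P(-, m) \cneq \chi(\pi_{\ast}(-) \otimes \oO_S(-B+mH))$ is additive on short exact sequences in $\Coh_{\rm{cpt}}(X)$; hence along $0 \to E_1 \to E_2 \to E_3 \to 0$, and along any sequence obtained by passing to subsheaves and quotients, the $B$-twisted Hilbert polynomials add and (for sheaves of the top dimension $d \cneq \deg \overline{\chi}$) the leading coefficients add. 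I also recall that a subsheaf of a pure sheaf of dimension $d$ is either $0$ or pure of dimension $d$, and that the reduced Hilbert polynomial of an extension of two $d$-dimensional sheaves is the weighted average of the reduced Hilbert polynomials of the two pieces, with weights their (positive) leading coefficients.

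For the ``if'' direction, assume $E_1, E_3$ are $\sigma$-semistable. I would first check $E_2$ is pure of dimension $d$: a subsheaf $T \subseteq E_2$ of dimension $<d$ has $T \cap E_1 = 0$ by purity of $E_1$, hence $T$ embeds into $E_3$, forcing $T=0$ by purity of $E_3$. Then for any $0 \neq F \subsetneq E_2$ --- which is automatically pure of dimension $d$ --- set $F_1 = F \cap E_1$ and $F_3 = \im(F \to E_3)$, so that $0 \to F_1 \to F \to F_3 \to 0$ with $F_1 \subseteq E_1$, $F_3 \subseteq E_3$, and at least one of $F_1, F_3$ nonzero. If one of them is zero, the reduced Hilbert polynomial of $F$ equals that of $F_3 \subseteq E_3$ or $F_1 \subseteq E_1$, which is $\le \overline{\chi}$ for $m \gg 0$ by semistability; if both are nonzero, $\overline{\chi}_{\sigma}(F, m)$ is the weighted average of $\overline{\chi}_{\sigma}(F_1, m)$ and $\overline{\chi}_{\sigma}(F_3, m)$, each $\le \overline{\chi}$ for $m \gg 0$, hence $\overline{\chi}_{\sigma}(F, m) \le \overline{\chi}$ for $m \gg 0$. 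So $E_2$ is $\sigma$-semistable.

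For the ``only if'' direction, assume $E_2$ is $\sigma$-semistable. Purity of $E_1$ is automatic. For purity of $E_3$, let $T \subseteq E_3$ be the maximal subsheaf of dimension $<d$ and $E_2' \subseteq E_2$ its preimage, so $E_1 \subseteq E_2'$, $E_2'/E_1 \cong T$; since $\dim E_3 = d$ (as $v_3 \in N(S)_{\overline{\chi}}$) we have $E_2' \subsetneq E_2$, and if $T \neq 0$ then additivity gives $\overline{\chi}_{\sigma}(E_2', m) > \overline{\chi}_{\sigma}(E_1, m) = \overline{\chi}$ for $m \gg 0$, contradicting semistability of $E_2$; hence $T=0$. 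Semistability of $E_1$ is immediate, since any $0 \neq E_1' \subsetneq E_1$ is a proper nonzero subsheaf of $E_2$ and $\overline{\chi}_{\sigma}(E_2, m) = \overline{\chi} = \overline{\chi}_{\sigma}(E_1, m)$. Finally, for semistability of $E_3$, take $0 \neq E_3' \subsetneq E_3$ and let $E_2'$ be its preimage in $E_2$, giving $0 \to E_1 \to E_2' \to E_3' \to 0$ with $E_2' \subsetneq E_2$; the inequality is trivial if $\dim E_3' < d$, and if $\dim E_3' = d$ then $\overline{\chi}_{\sigma}(E_2', m)$ is the weighted average of $\overline{\chi}_{\sigma}(E_1, m) = \overline{\chi}$ and $\overline{\chi}_{\sigma}(E_3', m)$ with positive weights, so $\overline{\chi}_{\sigma}(E_2', m) \le \overline{\chi}$ for $m \gg 0$ forces $\overline{\chi}_{\sigma}(E_3', m) \le \overline{\chi}$ for $m \gg 0$.

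The only slightly delicate point is the bookkeeping between the reduced (normalized-to-monic) Hilbert polynomials and the unnormalized ones $P(-, m)$: one must track leading coefficients in degree $d$, dispatch sheaves of dimension $<d$ separately (where the relevant inequalities hold trivially for $m \gg 0$), and use that the reduced Hilbert polynomial of an extension of $d$-dimensional sheaves is the weighted average of those of the factors. I expect this to be the main --- though entirely routine --- obstacle; the rest is a direct unwinding of the definition of $\sigma$-(semi)stability together with exactness of $\pi_{\ast}$.
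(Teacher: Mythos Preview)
Your proof is correct and is precisely the standard unwinding of the definition that the paper has in mind: the paper does not actually write out a proof, remarking only that it is ``obvious from the definition of stability condition.'' Your argument supplies exactly those details --- purity via the torsion filtration, and the slope inequalities via the weighted-average observation for extensions of sheaves with the same reduced Hilbert polynomial $\overline{\chi}$ --- so there is nothing to compare.
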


By taking the associated morphisms on K-groups, 
we obtain the following corollary. 
\begin{cor}\label{cor:K}
The functors (\ref{COHA:DT}) determine the 
associative algebra structure on 
\begin{align}\label{KHA:DT}
\bigoplus_{v \in N(S)_{\overline{\chi}}}
K(\mathcal{DT}^{\mathbb{C}^{\ast}}(\mM_{X}^{\sigma\mathchar`-\rm{ss}}(v))).
\end{align}
\end{cor}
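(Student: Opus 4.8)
The plan is to first promote the descended bifunctor~(\ref{COHA:DT}) to a bilinear map on Grothendieck groups, and then to deduce associativity of the resulting product from a categorical associativity statement that already holds on $\Dbc(\fM_S(-)^{\fin})$ and passes through the Verdier quotients. Throughout, $K(-)$ is read as the Grothendieck group $K_0$ of a triangulated category, which is all that is needed here.

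First I would note that the functor~(\ref{cat:COHA}), and hence its finite-type version~(\ref{cat:COHA1.5}), is triangulated in each of its two variables: the external product $\boxtimes$, the pull-back $(\ev_1,\ev_3)^{\ast}$ along a quasi-smooth morphism, and the push-forward $\ev_{2\ast}$ along a proper morphism are each exact functors (the last two preserving $\Dbc$ by~\cite{PoSa}), so their composite is a bifunctor exact in each argument. By Theorem~\ref{thm:COHA} (through Corollary~\ref{cor:induce}) this bifunctor descends to the bifunctor~(\ref{COHA:DT}) on the Verdier quotients, and since the localization functors $\Dbc(\fM_S(v)^{\fin})\to\mathcal{DT}^{\mathbb{C}^{\ast}}(\mM_X^{\sigma\sss}(v))$ are triangulated, the descended bifunctor is again exact in each variable. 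An exact functor of triangulated categories induces a homomorphism of Grothendieck groups, and a bifunctor exact in each variable induces a $\mathbb{Z}$-bilinear map; applied to~(\ref{COHA:DT}) this yields, for $v_1,v_3\in N(S)_{\overline{\chi}}$ with $v_2=v_1+v_3$, a bilinear map
\begin{align*}
m_{v_1,v_3}\colon K(\mathcal{DT}^{\mathbb{C}^{\ast}}(\mM_X^{\sigma\sss}(v_1)))\otimes K(\mathcal{DT}^{\mathbb{C}^{\ast}}(\mM_X^{\sigma\sss}(v_3)))\longrightarrow K(\mathcal{DT}^{\mathbb{C}^{\ast}}(\mM_X^{\sigma\sss}(v_2))).
\end{align*}
Since $\overline{\chi}_{\sigma}$ is additive, $N(S)_{\overline{\chi}}$ is closed under addition, so summing the $m_{v_1,v_3}$ over $v\in N(S)_{\overline{\chi}}$ defines a product $\ast$ on~(\ref{KHA:DT}); the component $v=0$ contributes a two-sided unit, since there the extension diagram~(\ref{dia:extS}) degenerates to the identity correspondence.

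Next I would prove associativity. It is enough to check $(a\ast b)\ast c=a\ast(b\ast c)$ on each triple of graded pieces $v_1,v_2,v_3$, and for this it suffices to exhibit a natural isomorphism between the two triple-composition bifunctors
\begin{align*}
\Dbc(\fM_S(v_1)^{\fin})\times\Dbc(\fM_S(v_2)^{\fin})\times\Dbc(\fM_S(v_3)^{\fin})\longrightarrow\Dbc(\fM_S(v_1+v_2+v_3)^{\fin})
\end{align*}
obtained by iterating~(\ref{cat:COHA1.5}) in the two possible orders, since isomorphic exact bifunctors induce the same multilinear map on Grothendieck groups. On $\Dbc(\fM_S(-))$ this associativity is part of Porta--Sala's construction of the categorified COHA~\cite{PoSa} (in fact they produce the full higher-coherence data by means of the Waldhausen simplicial object, cf.\ the footnote after~(\ref{intro:catCOHA}) and Remark~\ref{rmk:higher}). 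Concretely, both iterated bifunctors are computed by pull-back/push-forward along the single correspondence
\begin{align*}
\fM_S(v_1)\times\fM_S(v_2)\times\fM_S(v_3)\longleftarrow\fM_S^{(2)}(v_1,v_2,v_3)\longrightarrow\fM_S(v_1+v_2+v_3),
\end{align*}
where $\fM_S^{(2)}(v_1,v_2,v_3)$ is the derived stack of $3$-step filtered sheaves $0\subset F_1\subset F_{12}\subset F_{123}$ with successive quotients of classes $v_1,v_2,v_3$, the two left maps being taken to the successive quotients and the right map to $F_{123}$; the two identifications come from writing $\fM_S^{(2)}(v_1,v_2,v_3)$ as the two evident fiber products of copies of $\fM_S^{\ext}$ over the appropriate $\fM_S(-)$, followed by base change and the projection formula. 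Restricting to finite-type open substacks chosen compatibly (in the manner of~(\ref{cond:ev}), and harmless up to equivalence by Remark~\ref{rmk:open:independent}) gives the desired natural isomorphism on the finite-type side. Finally, because the localization functors to the DT categories are essentially surjective, this natural isomorphism descends to one between the two triple compositions on $\mathcal{DT}^{\mathbb{C}^{\ast}}(\mM_X^{\sigma\sss}(-))$; hence $\ast$ is associative, and~(\ref{KHA:DT}) is an associative (unital) algebra.

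I expect the only genuinely delicate point to be the bookkeeping of finite-type open substacks throughout the associativity diagram: one has to choose $\fM_S(v_i)^{\fin}$, $\fM_S(v_i+v_j)^{\fin}$ and $\fM_S^{(2)}(v_1,v_2,v_3)^{\fin}$ so that every morphism in the correspondence above, and in the fiber-product squares realizing it, stays within finite-type loci compatibly with~(\ref{cond:ev}); the QCA hypothesis together with Remark~\ref{rmk:open:independent} then ensures the product on~(\ref{KHA:DT}) does not depend on these choices. All remaining ingredients --- exactness of the constituent functors, additivity of $K_0$, and descent of a natural isomorphism along an essentially surjective localization --- are formal.
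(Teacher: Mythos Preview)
Your proposal is correct and follows essentially the same strategy as the paper: both reduce associativity on the DT side to Porta--Sala's associativity on $\Dbc(\fM_S(-))$. The paper's execution is slightly more economical: rather than descending the categorical associator through the Verdier quotients as you do, the paper simply observes that the quotient functors $\Dbc(\fM_S(v)^{\fin})\to\mathcal{DT}^{\mathbb{C}^{\ast}}(\mM_X^{\sigma\sss}(v))$ are \emph{surjective on $K$-theory}, so the induced map
\[
\bigoplus_{v\in N(S)_{\overline{\chi}}}K(\fM_S(v))\longrightarrow\bigoplus_{v\in N(S)_{\overline{\chi}}}K(\mathcal{DT}^{\mathbb{C}^{\ast}}(\mM_X^{\sigma\sss}(v)))
\]
is a surjective homomorphism compatible with the products; since the source is associative by~\cite{PoSa}, so is the target. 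This sidesteps your discussion of descending natural isomorphisms along essentially surjective localizations and the bookkeeping of finite-type opens for the triple product, though your approach has the advantage of being more explicit about why the product is well-defined and bilinear in the first place (which the paper leaves implicit), and it would be the right one if one wanted a categorical associator rather than just associativity on $K_0$.
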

\begin{proof}
By the construction, the functor (\ref{COHA:DT})
fits into the commutative diagram
\begin{align*}
\xymatrix{
\Dbc(\fM_S(v_1)) \times \Dbc(\fM_S(v_3)) \ar[r] \ar[d] & 
\Dbc(\fM_S(v_2)) \ar[d] \\
\mathcal{DT}^{\mathbb{C}^{\ast}}(\mM_{X}^{\sigma\mathchar`-\rm{ss}}(v_1))
\times \mathcal{DT}^{\mathbb{C}^{\ast}}(\mM_{X}^{\sigma\mathchar`-\rm{ss}}(v_3))\ar[r] & 
\mathcal{DT}^{\mathbb{C}^{\ast}}(\mM_{X}^{\sigma\mathchar`-\rm{ss}}(v_2)).
}
\end{align*}
Here the top horizontal arrow is given by (\ref{cat:COHA}) and 
the vertical arrows are compositions of restrictions to 
$\fM_S(v_i)^{\fin}$ and quotient functors. 
The vertical arrows are surjective on K-theory, 
so we have the surjective map of algebras
\begin{align*}
\bigoplus_{v \in N(S)_{\overline{\chi}}}
K(\fM_S(v)) \to 
\bigoplus_{v \in N(S)_{\overline{\chi}}}
K(\mathcal{DT}^{\mathbb{C}^{\ast}}(\mM_{X}^{\sigma\mathchar`-\rm{ss}}(v))).
\end{align*}
The left hand side is associative by~\cite{PoSa}, hence (\ref{KHA:DT}) is 
also an associative algebra. 
\end{proof}
\begin{rmk}\label{rmk:indep}
As we mentioned in Remark~\ref{rmk:open:independent}, 
the DT categories $\mathcal{DT}^{\mathbb{C}^{\ast}}(\mM_X^{\sigma \sss}(v))$
are independent of choices of $\fM_S(v)^{\fin}$. 
It is straightforward to check that, under 
the above equivalences, the functor (\ref{COHA:DT}) is also 
independent of choices of $\fM_S(v_i)^{\fin}$
satisfying (\ref{cond:ev}). 
The same also applies for later constructions in Theorem~\ref{thm:PTaction}, 
Theorem~\ref{thm:Iaction}, Theorem~\ref{thm:Pt1} and Theorem~\ref{thm:Pt2}. 
\end{rmk}

\begin{rmk}\label{rmk:higher}
Porta and Sala pointed out to the author that 
the result of Theorem~\ref{thm:COHA} 
would apply to a dg-categorical setting,
so that the 
dg-enhancements 
$\mathcal{DT}_{\rm{dg}}^{\mathbb{C}^{\ast}}(\mM_X^{\sigma \sss}(v))$
of
$\mathcal{DT}^{\mathbb{C}^{\ast}}(\mM_X^{\sigma \sss}(v))$ 
form an $\mathbb{E}_1$-algebra. 
Similarly to~\cite[Section~4]{PoSa}, 
we need to involve 
 higher parts of 
the Waldhausen constructions
for the dg-enhancements of $\Coh(S)$
in order to control the higher associativity. 
\end{rmk}

As we mentioned in the introduction, 
if we have 
\begin{align}\label{stack:S}
\mM_X^{\sigma \sss}(v)=
t_0(\Omega_{\fM_S^{\sigma \sss}(v)[-1]})
\end{align}
then we have (see~\cite[Lemma~5.7]{TocatDT})
\begin{align}\label{algebra:surface}
\bigoplus_{v \in N(S)_{\overline{\chi}}}
K(\mathcal{DT}^{\mathbb{C}^{\ast}}(\mM_{X}^{\sigma\mathchar`-\rm{ss}}(v)))
=\bigoplus_{v \in N(S)_{\overline{\chi}}}
K(\mM_S^{\sigma\mathchar`-\rm{ss}}(v))
\end{align}
and the algebra structure on it 
is essentially the same one 
in~\cite{PoSa}.
Of course in general the condition (\ref{stack:S})
does not hold, and in this case the algebra structure of (\ref{KHA:DT})
is more difficult to describe (for example, see~\cite[Example~5.8]{TocatDT}). 
Here we give some examples of the algebra (\ref{algebra:surface})
when (\ref{stack:S}) holds. 
\begin{exam}\label{exam:zero}
(i) 
If we take $\overline{\chi} \equiv 1$, 
then we have
\begin{align*}
N(S)_{\overline{\chi}}
=\mathbb{Z} \cdot [\pt], \ 
[\pt] \cneq [\oO_x]
\end{align*}
where $x \in S$. 
Since
the stack $\mM_X^{\sigma \sss}(m[\mathrm{pt}])$
coincides with the 
stack of zero dimensional sheaves on $X$
with length $m$, the 
condition (\ref{stack:S}) is satisfied.
Then the algebra (\ref{algebra:surface})
\begin{align*}
\bigoplus_{v \in N(S)_{\overline{\chi}}}
K(\mathcal{DT}^{\mathbb{C}^{\ast}}(\mM_{X}^{\sigma\mathchar`-\rm{ss}}(v)))
=
\bigoplus_{m\ge 0}K(\mM_S(m[\pt]))
\end{align*}
is nothing but the K-theoretic Hall algebra of zero dimensional 
sheaves constructed by Zhao~\cite{Zhao}, which admits 
a morphism to the shuffle algebra. 

(ii)
Let $C=\mathbb{P}^1 \subset S$ be a $(-1)$-curve.
Suppose that 
we have 
\begin{align*}
N(S)_{\overline{\chi}}
=\mathbb{Z} \cdot [\oO_C(k)], \ k \in \mathbb{Z}
\end{align*}
for a fixed $k$. 
Since 
$\mM_X^{\sigma \sss}(m[\oO_C(k)])$
consists of the direct sum $\oO_C(k)^{\oplus m}$, 
the condition (\ref{stack:S}) is satisfied.
The algebra (\ref{algebra:surface}) is 
\begin{align}\label{K:simple}
\bigoplus_{v \in N(S)_{\overline{\chi}}}
K(\mathcal{DT}^{\mathbb{C}^{\ast}}(\mM_{X}^{\sigma\mathchar`-\rm{ss}}(v)))
=\bigoplus_{m\ge 0} K(\mathrm{BGL}_m(\mathbb{C}))
=\mathrm{Sym}^{\bullet}_{\mathbb{Z}}(\mathbb{Z}[z^{\pm 1}]). 
\end{align}
For $f \in S^n(\mathbb{Z}[z^{\pm 1}])$ and 
$g \in S^m(\mathbb{Z}[z^{\pm 1}])$, 
its product is given by (see~\cite[Proposition~9.1]{Tudor})
\begin{align*}
f \cdot g(z_1, \ldots, z_n, z_{n+1}, \ldots, z_{n+m})
=\mathrm{Sym}\left( 
\frac{f(z_1, \ldots, z_n) g(z_{n+1}, \ldots, z_{n+m})}{\prod_{i=1}^n
\prod_{j=n+1}^{n+m}(1-z_i z_j^{-1})}  \right). 
\end{align*}
Here $\mathrm{Sym}$ means the symmetrization. 
\end{exam}

\section{An action of zero dimensional DT categories to PT categories}
\label{sec:catPT}
In this section, 
we prove Theorem~\ref{intro:thmB}. 
We introduce the moduli stacks of pairs and their extensions, 
and recall the 
definition of PT categories
on local surfaces defined in~\cite{TocatDT}.
We then construct an 
action of DT categories of zero dimensional sheaves on them. 
By taking the associated action on the K-theory, we obtain a 
representation of K-theoretic Hall algebra of zero dimensional 
sheaves in Example~\ref{exam:zero} (i). 
\subsection{Moduli stacks of pairs}
For a smooth projective surface $S$, let 
$\mathfrak{M}_S$ 
be the 
derived moduli stack of coherent sheaves on $S$
considered in (\ref{dstack:MS}), 
and $\mathfrak{F}$ the universal 
object (\ref{F:universal}). 
We 
define the derived stack $\mathfrak{M}_S^{\dag}$ by 
\begin{align*}
\rho^{\dag} \colon 
\mathfrak{M}_S^{\dag} \cneq 
\mathbb{V}((p_{\mathfrak{M}\ast} \mathfrak{F})^{\vee})
\to \mathfrak{M}_S. 
\end{align*}
Here $p_{\mathfrak{M}} \colon S \times \mathfrak{M}_S \to \mathfrak{M}_S$
is the projection. 
For $T \in dAff$, the $T$-valued points
of $\fM_S^{\dag}$ form the $\infty$-groupoid of pairs
\begin{align*}
(\ffF, \xi), \ 
\xi \colon \oO_{S \times T} \to \ffF
\end{align*}
where $\ffF$ is a $T$-valued point of $\fM_S$. 

The classical truncation of $\mM_S^{\dag}$ is 
a 1-stack 
\begin{align}\label{MS:dag}
\mM_S^{\dag}\cneq t_0(\mathfrak{M}_S^{\dag})
=\Spec_{\mM_S}(S(\hH^0((p_{\mM\ast} \fF)^{\vee})). 
\end{align}
We have the universal pair
on $\mM_S^{\dag}$
\begin{align*}
\iI^{\bullet}=(\oO_{S \times \mM_S^{\dag}} \to \fF).
\end{align*}
Then we have the following 
description of the cotangent 
complex of $\fM_S^{\dag}$
\begin{align}\label{perf:obs2}
&\mathbb{L}_{\fM_S^{\dag}}|_{\mM_S^{\dag}}=\left(
\hH om_{p_{\mM^{\dag}}}(\iI^{\bullet}, \fF)\right)^{\vee}. 
\end{align}
Here $p_{\mM^{\dag}} \colon S \times \mM_S^{\dag} \to \mM_S^{\dag}$
is the projection. 
Also we have the decompositions into
open and closed substacks
\begin{align*}
\mathfrak{M}_S^{\dag}=\coprod_{v \in N(S)} \mathfrak{M}_S^{\dag}(v), \ 
\mM_S^{\dag}=\coprod_{v \in N(S)} \mM_S^{\dag}(v),
\end{align*}
where each component corresponds to pairs $(F, \xi)$
such that $[F]=v$. 

Let $\Coh_{\le 1}(S) \subset \Coh(S)$ be the 
subcategory of sheaves $F$ with $\dim \Supp(F) \le 1$. 
We define the subgroup 
$N_{\le 1}(S) \subset N(S)$ 
to be
\begin{align*}
N_{\le 1}(S) \cneq \Imm (K(\Coh_{\le 1}(S)) \to N(S)). 
\end{align*}
Note that we have an isomorphism
\begin{align}\label{isom:N}
N_{\le 1}(S) \stackrel{\cong}{\to} 
\mathrm{NS}(S) \oplus \mathbb{Z}, \ 
F \mapsto (l(F), \chi(F))
\end{align}
where $l(F)$ is the fundamental one cycle of $F$. 
Below we identify an element $v \in N_{\le 1}(S)$
with $(\beta, n) \in \mathrm{NS}(S) \oplus \mathbb{Z}$
by the above isomorphism. 

For $v \in N_{\le 1}(S)$, the 
derived stack $\fM_S^{\dag}(v)$ is quasi-smooth 
(see~\cite[Lemma~6.1]{TocatDT}). 
We have the $(-1)$-shifted cotangent stack, and 
its classical truncation
\begin{align}\label{shift:dag}
\Omega_{\fM_S^{\dag}(v)}[-1] \to 
\fM_S^{\dag}(v), \  
t_0(\Omega_{\fM_S^{\dag}(v)}[-1]) \stackrel{\eta}{\to}
 \mM_S^{\dag}(v). 
\end{align}
From (\ref{perf:obs2}),
the fiber of the morphism
$\eta$ 
at the pair $(F, \xi)$
is 
\begin{align}\label{fiber:eta}
\eta^{-1}((F, \xi))=\Hom_S(I^{\bullet}, F[1])^{\vee}=
\Hom_S(F \otimes \omega_S^{-1}, I^{\bullet}[1]). 
\end{align}
Here $I^{\bullet}$ is the two 
term complex $(\oO_S \stackrel{\xi}{\to} F)$
such that $\oO_S$ is located in degree zero. 

On the other hand, let 
$\bB_S$ be the category of diagrams
\begin{align}\label{dia:BS2}
\xymatrix{
0 \ar[r] & \vV
 \ar[r] \ar@{.>}[rd]_-{\xi} & \uU \ar[r]
\ar[d]^-{\phi} & F \otimes \omega_S^{-1} 
\ar[r] & 0 \\
&   & F  & 
}
\end{align}
for $\vV \in \langle \oO_S\rangle_{\rm{ex}}$ and 
$F \in \Coh_{\le 1}(S)$. 
Here the top sequence is an exact sequence
of coherent sheaves on $S$.
Then 
$\bB_S$ is an abelian category, 
and we denote by $\bB_S^{\le 1} \subset \bB_S$
the subcategory of diagrams (\ref{dia:BS2}) with $\rank(\vV) \le 1$. 
The following result was proved in~\cite{TocatDT}. 

\begin{thm}\emph{(\cite[Theorem~6.3]{TocatDT})}\label{thm:diaB}
The stack 
$t_0(\Omega_{\fM_S^{\dag}(v)}[-1])$ is 
isomorphic to the stack of 
diagrams 
(\ref{dia:BS2}) 
with $\vV=\oO_S$
and $[F]=v$. 
Under the above isomorphism, the 
map $\eta$ in (\ref{shift:dag}) sends the diagram (\ref{dia:BS2})
to the 
pair $(F, \xi)$. 
\end{thm}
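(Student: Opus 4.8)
The plan is to construct an explicit morphism $\Phi$ from the stack of diagrams (\ref{dia:BS2}) with $\vV=\oO_S$ and $[F]=v$ to $t_0(\Omega_{\fM_S^{\dag}(v)}[-1])$ over $\mM_S^{\dag}(v)$, and to check it is an isomorphism by a fiberwise $\Hom$-space computation. The necessary input is already available: by (\ref{shift:dag}) and the description (\ref{perf:obs2}) of the cotangent complex, the fiber of $\eta$ over a point $(F,\xi)$ is (\ref{fiber:eta}), namely $\Hom_S(F\otimes\omega_S^{-1},I^{\bullet}[1])$ with $I^{\bullet}=(\oO_S\stackrel{\xi}{\to}F)$ placed in degrees $0,1$.

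First I would build $\Phi$. Relative to an affine test scheme $T$, a $T$-point of the diagram stack is a $T$-flat family of diagrams (\ref{dia:BS2}) with $\vV=\oO_{S\times T}$; the requirement that the dotted arrow $\xi$ be $\phi$ composed with $\oO_{S\times T}\hookrightarrow\uU$ says exactly that $(\id,\phi)$ is a map of two-term complexes $[\oO_{S\times T}\to\uU]\to I^{\bullet}_T$. Since the top row of (\ref{dia:BS2}) is a $T$-flat short exact sequence, $[\oO_{S\times T}\to\uU]$ is quasi-isomorphic to $(F\otimes\omega_S^{-1})[-1]$, so $(\id,\phi)$ determines a class in $\Hom_{S\times T}(F\otimes\omega_S^{-1},I^{\bullet}_T[1])$; via (\ref{perf:obs2}) and cohomology and base change this is precisely a $T$-point of $t_0(\Omega_{\fM_S^{\dag}(v)}[-1])$ lying over the pair $(F,\xi)\colon T\to\mM_S^{\dag}(v)$. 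Naturality in $T$ gives the morphism $\Phi$, and by construction $\eta\circ\Phi$ sends a diagram to $(F,\xi)$, which is the claim about $\eta$ in the statement.

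Then I would prove $\Phi$ is an isomorphism by showing it induces an equivalence on $T$-valued points for every $T$. Applying $\Hom_S(F\otimes\omega_S^{-1},-)$ (in families) to the rotation $I^{\bullet}\to\oO_S\stackrel{\xi}{\to}F\to I^{\bullet}[1]$ of the triangle defining $I^{\bullet}$, one finds that $\Hom_S(F\otimes\omega_S^{-1},I^{\bullet}[1])$ is an extension of $\ker(\Ext^1_S(F\otimes\omega_S^{-1},\oO_S)\stackrel{\xi_{\ast}}{\to}\Ext^1_S(F\otimes\omega_S^{-1},F))$ by $\Cok(\Hom_S(F\otimes\omega_S^{-1},\oO_S)\stackrel{\xi_{\ast}}{\to}\Hom_S(F\otimes\omega_S^{-1},F))$. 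On the diagram side, the extension class of $0\to\oO_S\to\uU\to F\otimes\omega_S^{-1}\to0$ lies in $\Ext^1_S(F\otimes\omega_S^{-1},\oO_S)$, a lift $\phi$ of $\xi$ along $\uU\to F$ exists if and only if this class is annihilated by $\xi_{\ast}$, two lifts differ by an element of $\Hom_S(F\otimes\omega_S^{-1},F)$, and the automorphisms of the extension act on the set of lifts through $\xi_{\ast}\colon\Hom_S(F\otimes\omega_S^{-1},\oO_S)\to\Hom_S(F\otimes\omega_S^{-1},F)$. Matching these two descriptions termwise — the subobject corresponds to the diagrams with split top row, so that only $\phi$ varies, and the quotient to the extension class of $\uU$ — and invoking the five lemma shows $\Phi$ is an equivalence on $T$-points; running the same computation in families, with the $\Ext$-groups replaced by the relative $\Ext$-sheaves on $T$, upgrades this to an isomorphism of stacks.

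The step I expect to be the main obstacle is this last matching: setting up the derived/stacky model of the stack of diagrams (\ref{dia:BS2}) precisely enough that the torsor-plus-automorphism bookkeeping literally reproduces the exact sequence for $\Hom_S(F\otimes\omega_S^{-1},I^{\bullet}[1])$, and pinning down the degree shifts and the Serre-duality identification behind (\ref{fiber:eta}) so that the connecting maps on the two sides agree exactly and not merely up to sign or twist. The geometry is clearly the same on both sides; the real content is in making the identification canonical, compatible with $\eta$, and stable under base change, so that $\Phi$ is an isomorphism of stacks rather than only a bijection on closed points.
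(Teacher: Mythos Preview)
The paper does not prove this theorem here; it is quoted from \cite{TocatDT} and only the correspondence is \emph{explained}. That explanation agrees with your forward map exactly: the paper writes the same morphism of two-term complexes $(\id,\phi)\colon(\oO_S\to\uU)\to I^{\bullet}$ and inverts the quasi-isomorphism $(\oO_S\to\uU)\simeq F\otimes\omega_S^{-1}[-1]$, as in (\ref{map:eta+}). For the inverse direction the paper takes a different, more direct route than your five-lemma matching: given $\vartheta\colon F\otimes\omega_S^{-1}\to I^{\bullet}[1]$, it completes $\vartheta$ and the identity on $\oO_S$ to a morphism of distinguished triangles (\ref{dia:OIF}), reads off $\uU$ as the cone of $F\otimes\omega_S^{-1}[-1]\to\oO_S$, and obtains $\phi$ as the induced map on cones; uniqueness of $\phi$ is deferred to \cite[Lemma~9.2]{TocatDT}. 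This gives an explicit inverse functor rather than a cohomological count.

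Your argument is correct and essentially equivalent, but you are making it harder than necessary. Since $F\in\Coh_{\le 1}(S)$ we have $\Hom_S(F\otimes\omega_S^{-1},\oO_S)=0$; hence the automorphism group of the extension $0\to\oO_S\to\uU\to F\otimes\omega_S^{-1}\to0$ is trivial and the cokernel term in your filtration is simply $\Hom_S(F\otimes\omega_S^{-1},F)$. This removes exactly the ``torsor-plus-automorphism bookkeeping'' you flagged as the main obstacle: the fibre of the diagram stack over $(F,\xi)$ is honestly a scheme, so the match with the affine fibre (\ref{fiber:eta}) of $\eta$ is immediate once the set-level bijection is in place. With that simplification your proof and the paper's explanation are two presentations of the same computation.
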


The correspondence in Theorem~\ref{thm:diaB}
is explained in the following way. 
Over the pair $(F, \xi)$, 
the diagram (\ref{dia:BS2})
with $\vV=\oO_S$ and $[F]=v$
determines a point in the fiber (\ref{fiber:eta})
by 
\begin{align}\label{map:eta+}
F \otimes \omega_S^{-1}[-1] \stackrel{\sim}{\leftarrow} (\oO_S \to \uU)
\stackrel{(\id, \phi)}{\longrightarrow} (\oO_S \stackrel{\xi}{\to} F)=I^{\bullet}. 
\end{align}
Conversely given a morphism 
$\vartheta \colon F \otimes \omega_S^{-1} \to I^{\bullet}[1]$, 
then we associate the commutative diagram
\begin{align}\label{dia:OIF}
\xymatrix{
& \oO_S\ar[r]\ar@{=}[d] & \uU\ar[r] \ar@{.>}[d]^-{\phi}& F \otimes \omega_S^{-1} \ar[r] \ar[d]^-{\vartheta} & \oO_S[1] \ar@{=}[d] \\
I^{\bullet} \ar[r]& \oO_S\ar[r]^-{\xi} &
F \ar[r] & I^{\bullet}[1] \ar[r] & \oO_S[1]. 
}
\end{align}
Here horizontal sequences are distinguished triangles. 
Therefore there exists a morphism $\phi \colon \uU \to F$ which 
makes the above diagram commutative, and 
can be shown to be unique (see~\cite[Lemma~9.2]{TocatDT}).  

\begin{rmk}\label{rmk:tilt}
Here we note that the above morphism $\vartheta$ is regarded as a 
morphism in 
some abelian category. 
Indeed let 
$\tT, \fF \subset \Coh(S)$
be subcategories such that 
$\tT$ consists of torsion sheaves and $\fF$
consists of torsion free sheaves. 
Then $(\tT, \fF)$ forms a torsion pair, 
and we have the associated tilting~\cite{HRS}
\begin{align}\label{tilt:sharp}
\Coh^{\sharp}(S) \cneq \langle \fF[1], \tT\rangle \subset \Dbc(S). 
\end{align}
As a general result of tilting, 
$\Coh^{\sharp}(S)$ is the heart of a t-structure on $\Dbc(S)$, 
hence an abelian category. 
Then both of $F \otimes \omega_S^{-1}$ and 
$I^{\bullet}[1]$ are objects in $\Coh^{\sharp}(S)$
and $\vartheta$ is a morphism in $\Coh^{\sharp}(S)$. 
\end{rmk}

\subsection{Derived moduli stacks of extensions of pairs}
In this subsection, we introduce the derived moduli stack
of extensions of pairs, and extend the 
categorified COHA structure to the
module structure over it on the derived category 
of coherent sheaves on derived moduli stacks of pairs. 
We define the derived stack 
$\fM_S^{\ext, \dag}$ by the 
Cartesian square
\begin{align*}
\xymatrix{
\fM_S^{\ext, \dag} \ar@{}[rd]|\square
\ar[r] \ar[d]_-{(\ev_1^{\dag}, \ev_3^{\dag})} 
& \fM_S^{\ext} \ar[d]^-{(\ev_1, \ev_3)} \\
\fM_S^{\dag} \times \fM_S \ar[r]^-{(\rho, \id)} & 
\fM_S \times \fM_S. 
}
\end{align*}
For $T \in dAff$, the $T$-valued points of $\fM_S^{\dag}$ form the 
$\infty$-groupoid of diagrams
\begin{align}\label{diagram:dag}
\xymatrix{
\oO_{S \times T} \ar[d]_-{\xi} &  &  \\
\ffF_1 \ar[r] & \ffF_2 \ar[r] & \ffF_3.
}
\end{align}
Here the bottom sequence is a $T$-valued point of 
$\fM_S^{\ext}$. 
Let us take $v_{\bullet}=(v_1, v_2, v_3) \in N_{\le 1}(S)^{\times 3}$. 
We have the open and closed derived substack
\begin{align*}
\fM_S^{\ext, \dag}(v_{\bullet}) \subset \fM_S^{\ext, \dag}
\end{align*}
corresponding to the diagram
(\ref{diagram:dag}) such that 
the bottom sequence is a $T$-valued point of $\fM_S^{\ext}(v_{\bullet})$. 
\begin{lem}
The derived stack $\fM_S^{\ext, \dag}(v_{\bullet})$ is quasi-smooth. 
\end{lem}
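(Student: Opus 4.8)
The plan is to exhibit $\fM_S^{\ext, \dag}(v_{\bullet})$ as built from quasi-smooth stacks by operations that preserve quasi-smoothness: derived fiber products of quasi-smooth stacks over a smooth one, and pullbacks along quasi-smooth morphisms. Recall the defining Cartesian square
\begin{align*}
\xymatrix{
\fM_S^{\ext, \dag}(v_{\bullet}) \ar@{}[rd]|\square \ar[r] \ar[d] & \fM_S^{\ext}(v_{\bullet}) \ar[d]^-{(\ev_1, \ev_3)} \\
\fM_S^{\dag}(v_1) \times \fM_S(v_3) \ar[r]^-{(\rho^{\dag}, \id)} & \fM_S(v_1) \times \fM_S(v_3).
}
\end{align*}
First I would observe that $\fM_S(v_i)$ is quasi-smooth (its cotangent complex at a sheaf $F$ is $\RHom_S(F,F)^\vee[-1]$, of amplitude $[-1,1]$ since $S$ is a smooth surface), and $\fM_S^{\dag}(v_1)$ is quasi-smooth by~\cite[Lemma~6.1]{TocatDT} (also quoted above). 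Likewise $\fM_S^{\ext}(v_{\bullet})$ is quasi-smooth: by the description~(\ref{MSext:spec}) the map $(\ev_1,\ev_3)$ is quasi-smooth (the cohomological amplitude of $\RHom(F_3,F_1)[1]$ is $[-1,1]$, as already noted before Corollary~\ref{cor:induce}), and a quasi-smooth morphism to a quasi-smooth stack has quasi-smooth source.

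The key step is then: $\fM_S^{\ext, \dag}(v_{\bullet})$ is the derived pullback of the quasi-smooth morphism $(\ev_1,\ev_3)$ along the map $(\rho^{\dag},\id)$. Since quasi-smooth morphisms are stable under arbitrary derived base change, the top horizontal arrow $\fM_S^{\ext,\dag}(v_{\bullet}) \to \fM_S^{\dag}(v_1)\times\fM_S(v_3)$ is again quasi-smooth. As $\fM_S^{\dag}(v_1)\times\fM_S(v_3)$ is quasi-smooth (a product of quasi-smooth stacks over $\Spec\mathbb{C}$), composing gives that $\fM_S^{\ext,\dag}(v_{\bullet}) \to \Spec\mathbb{C}$ is quasi-smooth, i.e. the stack is quasi-smooth. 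Equivalently, one can compute the cotangent complex directly from the Cartesian square: $\mathbb{L}_{\fM_S^{\ext,\dag}(v_{\bullet})}$ sits in a triangle whose outer terms are pullbacks of $\mathbb{L}_{\fM_S^{\dag}(v_1)\times\fM_S(v_3)}$ and of the relative cotangent complex of $(\ev_1,\ev_3)$ (which is perfect of amplitude $[-1,0]$ by~(\ref{MSext:spec})), hence perfect of amplitude $[-1,1]$ fiberwise.

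I do not expect a serious obstacle here; this is a formal consequence of stability of quasi-smoothness under derived base change together with the already-established quasi-smoothness of $\fM_S^{\dag}$ and $\fM_S^{\ext}$. The only point requiring a little care is to make sure the fiber product in the defining square is the \emph{derived} one (so that the base-change statement applies) and that $\fM_S^{\dag}(v_1)$ is the component over $v_1 \in N_{\le 1}(S)$ where quasi-smoothness is known; both are built into the setup above. If one prefers the cotangent-complex route, the mild bookkeeping is checking the amplitude of the mapping complex $\hH om_{p_{\fM}}(\iI^\bullet, \ffF_1)$ computing $\mathbb{L}$, which is exactly the content of~(\ref{perf:obs2}) and the amplitude bounds used for $\fM_S^{\dag}$ and $\fM_S^{\ext}$.
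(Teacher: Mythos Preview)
Your argument is correct and matches the paper's one-line proof, which simply notes that $(\ev_1^{\dag}, \ev_3^{\dag})$ is quasi-smooth (being the base change of the quasi-smooth $(\ev_1, \ev_3)$) and that its target $\fM_S^{\dag}(v_1)\times\fM_S(v_3)$ is quasi-smooth. One minor slip: the map $\fM_S^{\ext,\dag}(v_{\bullet}) \to \fM_S^{\dag}(v_1)\times\fM_S(v_3)$ you describe is the left vertical arrow in your square, not the top horizontal one.
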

\begin{proof}
The lemma follows since $(\ev_1^{\dag}, \ev_3^{\dag})$ is quasi-smooth 
and both of $\fM_S^{\dag}(v_1)$, $\fM_S(v_3)$ are quasi-smooth. 
\end{proof}

We have the diagram
\begin{align}\label{dia:MSdag}
\xymatrix{
\fM_S^{\ext, \dag}(v_{\bullet}) \ar[r]^-{\ev_2^{\dag}} \ar[d]_-{(\ev_1^{\dag}, \ev_3^{\dag})}
 & \fM_S^{\dag}(v_2) \\
\fM_S^{\dag}(v_1) \times \fM_S(v_3). 
}
\end{align}
Here $\ev_2^{\dag}$ 
is obtained by sending 
a diagram (\ref{diagram:dag})
to the composition 
$\oO_{S \times T} \stackrel{\xi}{\to} \ffF_1 \to \ffF_2$. 
Note that the vertical arrow is quasi-smooth. 
As for the horizontal arrow, we have the following. 
\begin{lem}\label{lem:evproper}
The morphism $\ev_2^{\dag}$ in the diagram (\ref{dia:MSdag})
 is proper. 
\end{lem}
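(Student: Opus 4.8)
The plan is to reduce the properness of $\ev_2^{\dag}$ to the properness of the corresponding evaluation morphism $\ev_2$ on moduli of extensions of sheaves (without the section), which is the fiberwise-derived-Quot-scheme statement recalled earlier in the excerpt. First I would set up the fiber: fix a $T$-point of $\fM_S^{\dag}(v_2)$, that is a pair $(\ffF_2,\xi_2)$ with $\xi_2\colon \oO_{S\times T}\to \ffF_2$ and $[\ffF_2]=v_2$. A $T$-point of $\fM_S^{\ext,\dag}(v_\bullet)$ lying over it is a diagram (\ref{diagram:dag}) whose bottom row is a short exact sequence $0\to \ffF_1\to \ffF_2\to \ffF_3\to 0$ with $[\ffF_i]=v_i$ together with $\xi_1\colon \oO_{S\times T}\to \ffF_1$ such that the composite $\oO_{S\times T}\xrightarrow{\xi_1}\ffF_1\to \ffF_2$ equals $\xi_2$. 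The key observation is that once the quotient $\ffF_2\twoheadrightarrow \ffF_3$ (equivalently the subsheaf $\ffF_1\subset\ffF_2$) is chosen, the lift $\xi_1$ of $\xi_2$ through $\ffF_1$ exists and is unique if and only if the composite $\oO_{S\times T}\xrightarrow{\xi_2}\ffF_2\to \ffF_3$ vanishes, and this is a closed condition. So the fiber of $\ev_2^{\dag}$ over $(\ffF_2,\xi_2)$ is a closed derived subscheme of the fiber of $\ev_2$ over $\ffF_2$, namely the derived Quot scheme of quotients $\ffF_2\twoheadrightarrow\ffF_3$ with $[\ffF_3]=v_3$ cut out by the condition that $\xi_2$ maps to zero in $\ffF_3$.

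More precisely I would argue via the Cartesian square defining $\fM_S^{\ext,\dag}$: base-changing the proper morphism $\ev_2\colon \fM_S^{\ext}(v_\bullet)\to\fM_S(v_2)$ along $\rho^{\dag}\colon \fM_S^{\dag}(v_2)\to\fM_S(v_2)$ gives a proper morphism, but $\fM_S^{\ext,\dag}(v_\bullet)$ is not quite that base change — it is the subscheme on which the section is compatible. So the cleanest route is: consider the fiber product $\fM_S^{\ext}(v_\bullet)\times_{\fM_S(v_2)}\fM_S^{\dag}(v_2)$, which maps properly to $\fM_S^{\dag}(v_2)$ since $\ev_2$ is proper and properness is stable under base change; then exhibit $\fM_S^{\ext,\dag}(v_\bullet)$ as a closed derived substack of this fiber product. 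The closedness comes from the vanishing condition $\oO\xrightarrow{\xi_2}\ffF_2\to\ffF_3=0$, which defines a closed substack because it is the zero locus of a section of a suitable quasi-coherent sheaf (the pushforward of $\hH om(\oO, \ffF_3/\mathrm{im})$, or more simply the cokernel of the induced map). A closed immersion composed with a proper morphism is proper, giving the claim.

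The step I expect to be the main obstacle is making the identification of $\fM_S^{\ext,\dag}(v_\bullet)$ as a closed substack of $\fM_S^{\ext}(v_\bullet)\times_{\fM_S(v_2)}\fM_S^{\dag}(v_2)$ precise at the derived level, rather than just on classical truncations or $\mathbb{C}$-points. One has to check that the datum of $\xi_1$ lifting $\xi_2$ is really a closed condition on $\ffF_1\subset\ffF_2$ in families over an arbitrary $T\in dAff$, which amounts to showing that the relevant mapping space $\mathrm{Map}(\oO_{S\times T}, \ffF_1)\to \mathrm{Map}(\oO_{S\times T},\ffF_2)$ identifies the fiber over $\xi_2$ with (the spectrum of) a quasi-coherent sheaf placed in the right homological degrees, so that imposing the point-condition is representable by a closed immersion. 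Here I would use that $\ffF_1\to\ffF_2$ is injective with cokernel $\ffF_3$ a flat family, so $\hH om_{p}(\oO, \ffF_i)$ sit in a fiber sequence and the obstruction to lifting is governed by $\hH om_p(\oO,\ffF_3)$; the locus where a given global section of $\ffF_2$ lifts is then the vanishing locus of its image in $\hH^0$ of that, which is closed. Once this is in place the rest is formal: base change preserves properness, closed immersions are proper, and composition of proper morphisms is proper.
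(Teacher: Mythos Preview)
Your proposal is correct and follows essentially the same approach as the paper: identify the fiber of $\ev_2^{\dag}$ over a pair $(F_2,\xi_2)$ with the closed subscheme of the Quot scheme $\mathrm{Quot}(F_2,v_3)$ cut out by the vanishing of the composite $\oO_S\xrightarrow{\xi_2}F_2\twoheadrightarrow F_3$, and conclude properness from properness of Quot. The paper's proof is a two-sentence version of your first paragraph and does not pursue the global fiber-product/closed-immersion reformulation you sketch in the second paragraph; your worries about derived-level precision are largely unnecessary here since properness is detected on classical truncations, which is why the paper is content with the pointwise argument.
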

\begin{proof}
For a point of $\fM_S^{\dag}(v_2)$
corresponding to a pair $(F_2, \xi_2)$, 
the fiber of $\ev_2^{\dag}$ at this point 
corresponds to diagrams
\begin{align*}
\oO_S \stackrel{\xi_2}{\to}F_2 \twoheadrightarrow F_3, \ 
[F_3]=v_3,
\end{align*}
whose composition is zero. 
The classical moduli space of such diagrams 
is a closed subscheme of 
the Quot scheme parameterizing quotients 
$F_2 \twoheadrightarrow F_3$ with $[F_3]=v_3$, hence 
it is a proper scheme. 
Therefore $\ev_2^{\dag}$ is proper. 
\end{proof}
By Lemma~\ref{lem:evproper}, 
the diagram (\ref{dia:MSdag}) 
induces the functor
\begin{align}\label{FM:extP}
\ev_{2\ast}^{\dag}(\ev_1^{\dag}, \ev_3^{\dag})^{\ast}
 \colon \Dbc(\fM_S^{\dag}(v_1)) \times \Dbc(\fM_S(v_3))
\to \Dbc(\fM_S^{\dag}(v_2)). 
\end{align}

\subsection{Moduli stacks of D0-D2-D6 bound states}
\label{subsec:moduliD026}
In this subsection, we recall the notion of D0-D2-D6 bound 
states and their moduli stacks. 
Let $\overline{X}$ be the projective 
compactification of $X$
\begin{align*}
X \subset \overline{X} \cneq \mathbb{P}_S(\omega_S \oplus \oO_S)
=X \cup S_{\infty}.
\end{align*}
Here $S_{\infty}$ is the divisor at the infinity. 
The category of D0-D2-D6 bound states on the 
non-compact CY 3-fold $X=\mathrm{Tot}_S(\omega_S)$ 
is defined by the extension closure
in $D^b_{\rm{coh}}(\overline{X})$
\begin{align*}
\aA_{X} \cneq 
\langle \oO_{\overline{X}}, \Coh_{\le 1}(X)[-1]
\rangle_{\rm{ex}} \subset \Dbc(\overline{X}). 
\end{align*}
Here $\Coh_{\le 1}(X)$
is the subcategory of objects 
in $\Coh_{\mathrm{cpt}}(X)$ whose supports have 
dimensions less than or equal to one. 
We regard $\Coh_{\le 1}(X)$ 
 as a subcategory of $\Coh(\overline{X})$
by the push-forward of the open 
immersion $X \subset \overline{X}$. 
The arguments in~\cite[Lemma~3.5, Proposition~3.6]{MR2669709} 
show
that 
$\aA_{X}$ is an abelian subcategory
of $\Dbc(\overline{X})$. 
We denote by $\aA_X^{\le 1} \subset \aA_X$
the subcategory of objects $E \in \aA_X$ with $\rank(E) \le 1$. 

There is a group homomorphism
\begin{align*}
\cl \colon K(\aA_X) \to \mathbb{Z} \oplus N_{\le 1}(S)
\end{align*}
characterized by the condition 
that $\cl(\oO_X)=(1, 0)$
and $\cl(F)=(0, [\pi_{\ast}F])$
for $F \in \Coh_{\le 1}(X)$. 
We define the (classical) moduli stack of rank one 
objects in $\aA_X$
to be the 2-functor
\begin{align*}
\mM_X^{\dag} \colon Aff^{op} \to 
Groupoid
\end{align*}
whose $T$-valued points for $T \in Aff$
form the groupoid
of data
\begin{align}\label{data:A}
\eE \in D^b_{\rm{coh}}(\overline{X} \times T), \ 
\lambda \colon 
\eE \dotimes \oO_{S_{\infty} \times T} \stackrel{\cong}{\to}
\oO_{S_{\infty} \times T}
\end{align}
such that for any closed point $x \in T$, 
we have 
\begin{align*}
\eE_x \cneq \dL i_x^{\ast} \eE \in \aA_X, \ 
i_x \colon \overline{X} \times \{x\} \hookrightarrow \overline{X} \times T.
\end{align*} 
The isomorphisms of the groupoid $\mM_X^{\dag}(T)$ are 
given by isomorphisms of objects $\eE_T$ which commute with 
the trivializations at the infinity. 
We have the decomposition of $\mM_X^{\dag}$ 
into open and closed substacks
\begin{align*}
\mM_X^{\dag}=\coprod_{v \in N_{\le 1}(S)} \mM_X^{\dag}(v)
\end{align*}
where $\mM_X^{\dag}(v)$ corresponds to 
$E \in \aA_X$ with $\cl(E)=(1, -v)$. 
The following result is proved in~\cite{TocatDT}: 

\begin{thm}\label{thm:D026}
\emph{(\cite[Theorem~6.3]{TocatDT})}
There is an equivalence of categories
\begin{align}\label{equiv:AB}
\aA_X^{\le 1} \stackrel{\sim}{\to} \bB_S^{\le 1}. 
\end{align}
Moreover 
the above equivalence together with the
isomorphism in Theorem~\ref{thm:D026} induce the isomorphism 
of stacks over $\mM_S^{\dag}(v)$
\begin{align}\label{isom:dag} 
\mM_X^{\dag}(v) \stackrel{\cong}{\to} 
t_0(\Omega_{\mathfrak{M}_S^{\dag}(v)}[-1]).  
\end{align}
\end{thm}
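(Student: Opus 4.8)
The plan is to deduce the equivalence \eqref{equiv:AB} from the spectral correspondence for the affine morphism $\pi\colon X\to S$, and then to obtain \eqref{isom:dag} by carrying the same construction out in families and combining it with Theorem~\ref{thm:diaB}. Recall first that $\pi$ is affine with $\pi_{\ast}\oO_X=\mathrm{Sym}_S(\omega_S^{-1})$, so push-forward along $\pi$ is an exact equivalence between $\Coh_{\le1}(X)$ and the category of Higgs sheaves on $S$, i.e.\ pairs $(F,\phi)$ with $F\in\Coh_{\le1}(S)$ and $\phi\colon F\to F\otimes\omega_S$; this is the identification already used in the proof of Proposition~\ref{prop:isomM}. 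A rank zero object of $\aA_X$ is of the form $E_0[-1]$ with $E_0\in\Coh_{\le1}(X)$, hence corresponds to such a pair $(F,\phi)$, and a pair $(F,\phi)$ is precisely the data of a diagram \eqref{dia:BS2} with $\vV=0$: the top row then forces $\uU\cong F\otimes\omega_S^{-1}$, and $\phi$ is the middle vertical arrow. Thus the rank zero part of the equivalence is immediate, and it remains to treat rank one objects.

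For a rank one object $E\in\aA_X^{\le1}$ I would argue as follows. The category $\aA_X=\langle\oO_{\overline X},\Coh_{\le1}(X)[-1]\rangle_{\mathrm{ex}}$ is abelian by the arguments of \cite[Section~3]{MR2669709}, and a rank one object admits a finite filtration in $\aA_X$ whose subquotients are objects of $\Coh_{\le1}(X)[-1]$ together with exactly one copy of $\oO_{\overline X}$. Together with the trivialization $\lambda$ at $S_\infty$, this allows one to show that the derived push-forward of $E$ along $\overline X\to S$ is a two-term complex $I^{\bullet}=(\oO_S\stackrel{\xi}{\to}F)$ on $S$, with $\oO_S$ coming from the $\oO_{\overline X}$-subquotient and $F\in\Coh_{\le1}(S)$ from the $\Coh_{\le1}(X)[-1]$-subquotients, while the remaining datum — the $\oO_X$-module structure carried by $E$ — is encoded by a morphism $\vartheta\colon F\otimes\omega_S^{-1}\to I^{\bullet}[1]$. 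The clean way to organize this is via Remark~\ref{rmk:tilt}: both $F\otimes\omega_S^{-1}$ and $I^{\bullet}[1]$ lie in the tilted heart $\Coh^{\sharp}(S)$, so $\vartheta$ is a genuine morphism in an abelian category, the assignment $E\mapsto(I^{\bullet},\vartheta)$ is functorial and exact, and the $\oO_X$-linearity of $E$ matches the Higgs-type datum $\vartheta$ exactly as in the proof of Proposition~\ref{prop:isomM}.

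It then remains to identify the pair $(I^{\bullet},\vartheta)$ with a diagram \eqref{dia:BS2} with $\vV=\oO_S$, and this is precisely the content of the commutative diagram \eqref{dia:OIF}: from $\vartheta$ one rotates the triangle $I^{\bullet}\to\oO_S\to F\to I^{\bullet}[1]$ and pulls back along $\vartheta$ to obtain $\uU$ fitting into a short exact sequence $0\to\oO_S\to\uU\to F\otimes\omega_S^{-1}\to0$ — the connecting map is an element of $\Ext^1_S(F\otimes\omega_S^{-1},\oO_S)$, so $\uU$ is automatically a coherent sheaf — together with a map $\phi\colon\uU\to F$ whose existence and uniqueness is \cite[Lemma~9.2]{TocatDT}; conversely \eqref{map:eta+} recovers $\vartheta$ from such a diagram. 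Running the rank zero and rank one cases uniformly, and checking inside $\Coh^{\sharp}(S)$ that short exact sequences are preserved in both directions, yields \eqref{equiv:AB}. For \eqref{isom:dag}, every step above is compatible with base change along affine derived schemes $T$ — push-forward along $\pi$ commutes with base change since $\pi$ is affine and flat, and the tilt and the relevant flat families of diagrams \eqref{dia:BS2} behave well in families — so the construction globalizes to an isomorphism between $\mM_X^{\dag}(v)$, the stack of data \eqref{data:A} with $\cl=(1,-v)$, and the stack of diagrams \eqref{dia:BS2} with $\vV=\oO_S$ and $[F]=v$; here $\cl(E)=(1,-v)$ forces the rank to be $1$, hence $\vV=\oO_S$, and, using $\cl(\oO_X)=(1,0)$ and $\cl(G)=(0,[\pi_{\ast}G])$ for $G\in\Coh_{\le1}(X)$, forces $[F]=v$. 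By Theorem~\ref{thm:diaB} this last stack is $t_0(\Omega_{\mathfrak{M}_S^{\dag}(v)}[-1])$, and one checks that the resulting isomorphism lies over $\mM_S^{\dag}(v)$ and intertwines the structure maps.

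I expect the main obstacle to be the rank one part of the first two steps: proving, functorially, that a rank one object of $\aA_X$ — a complex on the \emph{compactified} threefold $\overline X=\mathbb{P}_S(\omega_S\oplus\oO_S)$ equipped with the trivialization $\lambda$ along $S_\infty$ — is the same as a triple $(\oO_S,\xi,\vartheta)$ on $S$. The delicate points are handling $\overline X$ and $\lambda$ so that no spurious moduli appear along $S_\infty$ and so that the push-forward really is a two-term complex of the stated shape, and matching the abelian structure of $\aA_X^{\le1}$ with that of the category of Higgs complexes on $S$. Once these are settled, the translation to \eqref{dia:BS2} via \eqref{dia:OIF}, the passage to families, and the comparison with Theorem~\ref{thm:diaB} are essentially formal.
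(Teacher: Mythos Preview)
The paper does not contain a proof of this statement: Theorem~\ref{thm:D026} is stated as a citation of \cite[Theorem~6.3]{TocatDT}, with no argument given here. So there is no ``paper's own proof'' to compare against.

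That said, your sketch is consistent with the explanatory material the paper \emph{does} provide. After Theorem~\ref{thm:diaB} (which is the same cited result), the paper explains the bijection on points via the maps \eqref{map:eta+} and the diagram \eqref{dia:OIF}, together with Remark~\ref{rmk:tilt} interpreting $\vartheta$ as a morphism in $\Coh^{\sharp}(S)$; your steps (identifying a rank one object with $(I^{\bullet},\vartheta)$ and then with a diagram \eqref{dia:BS2} via \eqref{dia:OIF}) follow exactly this outline. The spectral/Higgs correspondence for $\pi_{\ast}$ that you invoke is also what underlies Proposition~\ref{prop:isomM} and the isomorphism \eqref{shifted:cot}. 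Your honest caveat about the compactification $\overline{X}$ and the trivialization $\lambda$ is on point: that is where the actual work in \cite{TocatDT} lies, and it is not something the present paper spells out.
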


\subsection{Moduli stacks of extensions in $\aA_X$}
In this subsection, we consider the classical 
moduli stack of exact sequences in $\aA_X$
of the form $0 \to E_1 \to E_2 \to E_3[-1] \to 0$
where $E_1$, $E_2$ are rank one objects in $\aA_X$
and $E_3 \in \Coh_{\le 1}(X)$. 
More precisely, we 
take $v_{\bullet}=(v_1, v_2, v_3) \in N_{\le 1}(S)^{\times 3}$
and define the classical 
stack
\begin{align*}
\mM_X^{\ext, \dag}(v_{\bullet}) \colon Aff^{op} \to Groupoid
\end{align*}
by sending $T \in Aff$ to the groupoid 
of distinguished triangles
$\eE_1 \stackrel{i}{\to} \eE_2 \to \eE_3[-1]$
together with commutative diagrams
\begin{align}\notag
\xymatrix{
\eE_1 \dotimes \oO_{S_{\infty} \times T} \ar[r]^-{i} 
\ar[d]_-{\lambda_1}^-{\cong} & 
\eE_2 \dotimes \oO_{S_{\infty} \times T} \ar[d]_-{\lambda_2}^-{\cong} \\
\oO_{S_{\infty} \times T} \ar@{=}[r] & \oO_{S_{\infty} \times T}. 
}
\end{align}
Here $(\eE_i, \lambda_i)$ for $i=1, 2$ are $T$-valued points of 
$\mM_X^{\dag}(v_i)$
and $\eE_3$ is a $T$-valued point of $\mM_X(v_3)$. 
We also have the evaluation morphisms
\begin{align}\label{dia:Xext:dag}
\xymatrix{
\mM_X^{\ext, \dag}(v_{\bullet}) 
\ar[r]^-{\ev_2^{X, \dag}} \ar[d]_-{(\ev_1^{X, \dag}, \ev_3^{X, \dag})} 
& \mM_X^{\dag}(v_2) \\
\mM_X^{\dag}(v_1) \times \mM_X(v_3) & 
}
\end{align}
where $\ev_i^{X, \dag}$ sends
 $\eE_{\bullet}$ to $\eE_i$. 
We have the following description of 
each point of $\mM_X^{\ext, \dag}(v_{\bullet})$. 

\begin{lem}\label{prop:equiv}
Giving a point of $\mM_X^{\ext, \dag}(v_{\bullet})$
is equivalent to giving 
a point of $\mM_S^{\ext, \dag}(v_{\bullet})$ 
\begin{align}\label{dia:F}
\xymatrix{
& \oO_S \ar@{=}[r] \ar[d] & \oO_S \ar[d] & &\\
0 \ar[r] & F_1 \ar[r]^-{i} & F_2 \ar[r]^-{j} & F_3 \ar[r] & 0. 
}
\end{align}
together with 
a morphism of distinguished triangles
\begin{align}\label{dia:I}
\xymatrix{
F_1 \ar[d]_-{\vartheta_1} \otimes \omega_S^{-1} \ar[r]^-{i} & F_2 \otimes 
\omega_S^{-1} \ar[r]^-{j} \ar[d]_-{\vartheta_2} 
& F_3 \otimes \omega_S^{-1} \ar[d]_-{\vartheta_3}  \\
 I_{1}^{\bullet}[1]  \ar[r] 
 & I_{2}^{\bullet}[1] \ar[r]^-{k} & F_3.
}
\end{align}
Here 
$I_{i}^{\bullet}=(\oO_S \to F_i)$
and the bottom sequence of (\ref{dia:I})
 is given by the diagram (\ref{dia:F}). 
\end{lem}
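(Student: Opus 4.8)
The plan is to pull the whole question back from $D^b_{\rm{coh}}(\overline{X})$ to the abelian category $\bB_S^{\le 1}$ by the equivalence $\aA_X^{\le 1}\stackrel{\sim}{\to}\bB_S^{\le 1}$ of Theorem~\ref{thm:D026}, and then to read off the asserted $S$-side data from a short exact sequence in $\bB_S^{\le 1}$. First I would note that a point of $\mM_X^{\ext,\dag}(v_{\bullet})$ amounts to a distinguished triangle $\eE_1\to\eE_2\to\eE_3[-1]$ with $(\eE_i,\lambda_i)$ a point of $\mM_X^{\dag}(v_i)$ for $i=1,2$, $\eE_3$ a point of $\mM_X(v_3)$, and the $\lambda_i$ compatible along $\eE_1\hookrightarrow\eE_2$. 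As $\eE_3[-1]$ has rank zero it lies in $\aA_X^{\le 1}$, and by additivity of the rank so does $\eE_2$, so this triangle is a short exact sequence in the abelian category $\aA_X^{\le 1}$. Applying Theorem~\ref{thm:D026} it becomes a short exact sequence in $\bB_S^{\le 1}$ of diagrams of the shape (\ref{dia:BS2}), say with terms $(\vV_i,\uU_i,F_i,\phi_i)$ for $i=1,2,3$.

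Next I would unwind this sequence. Since $\eE_1,\eE_2$ are of rank one and carry trivializations at $S_{\infty}$, we have $\vV_1=\vV_2=\oO_S$, and compatibility of $\lambda_1,\lambda_2$ forces the induced map $\vV_1\to\vV_2$ to be $\id_{\oO_S}$; the quotient $\eE_3[-1]$ has rank zero, so $\vV_3=0$ and hence $\uU_3=F_3\otimes\omega_S^{-1}$. Thus the short exact sequence of diagrams (\ref{dia:BS2}) separates into the trivial row $0\to\oO_S\stackrel{=}{\to}\oO_S\to0\to0$, a short exact sequence $0\to F_1\to F_2\to F_3\to0$ in $\Coh_{\le 1}(S)$, and, recording for $i=1,2$ the composite $\xi_i\colon\oO_S=\vV_i\hookrightarrow\uU_i\stackrel{\phi_i}{\to}F_i$, exactly the diagram (\ref{dia:F}), that is, a point of $\mM_S^{\ext,\dag}(v_{\bullet})$. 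Applying the construction (\ref{map:eta+}) of Theorem~\ref{thm:diaB} to each $(\uU_i,\phi_i)$ for $i=1,2$ produces $\vartheta_i\colon F_i\otimes\omega_S^{-1}\to I_i^{\bullet}[1]$ with $I_i^{\bullet}=(\oO_S\stackrel{\xi_i}{\to}F_i)$, while for $i=3$, where $\eE_3\in\Coh_{\le 1}(X)$ and $\vV_3=0$, the datum $\phi_3$ is directly the morphism $\vartheta_3\colon F_3\otimes\omega_S^{-1}\to F_3$ in the bottom right of (\ref{dia:I}).

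It then remains to identify $(\vartheta_1,\vartheta_2,\vartheta_3)$ with the morphism of distinguished triangles (\ref{dia:I}). For this I would use the functoriality, with respect to morphisms of diagrams (\ref{dia:BS2}), of the construction of the commutative diagram (\ref{dia:OIF}): the morphism of short exact sequences in $\bB_S$ yields a compatible family of diagrams (\ref{dia:OIF}) for $i=1,2,3$, whose face carrying the $\vartheta_i$, the $F_i\otimes\omega_S^{-1}$ and the $I_i^{\bullet}[1]$ is (\ref{dia:I}) — here one also uses that the termwise-injective map $I_1^{\bullet}\to I_2^{\bullet}$ induced by (\ref{dia:F}) has quotient complex $F_3[-1]$, so that its shift is the bottom triangle $I_1^{\bullet}[1]\to I_2^{\bullet}[1]\stackrel{k}{\to}F_3$ of (\ref{dia:I}). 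The converse is obtained by running this chain backwards: the data (\ref{dia:F}) and $(\vartheta_1,\vartheta_2,\vartheta_3)$ reconstruct a short exact sequence in $\bB_S^{\le 1}$ — using the uniqueness of the $\phi_i$ recovered from $\vartheta_i$ established in \cite[Lemma~9.2]{TocatDT} — hence a point of $\mM_X^{\ext,\dag}(v_{\bullet})$; and, exactly as in the proof of Proposition~\ref{prop:isomM}, the same argument applies verbatim to $T$-valued points for an arbitrary $\mathbb{C}$-scheme $T$. The step I expect to be the main obstacle is this last piece of bookkeeping: verifying that the equivalence of Theorem~\ref{thm:D026} and the construction (\ref{map:eta+}) are functorial enough to carry along a whole short exact sequence, and that the trivializations at $S_{\infty}$ are correctly matched with the $\oO_S$-summands; once that naturality is in place, the identification of the triangles is formal.
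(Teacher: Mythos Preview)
Your proposal is correct and takes essentially the same approach as the paper: transport to $\bB_S^{\le 1}$ via Theorem~\ref{thm:D026}, unpack the resulting short exact sequence (with $\vV_1=\vV_2=\oO_S$, $\vV_3=0$) to obtain (\ref{dia:F}), and use the correspondence (\ref{map:eta+}) after Theorem~\ref{thm:diaB} to extract (\ref{dia:I}). The only cosmetic difference is in the converse direction, where the paper explicitly reconstructs the $\uU_i$ by composing (\ref{dia:I}) with $I_i^{\bullet}[1]\to\oO_S[1]$ and taking cones, rather than invoking the inverse correspondence and the uniqueness of~\cite[Lemma~9.2]{TocatDT} abstractly.
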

\begin{proof}
By the equivalence (\ref{equiv:AB}), 
the stack $\mM_X^{\ext, \dag}(v_{\bullet})$
is isomorphic to the stack of exact sequences in $\bB_S^{\le 1}$
of the form
\begin{align*}
0 &\to 
\left(\xymatrix{
0 \ar[r] & \oO_S  \ar@{.>}[rd]
 \ar[r]  & \uU_1 \ar[r]
\ar[d] & F_1 \otimes \omega_S^{-1} 
\ar[r] & 0 \\
&   & F  & 
}\right)
\to 
\left(\xymatrix{
0 \ar[r] & \oO_S  \ar@{.>}[rd]
 \ar[r]  & \uU_2 \ar[r]
\ar[d] & F_2 \otimes \omega_S^{-1} 
\ar[r] & 0 \\
&   & F_2  & 
}\right) \\
&\to 
\left(\xymatrix{
0 \ar[r] & 0  \ar@{.>}[rd]
 \ar[r]  & \uU_3 \ar[r]
\ar[d] & F_3 \otimes \omega_S^{-1} 
\ar[r] & 0 \\
&   & F_3  & 
}\right)
 \to 0
\end{align*}
such that $[F_i]=v_i$. 
By taking the associated exact sequence for dotted arrows, 
we obtain the diagram (\ref{dia:F}).
From the correspondence of objects in $\bB_S^{\le 1}$ 
explained after Theorem~\ref{thm:diaB}, the above 
diagram immediately gives the diagram (\ref{dia:I}). 

Conversely suppose that diagrams (\ref{dia:F}), (\ref{dia:I})
are given. 
Then by composing the diagram (\ref{dia:I}) 
with $I^{\bullet}_i[1] \to \oO_S[1]$
and taking cones, 
we obtain the commutative diagram
\begin{align*}
\xymatrix{
\uU_1 \ar[r] \ar[d] & \uU_2 \ar[r] \ar[d] & \uU_3 \ar[d] \\
F_1 \otimes \omega_S^{-1} \ar[r] \ar[d] & 
F_2 \otimes \omega_S^{-1} \ar[r] \ar[d] &
F_3 \otimes \omega_S^{-1} \\
\oO_S[1] \ar@{=}[r] & \oO_S[1]. 
}
\end{align*}
Here each horizontal and vertical sequences are distinguished triangles. 
The above diagram together with (\ref{dia:F}) give 
an exact sequence in $\bB_S^{\le 1}$, hence an exact 
sequence in $\aA_X^{\le 1}$ by the equivalence (\ref{equiv:AB}). 
\end{proof}

For $v_{\bullet}=(v_1, v_2, v_3) \in N_{\le 1}(S)^{\times 3}$, 
we have the morphism
\begin{align}\label{mor:pi*}
\pi_{\ast}^{\dag} \colon \mM_X^{\ext, \dag}(v_{\bullet}) \to \mM_S^{\ext, \dag}(v_{\bullet})
\end{align}
by sending a point in $\mM_X^{\ext, \dag}(v_{\bullet})$ 
to the diagram (\ref{dia:F}). 
On the other hand, 
let $\ev^{\dag}$ be the morphism
\begin{align*}
\ev^{\dag}=(\ev_1^{\dag}, \ev_2^{\dag}, \ev_3^{\dag}) \colon 
\fM_S^{\ext, \dag}(v_{\bullet})
 \to \fM_S^{\dag}(v_1) \times \fM_S^{\dag}(v_2) \times \fM_S(v_3). 
\end{align*}
We have the following proposition. 
\begin{prop}\label{prop:ev+}
We have an isomorphism over $\mM_S^{\ext, \dag}(v_{\bullet})$
\begin{align}\label{isom:ev+}
\mM_X^{\ext, \dag}(v_{\bullet}) \stackrel{\cong}{\to}
 t_0(\Omega_{\ev^{\dag}}[-2]). 
\end{align}
\end{prop}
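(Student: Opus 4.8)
The plan is to adapt the proof of Proposition~\ref{prop:isomM} to the setting of pairs, using Lemma~\ref{prop:equiv} in place of the elementary description of the fibres of $\pi_\ast$ employed there. I would first fix a $\mathbb{C}$-valued point $p\in\mM_S^{\ext,\dag}(v_\bullet)$, that is a diagram (\ref{dia:F}), and describe the fibre of the morphism (\ref{mor:pi*}) over $p$. By Lemma~\ref{prop:equiv} this fibre is the set of morphisms of distinguished triangles (\ref{dia:I}) lying over the fixed diagram (\ref{dia:F}). As in the discussion of the single-pair case following Theorem~\ref{thm:diaB} (and~\cite[Lemma~9.2]{TocatDT}), the components $\vartheta_1$ and $\vartheta_3$ are uniquely determined by $\vartheta_2$, since the relevant ambiguities lie in groups of the form $\Ext^{-1}_S$ between sheaves, which vanish; moreover a morphism $\vartheta_2\colon F_2\otimes\omega_S^{-1}\to I_2^\bullet[1]$ extends to such a morphism of triangles if and only if the composition
\[
F_1\otimes\omega_S^{-1}\stackrel{i}{\to}F_2\otimes\omega_S^{-1}\stackrel{\vartheta_2}{\to}I_2^\bullet[1]\stackrel{k}{\to}F_3
\]
vanishes, where $k$ is the map appearing in (\ref{dia:I}). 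Hence, setting $\gamma(\vartheta_2)=k\circ\vartheta_2\circ i$,
\[
(\pi_\ast^\dag)^{-1}(p)=\Ker(\Hom_S(F_2\otimes\omega_S^{-1},I_2^\bullet[1])\stackrel{\gamma}{\to}\Hom_S(F_1\otimes\omega_S^{-1},F_3)).
\]

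On the other side, I would compute the fibre of $t_0(\Omega_{\ev^\dag}[-2])\to\mM_S^{\ext,\dag}(v_\bullet)$ at $p$, which by the computation in the proof of Lemma~\ref{lem:conormal} equals $\hH^{-2}(\mathbb{L}_{\ev^\dag}|_p)$. As in Proposition~\ref{prop:isomM}, the morphism $(\ev_1^\dag,\ev_3^\dag)$ is quasi-smooth, being a base change of $(\ev_1,\ev_3)$, with relative cotangent complex the pullback of $(\hH om_{p_{\fM\times\fM}}(\ffF_3,\ffF_1)[1])^\vee$ from (\ref{MSext:spec}). Arranging the cotangent triangles of $\ev^\dag$, of $(\ev_1^\dag,\ev_3^\dag)$, and of the projection $\fM_S^\dag(v_1)\times\fM_S^\dag(v_2)\times\fM_S(v_3)\to\fM_S^\dag(v_1)\times\fM_S(v_3)$ into a commutative diagram exactly as in the proof of Proposition~\ref{prop:isomM}, and passing to cones, yields a distinguished triangle
\[
(\ev_2^\dag)^\ast\mathbb{L}_{\fM_S^\dag(v_2)}\to(\ev_1^\dag,\ev_3^\dag)^\ast(\hH om_{p_{\fM\times\fM}}(\ffF_3,\ffF_1)[1])^\vee\to\mathbb{L}_{\ev^\dag}.
\]
Restricting this to $p$ and using (\ref{perf:obs2}), the first term becomes $\RHom_S(I_2^\bullet,F_2)^\vee$ with $I_2^\bullet$ as in Lemma~\ref{prop:equiv}; since $F_2$ has one-dimensional support we have $H^2(S,F_2)=0$, hence $\Ext^2_S(I_2^\bullet,F_2)=0$, so this complex has vanishing degree $-2$ cohomology, exactly as $\mathbb{L}_{\fM_S(v_2)}|_{F_2}$ does in the sheaf case. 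The long exact sequence of the restricted triangle then gives
\[
\hH^{-2}(\mathbb{L}_{\ev^\dag}|_p)=\Ker(\Ext^1_S(I_2^\bullet,F_2)^\vee\stackrel{\delta}{\to}\Ext^2_S(F_3,F_1)^\vee),
\]
and Serre duality on $S$ identifies the source with $\Hom_S(F_2\otimes\omega_S^{-1},I_2^\bullet[1])$ and the target with $\Hom_S(F_1\otimes\omega_S^{-1},F_3)$, just as in (\ref{fiber:eta}) and the proof of Proposition~\ref{prop:isomM}.

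The remaining point — which I expect to be the main technical obstacle — is to check that, under these Serre duality identifications, the connecting homomorphism $\delta$ agrees with the map $\gamma$ of the first step, i.e.\ with pre-composition by $i$ followed by post-composition by $k$. This is the analogue of the assertion ``one can check that the map $\eta$ is identified with the map in (\ref{fiber:p})'' in the proof of Proposition~\ref{prop:isomM}, but it is more delicate here because $I_2^\bullet$ is a two-term complex rather than a sheaf; I would carry out the comparison inside the abelian category $\Coh^\sharp(S)$ of Remark~\ref{rmk:tilt}, in which $F_i\otimes\omega_S^{-1}$ and $I_i^\bullet[1]$ are objects and $\vartheta_1,\vartheta_2,\vartheta_3$ are genuine morphisms, by tracing the induced maps through the commutative diagram of cotangent triangles at $p$. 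Granting this, one obtains a bijection $(\pi_\ast^\dag)^{-1}(p)\stackrel{\cong}{\to}(t_0(\Omega_{\ev^\dag}[-2]))_p$ compatible with the projections to $p$. Finally, as in Proposition~\ref{prop:isomM}, all the constructions above are functorial in the test scheme, so running the same argument with $T$-valued points of $\mM_S^{\ext,\dag}(v_\bullet)$ for an affine $\mathbb{C}$-scheme $T$ upgrades this to the claimed isomorphism (\ref{isom:ev+}) over $\mM_S^{\ext,\dag}(v_\bullet)$.
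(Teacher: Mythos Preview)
Your proposal is correct and follows essentially the same route as the paper's proof: both compute the fibre of $\pi_\ast^\dag$ at $p$ via Lemma~\ref{prop:equiv} and Remark~\ref{rmk:tilt} as $\Ker(\Hom(F_2\otimes\omega_S^{-1},I_2^\bullet[1])\to\Hom(F_1\otimes\omega_S^{-1},F_3))$, then obtain the same distinguished triangle for $\mathbb{L}_{\ev^\dag}$ from the cotangent diagram and match the two descriptions. The paper invokes the abelian category $\Coh^\sharp(S)$ slightly earlier (to import the argument of Proposition~\ref{prop:isomM} wholesale for the fibre description) rather than, as you do, reserving it for the map comparison, but this is only a difference in exposition.
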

\begin{proof}
Let $p \in \mM_S^{\ext}(v_{\bullet})$ be a point 
corresponding to the diagram (\ref{dia:F}). 
By Lemma~\ref{prop:equiv}, the fiber of 
the morphism (\ref{mor:pi*}) 
at $p$ is given by the diagram (\ref{dia:I}). 
By Remark~\ref{rmk:tilt}, 
the diagram (\ref{dia:I}) is regarded 
as a morphism of exact sequences in
the abelian category $\Coh^{\sharp}(S)$. 
Therefore similarly to the proof of Proposition~\ref{prop:isomM}, we have 
\begin{align}\label{fiber:pidag}
(\pi_{\ast}^{\dag})^{-1}(p)=
\Ker(\Hom(F_2 \otimes \omega_S^{-1}, I_2^{\bullet}[1])
\stackrel{k \circ (-) \circ i}{\longrightarrow} 
\Hom(F_1 \otimes \omega_S^{-1}, F_3)). 
\end{align}
On the other hand, we have the commutative diagram
\begin{align*}
\xymatrix{
(\ev_1^{\dag}, \ev_3^{\dag})^{\ast}
\mathbb{L}_{\fM_S^{\dag}(v_1) \times \fM_S(v_3)} \ar@{=}[r] \ar[d] & 
(\ev_1^{\dag}, \ev_3^{\dag})^{\ast}
\mathbb{L}_{\fM_S^{\dag}(v_1) \times \fM_S(v_3)} \ar[d] &  \\
(\ev^{\dag})^{\ast}\mathbb{L}_{\fM_S^{\dag}(v_1) \times \fM_S^{\dag}(v_2) 
\times
\fM_S(v_3)} \ar[r] \ar[d] & 
\mathbb{L}_{\fM_S^{\ext, \dag}(v_{\bullet})} 
\ar[r] \ar[d] & \mathbb{L}_{\ev^{\dag}} \\
(\ev_2^{\dag})^{\ast}\mathbb{L}_{\fM_S^{\dag}(v_2)} & 
(\ev_1^{\dag}, \ev_3^{\dag})^{\ast}
(\hH om_{p_{\fM^{\dag} \times \fM}}(\ffF_3, \ffF_1)[1])^{\vee}. &
}
\end{align*}
Here each horizontal and vertical sequences are distinguished 
triangle. 
By taking the cone, 
we obtain the distinguished triangle
\begin{align*}
(\ev_2^{\dag})^{\ast}\mathbb{L}_{\fM_S^{\dag}(v_2)} \to 
(\ev_1^{\dag}, \ev_3^{\dag})^{\ast}
(\hH om_{p_{\fM^{\dag} \times \fM}}(\ffF_3, \ffF_1)[1])^{\vee}
\to \mathbb{L}_{\ev^{\dag}}. 
\end{align*} 
By restricting it to $p$ and 
the associated exact sequence of cohomologies, we see that
\begin{align*}
\hH^{-2}(\mathbb{L}_{\ev^{\dag}}|_{p})=
\Ker(\Hom(F_2\otimes \omega_S^{-1}, I_2^{\bullet}[1]) \to 
\Hom(F_1, F_3 \otimes \omega_S)),
\end{align*}
which is identified with (\ref{fiber:pidag}). 
Therefore similarly to Proposition~\ref{prop:isomM}, we have the 
isomorphism (\ref{isom:ev+}). 
\end{proof}

Let us take finite type derived open substacks
$\fM_S^{\dag}(v_i)^{\fin} \subset \fM_S^{\dag}(v_i)$ 
for $i=1, 2$ and 
$\fM_S(v_3)^{\fin} \subset \fM_S(v_3)$
satisfying 
\begin{align}\label{cond:ev1.5}
\fM_S^{\ext, \dag}(v_{\bullet})^{\fin} \cneq 
(\ev_2^{\dag})^{-1}(\fM^{\dag}_S(v_2)^{\fin}) 
\subset (\ev^{\dag}_1, \ev^{\dag}_3)^{-1}(\fM^{\dag}_S(v_1)^{\fin} 
\times \fM_S(v_3)^{\fin}). 
\end{align}
Then the diagram (\ref{dia:MSdag}) restricts to the diagram
\begin{align}\label{dia:circ1.5}
\xymatrix{
\fM_S^{\ext, \dag}(v_{\bullet})^{\fin} \ar[r]^-{\ev^{\dag}_2}
 \ar[d]_-{(\ev_1^{\dag}, \ev_3^{\dag})} & \fM_S^{\dag}(v_2)^{\fin} \\
\fM_S^{\dag}(v_1)^{\fin} \times \fM_S(v_3)^{\fin}. & 
}
\end{align}
The vertical arrow is quasi-smooth and
the horizontal arrow is proper. 
Therefore we have the induced functor
\begin{align}\label{cat:COHA2.5}
\ev^{\dag}_{2\ast}(\ev^{\dag}_1, \ev^{\dag}_3)^{\ast}
 \colon \Dbc(\fM_S^{\dag}(v_1)^{\fin}) \times \Dbc(\fM_S(v_3)^{\fin})
\to \Dbc(\fM_S^{\dag}(v_2)^{\fin}). 
\end{align}
Similarly to Corollary~\ref{cor:induce}, we also have 
the following corollary of Proposition~\ref{prop:ev+}. 
\begin{cor}\label{cor:induce2}
For conical closed substacks 
$\zZ_i \subset \mM_X^{\dag}(v_i)$
for $i=1, 2$ 
and $\zZ_3 \subset \mM_X(v_3)$
suppose that the following condition holds
\begin{align}\notag
(\ev_1^{X, \dag}, \ev_3^{X, \dag})^{-1}((\zZ_1 \times \mM_X(v_3)) \cup 
(\mM_X^{\dag}(v_1) \times \zZ_3)) \subset
(\ev_2^{X, \dag})^{-1}(\zZ_2).
\end{align}
Then the functor (\ref{cat:COHA2.5}) descends to the functor
\begin{align}\notag
\Dbc(\fM_S^{\dag}(v_1)^{\fin})/\cC_{\zZ_1^{\fin}} \times 
\Dbc(\fM_S(v_3)^{\fin})/\cC_{\zZ_3^{\fin}}
 \to \Dbc(\fM_S^{\dag}(v_2)^{\fin})/\cC_{\zZ_2^{\fin}}. 
\end{align}
\end{cor}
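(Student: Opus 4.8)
The plan is to run exactly the argument of Corollary~\ref{cor:induce}, with Proposition~\ref{prop:ev+} playing the role that Proposition~\ref{prop:isomM} played there, and feeding the Hall-type diagram (\ref{dia:MSdag}) into the general functoriality machinery of Section~2. First I would pass to finite type open substacks and work with the restricted diagram (\ref{dia:circ1.5}), whose vertical arrow $(\ev_1^{\dag}, \ev_3^{\dag})$ is quasi-smooth and whose horizontal arrow $\ev_2^{\dag}$ is proper by Lemma~\ref{lem:evproper}; thus the functor (\ref{cat:COHA2.5}) is well defined and the hypotheses of Proposition~\ref{prop:FM} and Lemma~\ref{lem:restrict} are in force (the perfect complex $\pP$ there being simply $\oO$).

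Applying the construction of the diagrams (\ref{diagram:fN})--(\ref{dia:evf}) to (\ref{dia:MSdag}), I obtain a correspondence of $(-1)$-shifted cotangent stacks
\begin{align*}
t_0(\Omega_{\fM_S^{\dag}(v_1)}[-1]) \times t_0(\Omega_{\fM_S(v_3)}[-1])
&\xleftarrow{\;h_1\;}
t_0(\Omega_{\ev^{\dag}}[-2]) \\
&\xrightarrow{\;h_2\;}
t_0(\Omega_{\fM_S^{\dag}(v_2)}[-1]).
\end{align*}
Via the identifications (\ref{shifted:cot}), (\ref{shift:dag}), the equivalence of Theorem~\ref{thm:D026} (in particular (\ref{isom:dag})), and Proposition~\ref{prop:ev+} for the middle term, the three stacks in this correspondence become $\mM_X^{\dag}(v_1) \times \mM_X(v_3)$, $\mM_X^{\ext, \dag}(v_{\bullet})$, and $\mM_X^{\dag}(v_2)$, and one must check that $h_1$ and $h_2$ are then the evaluation morphisms $(\ev_1^{X, \dag}, \ev_3^{X, \dag})$ and $\ev_2^{X, \dag}$ of the diagram (\ref{dia:Xext:dag}). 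Granting this, the hypothesis of the corollary is precisely that $\zZ_2 \supset h_2 h_1^{-1}((\zZ_1 \times \mM_X(v_3)) \cup (\mM_X^{\dag}(v_1) \times \zZ_3))$, so Proposition~\ref{prop:FM} together with its open-restriction form Lemma~\ref{lem:restrict} shows that (\ref{cat:COHA2.5}) sends $\cC_{\zZ_1^{\fin}} \times \Dbc(\fM_S(v_3)^{\fin})$ and $\Dbc(\fM_S^{\dag}(v_1)^{\fin}) \times \cC_{\zZ_3^{\fin}}$ into $\cC_{\zZ_2^{\fin}}$; hence the functor descends to the quotient categories, which is the assertion.

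The hard part will be the identification of this cotangent-level correspondence with the $\aA_X$-extension diagram (\ref{dia:Xext:dag}). Its content is exactly Lemma~\ref{prop:equiv}: a point of $\mM_X^{\ext, \dag}(v_{\bullet})$ is the diagram (\ref{dia:F}) of pairs together with the morphism of triangles (\ref{dia:I}), and under Proposition~\ref{prop:ev+} this is precisely a point of $t_0(\Omega_{\ev^{\dag}}[-2])$. The subtle bookkeeping is the asymmetry of the three legs — the first two carry the dagger (pair) structure and the third does not, and the extension in $\aA_X$ is shifted, $E_3[-1]$ — which is why in (\ref{dia:I}) the morphism $\vartheta_3$ lands in $F_3$ rather than in $I_3^{\bullet}[1]$. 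I would trace this asymmetry on the cotangent side using the description of $\mathbb{L}_{\ev^{\dag}}$ from the proof of Proposition~\ref{prop:ev+}, where the $\vartheta_i$ are read as morphisms in the tilted heart $\Coh^{\sharp}(S)$ (Remark~\ref{rmk:tilt}), so that the conormal-fiber computation there already matches the one cutting out $\mM_X^{\ext, \dag}(v_{\bullet})$. Finally, as in Remark~\ref{rmk:indep}, I would record that the resulting functor does not depend on the choice of finite type open substacks satisfying (\ref{cond:ev1.5}).
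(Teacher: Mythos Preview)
Your proposal is correct and follows exactly the route the paper intends: the paper states Corollary~\ref{cor:induce2} as a direct analogue of Corollary~\ref{cor:induce}, with Proposition~\ref{prop:ev+} replacing Proposition~\ref{prop:isomM}, and gives no further argument. Your write-up simply spells out what ``similarly'' means here---feeding the diagram (\ref{dia:MSdag}) into the machinery of Proposition~\ref{prop:FM} and Lemma~\ref{lem:restrict}, and identifying the resulting conormal correspondence with (\ref{dia:Xext:dag}) via (\ref{isom:dag}) and Proposition~\ref{prop:ev+}---so there is nothing to compare.
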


\subsection{Categorical PT theory}
Here we recall the definition of PT categories 
and give a proof of Theorem~\ref{intro:thmB}. 
By definition, a PT stable pair consists of a pair~\cite{MR2545686}
\begin{align}\label{PTpair}
(F, s), \  F \in \Coh_{\le 1}(X), \ s \colon \oO_X \to F
\end{align}
such that $F$ is pure one dimensional and $s$ is 
surjective in dimension one. 
For $(\beta, n) \in N_{\le 1}(S)$, 
we denote by 
\begin{align}\notag
P_n(X, \beta)
\end{align}
the moduli space 
of PT stable pairs (\ref{PTpair})
satisfying $[\pi_{\ast}F]=(\beta, n)$. 
The moduli space of stable pairs $P_n(X, \beta)$ is 
known to be a quasi-projective scheme. 

We have the open immersion
\begin{align*}
P_n(X, \beta) \subset \mM_X^{\dag}(\beta, n)
\end{align*}
sending a pair $(F, s)$ to the
two term complex
$(\oO_{\overline{X}} \stackrel{s}{\to} F)$. 
We define the following 
conical closed substack
\begin{align*}
\zZ_{P\us}(\beta, n) \cneq \mM_X^{\dag}(\beta, n) \setminus P_n(X, \beta)
\subset \mM_X^{\dag}(\beta, n). 
\end{align*}
Since $P_n(X, \beta)$ is a quasi-projective scheme, 
there is a 
derived open substack 
$\mathfrak{M}_S^{\dag}(\beta, n)^{\fin}
\subset \mathfrak{M}_S^{\dag}(\beta, n)$ 
of finite type
such that 
\begin{align}\label{PT:qcompact}
P_n(X, \beta) \subset 
t_0(\Omega_{\mathfrak{M}_S^{\dag}(\beta, n)^{\fin}}[-1])
\subset \mM_X^{\dag}(\beta, n).  
\end{align}
Here we have used the isomorphism (\ref{isom:dag}). 
\begin{defi}\label{def:catPT}\emph{(\cite[Definition~6.6]{TocatDT})}
The $\mathbb{C}^{\ast}$-equivariant 
categorical PT theory is defined by
\begin{align*}
\dD \tT^{\mathbb{C}^{\ast}}(P_n(X, \beta)) \cneq 
D^b_{\rm{coh}}(\mathfrak{M}_S^{\dag}(\beta, n)^{\fin})/ 
\cC_{\zZ_{P\us}(\beta, n)^{\fin}}.
\end{align*}
\end{defi}
Similarly to Remark~\ref{rmk:open:independent}, 
the above definition is independent of 
a choice of $\fM_S^{\dag}(\beta, n)^{\fin}$
satisfying (\ref{PT:qcompact}). 
Let us take $v_{\bullet} \in N_{\le 1}(S)^{\times 3}$ to be 
\begin{align*}
v_1=(\beta, n), \ v_2=(\beta, n+m), \ 
v_3=(0, m)=m[\mathrm{pt}].
\end{align*}
We also take 
derived open substacks 
of finite type 
\begin{align*}
\fM_S^{\dag}(v_1)^{\fin} \subset \fM_S^{\dag}(v_1), \ 
\fM_S^{\dag}(v_2)^{\fin} \subset \fM_S^{\dag}(v_2), \ 
\fM_S^{\fin}(v_3)=\fM_S(m[\mathrm{pt}])
\end{align*}
satisfying (\ref{PT:qcompact}) for $v_1$, $v_2$ and 
the condition
(\ref{cond:ev1.5}).
Here we note that 
$\fM_S(m[\mathrm{pt}])$ is the derived moduli 
stack of zero dimensional sheaves of 
length $m$, so it is of finite type. 
Then the diagram ({\ref{dia:circ1.5}) induces the functor
\begin{align}\label{funct:circ}
\ev_{2\ast}^{\dag}(\ev_1^{\dag}, \ev_3^{\dag})^{\ast}
 \colon \Dbc(\fM_S^{\dag}(v_1)^{\fin}) \times 
\Dbc(\fM_S(m[\mathrm{pt}])) \to 
\Dbc(\fM_S^{\dag}(v_2)^{\fin}).
\end{align}

\begin{thm}\label{thm:PTaction}
The functor (\ref{funct:circ})
descends to the functor
\begin{align}\label{descend:PT}
\dD \tT^{\mathbb{C}^{\ast}}(P_n(X, \beta))
\times \dD \tT^{\mathbb{C}^{\ast}}(\mM_X^{\sigma \sss}(m[\mathrm{pt}]))
\to \dD \tT^{\mathbb{C}^{\ast}}(P_{n+m}(X, \beta)). 
\end{align}
\end{thm}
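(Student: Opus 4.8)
The plan is to run exactly the argument used for Theorem~\ref{thm:COHA}. The functor (\ref{funct:circ}) is already well-defined because in (\ref{dia:circ1.5}) the vertical arrow is quasi-smooth and the horizontal arrow is proper; so the only thing to check is that it is compatible with singular supports, and for this I would invoke Corollary~\ref{cor:induce2}. Everything then reduces to an elementary statement about PT-stability along extensions, playing the role of Lemma~\ref{lem:stability}.

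First I note that every zero-dimensional sheaf is $\sigma$-semistable, since its reduced Hilbert polynomial is the constant $1$; hence $\mM_X^{\sigma \sss}(m[\mathrm{pt}])=\mM_X(m[\mathrm{pt}])$ and $\zZ_{\sigma \us}(m[\mathrm{pt}])=\emptyset$, so $\cC_{\zZ_{\sigma \us}(m[\mathrm{pt}])}=0$ and the second factor is $\Dbc(\fM_S(m[\mathrm{pt}]))$. I would then apply Corollary~\ref{cor:induce2} with $v_1=(\beta, n)$, $v_2=(\beta, n+m)$, $v_3=m[\mathrm{pt}]$ and with $\zZ_1=\zZ_{P\us}(\beta, n)$, $\zZ_2=\zZ_{P\us}(\beta, n+m)$, $\zZ_3=\emptyset$. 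Since $\zZ_3$ is empty, the hypothesis of that corollary collapses to the single containment
\begin{align}\label{plan:contain}
(\ev_1^{X, \dag})^{-1}(\zZ_{P\us}(\beta, n)) \subset (\ev_2^{X, \dag})^{-1}(\zZ_{P\us}(\beta, n+m))
\end{align}
of conical closed substacks of $\mM_X^{\ext, \dag}(v_{\bullet})$, and (\ref{plan:contain}) then produces the functor (\ref{descend:PT}).

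It remains to establish (\ref{plan:contain}), equivalently: for a point of $\mM_X^{\ext, \dag}(v_{\bullet})$ represented by an exact sequence $0 \to E_1 \to E_2 \to E_3[-1] \to 0$ in $\aA_X$ with $E_3$ zero-dimensional, if $E_2$ is a PT stable pair then so is $E_1$. I would deduce this from the description of such extensions in Lemma~\ref{prop:equiv}: passing through the equivalence $\aA_X^{\le 1}\simeq\bB_S^{\le 1}$ of Theorem~\ref{thm:D026}, the point is encoded by a short exact sequence $0 \to F_1 \stackrel{i}{\to} F_2 \to F_3 \to 0$ with $F_3$ zero-dimensional (as $v_3=m[\mathrm{pt}]$), together with compatible sections $s_2=i\circ s_1$ and the comparison diagram (\ref{dia:I}); under this dictionary PT-stability of $E_j$ corresponds to purity of $F_j$ together with surjectivity in dimension one of $s_j$. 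Now if $E_1$ were not a PT stable pair, then either $F_1$ has a nonzero zero-dimensional subsheaf, which via $i$ is also a zero-dimensional subsheaf of $F_2$, contradicting purity of $F_2$; or $\Cok(s_1)$ has a one-dimensional component, in which case the short exact sequence $0 \to \Cok(s_1)\to\Cok(s_2)\to F_3\to 0$ (using $\im(s_2)=i(\im(s_1))$ and $F_3$ zero-dimensional) shows $\Cok(s_2)$ has a one-dimensional component, contradicting that $s_2$ is surjective in dimension one. Either way $E_2$ is not a PT stable pair, which gives (\ref{plan:contain}). Only this implication is needed; the converse fails in general, since extending by a zero-dimensional sheaf typically destroys purity.

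The step I expect to require the most care is the last translation, namely matching PT-stability of the objects $E_j$ on $\overline{X}$ with the purity and section conditions on the $S$-side data supplied by Lemma~\ref{prop:equiv} and Theorem~\ref{thm:diaB}; since $\pi$ is affine and the sheaves involved have proper support this is a bookkeeping matter, but it is the only non-formal input. The genuinely geometric content — the identification $\mM_X^{\ext, \dag}(v_{\bullet})\cong t_0(\Omega_{\ev^{\dag}}[-2])$ and the resulting compatibility of the Fourier–Mukai-type functor with singular supports — has already been established in Proposition~\ref{prop:ev+} and Corollary~\ref{cor:induce2}, so nothing further is needed there.
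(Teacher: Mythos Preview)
Your reduction to Corollary~\ref{cor:induce2}, via the containment $(\ev_1^{X,\dag})^{-1}(\zZ_{P\us}(v_1))\subset(\ev_2^{X,\dag})^{-1}(\zZ_{P\us}(v_2))$, is exactly the paper's strategy; the paper isolates the stability input as Lemma~\ref{lem:PT}.

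The gap is in your proof of that stability input, and it is not bookkeeping. The dictionary ``PT-stability of $E_j$ corresponds to purity of $F_j$ together with surjectivity in dimension one of $s_j$'' is false. The $S$-pair $(F_j,s_j)$ is only the image of $E_j$ under $\mM_X^{\dag}\to\mM_S^{\dag}$; the PT condition on $E_j$ depends also on the fibre datum, namely the map $\phi_j\colon F_j\to F_j\otimes\omega_S$ encoding the $\oO_X$-module structure. The purity half of your claim does hold, since the maximal zero-dimensional subsheaf of $F_j$ is automatically $\phi_j$-invariant and hence comes from a zero-dimensional subsheaf on $X$. But the section half fails: if $s_j^X\colon\oO_X\to F_j^X$ is the section on $X$, then the image of $\pi_\ast s_j^X$ in $F_j$ is the smallest $\phi_j$-invariant subsheaf containing $\im(s_j)$, which can be strictly larger than $\im(s_j)$. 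For instance, with $\omega_S\cong\oO_S$, a smooth curve $C\subset S$, $F_2=\oO_C^{\oplus 2}$, $\phi_2=\left(\begin{smallmatrix}0&1\\0&0\end{smallmatrix}\right)$ and $s_2=(0,1)$, one has $\Cok(s_2)\cong\oO_C$ one-dimensional on $S$, yet $s_2^X$ is surjective on $X=S\times\mathbb{A}^1$ and $E_2$ is a PT pair. So the final implication in your argument, ``$\Cok(s_2)$ one-dimensional $\Rightarrow E_2$ not PT'', is where the chain breaks.

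The paper's Lemma~\ref{lem:PT} avoids this by working directly in $\aA_X$: the cohomology long exact sequence gives a surjection $\Cok(s_2^X)\twoheadrightarrow Q$, and a short diagram chase then exhibits $E_1=(\oO_{\overline X}\to F_1^X)$ with $F_1^X\subset F_2^X$ a subsheaf (hence pure) and $(F_1^X,s_1^X)\cong(F_2^X,s_2^X)$ away from $\Supp(Q)$ (hence $s_1^X$ surjective in dimension one). No translation to the surface is needed.
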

\begin{proof}
By Lemma~\ref{lem:PT} below, we have 
the following inclusion
\begin{align*}
(\ev_1^{X, \dag}, \ev_3^{X, \dag})^{-1}(\zZ_{P\us}(v_1)
 \times \mM_X(v_3))
\subset (\ev_2^{X, \dag})^{-1}(\zZ_{P\us}(v_2)). 
\end{align*}
Therefore the result follows from 
Corollary~\ref{cor:induce2}. 
\end{proof}

Here we have used the following lemma. 
\begin{lem}\label{lem:PT}
For an exact sequence $0 \to E_1 \to E_2 \to E_3 \to 0$
in $\aA_X$, 
suppose that $E_2=(\oO_{\overline{X}} \stackrel{s_2}{\to} F_2)$ for a PT stable 
pair $(F_2, s_2)$ and $E_3=Q[-1]$ for a zero dimensional sheaf $Q$. 
Then $E_1=(\oO_{\overline{X}} \stackrel{s_1}{\to} F_1)$
for a PT stable pair $(F_1, s_1)$. 
\end{lem}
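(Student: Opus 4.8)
We are given an exact sequence $0 \to E_1 \to E_2 \to E_3 \to 0$ in the abelian category $\aA_X$, with $E_2 = (\oO_{\overline{X}} \xrightarrow{s_2} F_2)$ for a PT stable pair $(F_2, s_2)$ and $E_3 = Q[-1]$ for a zero-dimensional sheaf $Q$ on $X$. Since $E_2$ has rank one and $E_3$ has rank zero, the object $E_1$ is also a rank one object in $\aA_X$, so by the definition of $\aA_X^{\le 1}$ it is of the form $E_1 = (\oO_{\overline{X}} \xrightarrow{s_1} F_1)$ for some $F_1 \in \Coh_{\le 1}(X)$ and some $s_1$. What must be shown is that $(F_1, s_1)$ is a \emph{PT stable pair}, i.e.\ that $F_1$ is pure one-dimensional and $s_1$ is surjective in dimension one (i.e.\ has zero-dimensional cokernel).

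**The plan.** First I would take cohomology sheaves. Working in $\Dbc(\overline{X})$, the triangle $E_1 \to E_2 \to Q[-1] \to E_1[1]$ gives a long exact sequence; since $E_2$ is (quasi-isomorphic to) a two-term complex in degrees $0,1$ and $Q[-1]$ sits in degree $1$, one reads off that $\hH^{-1}$ of everything vanishes, $\hH^0(E_1) \cong \hH^0(E_2)$ fits into a sequence, and there is a connecting map relating $\hH^0$ of $Q[-1]$ (which is $0$) and $\hH^1(E_1)$, $\hH^1(E_2) = F_2$. Concretely, representing $E_1 = (\oO_{\overline X} \xrightarrow{s_1} F_1)$ and $E_2 = (\oO_{\overline X} \xrightarrow{s_2} F_2)$, the morphism $E_1 \to E_2$ is given by the identity on $\oO_{\overline X}$ and a map $F_1 \to F_2$; the cone being $Q[-1]$ forces that $F_1 \to F_2$ is injective with cokernel $Q$, so we get a short exact sequence of sheaves $0 \to F_1 \to F_2 \to Q \to 0$ with $s_1 = s_2$ composed appropriately (more precisely $s_2$ factors through $F_1$, giving $s_1$). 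This is exactly the shape of the diagrams in Lemma~\ref{prop:equiv} and Remark~\ref{rmk:tilt}, and I would phrase it using that language if convenient.

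**Purity of $F_1$.** Since $F_1 \hookrightarrow F_2$ and $F_2$ is pure one-dimensional (being the sheaf of a PT stable pair), any subsheaf of $F_2$ is pure one-dimensional or zero; in particular $F_1$ is pure one-dimensional (it is nonzero since $E_1$ is a nonzero rank-one object whose $F$-part cannot be zero for numerical reasons, or one argues directly). This step is routine.

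**Surjectivity in dimension one — the main obstacle.** The remaining point is that $\Cok(s_1)$ is zero-dimensional. We have the commutative square with $s_2 = (F_1 \hookrightarrow F_2) \circ s_1$, so $s_1(\oO_{\overline X}) \subseteq F_1 \subseteq F_2$, and $\Cok(s_1) = F_1 / s_1(\oO)$ injects into $F_2 / s_2(\oO) = \Cok(s_2)$, which is zero-dimensional by stability of $(F_2, s_2)$. Hence $\Cok(s_1)$ is zero-dimensional. This is actually quite clean once the snake/inclusion picture is set up; the subtlety I would watch for is making sure the map $\Cok(s_1) \to \Cok(s_2)$ really is injective, which follows from the snake lemma applied to
\begin{align*}
\xymatrix{
0 \ar[r] & \oO_{\overline X} \ar@{=}[d] \ar[r]^-{s_1} & F_1 \ar@{^{(}->}[d] \ar[r] & \Cok(s_1) \ar[d] \ar[r] & 0 \\
0 \ar[r] & \oO_{\overline X} \ar[r]^-{s_2} & F_2 \ar[r] & \Cok(s_2) \ar[r] & 0,
}
\end{align*}
since $F_1 \to F_2$ is injective (its kernel is $0$, so the induced map on cokernels is injective by the snake lemma). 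Therefore $(F_1, s_1)$ is a PT stable pair, completing the proof. The only genuinely delicate input is extracting the honest short exact sequence of sheaves $0 \to F_1 \to F_2 \to Q \to 0$ from the triangle in $\aA_X$ — but this is precisely the content of the equivalence $\aA_X^{\le 1} \simeq \bB_S^{\le 1}$ of Theorem~\ref{thm:D026} together with the description of extensions in Lemma~\ref{prop:equiv}, so I would invoke those rather than re-deriving it.
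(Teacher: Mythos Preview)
Your proof is essentially correct, but it differs from the paper's argument in two places and has one small technical wrinkle worth fixing.

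First, the paper does not invoke the equivalence $\aA_X^{\le 1}\simeq\bB_S^{\le 1}$ or Lemma~\ref{prop:equiv} to obtain the form of $E_1$. Instead it argues directly in $D^b_{\rm{coh}}(\overline{X})$: the long exact sequence of cohomology sheaves gives a surjection $\hH^1(E_2)=\Cok(s_2)\twoheadrightarrow \hH^1(E_3)=Q$, hence a surjection $\alpha\colon F_2\twoheadrightarrow Q$; a short diagram chase with the triangles $F_i[-1]\to E_i\to\oO_{\overline{X}}$ then identifies $F_1=\ker\alpha\subset F_2$ and produces the triangle $F_1[-1]\to E_1\to\oO_{\overline{X}}$, i.e.\ $E_1=(\oO_{\overline{X}}\stackrel{s_1}{\to}F_1)$. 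Your route via the categorical equivalence is valid, but note that your early claim that rank-one objects of $\aA_X$ are of the form $(\oO_{\overline{X}}\to F)$ ``by the definition of $\aA_X^{\le 1}$'' is not literally immediate from the extension-closure definition; it is exactly what the diagram chase (or the equivalence you later invoke) establishes.

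Second, for surjectivity in dimension one the paper simply observes that $(F_1,s_1)$ and $(F_2,s_2)$ are isomorphic away from the zero-dimensional support of $Q$. Your snake-lemma argument reaches the same conclusion, but the rows in your displayed diagram are \emph{not} short exact on the left: the maps $s_i\colon\oO_{\overline{X}}\to F_i$ have large kernels (ideal sheaves of curves). The conclusion $\Cok(s_1)\hookrightarrow\Cok(s_2)$ is still correct, since the injection $F_1\hookrightarrow F_2$ carries $\mathrm{im}(s_1)$ isomorphically onto $\mathrm{im}(s_2)$; you should phrase the argument that way rather than as a snake lemma on short exact rows.
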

\begin{proof}
By taking the long exact sequence of cohomologies, 
we have the surjection
\begin{align}\label{surj:s}
\hH^1(E_2)=\Cok(s_2) \twoheadrightarrow \hH^1(E_3)=Q. 
\end{align}
On the other hand, we have the following commutative diagram
\begin{align*}
\xymatrix{
F_1[-1] \ar[d] & E_1 \ar[d] & \\
F_2[-1] \ar[r] \ar[d]_-{\alpha[-1]} & E_2 \ar[r] \ar[d] & \oO_{\overline{X}}  \\
Q[-1] \ar@{=}[r] & Q[-1].  & 
}
\end{align*}
Here vertical sequences are distinguished triangles. 
The map $\alpha \colon F_2 \to Q$ is the composition of 
$F_2 \twoheadrightarrow \Cok(s_2)$ 
with the morphism (\ref{surj:s}), so it is surjective. 
Therefore $F_1$ is a subsheaf of $F_2$, which is a pure one dimensional 
sheaf. By taking the cones, 
we obtain the 
distinguished triangle 
$F_1[-1] \to E_1 \to \oO_{\overline{X}}$, 
hence $E_1=(\oO_{\overline{X}} \stackrel{s_1}{\to} F_1)$
for a pair $(F_1, s_1)$. 
Since $(F_1, s_1)$ is isomorphic to $(F_2, s_2)$ outside the 
support of $Q$ and $s_2$ is surjective in dimension one, 
 $s_1$ is also surjective in dimension one.
Therefore $(F_1, s_1)$ is PT stable. 
\end{proof}

Similarly to Corollary~\ref{cor:K}, we have the following 
corollary of Theorem~\ref{thm:PTaction}. 
\begin{cor}\label{cor:K2}
The functors (\ref{descend:PT}) induce the right action of 
the K-theoretic Hall-algebra
of zero dimensional sheaves to the
direct sum of PT categories 
for a fixed $\beta$
\begin{align*}
\bigoplus_{n \in \mathbb{Z}}
K(\mathcal{DT}^{\mathbb{C}^{\ast}}(P_n(X, \beta))) \times 
\bigoplus_{m\ge 0}
K(\mathcal{DT}^{\mathbb{C}^{\ast}}(\mM_X^{\sigma \sss}(m[\mathrm{pt}])))
\to \bigoplus_{n\in \mathbb{Z}}K(\mathcal{DT}^{\mathbb{C}^{\ast}}(P_n(X, \beta))). 
\end{align*}
\end{cor}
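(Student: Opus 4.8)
The plan is to follow the argument of Corollary~\ref{cor:K}, reducing the statement to an associativity-type identity on the level of the derived categories of the derived moduli stacks of pairs.

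By the construction in Theorem~\ref{thm:PTaction}, the functor (\ref{descend:PT}) fits into a commutative diagram
\begin{align*}
\xymatrix{
\Dbc(\fM_S^{\dag}(v_1)^{\fin}) \times \Dbc(\fM_S(m[\pt])) \ar[r] \ar[d] & \Dbc(\fM_S^{\dag}(v_2)^{\fin}) \ar[d] \\
\dD \tT^{\mathbb{C}^{\ast}}(P_n(X, \beta)) \times \dD \tT^{\mathbb{C}^{\ast}}(\mM_X^{\sigma \sss}(m[\pt])) \ar[r] & \dD \tT^{\mathbb{C}^{\ast}}(P_{n+m}(X, \beta))
}
\end{align*}
where $v_1=(\beta, n)$, $v_2=(\beta, n+m)$, the top horizontal arrow is (\ref{funct:circ}), and the vertical arrows are the quotient functors defining the PT and DT categories, composed (on the first factor of the source and on the target) with the restriction to the chosen finite-type derived open substacks. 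As in the proof of Corollary~\ref{cor:K}, both vertical arrows are surjective on K-theory: a Verdier quotient functor is surjective on $K_0$, and restriction to a derived open substack is surjective on $K$ by the localization sequence. Hence, passing to K-theory, it suffices to show that the map induced by (\ref{funct:circ}), with the above choices of $v_{\bullet}$, makes
\begin{align*}
\bigoplus_{n \in \mathbb{Z}} K(\fM_S^{\dag}(\beta, n))
\end{align*}
into a right module over the K-theoretic Hall algebra $\bigoplus_{m\ge 0} K(\fM_S(m[\pt]))$ of Example~\ref{exam:zero}~(i); the desired action on PT categories is then obtained by passing to the quotients, the independence of the choices of finite-type opens being Remark~\ref{rmk:indep}.

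For the module structure, the associativity of $\bigoplus_m K(\fM_S(m[\pt]))$ is the case $\overline{\chi}\equiv 1$ of Corollary~\ref{cor:K}. It remains to verify the module axiom $(P\cdot Q)\cdot Q' = P\cdot(Q\cdot Q')$ for $P \in K(\fM_S^{\dag}(\beta, n))$ and zero-dimensional classes $Q, Q'$. For this I would introduce the derived moduli stack of flags of pairs
\begin{align*}
(\oO_S \to F_0) \hookrightarrow (\oO_S \to F_1) \hookrightarrow (\oO_S \to F_2), \quad F_1/F_0,\ F_2/F_1 \in \Coh_{\le 1}(S),
\end{align*}
built from the derived moduli stack of length-two flags of coherent sheaves on $S$ exactly as $\fM_S^{\ext, \dag}$ was built from $\fM_S^{\ext}$. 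It carries two evident morphisms to $\fM_S^{\ext, \dag}$, obtained by forgetting the middle, resp.\ the outer, step, exhibiting it as a derived fiber product of two copies of $\fM_S^{\ext, \dag}$ over $\fM_S^{\dag}$ in two different ways. Both iterated compositions in the module axiom are then computed by pull-back/push-forward along this flag stack, and the two expressions agree by derived base change around the resulting Cartesian squares of derived moduli stacks; this is the same argument that proves associativity of the categorified COHA in~\cite{PoSa} (equivalently, $\fM_S^{\dag}$ is a right module stack over $\fM_S$ in the sense of~\cite{PoSa}, with $\fM_S^{\ext, \dag}$ its structure stack of extensions). One also checks, routinely, that the finite-type opens of (\ref{cond:ev1.5}) required on the two sides of the axiom can be chosen compatibly.

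The main obstacle is the verification of the module axiom at the level of the derived moduli stacks: one must identify the correct flag-of-pairs stack and check the relevant derived base-change identities, and for a fully functorial statement at the dg level one must involve the higher Waldhausen pieces as in Remark~\ref{rmk:higher}. At the level of K-theory, however, this is a diagram chase among pull-back and proper push-forward maps entirely parallel to the associativity of the K-theoretic COHA on surfaces proved in~\cite{PoSa}, so it requires no new geometric input; the only ingredient specific to our situation, namely that PT stability is preserved under the relevant exact sequences, has already been established in Lemma~\ref{lem:PT} and is what makes the top functor descend to the quotient categories in the first place.
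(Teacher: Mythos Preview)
Your proposal is correct and follows essentially the same approach as the paper. The paper does not give a separate proof of this corollary, only the remark ``Similarly to Corollary~\ref{cor:K}, we have the following corollary of Theorem~\ref{thm:PTaction}'', and your argument is precisely the intended unpacking: the commutative diagram, surjectivity of the quotient functors on $K$-theory, and reduction of the module axiom to the Porta--Sala base-change argument for flag stacks of pairs, with the finite-type choices handled by Remark~\ref{rmk:indep}.
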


\section{Hecke correspondences of categorical PT theories}\label{sec:Hecke}
In this section, we 
consider the simple operators in Corollary~\ref{cor:K2}.
They are induced by the stacks of Hecke correspondences, and 
we describe them as two kinds of projectivizations over some 
quasi-smooth derived stacks. 
Using these descriptions, we construct 
the annihilator operators of the above simple operators. 
\subsection{Simple operators}
Here we consider the operator (\ref{descend:PT})
for $m=1$
\begin{align*}
\phi_P \colon \mathcal{DT}^{\mathbb{C}^{\ast}}(P_n(X, \beta))
\times \mathcal{DT}^{\mathbb{C}^{\ast}}(\mM_X^{\sigma \sss}([\mathrm{pt}]))
\to \mathcal{DT}^{\mathbb{C}^{\ast}}(P_{n+1}(X, \beta)). 
\end{align*}
Note that 
 $\fM_S([\pt])$ is the derived moduli 
stack of skyscraper sheaves of points of $S$, 
which is written as
\begin{align*}
\fM_S([\pt])=[\fS/\mathbb{C}^{\ast}].
\end{align*}
Here $\fS$ is a quasi-smooth 
derived scheme whose classical truncation is 
$S$, and $\mathbb{C}^{\ast}$ acts on it trivially. 
Therefore we have the decomposition
\begin{align}\label{decom:S}
 \mathcal{DT}^{\mathbb{C}^{\ast}}(\mM_X^{\sigma \sss}([\mathrm{pt}]))=
\Dbc(\fM_S([\mathrm{pt}])) =\bigoplus_{k \in \mathbb{Z}}\Dbc(\fS)_{k}
\end{align}
where $\Dbc(\fS)_{k}$ consists of objects with $\mathbb{C}^{\ast}$-weights $k$. 
We have the following description of the derived scheme $\fS$. 

\begin{lem}\label{lem:trivial}
We have $\fS=\Spec_S S(\omega_S[1])$. 
\end{lem}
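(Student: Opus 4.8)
The plan is to identify $\fS$ with the derived zero locus that computes the derived moduli of length-one (i.e.\ skyscraper) sheaves on $S$, and then read off its structure sheaf as a Koszul complex. Recall that $\fM_S([\pt])$ is, by definition, the derived moduli stack of coherent sheaves on $S$ with numerical class $[\oO_x]$; since any such sheaf is $\oO_x$ for a unique $x \in S$ with automorphism group $\mathbb{C}^{\ast}$ acting by scaling, the classical truncation is $S \times B\mathbb{C}^{\ast}$ and we may write $\fM_S([\pt]) = [\fS/\mathbb{C}^{\ast}]$ for a quasi-smooth derived scheme $\fS$ with $t_0(\fS) = S$ carrying the trivial $\mathbb{C}^{\ast}$-action. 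So the content of the lemma is purely a statement about the derived structure on $S$, and it suffices to compute the cotangent complex $\mathbb{L}_{\fS}|_S$: since $\fS$ is quasi-smooth with smooth classical truncation $S$, it is the derived zero locus of a section of a vector bundle on (an open cover of) $S$, and by the description of such derived schemes as in (\ref{frak:U}) it is determined up to the obstruction bundle $\hH^1(\mathbb{T}_{\fS})$ together with the section cutting it out; in the smooth-truncation case the section vanishes to second order and $\fS \cong \Spec_S S(\hH^1(\mathbb{T}_{\fS}))$ with $\hH^1(\mathbb{T}_{\fS})$ placed in the appropriate cohomological/weight degree.

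First I would compute $\mathbb{L}_{\fM_S([\pt])}$ at the point $\oO_x$ using the standard formula for the cotangent complex of a moduli stack of sheaves: for a point $[F] \in \mM_S$ one has
\begin{align*}
\mathbb{L}_{\fM_S}|_{[F]} = \RHom_S(F,F)^{\vee}[-1].
\end{align*}
Taking $F = \oO_x$ for $x \in S$ a point of a smooth projective surface, Serre duality and the Koszul resolution of $\oO_x$ give $\Ext^i_S(\oO_x,\oO_x) = \wedge^i T_x S$ for $i=0,1,2$, so $\RHom_S(\oO_x,\oO_x)$ has cohomology $\mathbb{C}$, $T_xS$, $\wedge^2 T_x S$ in degrees $0,1,2$. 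Stripping off the degree-$0$ part (which records the $B\mathbb{C}^{\ast}$ and the automorphisms) and passing to $\fS$ one gets that $\mathbb{T}_{\fS}|_S$ has $\hH^0 = T_x S$ (the genuine tangent directions along $S$, consistent with $t_0(\fS) = S$ smooth of dimension $2$) and $\hH^1 = \wedge^2 T_x S = (\omega_S^{-1})_x$. Globalizing over $S$: $\hH^1(\mathbb{T}_{\fS}) = \omega_S^{-1}$, hence dually the obstruction-theoretic direction is $\omega_S$, and since $\fS$ has smooth truncation it is the total space $\mathbb{V}(\omega_S^{-1}[-1])$ in the notation $\mathbb{V}(\fE) = \Spec S(\fE)$, i.e.\
\begin{align*}
\fS = \Spec_S S\!\left((\omega_S^{-1})^{\vee}[1]\right) = \Spec_S S(\omega_S[1]).
\end{align*}
The shift $[1]$ is exactly what records that this is a $(-1)$-shifted / obstruction direction rather than an honest tangent direction, which is why $\fS$ is derived and not just $S$ itself.

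The main obstacle is making the second-order-vanishing argument precise: knowing $\mathbb{L}_{\fS}|_S$ does not by itself pin down $\fS$, only its first-order neighborhood, and one must argue that the section cutting out $\fS$ inside a vector bundle over $S$ can be taken to vanish identically (so that $\fS$ is the \emph{linear} derived scheme $\mathbb{V}(\omega_S[1])$ and not some nontrivial Koszul deformation of it). The cleanest route is to observe that the $\mathbb{C}^{\ast}$-action of weight one on $S^m$ of the obstruction bundle forces the defining equations to be $\mathbb{C}^{\ast}$-homogeneous, and a weight-one section of a weight-one bundle that vanishes on all of $S$ (as it must, since $t_0(\fS)=S$) together with the weight grading leaves no room for higher-order corrections; alternatively, one invokes that $\fM_S([\pt])$ is the $(-1)$-shifted cotangent stack $\Omega_{\fM_S([\pt])_{\mathrm{cl}}}[-1]$ — wait, more directly: $\fS$ sits inside $\mathfrak{Perf}_S$ and the self-Ext algebra of $\oO_x$ is formal (it is an exterior algebra, which is intrinsically formal in this range), so the derived deformation theory is unobstructed-as-an-$A_\infty$-structure in the needed sense, giving the linear model. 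Either way, once the linearity is established, the identification $\fS = \Spec_S S(\omega_S[1])$ is immediate from the computation of $\hH^1(\mathbb{T}_{\fS})$ above.
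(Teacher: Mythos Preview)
Your computation of the cotangent complex at a point and the identification $\hH^1(\mathbb{T}_{\fS}) = \omega_S^{-1}$ are correct, and you rightly flag that this alone does not determine $\fS$: one must show that the square zero extension $\omega_S[1] \to \oO_{\fS} \to \oO_S$ is \emph{split}. Neither of your two proposed fixes actually closes this gap.

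The $\mathbb{C}^{\ast}$-weight argument does not work: the $\mathbb{C}^{\ast}$ in $\fM_S([\pt])=[\fS/\mathbb{C}^{\ast}]$ is the automorphism group of a skyscraper sheaf and acts \emph{trivially} on $\fS$, so it imposes no homogeneity constraint whatsoever on the derived structure. There is no auxiliary contracting $\mathbb{G}_m$-action available here to linearize.

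The formality route is in the right spirit but is not pinned down. Pointwise formality of $\RHom_S(\oO_x,\oO_x)$ (an exterior algebra) does not by itself imply that the \emph{family} over $S$ is trivial; the relevant obstruction lives in $\Hom(\Omega_S,\omega_S[2])$, which need not vanish a priori. What one actually needs is the splitting of the family object $p_{1\ast}\hH om_{S\times S}(\oO_{\Delta},\oO_{\Delta})$, and this is precisely the Hochschild--Kostant--Rosenberg isomorphism.

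The paper's proof makes this explicit: it identifies $\mathbb{L}_{\fS}|_S$ with $(\tau_{\ge 0}\,p_{1\ast}\hH om_{S\times S}(\oO_{\Delta},\oO_{\Delta})[1])^{\vee}$ using that $\oO_{\Delta}$ is the restriction of the universal sheaf, applies HKR to split this as $\Omega_S \oplus \omega_S[1]$, and concludes that the classifying map $\delta \colon \Omega_S \to \omega_S[2]$ for the square zero extension vanishes. That HKR input is exactly the missing step in your argument.
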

\begin{proof}
Since $\Ext_S^2(\oO_x, \oO_x)=\omega_S^{\vee}|_{x}$
for each point $x \in S$, 
the closed immersion $S \hookrightarrow \fS$ is 
a square zero extension such that we have the 
distinguished triangle
\begin{align*}
\omega_S[1] \to \oO_{\fS} \to \oO_S. 
\end{align*}
Therefore the square zero extension $S \hookrightarrow \fS$
is classified by the map $\delta$ in the following 
distinguished triangle (see~\cite[Proposition~5.4.2]{MR3701353})
\begin{align*}
\omega_S[1] \to \mathbb{L}_{\fS}|_{S} \to \Omega_S \stackrel{\delta}{\to}
\omega_S[2]. 
\end{align*}
Since the structure sheaf of the diagonal 
$\Delta \subset S \times S$
is the universal family 
on $\fS \times S$ restricted to $S \times S$, 
we have the isomorphism
\begin{align}\label{isom:L}
\mathbb{L}_{\fS}|_{S} \cong (\tau_{\ge 0} p_{1\ast} \hH om_{S \times S}(\oO_{\Delta}, \oO_{\Delta})[1])^{\vee}. 
\end{align}
Here $p_1 \colon S \times S \to S$ is the first projection. 
By the Hochschild-Kostant-Rosenberg 
 isomorphism, we have 
\begin{align*}
p_{1\ast} \hH om_{S \times S}(\oO_{\Delta}, \oO_{\Delta})[1]
\cong \oO_S[1] \oplus T_S \oplus \wedge^2 T_S[-1]. 
\end{align*}
It follows that (\ref{isom:L}) is isomorphic to 
$\Omega_S \oplus \omega_S[1]$, and $\delta=0$ follows. 
Therefore $S \hookrightarrow \fS$ is a trivial square 
zero extension, and the lemma follows. 
\end{proof}

Note that have the diagram
\begin{align}\label{dia:S/C}
S \stackrel{\iota}{\hookrightarrow} \fS 
\stackrel{\vartheta}{\leftarrow} [\fS/\mathbb{C}^{\ast}]
\end{align}
where the left arrow is induced by the classical truncation
and the right arrow is induced by 
the natural morphism $B\mathbb{C}^{\ast} \to \Spec \mathbb{C}$. 
We define 
\begin{align*}
\lambda_k \cneq \vartheta^{\ast}\iota_{\ast}(-) \otimes \oO(k) \colon  \Dbc(S) \to \Dbc(\fS)_{k}
\end{align*}
where $\oO(k)$ is a line bundle on $[\fS/\mathbb{C}^{\ast}]$ determined by 
the one dimensional representation of $\mathbb{C}^{\ast}$ with weight $k$. 
\begin{defi}\label{def:muk}
For each $k \in \mathbb{Z}$ and $\eE \in \Dbc(S)$, we define 
the functor $\mu_{\eE, k}^+$ by 
\begin{align*}
\mu_{\eE, k}^+(-)  \cneq \phi_P(\{ (-), \lambda_k(\eE)\})  \colon 
\mathcal{DT}^{\mathbb{C}^{\ast}}(P_n(X, \beta))
\to \mathcal{DT}^{\mathbb{C}^{\ast}}(P_{n+1}(X, \beta)). 
\end{align*}
\end{defi}
Note that the functor $\lambda_k$ induces the isomorphism 
on the K-theory
\begin{align*}
\lambda_k \colon K(S) \stackrel{\cong}{\to} K(\fS)_k. 
\end{align*}
Therefore 
under the decomposition (\ref{decom:S})
and the above isomorphism, 
the induced map of $\phi_P$ on K-theory 
\begin{align*}
\phi_P \colon K(\mathcal{DT}^{\mathbb{C}^{\ast}}(P_n(X, \beta)))
\times K(\mathcal{DT}^{\mathbb{C}^{\ast}}(\mM_X^{\sigma \sss}([\mathrm{pt}])))
\to K(\mathcal{DT}^{\mathbb{C}^{\ast}}(P_{n+1}(X, \beta)))
\end{align*}
is written as, for $\eE_k \in K(S)$, 
\begin{align*}
\phi_P \left(\left\{ (-), \sum_{k \in \mathbb{Z}} \eE_k\right\} \right)
=\sum_{k \in \mathbb{Z}} \mu_{\eE_k, k}^+(-).
\end{align*}
In the rest of this section, we construct 
functors in the 
opposite direction
\begin{align}\label{mu:opposite}
\mu_{\eE, k}^-(-) \colon 
\mathcal{DT}^{\mathbb{C}^{\ast}}(P_{n+1}(X, \beta))
\to \mathcal{DT}^{\mathbb{C}^{\ast}}(P_{n}(X, \beta)) 
\end{align}
as an analogy of annihilation operators 
for Heinsenberg algebra action on 
homologies of Hilbert schemes of points~\cite{MR1386846, MR1441880}.

\subsection{The stack of Hecke correspondences}
We describe the operator $\mu_{\eE, k}^+$ in terms of the 
stack of Hecke correspondences, which we will define below. 
For $v \in N_{\le 1}(S)$, we denote by 
\begin{align*}
\fM_S(v)^{\pure} \subset \fM_S(v),  \
\fM_S^{\dag}(v)^{\pure} \subset \fM_S^{\dag}(v)
\end{align*}
the derived open substacks consisting of pure one dimensional 
sheaves $F$, pairs $(F, \xi)$ such that $F$ is pure one dimensional, 
respectively. 
In the notation of the diagram (\ref{dia:MSdag}), 
we also define 
\begin{align*}
\fM_S^{\ext, \dag}(v_{\bullet})^{\pure} 
\cneq (\ev_2^{\dag})^{-1}(\fM_S^{\dag}(v_2)^{\pure}) 
\subset \fM_S^{\ext, \dag}(v_{\bullet}).
\end{align*}
Since any non-zero subsheaf of a pure one dimensional sheaf is also pure
one dimensional, 
for $v_{\bullet} \in N_{\le 1}(S)^{\times 3}$
the diagram (\ref{dia:MSdag}) restricts to the diagram 
\begin{align*}
\xymatrix{
\fM_S^{\ext, \dag}(v_{\bullet})^{\pure} \ar[r]^{\ev_2^{\dag}} 
 \ar[d]_{(\ev_1^{\dag}, \ev_3^{\dag})} & \fM_S^{\dag}(v_2)^{\pure} \\
\fM_S^{\dag}(v_1)^{\pure} \times \fM_S(v_3). & 
}
\end{align*}
We take 
$v_{\bullet} \in N_{\le 1}(S)^{\times 3}$ of the form
\begin{align*}
v_1=(\beta, n), \ v_2=(\beta, n+1), \ 
v_3=(0, 1)=[\pt]. 
\end{align*}
Here $\beta \in \mathrm{NS}(S)$
 is a non-zero effective curve class on $S$
and $n \in \mathbb{Z}$. 
Then we have the following diagram
\begin{align}\label{dia:Hecke}
\xymatrix{
\fH ecke(v_{\bullet}) \ar@{}[dr]|\square
\ar@/^30pt/[rr]^-{\pi_2} \ar[r]^-{\tau} 
\ar@/_60pt/[dd]_-{(\pi_1, \pi_3)} \ar[d] 
& \fM_S^{\ext, \dag}(v_{\bullet})^{\pure} \ar[r]^-{\ev^{\dag}_2} 
\ar[d]_-{(\ev_1^{\dag}, \ev_3^{\dag})} & \fM_S^{\dag}(v_2)^{\pure} \\
\fM_S^{\dag}(v_1)^{\pure} \times [S/\mathbb{C}^{\ast}] \ar[d] \ar[r] 
\ar@{}[dr]|\square
& \fM_S^{\dag}(v_1)^{\pure} \times [\fS/\mathbb{C}^{\ast}] 
\ar[d]_-{\id \times \vartheta} & \\
\fM_S^{\dag}(v_1)^{\pure} \times S 
\ar[r]_-{\id \times \iota} & \fM_S^{\dag}(v_1)^{\pure} \times \fS
}
\end{align}
Here $\fH ecke(v_{\bullet})$
is defined by the top left Cartesian square. 
Let us take derived open substacks
$\fM_S^{\dag}(v_i)^{\fin} \subset \fM_S^{\dag}(v_i)^{\pure}$
for $i=1, 2$ satisfying 
the condition (\ref{PT:qcompact}) and 
\begin{align}\label{hecke:fin}
\fH ecke(v_{\bullet})^{\fin} \cneq 
\pi_2^{-1}(\fM_S^{\dag}(v_2)^{\fin}) \subset 
\pi_1^{-1}(\fM_S^{\dag}(v_1)^{\fin}). 
\end{align}
Then we have the diagram
\begin{align}\label{dia:Hecke1.5}
\xymatrix{
\fH ecke(v_{\bullet})^{\fin} \ar[r]^-{\pi_2} \ar[d]_-{(\pi_1, \pi_3)} & \fM_S^{\dag}(v_2)^{\fin} \\
\fM_S^{\dag}(v_1)^{\fin} \times S. 
}
\end{align}
Here the vertical arrow is quasi-smooth and the horizontal arrow is 
proper. 
Also let $\lL$ be the line bundle on $\fH ecke(v_{\bullet})$ 
defined by 
\begin{align}\label{def:lL}
\lL \cneq \ev_3^{\dag \ast} \oO(1)|_{\fH ecke(v_{\bullet})}
 \in \Pic(\fH ecke(v_{\bullet})). 
\end{align}
Then for each $\eE \in \Dbc(S)$, we 
have the functor
\begin{align}\label{funct:fin}
\pi_{2\ast}(\pi_1^{\ast}(-)\otimes \pi_3^{\ast}\eE \otimes \lL^k) \colon 
\Dbc(\fM_S^{\dag}(\beta, n)^{\fin}) \to \Dbc(\fM_S^{\dag}(\eta, n+1)^{\fin}). 
\end{align}

\begin{lem}\label{lem:muHecke}
The functor (\ref{funct:fin}) descends to the functor $\mu_{\eE, k}^+$ in 
Definition~\ref{def:muk}, i.e. the following diagram commutes
\begin{align}\label{descend:PT2}
\xymatrix{
\Dbc(\fM_S^{\dag}(\beta, n)^{\fin}) \ar[r] \ar[d] & 
\Dbc(\fM_S^{\dag}(\beta, n+1)^{\fin}) \ar[d] \\
\mathcal{DT}^{\mathbb{C}^{\ast}}(P_n(X, \beta)) \ar[r]^-{\mu_{\eE, k}^{+}} & 
\mathcal{DT}^{\mathbb{C}^{\ast}}(P_{n+1}(X, \beta)). 
}
\end{align}
Here the top horizontal arrow is given by (\ref{funct:fin}) and the vertical 
arrows are quotient functors. 
\end{lem}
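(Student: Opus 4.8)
The plan is to unwind all the definitions and reduce the statement to a single base-change identity. Recall that $\mu_{\eE,k}^{+}(-)=\phi_P(\{(-),\lambda_k(\eE)\})$, that $\phi_P$ is induced by the functor $\ev_{2\ast}^{\dag}(\ev_1^{\dag},\ev_3^{\dag})^{\ast}$ of (\ref{funct:circ}) for $v_3=[\pt]$ (so that $\fM_S(v_3)^{\fin}=\fM_S([\pt])=[\fS/\CC]$ and $\ev_3^{\dag}$ takes values in $[\fS/\CC]$), and that $\lambda_k(\eE)=\vartheta^{\ast}\iota_{\ast}\eE\otimes\oO(k)\in\Dbc([\fS/\CC])$. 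Hence $\mu_{\eE,k}^{+}$ is, by construction, the descent to the quotient categories of the functor
\[
\ev_{2\ast}^{\dag}\bigl(\ev_1^{\dag\ast}(-)\otimes\ev_3^{\dag\ast}\lambda_k(\eE)\bigr)\colon
\Dbc(\fM_S^{\dag}(\beta,n)^{\fin})\to\Dbc(\fM_S^{\dag}(\beta,n+1)^{\fin})
\]
obtained from $\fM_S^{\ext,\dag}(v_{\bullet})^{\fin}$, its descent being guaranteed by Theorem~\ref{thm:PTaction}. So it suffices to show that the functor (\ref{funct:fin}) is isomorphic to this one, and that (\ref{funct:fin}) itself descends to the quotients.

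For the isomorphism I would read off from (\ref{dia:Hecke}) the relations $\pi_1=\ev_1^{\dag}\circ\tau$, $\pi_2=\ev_2^{\dag}\circ\tau$, and from (\ref{def:lL}) that $\lL^{k}=\tau^{\ast}\ev_3^{\dag\ast}\oO(k)$. Rewriting (\ref{funct:fin}) accordingly and applying the projection formula for the proper morphism $\tau$ twice gives
\[
\pi_{2\ast}\bigl(\pi_1^{\ast}(-)\otimes\pi_3^{\ast}\eE\otimes\lL^{k}\bigr)
\cong\ev_{2\ast}^{\dag}\bigl(\ev_1^{\dag\ast}(-)\otimes\ev_3^{\dag\ast}\oO(k)\otimes\tau_{\ast}\pi_3^{\ast}\eE\bigr),
\]
so everything comes down to the identity $\tau_{\ast}\pi_3^{\ast}\eE\cong\ev_3^{\dag\ast}\vartheta^{\ast}\iota_{\ast}\eE$. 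I would prove this by base change along the composite of the two Cartesian squares in (\ref{dia:Hecke}): that composite exhibits $\fH ecke(v_{\bullet})$ as the derived fibre product of $(\ev_1^{\dag},\vartheta\circ\ev_3^{\dag})\colon\fM_S^{\ext,\dag}(v_{\bullet})^{\pure}\to\fM_S^{\dag}(v_1)^{\pure}\times\fS$ with the closed immersion $\id\times\iota\colon\fM_S^{\dag}(v_1)^{\pure}\times S\hookrightarrow\fM_S^{\dag}(v_1)^{\pure}\times\fS$, with $\tau$ and $(\pi_1,\pi_3)$ the two projections; since $\pi_3^{\ast}\eE$ is the pullback of $\eE$ along $(\pi_1,\pi_3)$ followed by the second projection, base change turns $\tau_{\ast}\pi_3^{\ast}\eE$ into the pullback along $(\ev_1^{\dag},\vartheta\circ\ev_3^{\dag})$ of $(\id\times\iota)_{\ast}(\oO\boxtimes\eE)=\oO\boxtimes\iota_{\ast}\eE$, which is exactly $\ev_3^{\dag\ast}\vartheta^{\ast}\iota_{\ast}\eE$. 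Feeding this back and using $\oO(k)\otimes\vartheta^{\ast}\iota_{\ast}\eE=\lambda_k(\eE)$ identifies (\ref{funct:fin}) with $\ev_{2\ast}^{\dag}(\ev_1^{\dag\ast}(-)\otimes\ev_3^{\dag\ast}\lambda_k(\eE))$, as wanted.

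The remaining points are bookkeeping. The finite-type substacks are compatible: by (\ref{hecke:fin}) and (\ref{cond:ev1.5}) one has $\fH ecke(v_{\bullet})^{\fin}=\tau^{-1}(\fM_S^{\ext,\dag}(v_{\bullet})^{\fin})$, so $\tau$ restricts to a proper morphism $\fH ecke(v_{\bullet})^{\fin}\to\fM_S^{\ext,\dag}(v_{\bullet})^{\fin}$, the two Cartesian squares restrict, and the computation above takes place at the finite-type level; the pullbacks are along quasi-smooth morphisms and the pushforwards along proper ones, as recorded in (\ref{dia:Hecke1.5}). That (\ref{funct:fin}) descends to the quotient categories then follows from its identification with the functor defining $\mu_{\eE,k}^{+}$, whose descent is Theorem~\ref{thm:PTaction}; alternatively one may invoke Proposition~\ref{prop:FM} and Lemma~\ref{lem:restrict} applied to (\ref{dia:Hecke1.5}) together with Lemma~\ref{lem:PT}. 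Composing with the quotient functors then gives the commutative square (\ref{descend:PT2}). The main obstacle is the base-change step: one must make sure it is valid for $\Dbc$ along the not necessarily flat morphism $\ev_3^{\dag}$, which is fine because the morphism being base-changed, $\id\times\iota$, is a closed immersion — hence proper — and, after passing to the finite-type substacks, all stacks involved are QCA, so the base-change theorem for coherent pull/push used throughout~\cite{PoSa,MR3300415} applies; everything else is formal.
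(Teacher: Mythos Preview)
Your proof is correct and follows essentially the same approach as the paper: both arguments identify the functor (\ref{funct:fin}) with $\ev_{2\ast}^{\dag}\bigl(\ev_1^{\dag\ast}(-)\otimes\ev_3^{\dag\ast}\lambda_k(\eE)\bigr)$ via the projection formula for $\tau$ and the base-change isomorphism $\tau_{\ast}(\pi_1,\pi_3)^{\ast}\cong\bigl((\id\times\vartheta)\circ(\ev_1^{\dag},\ev_3^{\dag})\bigr)^{\ast}(\id\times\iota)_{\ast}$ along the composite Cartesian square in (\ref{dia:Hecke}). Your write-up is more explicit about the finite-type bookkeeping and the justification of base change in $\Dbc$, but the mathematical content is the same.
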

\begin{proof}
The lemma follows by using base change with respect to the diagram (\ref{dia:Hecke}). 
Namely in the notation of the diagram (\ref{dia:Hecke}), we have 
\begin{align*}
\ev_{2\ast}^{\dag}(\ev_1^{\dag}, \ev_3^{\dag})^{\ast}(- \times \vartheta^{\ast} \iota_{\ast}\eE \otimes \oO(k)) &\cong
\ev_{2\ast}^{\dag}\{(\ev_1^{\dag}, \ev_3^{\dag})^{\ast}(\id \times \vartheta)^{\ast}(\id \times \iota)_{\ast}(- \boxtimes \eE) \otimes (\ev_3^{\dag})^{\ast}\oO(k) \} \\
&\cong \ev_{2\ast}^{\dag}\{ \tau_{\ast}(\pi_1, \pi_3)^{\ast}(-\boxtimes \eE) \otimes (\ev_3^{\dag})^{\ast}\oO(k)\} \\
&\cong \pi_{2\ast}\{(\pi_1, \pi_3)^{\ast}(-\boxtimes \eE) \otimes \lL^k\}. 
\end{align*}
The above isomorphisms immediately implies the commutativity of the 
diagram (\ref{descend:PT2}). 
\end{proof}

\subsection{The descriptions of the derived stack $\fH ecke(v_{\bullet})$}
In this subsection, we give two descriptions of the derived
stack $\fH ecke(v_{\bullet})$
as projectivizations of perfect objects over quasi-smooth 
derived stacks. 
These descriptions immediately show that both of 
the morphisms $\pi_1$, $\pi_2$ are quasi-smooth and proper, 
and these facts will be required to construct the functor (\ref{mu:opposite}). 
The descriptions here will be also used to compute the commutators in 
the next section. 
We first prove some lemmas. 

\begin{lem}\label{lem:diaHecke}
The stack $\fH ecke(v_{\bullet})$
parametrizes diagrams
\begin{align}\label{dia:Hecke:seq}
\xymatrix{
& \oO_S \ar@{=}[r] \ar[d]_-{\xi_1} & \oO_S \ar[d]_-{\xi_2} & & \\
0 \ar[r] & F_1 \ar[r] & F_2 \ar[r]^-{j} & \oO_x \ar[r] & 0. 
}
\end{align} 
Here the bottom sequence is a non-split exact sequence 
such that $F_1$, $F_2$ are pure one dimensional coherent 
sheaves on $S$ with $[F_i]=v_i$,
and $x \in S$. 
The isomorphisms of the diagrams (\ref{dia:Hecke:seq}) are termwise 
isomorphisms 
which are identities on $\oO_S$ and $\oO_x$. 
\end{lem}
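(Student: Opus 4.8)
The plan is to unwind the two Cartesian squares of diagram~(\ref{dia:Hecke}) defining $\fH ecke(v_{\bullet})$, to combine this with the description of the $T$-points of $\fM_S^{\ext, \dag}(v_{\bullet})$ (diagram~(\ref{diagram:dag})) and with the identification $\fM_S([\pt]) = [\fS/\CC]$, and finally to trade the purity hypothesis on the middle sheaf for the non-splitting hypothesis in the statement. Recall that a $T$-point of $\fM_S^{\ext, \dag}(v_{\bullet})$ is a section $\xi_1 \colon \oO_{S \times T} \to \ffF_1$ together with a fiber sequence $\ffF_1 \to \ffF_2 \to \ffF_3$ of families of coherent sheaves with $[\ffF_i] = v_i$. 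Since $v_3 = [\pt]$, the family $\ffF_3$ is a flat family of length-one sheaves, so geometrically it is $\oO_x$ for some $x \in S$, and $\ffF_1 = \ker(\ffF_2 \to \ffF_3)$ is automatically $T$-flat. The superscript $\pure$ imposes that $\ffF_2$ --- equivalently $\ffF_1$, being a nonzero subsheaf of $\ffF_2$ --- is a flat family of pure one-dimensional sheaves.

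First I would trace the top-left square of~(\ref{dia:Hecke}): it exhibits $\fH ecke(v_{\bullet})$ as the closed derived substack of $\fM_S^{\ext, \dag}(v_{\bullet})^{\pure}$ where the morphism classifying $\ffF_3$ into $\fM_S(v_3) = [\fS/\CC]$ factors through the classical truncation $[S/\CC] \hookrightarrow [\fS/\CC]$ (equivalently, after the lower square, through $\iota \colon S \hookrightarrow \fS$); this is a condition only on the derived structure --- it is automatic for classical test schemes --- and it records the point $x$, i.e. the morphism $\pi_3$. Setting $\xi_2$ to be the composite $\oO_S \xrightarrow{\xi_1} F_1 \hookrightarrow F_2$, which is precisely $\ev_2^{\dag}$ applied to the diagram (cf.~(\ref{dia:MSdag})), a point of $\fH ecke(v_{\bullet})$ is exactly the diagram~(\ref{dia:Hecke:seq}). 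The groupoid structure is inherited: $\oO_S$ carries no automorphisms since the section is part of the $\dag$-data, and the identification of $F_2/F_1$ with the chosen point $\oO_x \in \fM_S([\pt])$ forces the induced map on the quotient to be the identity, so morphisms are isomorphisms of $F_1$ and $F_2$ compatible with all the maps and inducing the identity on $\oO_S$ and on $\oO_x$.

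The only step with actual content is to replace the purity of $F_1$ and $F_2$ by the purity of $F_1$ together with non-splitness of the sequence $0 \to F_1 \to F_2 \to \oO_x \to 0$. If the sequence splits, then $\oO_x$ is a zero-dimensional subsheaf of $F_2$, so $F_2$ is not pure; conversely, if $F_1$ is pure and the sequence is non-split, then the maximal zero-dimensional subsheaf $T \subseteq F_2$ meets $F_1$ in a zero-dimensional subsheaf of the pure sheaf $F_1$, hence $T \cap F_1 = 0$ and $T \hookrightarrow F_2/F_1 = \oO_x$; if $T \neq 0$ it maps isomorphically onto $\oO_x$ and splits the sequence, a contradiction, so $F_2$ is pure. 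The same argument runs fiberwise in families, and $\ffF_2$ is $T$-flat as an extension of two flat families. I do not anticipate a genuine obstacle: the proof is a direct unwinding of the fiber products together with this elementary homological equivalence; the one place demanding care is keeping the derived structure on $[\fS/\CC]$ straight from its classical truncation $[S/\CC]$ as they enter through the $\iota$-base change.
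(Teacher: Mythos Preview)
Your proposal is correct and follows essentially the same approach as the paper: the paper also reduces immediately to the equivalence ``$F_2$ pure $\Longleftrightarrow$ $F_1$ pure and the sequence is non-split,'' and proves the nontrivial direction by the same torsion-subsheaf argument (a nonzero $\oO_y \hookrightarrow F_2$ must map nontrivially to $\oO_x$ by purity of $F_1$, hence splits the sequence). Your write-up is simply more explicit in unwinding the Cartesian squares of~(\ref{dia:Hecke}) and in tracking the groupoid structure, which the paper leaves implicit.
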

\begin{proof}
From the definition of $\fH ecke(v_{\bullet})$, 
it is enough to show that 
for a diagram (\ref{dia:Hecke:seq}), the sheaf $F_2$ is 
pure if and only if $F_1$ is pure and the bottom sequence is 
non-split. 
The only if direction is obvious. 
Suppose that $F_1$ is pure and the bottom sequence is non-split. 
If $F_2$ is not pure, 
then there is a point $y \in S$ and a non-zero map 
$i \colon \oO_y \to F_2$. 
As $F_1$ is pure, we must have $y=x$.
Moreover the composition
\begin{align*}
\oO_x \stackrel{i}{\to}
 F_2 \stackrel{j}{\twoheadrightarrow} \oO_x
\end{align*}
must be non-zero, as otherwise there is a non-zero map 
$\oO_x \to F_1$ which contradicts to the purity of $F_1$. 
Therefore the bottom sequence of (\ref{dia:Hecke:seq}) splits, so 
a contradiction. 
\end{proof}
Let $\mathbb{D}_S$ be the dualizing functor
\begin{align*}
\mathbb{D}_S=\hH om_{S}(-, \oO_S) \colon 
\Dbc(S)^{\mathrm{op}} \to \Dbc(S).
\end{align*}
Then $\mathbb{D}_S(\Coh(S))$
is another heart of a t-structure on $\Dbc(S)$. 
By the lemma below, 
we can also regard 
$\hH ecke(v_{\bullet})$
as the stack of short exact sequences in $\mathbb{D}_S(\Coh(S))[1]$.

\begin{lem}\label{lem:hecke:seq}
Giving a diagram (\ref{dia:Hecke:seq}) is equivalent to giving a non-split
exact sequence in 
$\mathbb{D}_S(\Coh(S))[1]$
of the form
\begin{align}\label{exact:D}
0 \to \oO_x[-1] \to I_1^{\bullet}[1] \to I_2^{\bullet}[1] \to 0. 
\end{align}
Here $I_i^{\bullet}=(\oO_S \stackrel{\xi_i}{\to} F_i) \in \Dbc(S)$. 
\end{lem}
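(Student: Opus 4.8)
The plan is to push everything through the naive dualizing functor $\mathbb{D}_S=\RHom_S(-,\oO_S)$. Since $S$ is smooth this is a contravariant autoequivalence of $\Dbc(S)$ with $\mathbb{D}_S^2\cong\id$, so it carries distinguished triangles to reversed distinguished triangles and the standard heart $\Coh(S)$ to the heart $\mathbb{D}_S(\Coh(S))$. The first task is to check that the three objects appearing in $(\ref{exact:D})$ really lie in $\mathbb{D}_S(\Coh(S))[1]$. For a point $x\in S$ one has $\mathbb{D}_S(\oO_x)\cong\oO_x[-2]$, hence $\mathbb{D}_S(\oO_x[-1])\cong\oO_x[-1]\in\Coh(S)[-1]$ and thus $\oO_x[-1]\in\mathbb{D}_S(\Coh(S))[1]$. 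For $I^{\bullet}=(\oO_S\stackrel{\xi}{\to}F)$ with $F$ pure one dimensional one has $\mathbb{D}_S(F)=\mathcal{E}xt^{1}_{S}(F,\oO_S)[-1]$ with $\mathcal{E}xt^{1}_{S}(F,\oO_S)$ again pure one dimensional, and applying $\mathbb{D}_S$ to the triangle $I^{\bullet}\to\oO_S\stackrel{\xi}{\to}F$ shows that $G\cneq\mathbb{D}_S(I^{\bullet})$ is an honest coherent sheaf, sitting in a short exact sequence
\[
0\to\oO_S\stackrel{\epsilon}{\to}G\to\mathcal{E}xt^{1}_{S}(F,\oO_S)\to 0 ;
\]
hence $I^{\bullet}\in\mathbb{D}_S(\Coh(S))$ and $I^{\bullet}[1]\in\mathbb{D}_S(\Coh(S))[1]$. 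Along the way I would record the dictionary: the pair $(G,\epsilon)$ determines and is determined by $(F,\xi)$ (one reads off $\Cok(\epsilon)=\mathcal{E}xt^{1}_{S}(F,\oO_S)$, recovers $F$ by applying $\mathbb{D}_S$ again, and recovers $\xi$ from the extension class of the displayed sequence via $\Ext^{1}_{S}(\Cok(\epsilon),\oO_S)\cong\Hom_S(\oO_S,F)$), and under $\mathbb{D}_S$ an $\oO_S$-compatible morphism of pairs $(F_1,\xi_1)\to(F_2,\xi_2)$ corresponds to an $\oO_S$-compatible morphism $G_2\to G_1$, which is an injection with cokernel $\oO_x$ on one side precisely when it is on the other---both being equivalent to $\Cone(I_1^{\bullet}\to I_2^{\bullet})\cong\oO_x[-1]$.

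Granting this dictionary, the forward direction is formal: a diagram $(\ref{dia:Hecke:seq})$ is exactly a morphism of two term complexes $I_1^{\bullet}\to I_2^{\bullet}$ over $\id_{\oO_S}$ whose map on $F$-terms is the inclusion $F_1\hookrightarrow F_2$ with cokernel $\oO_x$; a direct mapping cone computation (or the $3\times 3$-lemma) gives $\Cone(I_1^{\bullet}\to I_2^{\bullet})\cong\oO_x[-1]$, and rotating and shifting by $[1]$ yields the triangle
\[
\oO_x[-1]\to I_1^{\bullet}[1]\to I_2^{\bullet}[1]\to\oO_x ,
\]
whose first three terms lie in the heart $\mathbb{D}_S(\Coh(S))[1]$ by the first step, so that it is a short exact sequence there. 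It is visibly $\oO_S$-compatible, and it is non-split exactly when the bottom row of $(\ref{dia:Hecke:seq})$ is.

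For the converse, starting from an $\oO_S$-compatible non-split sequence $(\ref{exact:D})$ I would shift by $[-1]$ to get a triangle $\oO_x[-2]\to I_1^{\bullet}\stackrel{a}{\to}I_2^{\bullet}\to\oO_x[-1]$ with $p_2\circ a=p_1$, where $p_i\colon I_i^{\bullet}\to\oO_S$ is the canonical morphism with $\Cone(p_i)=F_i$. Applying the octahedral axiom to $I_1^{\bullet}\stackrel{a}{\to}I_2^{\bullet}\stackrel{p_2}{\to}\oO_S$ produces the triangle $\oO_x[-1]\to F_1\to F_2\to\oO_x$, i.e.\ a short exact sequence $0\to F_1\to F_2\to\oO_x\to 0$ of sheaves (forced, since all three terms are sheaves concentrated in degree $0$); the $\oO_S$-compatibility then reinstates the sections $\xi_1,\xi_2$, reproducing $(\ref{dia:Hecke:seq})$, and purity of $F_2$ follows from purity of $F_1$ and non-splitness as in Lemma~\ref{lem:diaHecke}. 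These two procedures are mutually inverse and preserve (non-)splitness, which gives the claim. The delicate point I anticipate is entirely in the first paragraph: verifying that $\mathbb{D}_S(I^{\bullet})$ is a genuine sheaf (so that the terms of $(\ref{exact:D})$ actually belong to the heart $\mathbb{D}_S(\Coh(S))[1]$) and that morphisms, sections, and extension classes correspond $\oO_S$-compatibly under $\mathbb{D}_S$; once this is in place both directions are routine chases with mapping cones and the octahedral axiom.
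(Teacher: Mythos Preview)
Your forward direction matches the paper's: both verify heart membership by computing $\mathbb{D}_S$ on the three terms (your treatment is in fact more explicit, exhibiting $\mathbb{D}_S(I^{\bullet})$ as an honest sheaf sitting in an extension of $\mathcal{E}xt^1_S(F,\oO_S)$ by $\oO_S$), then take the cone of the evident map of two-term complexes and observe that non-splitness is preserved.

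For the converse your argument diverges from the paper's and carries a genuine gap. You assume the morphism $a\colon I_1^{\bullet}\to I_2^{\bullet}$ satisfies $p_2\circ a=p_1$ (your ``$\oO_S$-compatibility''), and then invoke the octahedral axiom. But this compatibility is \emph{not} part of the data of the exact sequence (\ref{exact:D}): one is given the objects $I_i^{\bullet}[1]$ together with an arbitrary epimorphism $a[1]$ in the heart $\mathbb{D}_S(\Coh(S))[1]$ with kernel $\oO_x[-1]$, and there is no a priori reason for $p_2\circ a$ to equal $p_1$. In your dual picture this is the assertion that the inclusion $G_2\hookrightarrow G_1$ carries $\epsilon_2(\oO_S)$ onto $\epsilon_1(\oO_S)$, which fails in general (think of $G_1=\oO_S\oplus\oO_C$ with the inclusion twisted by a unipotent automorphism of $G_1$). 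You flag this as the delicate point but do not prove it, and without it the octahedral step does not produce the required triangle on the $F_i$.

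The paper sidesteps the issue by never looking at $a$. Instead it analyzes the \emph{connecting} morphism $I_2^{\bullet}[1]\to\oO_x$ directly: applying $\RHom_S(-,\oO_x)$ to the triangle $\oO_S\stackrel{\xi_2}{\to}F_2\to I_2^{\bullet}[1]$ gives an inclusion
\[
\Hom(I_2^{\bullet}[1],\oO_x)\hookrightarrow\Hom(F_2,\oO_x)
\]
whose image consists precisely of the maps killing $\mathrm{im}(\xi_2)$. Thus the (nonzero, hence surjective) connecting map is represented by a map of two-term complexes $(\oO_S\to F_2)\to(0\to\oO_x)$, and taking termwise kernels immediately yields the diagram (\ref{dia:Hecke:seq}). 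This argument uses only $\xi_2$ and the connecting morphism, so no compatibility hypothesis on $a$ is required.
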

\begin{proof}
By taking the cone and shift of the diagram (\ref{dia:Hecke:seq}),
we obtain the distinguished triangle 
$\oO_x[-1] \to I_1^{\bullet}[1] \to I_2^{\bullet}[1]$. 
Since we have 
\begin{align*}
\mathbb{D}_S(\oO_x)=\oO_x[-2], \ 
\mathbb{D}_S(F)=\eE xt_S^1(F, \oO_S)[-1], \ 
\mathbb{D}_S(\oO_S)=\oO_S
\end{align*}
where $F$ is a pure one dimensional sheaf, 
each term of 
the sequence (\ref{exact:D}) is an object in 
$\mathbb{D}_S(\Coh(S))[1]$.
Therefore (\ref{exact:D}) is an exact sequence in 
$\mathbb{D}_S(\Coh(S))[1]$. 
Since $F_1$ is pure, the sequence (\ref{exact:D}) is non-split. 

Conversely suppose that a non-split 
exact sequence (\ref{exact:D}) is given. 
We have the distinguished triangle
\begin{align}\label{seq:I3}
\RHom_S(I_2^{\bullet}[1], \oO_x) \to \RHom_S(F_2, \oO_x)
\to \RHom_S(\oO_S, \oO_x)=\mathbb{C}. 
\end{align}
By taking the associated long exact sequence of cohomologies, 
we see that 
the extension class $I_2^{\bullet}[1]
 \to \oO_x$ of the sequence (\ref{exact:D})
is represented by the commutative diagram
\begin{align*}
\xymatrix{
\oO_S \ar[r] \ar[d]_-{\xi_2} & 0 \ar[d] \\
F_2 \ar[r] & \oO_x. 
}
\end{align*}
As the map $I_2^{\bullet}[1] \to \oO_x$ is non-zero, the bottom 
arrow is surjection. By taking the termwise kernels, 
we obtain the diagram (\ref{dia:Hecke:seq}). 
 \end{proof}

For $i=1, 2$, we denote by
\begin{align*}
\fI^{\bullet}(v_i)=(\oO_{\fM_S^{\dag}(v_{i})^{\pure} \times S} \to 
\ffF(v_i)) \in \mathrm{Perf}(\fM_S^{\dag}(v_i)^{\pure} \times S)
\end{align*}
the object associated with the universal pair.  
We first describe the morphism $(\pi_1, \pi_3)$ in the diagram 
(\ref{dia:Hecke}). 

\begin{lem}\label{fiber:pi}
We have an equivalence of derived stacks
\begin{align}\label{equiv:Hecke}
\fH ecke(v_{\bullet}) \stackrel{\sim}{\to}
\mathbb{P}_{\fM_S^{\dag}(v_1)^{\pure} \times S}(\ffF(v_1)^{\vee} \boxtimes \omega_S[1])
\end{align}
such that the projection of the right hand side 
to $\fM_S^{\dag}(v_1)^{\pure} \times S$
is identified with $(\pi_1, \pi_3)$. 
In particular $(\pi_1, \pi_3)$ and $\pi_1$
are quasi-smooth and proper.  
\end{lem}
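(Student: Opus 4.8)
The plan is to identify $\fH ecke(v_\bullet)$ with a projectivization by unwinding the Cartesian squares in (\ref{dia:Hecke}) and combining the relative linear-space description (\ref{MSext:spec}) of the stack of extensions with the diagonal geometry of $\fS$ recorded in Lemma~\ref{lem:trivial}.

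First I would chase the squares in (\ref{dia:Hecke}) to see that $\fH ecke(v_\bullet)$ is the base change of $\fM_S^{\ext, \dag}(v_\bullet)^{\pure}$ along the inclusion $[S/\mathbb{C}^*] \hookrightarrow [\fS/\mathbb{C}^*] = \fM_S([\pt])$ with respect to $\ev_3^{\dag}$. By the Cartesian square defining $\fM_S^{\ext, \dag}$ together with (\ref{MSext:spec}), the morphism $(\ev_1^{\dag}, \ev_3^{\dag})$ exhibits $\fM_S^{\ext, \dag}(v_\bullet)$ as $\mathbb{V}(\mathcal{G}^\vee)$ over $\fM_S^{\dag}(v_1) \times \fM_S([\pt])$, where $\mathcal{G} = \hH om_{p}(\ffF(v_3), \ffF(v_1)[1])$ and the universal length one sheaf $\ffF(v_3)$ carries $\mathbb{C}^*$-weight $1$ with respect to the $B\mathbb{C}^*$-factor of $\fM_S([\pt])$. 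Restricting, $\fH ecke(v_\bullet)$ becomes an open substack of $\mathbb{V}(\mathcal{G}'^\vee)$ over $\fM_S^{\dag}(v_1)^{\pure} \times [S/\mathbb{C}^*]$, with $\mathcal{G}'$ the restriction of $\mathcal{G}$.

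Next I would compute $\mathcal{G}'$. By (the proof of) Lemma~\ref{lem:trivial}, the universal sheaf restricts on $S \times [S/\mathbb{C}^*]$ to a weight one twist of the structure sheaf $\oO_\Delta$ of the diagonal $\Delta \subset S \times S$; using $\eE xt^2_{S\times S}(\oO_\Delta, \oO) \cong \oO_\Delta \otimes \omega_S^{-1}$ and the projection formula one gets $\mathcal{G}' \simeq \ffF(v_1) \boxtimes \omega_S^{-1}[-1]$ on $\fM_S^{\dag}(v_1)^{\pure} \times S$ up to the $\mathbb{C}^*$-weight, hence $\mathcal{G}'^\vee \simeq \ffF(v_1)^\vee \boxtimes \omega_S[1]$. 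Because the $B\mathbb{C}^*$-factor of $[S/\mathbb{C}^*]$ is precisely the datum against which the weight one linear coordinates of this $\mathbb{V}$ are graded, re-expressing things over $\fM_S^{\dag}(v_1)^{\pure} \times S$ identifies $\mathbb{V}(\mathcal{G}'^\vee)$ with $[\mathbb{V}_{\fM_S^{\dag}(v_1)^{\pure}\times S}(\ffF(v_1)^\vee \boxtimes \omega_S[1])/\mathbb{C}^*]$, and deleting the zero section gives $\mathbb{P}(\ffF(v_1)^\vee \boxtimes \omega_S[1])$ in the sense of Section~2. It then remains to check that the open substack $\fH ecke(v_\bullet)$ is exactly the complement of the zero section, which is Lemma~\ref{lem:diaHecke}: over $\fM_S^{\dag}(v_1)^{\pure}$ the purity of $F_2$ is equivalent to non-splitness of the bottom row of (\ref{dia:Hecke:seq}), i.e. to non-vanishing of the extension class, i.e. to not lying on the zero section. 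The identification of the projection with $(\pi_1, \pi_3)$ is read off from (\ref{dia:Hecke}), giving (\ref{equiv:Hecke}).

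Finally, for the quasi-smoothness and properness I would examine amplitudes. At a closed point $((F_1, \xi_1), x)$ the sheaf $F_1$ is pure one dimensional on the smooth surface $S$, hence has projective dimension $1$, so $Li_x^* F_1$ has amplitude $[-1,0]$ and $\ffF(v_1)^\vee \boxtimes \omega_S[1]$ has amplitude $[-1,0]$ there; by the criterion for $\mathbb{P}(-)$ recalled in Section~2, $(\pi_1,\pi_3)$ is quasi-smooth and proper. Composing with the smooth proper projection $\fM_S^{\dag}(v_1)^{\pure} \times S \to \fM_S^{\dag}(v_1)^{\pure}$ then shows $\pi_1$ is quasi-smooth and proper as well. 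The step I expect to be most delicate is the weight bookkeeping through this chain of base changes and relative spectra: making sure the passage ``$\mathbb{V}$ minus the zero section modulo $\mathbb{C}^*$'' yields exactly $\mathbb{P}(\ffF(v_1)^\vee \boxtimes \omega_S[1])$ rather than a line-bundle twist of it, and that the purity locus agrees on the nose with the complement of the zero section.
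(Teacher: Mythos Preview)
Your proposal is correct and follows essentially the same route as the paper: describe $\fH ecke(v_\bullet)$ via the linear-space presentation (\ref{MSext:spec}) pulled back along $[S/\mathbb{C}^*]\hookrightarrow\fM_S([\pt])$, identify the resulting perfect object on $\fM_S^{\dag}(v_1)^{\pure}\times S$ by a duality computation involving $\oO_\Delta$, invoke Lemma~\ref{lem:diaHecke} to match the pure locus with the complement of the zero section, and finish with the amplitude check. The only cosmetic difference is that the paper computes $\hH om_{p_{13}}(p_{23}^*\oO_\Delta, p_{12}^*\ffF(v_1)[1])^\vee$ via Grothendieck duality for the projection $p_{13}$, whereas you compute the Hom directly using $\oO_\Delta^\vee\simeq\Delta_*\omega_S^{-1}[-2]$; these are the same calculation, and your weight bookkeeping concern is handled in the paper exactly as you suggest (the $\mathbb{C}^*$ acts with weight one on $\oO_\Delta$, so quotienting $\mathbb{V}\setminus 0$ gives $\mathbb{P}$ on the nose).
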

\begin{proof}
We consider the following diagram
\begin{align*}
\xymatrix{
\fM_S^{\dag}(v_1)^{\pure} \times S \ar@<-0.3ex>@{^{(}->}[r]^-{(\id, \Delta)}
 \ar[d] 
& \fM_S^{\dag}(v_1)^{\pure} \times S \times S \ar[r]^-{p_{12}} 
\ar[d]_-{p_{23}} 
\ar[rd]_-{p_{13}} &
\fM_S^{\dag}(v_1)^{\pure} \times S \\
S \ar@<-0.3ex>@{^{(}->}[r]^{\Delta} & S \times S & \fM_S^{\dag}(v_1)^{\pure} \times S. 
}
\end{align*}
Here $\Delta$ is the diagonal and 
$p_{ij}$ are projections onto the corresponding factors. 
By the description (\ref{MSext:spec}) and Lemma~\ref{lem:diaHecke}, 
we see that 
the derived stack $\fH ecke(v_{\bullet})$ together with 
$(\pi_1, \pi_3)$ are identified with 
the following
\begin{align}\label{stack:VM}
\left[\left(\mathbb{V}(
\hH om_{p_{13}}(p_{23}^{\ast}\oO_{\Delta}, p_{12}^{\ast}\ffF(v_1)[1])^{\vee})
\setminus 0_{\fM_S^{\dag}(v_1)^{\pure} \times S}\right)
/\mathbb{C}^{\ast}\right]
\to \fM_S^{\dag}(v_1)^{\pure} \times S. 
\end{align}
Here $\mathbb{C}^{\ast}$ acts on $\oO_{\Delta}$ with weight one. 
We have the isomorphisms
\begin{align}\notag
\hH om_{p_{13}}(p_{23}^{\ast}\oO_{\Delta}, p_{12}^{\ast}\ffF(v_1)[1])^{\vee}
&\cong \hH om_{p_{13}}(p_{12}^{\ast}\ffF(v_1), (\id, \Delta)_{\ast}(\oO\boxtimes \omega_S[1])) \\
&\cong  \notag
p_{13\ast}(\id, \Delta)_{\ast}\{((\id, \Delta)^{\ast}p_{12}^{\ast}
\ffF(v_1)^{\vee}) \boxtimes \omega_S[1] \} \\
\notag
&\cong \ffF(v_1)^{\vee} \boxtimes \omega_S[1]. 
\end{align}
Here the first isomorphism follows from the 
Grothendieck duality. 
Therefore we have the equivalence (\ref{equiv:Hecke}).

Let us take a point 
$(F_1, \xi_1)$
of $\fM_S^{\dag}(v_1)^{\pure}$ 
and a point $x \in S$.
By the equivalence (\ref{equiv:Hecke}), the fiber of 
$(\pi_1, \pi_3)$ at the point
$((F_1, \xi_1), x))$ is 
given by 
\begin{align*}
(\pi_1, \pi_3)^{-1}((F_1, \xi_1), x))
=\mathbb{P}(F_1^{\vee}|_{x} \otimes \omega_S[1]).
\end{align*}
Since $F_1$ is pure one dimensional, 
$F_1^{\vee}|_{x} \otimes \omega_S[1]$
has cohomological amplitude $[-1, 0]$. 
Therefore the morphism $(\pi_1, \pi_3)$ is 
quasi-smooth and proper. 
Then $\pi_1$ is also quasi-smooth and proper, 
as $S$ is smooth projective. 
\end{proof}

By replacing $\pi_1$ with $\pi_2$ in the diagram (\ref{dia:Hecke1.5}), 
we also obtain the diagram
\begin{align}\label{dia:Hecke2}
\xymatrix{
\fH ecke(v_{\bullet}) \ar@{}[dr]|\square
\ar@/^30pt/[rr]^-{\pi_1} \ar[r]^-{\tau} 
\ar@/_60pt/[dd]_-{(\pi_2, \pi_3)} \ar[d] 
& \fM_S^{\ext, \dag}(v_{\bullet})^{\pure} \ar[r]^-{\ev_1^{\dag}} 
\ar[d]_-{(\ev_2^{\dag}, \ev_3^{\dag})} & \fM_S^{\dag}(v_1)^{\pure} \\
\fM_S^{\dag}(v_2)^{\pure} \times [S/\mathbb{C}^{\ast}] \ar[d] \ar[r] 
\ar@{}[dr]|\square
& \fM_S^{\dag}(v_2)^{\pure} \times [\fS/\mathbb{C}^{\ast}] 
\ar[d]_-{\id \times \vartheta} & \\
\fM_S^{\dag}(v_2)^{\pure} \times S 
\ar[r]_-{\id \times \iota} & \fM_S^{\dag}(v_2)^{\pure} \times \fS
}
\end{align}
By Lemma~\ref{fiber:pi}, $\pi_1$ is quasi-smooth and proper. 
We also investigate the morphism 
$(\pi_2, \pi_3)$. 

\begin{lem}\label{lem:proper}
We have an equivalence of derived stacks
\begin{align}\label{equiv:Hecke2}
\fH ecke(v_{\bullet}) \stackrel{\sim}{\to}
\mathbb{P}_{\fM_S^{\dag}(v_2)^{\pure} \times S}
(\fI^{\bullet}(v_2)[1])
\end{align}
such that the projection of the right hand side 
to $\fM_S^{\dag}(v_2)^{\pure} \times S$
is identified with $(\pi_2, \pi_3)$. 
In particular $(\pi_2, \pi_3)$ and $\pi_3$
are quasi-smooth and proper. 
\end{lem}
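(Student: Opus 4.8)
The plan is to run the argument of the proof of Lemma~\ref{fiber:pi}, but now organised around the quotient $\fI^{\bullet}(v_2)[1]$ rather than around the sub. First I would invoke Lemma~\ref{lem:hecke:seq}: giving a $T$-point of $\fH ecke(v_{\bullet})$ amounts to giving a $T$-point $(\ffF_2,\xi_2)$ of $\fM_S^{\dag}(v_2)^{\pure}$, a $T$-point of $[\fS/\mathbb{C}^{\ast}]$ (that is, a point $x$ of $S$ together with the scaling automorphisms of $\oO_x$), and a non-split exact sequence in $\mathbb{D}_S(\Coh(S))[1]$ of the form $0 \to \oO_x[-1] \to \fI_1^{\bullet}[1] \to \fI^{\bullet}(v_2)[1] \to 0$, where $\fI_1^{\bullet}=(\oO_{S\times T}\stackrel{\xi_1}{\to}\ffF_1)$; by the triangle (\ref{seq:I3}) this last datum is equivalent to a non-zero morphism $\fI^{\bullet}(v_2)[1] \to \oO_x$, well defined up to the scaling of $\oO_x$. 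At this stage I would note that the purity of $\ffF_1$ required in the definition of $\fH ecke(v_{\bullet})$ is automatic over $\fM_S^{\dag}(v_2)^{\pure}$: a non-zero subsheaf of a pure one dimensional sheaf is pure, and the sequence in (\ref{dia:Hecke:seq}) cannot split since $\ffF_2$ is pure (see also the proof of Lemma~\ref{lem:diaHecke}).

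Next I would realise this family relatively over $\fM_S^{\dag}(v_2)^{\pure}\times S$. Exactly as in the proof of Lemma~\ref{fiber:pi}, form $\fM_S^{\dag}(v_2)^{\pure}\times S\times S$, with the diagonal $\Delta\subset S\times S$ (the second factor being the `surface'), the projections $p_{12},p_{23},p_{13}$ and the closed immersion $(\id,\Delta)$; then, by the reasoning that produces the description (\ref{MSext:spec}) of stacks of extensions, now applied with $\fI^{\bullet}(v_2)[1]$ in the role of $\ffF(v_3)$ and $\oO_x$ in the role of $\ffF(v_1)[1]$, together with Lemma~\ref{lem:hecke:seq}, the stack $\fH ecke(v_{\bullet})$ together with the morphism $(\pi_2,\pi_3)$ is identified with
\begin{align*}
\left[\left(\mathbb{V}\bigl(\hH om_{p_{13}}(p_{12}^{\ast}\fI^{\bullet}(v_2)[1],\,p_{23}^{\ast}\oO_{\Delta})^{\vee}\bigr)\setminus 0_{\fM_S^{\dag}(v_2)^{\pure}\times S}\right)/\mathbb{C}^{\ast}\right]\longrightarrow \fM_S^{\dag}(v_2)^{\pure}\times S,
\end{align*}
where $\mathbb{C}^{\ast}$ acts on $\oO_{\Delta}$ with weight one. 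Using $p_{23}^{\ast}\oO_{\Delta}=(\id,\Delta)_{\ast}\oO_{\fM_S^{\dag}(v_2)^{\pure}\times S}$, $p_{13}\circ(\id,\Delta)=\id$, and the adjunction identity $\hH om(-,(\id,\Delta)_{\ast}\oO)\cong(\id,\Delta)_{\ast}\bigl((\id,\Delta)^{\ast}(-)\bigr)^{\vee}$ (note that, in contrast with Lemma~\ref{fiber:pi}, no $\omega_S$-twist appears here, because we are mapping \emph{into} $\oO_{\Delta}$ rather than out of it), one obtains $\hH om_{p_{13}}(p_{12}^{\ast}\fI^{\bullet}(v_2)[1],p_{23}^{\ast}\oO_{\Delta})\cong(\fI^{\bullet}(v_2)[1])^{\vee}$, whence the equivalence (\ref{equiv:Hecke2}), compatibly with $(\pi_2,\pi_3)$.

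Finally I would deduce quasi-smoothness and properness from an amplitude estimate. For a $\mathbb{C}$-point $p=((F_2,\xi_2),x)$ of $\fM_S^{\dag}(v_2)^{\pure}\times S$, the derived restriction satisfies $\fI^{\bullet}(v_2)[1]|_p\cong\Cone\bigl(k(x)\stackrel{\xi_2|_x}{\to}F_2\dotimes k(x)\bigr)$, where $k(x)$ is the residue field; since $F_2$ is pure one dimensional it is Cohen--Macaulay of projective dimension one, so $F_2\dotimes k(x)$ has cohomological amplitude $[-1,0]$, and hence so does $\fI^{\bullet}(v_2)[1]|_p$ at every point. By the properness criterion for the projectivization $\mathbb{P}(-)$ recalled earlier (cohomological amplitude $[-1,0]$ at every point), $\mathbb{P}_{\fM_S^{\dag}(v_2)^{\pure}\times S}(\fI^{\bullet}(v_2)[1])\to\fM_S^{\dag}(v_2)^{\pure}\times S$ is quasi-smooth and proper, hence so is $(\pi_2,\pi_3)$; composing with the smooth and proper projection $\fM_S^{\dag}(v_2)^{\pure}\times S\to\fM_S^{\dag}(v_2)^{\pure}$ (here $S$ is smooth projective) shows that $\pi_2$ is quasi-smooth and proper as well. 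The step I expect to be the main obstacle is the middle one: relativising the extension datum correctly over the \emph{derived} base $\fM_S^{\dag}(v_2)^{\pure}\times S$ while keeping track of the $\mathbb{C}^{\ast}$-equivariance, so that after the Grothendieck-duality identification the cohomological shift lands exactly on $\fI^{\bullet}(v_2)[1]$ and not on a twist or a dual of it; once the diagonal correspondence above is in place, this is the same computation as in the proof of Lemma~\ref{fiber:pi}.
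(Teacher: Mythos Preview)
Your proposal is correct and follows essentially the same approach as the paper: both invoke Lemma~\ref{lem:hecke:seq} to identify $\fH ecke(v_{\bullet})$ with the projectivization via the diagonal correspondence on $\fM_S^{\dag}(v_2)^{\pure}\times S\times S$, simplify the relative $\hH om$ to obtain $\fI^{\bullet}(v_2)[1]$, and then verify the cohomological amplitude $[-1,0]$ pointwise to conclude quasi-smoothness and properness. The only cosmetic difference is that the paper checks the amplitude via the distinguished triangle $F_2|_x \to I_2^{\bullet}[1]|_x \to \oO_x[1]$ rather than by appealing to the Cohen--Macaulay property of $F_2$ directly.
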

\begin{proof} 
We consider the following diagram
\begin{align*}
\xymatrix{
\fM_S^{\dag}(v_2)^{\pure} \times S \ar@<-0.3ex>@{^{(}->}[r]^-{(\id, \Delta)}
 \ar[d] 
& \fM_S^{\dag}(v_2)^{\pure} \times S \times S \ar[r]^-{q_{12}} 
\ar[d]_-{q_{23}} 
\ar[rd]_-{q_{13}} &
\fM_S^{\dag}(v_2)^{\pure} \times S \\
S \ar@<-0.3ex>@{^{(}->}[r]^{\Delta} & S \times S & \fM_S^{\dag}(v_2)^{\pure} \times S. 
}
\end{align*}
Here 
$q_{ij}$ are projections onto the corresponding factors. 
By Lemma~\ref{lem:hecke:seq}, 
a similar argument of (\ref{stack:VM}) shows that 
the derived stack $\fH ecke(v_{\bullet})$ together with 
$(\pi_2, \pi_3)$ are identified with 
the following
\begin{align*}
\left[\left(\mathbb{V}(\hH om_{q_{13}}(q_{12}^{\ast}\fI^{\bullet}(v_2)[1], 
q_{23}^{\ast}\oO_{\Delta})^{\vee})
\setminus 0_{\fM_S^{\dag}(v_2)^{\pure} \times S}\right)
/\mathbb{C}^{\ast}\right]
\to \fM_S^{\dag}(v_2)^{\pure} \times S. 
\end{align*}
Here $\mathbb{C}^{\ast}$ acts on $\oO_{\Delta}$ with weight one. 
We have the isomorphisms
\begin{align*}
\hH om_{q_{13}}(q_{12}^{\ast}\fI^{\bullet}(v_2)[1], 
q_{23}^{\ast}\oO_{\Delta})^{\vee}
&\cong 
\hH om_{q_{13}}(q_{12}^{\ast}\fI^{\bullet}(v_2)[1], 
(\id, \Delta)_{\ast}\oO_{\fM_S^{\dag}(v_2)^{\pure} \times S})^{\vee} \\
&\cong \hH om(\fI^{\bullet}(v_2)[1], \oO_{\fM_S^{\dag}(v_2)^{\pure} \times S})^{\vee} \\
&\cong \fI^{\bullet}(v_2)[1]. 
\end{align*}
Therefore we have the equivalence (\ref{equiv:Hecke2}). 

Let us take a point 
$(F_2, \xi_2)$
of $\fM_S^{\dag}(v_2)^{\pure}$ 
and a point $x \in S$.
By the equivalence (\ref{equiv:Hecke2}), the fiber of 
$(\pi_2, \pi_3)$ at the point
$((F_2, \xi_2), x))$ is 
given by 
\begin{align*}
(\pi_2, \pi_3)^{-1}((F_2, \xi_2), x))
=\mathbb{P}(I_2^{\bullet}[1]|_{x}).
\end{align*}
We have the distinguished triangle
\begin{align*}
F_2|_{x} \to I_2^{\bullet}[1]|_{x} \to \oO_x[1]. 
\end{align*}
Since $F_2$ is pure one dimensional, 
$F_2|_{x}$ has cohomological amplitude $[-1, 0]$.
Therefore $I_2^{\bullet}[1]|_{x}$
has also cohomological amplitude $[-1, 0]$, hence 
$(\pi_2, \pi_3)$ is 
quasi-smooth and proper. 
Then $\pi_2$ is also quasi-smooth and proper, 
as $S$ is smooth projective. 
\end{proof}

\subsection{The stack of pure objects in $\aA_X$}
Here 
we give a refinement of Lemma~\ref{lem:PT}
which is required to construct the functors (\ref{mu:opposite}). 
For $v \in N_{\le 1}(S)$, we 
define the following open substack of 
$\mM_X^{\dag}(v)$
\begin{align*}
\mM_X^{\dag}(v)^{\pure} \cneq 
t_0(\Omega_{\fM_S^{\dag}(v)^{\pure}}(v)[-1]) \subset \mM_X^{\dag}(v).
\end{align*}
Here we have used the isomorphism (\ref{isom:dag}). 
Note that through the equivalence 
$\aA_X^{\le 1} \stackrel{\sim}{\to} \bB_S^{\le 1}$ in Theorem~\ref{thm:D026}, 
the stack 
 $\mM_X^{\dag}(v)^{\pure}$ parametrizes diagrams (\ref{dia:BS2})
such that $\vV=\oO_S$ and $F$ is a pure one dimensional sheaf with $[F]=v$. 
We have the following lemma. 

\begin{lem}\label{lem:evP}
For an exact sequence in $\aA_X$ of the form
\begin{align*}
0 \to E_1 \to E_2 \to \oO_x[-1] \to 0
\end{align*}
suppose that $E_i$ 
corresponds to points of $\mM_X^{\dag}(v_i)^{\pure}$. 
Then $E_1$ is a PT stable pair 
if and only if $E_2$ is a PT stable pair. 
\end{lem}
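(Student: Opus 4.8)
The statement to prove is Lemma~\ref{lem:evP}: for an exact sequence $0 \to E_1 \to E_2 \to \oO_x[-1] \to 0$ in $\aA_X$ with $E_1, E_2$ corresponding to points of $\mM_X^{\dag}(v_i)^{\pure}$, the pair $E_1$ is PT stable iff $E_2$ is PT stable. The natural approach is to run the same style of argument as in Lemma~\ref{lem:PT}, but now upgrade it to an \emph{equivalence}. One direction --- $E_2$ PT stable $\Rightarrow$ $E_1$ PT stable --- is essentially already Lemma~\ref{lem:PT} (the case of a length-one zero-dimensional sheaf $Q=\oO_x$), so I would simply invoke it. The content is the converse: assuming $E_1$ is a PT stable pair, show $E_2$ is too.

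First I would write $E_i = (\oO_{\overline{X}} \stackrel{s_i}{\to} F_i)$ with $F_i \in \Coh_{\le 1}(X)$; the hypothesis that $E_i$ defines a point of $\mM_X^{\dag}(v_i)^{\pure}$ means exactly that $F_i$ is pure one-dimensional. Taking cohomology sheaves of the triangle $E_1 \to E_2 \to \oO_x[-1]$ (in the standard t-structure on $\Dbc(\overline{X})$, under which $\oO_x[-1]$ has $\hH^1 = \oO_x$), one gets $\hH^0(E_1) = \hH^0(E_2) =: \oO_{\overline X}$ up to the section data and a short exact sequence of $\hH^1$'s; more usefully, rotating the triangle gives $F_1[-1] \to F_2[-1] \to \oO_x[-1]$ coming from an exact sequence $0 \to F_1 \to F_2 \to \oO_x \to 0$ of coherent sheaves on $\overline X$ (supported on $X$), together with the compatibility $s_2 = (F_1\hookrightarrow F_2)\circ s_1$ read off from the map to $\oO_{\overline X}$. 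Since $F_1$ is pure one-dimensional and $F_1 \subset F_2$ with quotient $\oO_x$, purity of $F_2$ is the only thing that could fail: a nonzero map $\oO_y \to F_2$ would force $y = x$ (as $\Hom(\oO_y, F_1)=0$) and then the composite $\oO_x \to F_2 \twoheadrightarrow \oO_x$ would have to be nonzero --- otherwise $\oO_x$ factors through $F_1$, contradicting purity of $F_1$ --- which splits the sequence $0 \to F_1 \to F_2 \to \oO_x \to 0$; but a split sequence would make $\oO_x$ a direct summand of $F_2$, again killing purity unless... --- this is exactly where I must be careful. The cleanest way to rule out the split case: if $0 \to F_1 \to F_2 \to \oO_x \to 0$ splits, then $F_2 = F_1 \oplus \oO_x$ is not pure, so $E_2$ does \emph{not} lie in $\mM_X^{\dag}(v_2)^{\pure}$, contradicting the hypothesis on $E_2$. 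Hence $F_2$ is pure. Finally, $s_2 = \iota \circ s_1$ where $\iota\colon F_1 \hookrightarrow F_2$; since $s_1$ is surjective in dimension one (generically $F_1$ is generated by $s_1$) and $\iota$ is an isomorphism away from the point $x$, $s_2$ is surjective in dimension one as well. Therefore $(F_2, s_2)$ is PT stable.

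Dually, for the direction I am borrowing from Lemma~\ref{lem:PT}, the same diagram shows $F_1 = \ker(F_2 \twoheadrightarrow \oO_x)$ is a subsheaf of the pure one-dimensional $F_2$, hence pure, and $s_1$ is surjective in dimension one since $s_2$ is and the cokernel-of-$s$ data only changes at $x$. One subtlety worth flagging: one should confirm that an arbitrary exact sequence in $\aA_X$ of the prescribed form with the purity constraints on $E_1, E_2$ does give rise to an honest short exact sequence $0 \to F_1 \to F_2 \to \oO_x \to 0$ of \emph{sheaves} (not just a triangle with a possibly nonzero connecting map) --- this follows because $\Hom(\oO_x[-1], F_2[-1]) = \Hom(\oO_x, F_2)$ and the $\hH^0$-parts match the trivialized section $\oO_{\overline X}$, so the only room is in degree $\le 1$, already accounted for.

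**Main obstacle.** The crux is the converse direction and specifically excluding the \emph{split} extension: the argument that a splitting violates purity of $F_2$ uses the hypothesis ``$E_2$ lies in $\mM_X^{\dag}(v_2)^{\pure}$'' in an essential way, so the statement is genuinely an ``iff'' only relative to these purity constraints --- without them, $E_1$ PT stable does not force $E_2$ pure. I expect the bulk of the write-up to be this careful bookkeeping of the four-term cohomology diagram (as in the proof of Lemma~\ref{lem:PT}, with its explicit commutative square) and the verification that $s_1, s_2$ correspond under $\iota$, so that surjectivity in dimension one transfers in both directions; none of this is deep, but it must be stated precisely because ``surjective in dimension one'' and ``purity'' are exactly the two conditions at stake.
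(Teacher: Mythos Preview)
Your proposal is correct and follows essentially the same route as the paper: invoke Lemma~\ref{lem:PT} for the ``if'' direction, and for the converse write $E_i=(\oO_{\overline X}\stackrel{s_i}{\to}F_i)$, extract the short exact sequence $0\to F_1\to F_2\to\oO_x\to 0$, and transfer surjectivity-in-dimension-one from $s_1$ to $s_2$ since the pairs agree away from $x$.

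One organizational remark: your extended analysis of the split case to argue purity of $F_2$ is circular---you already asserted, correctly, that $E_2\in\mM_X^{\dag}(v_2)^{\pure}$ forces $F_2$ pure, so there is nothing further to prove there. The paper's write-up is slightly leaner on this point: rather than presupposing the two-term form of $E_2$, it uses the vanishing $\Hom(\oO_x[-2],\oO_{\overline X})=0$ to factor $E_1\to\oO_{\overline X}$ through $E_2$, then takes cones to produce $F_2$ and the sequence $0\to F_1\to F_2\to\oO_x\to 0$ simultaneously; purity of $F_2$ is then a single sentence invoking the hypothesis on $E_2$. This also cleanly handles the ``subtlety'' you flag about getting an honest short exact sequence of sheaves rather than just a triangle.
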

\begin{proof}
The if direction is proved in Lemma~\ref{lem:PT}, 
so we prove the only if direction. 
Suppose that $E_1=(\oO_{\overline{X}} \stackrel{s_1}{\to} F_1)$ for a PT stable pair 
$(F_1, s_1)$ on $X$. 
Then we have the following diagram
\begin{align*}
\xymatrix{
  & F_1[-1] \ar[d] &   \\
\oO_x[-2] \ar[r] \ar[rd]_-{0} & E_1 \ar[r] \ar[d] & E_2 \\
& \oO_{\overline{X}}.   & 
}
\end{align*}
Since $\Hom(\oO_x[-2], \oO_{\overline{X}})=0$, 
the map $E_1 \to \oO_{\overline{X}}$ factors through 
$E_1 \to E_2 \to \oO_{\overline{X}}$. 
By taking the cones, we obtain distinguished triangles
\begin{align*}
F_2[-1] \to E_2 \to \oO_{\overline{X}}, \ 
\oO_x[-2] \to F_1[-1] \to F_2[-1] 
\end{align*}
for some $F_2$. 
By the second sequence, 
the object $F_2$ is a one dimensional sheaf. 
Then by the first sequence, 
$E_2=(\oO_{\overline{X}} \stackrel{s_2}{\to} F_2)$
for a pair $(F_2, s_2)$. 
As $E_2$ is a point of 
$\mM_X^{\dag}(v_2)^{\pure}$, the sheaf $F_2$ 
is a pure one dimensional sheaf. 
Moreover $(F_2, s_2)$ is isomorphic to $(F_1, s_1)$
outside $x$, so $s_2$ is also surjective in 
dimension one. 
Therefore $(F_2, s_2)$ is a PT stable pair. 
\end{proof}

The above lemma is rephrased in terms of stack of exact sequences in 
$\aA_X$ in the following way. 
We take $v_{\bullet} \in N_{\le 1}(S)^{\times 3}$ and, 
using the notation of the diagram (\ref{dia:Xext:dag}), 
we set 
\begin{align*}
\mM_X^{\ext, \dag}(v_{\bullet})^{\pure} \cneq 
(\ev_2^{X, \dag})^{-1}(\mM_X^{\dag}(v_2)^{\pure})
\subset \mM_X^{\ext, \dag}(v_{\bullet}).
\end{align*}
Since any subsheaf of a pure one dimensional sheaf is pure one dimensional, 
the diagram (\ref{dia:Xext:dag}) restricts to the diagram 
\begin{align}\label{dia:Xext:dag2}
\xymatrix{
\mM_X^{\ext, \dag}(v_{\bullet})^{\pure} 
\ar[d]_-{(\ev_1^{X, \dag}, \ev_3^{X, \dag})} \ar[r]^-{\ev_2^{X, \dag}} 
& \mM_X^{\dag}(v_2)^{\pure} \\
\mM_X^{\dag}(v_1)^{\pure} \times \mM_X(v_3). &
}
\end{align}
We also define the following conical closed substack of 
$\mM_X^{\dag}(v)^{\pure}$
\begin{align*}
\zZ_{P\us}(v)^{\pure} \cneq \zZ_{P\us}(v) \cap \mM_X^{\dag}(v)^{\pure}
\subset \mM_X^{\dag}(v)^{\pure}.
\end{align*}
Then for $v_3=[\pt]$ in the diagram (\ref{dia:Xext:dag2}), 
the result of Lemma~\ref{lem:evP}
implies the following identity
\begin{align}\label{evX:id}
(\ev_1^{X, \dag}, \ev_3^{X, \dag})^{-1}(\zZ_{P\us}(v_1)^{\pure} \times \mM_X(v_3)) =
(\ev_2^{X, \dag})^{-1}(\zZ_{P\us}(v_2)^{\pure}). 
\end{align}

\subsection{The annihilator functors}

Finally in this section, we construct the 
annihilator functors (\ref{mu:opposite}). 
Recall that in (\ref{hecke:fin}), 
we took derived open substacks 
$\fM_S^{\dag}(v_i)^{\fin} \subset \fM_S^{\dag}(v_i)^{\pure}$
of finite type. 
Here we take another derived open substack 
$\fM_S^{\dag}(v_2)^{\fin'} \subset \fM_S^{\dag}(v_2)^{\pure}$
of finite type, 
which contains $\fM_S^{\dag}(v_2)^{\fin}$ and satisfies 
\begin{align*}
\fH ecke(v_{\bullet})^{\fin'} \cneq 
\pi_1^{-1}(\fM_S(v_1)^{\fin}) \subset \pi_2^{-1}(\fM_S(v_2)^{\fin'}). 
\end{align*}
Then the diagram (\ref{dia:Hecke2}) restricts to the diagram
\begin{align*}
\xymatrix{
\fH ecke(v_{\bullet})^{\fin'} \ar[r]^-{\pi_1} \ar[d]_-{(\pi_2, \pi_3)} & \fM_S^{\dag}(v_1)^{\fin} \\
\fM_S^{\dag}(v_2)^{\fin'} \times S. 
}
\end{align*}
By Lemma~\ref{fiber:pi} and Lemma~\ref{lem:proper},
the vertical arrow is quasi-smooth and the horizontal arrow is proper. 
Therefore for each 
$\eE \in \Dbc(S)$ and $k \in \mathbb{Z}$, 
we have the functor
\begin{align}\label{funct:-}
\pi_{1!}(\pi_2^{\ast}(-) \otimes \pi_3^{\ast} \eE \otimes \lL^k) \colon 
\Dbc(\fM_S^{\dag}(\beta, n+1)^{\fin'}) \to 
\Dbc(\fM_S^{\dag}(\beta, n)^{\fin}). 
\end{align}
\begin{lem}\label{lem:descend2}
The functor (\ref{funct:-})
sends 
$\cC_{\zZ_{P\us}(\beta, n+1)^{\fin'}}$ to $\cC_{\zZ_{P\us}(\beta, n)^{\fin}}$.
\end{lem}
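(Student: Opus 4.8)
The plan is to realise the functor~(\ref{funct:-}) as a Fourier--Mukai transform of the type treated in Section~2 and then appeal to Proposition~\ref{prop:FM} (in its open-substack form, Lemma~\ref{lem:restrict}), reducing the statement to an inclusion of conical closed substacks in the $(-1)$-shifted cotangent world; that inclusion will in turn come from the identity~(\ref{evX:id}), equivalently from Lemma~\ref{lem:evP}. First I would record that~(\ref{funct:-}) really is of the form $\mathrm{FM}_{\pP}$ as in~(\ref{FM:P}): by Lemma~\ref{fiber:pi} the morphism $\pi_1$ is proper, so $\pi_{1!}=\pi_{1\ast}$; since $S$ is smooth we have $\Dbc(S)=\mathrm{Perf}(S)$, hence $\pP\cneq\pi_3^{\ast}\eE\otimes\lL^k\in\mathrm{Perf}(\fH ecke(v_{\bullet}))$; and $\pi_2$ is quasi-smooth, being the composite of the quasi-smooth morphism $(\pi_2,\pi_3)$ of Lemma~\ref{lem:proper} with the smooth projection $\fM_S^{\dag}(v_2)^{\pure}\times S\to\fM_S^{\dag}(v_2)^{\pure}$. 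Thus~(\ref{funct:-}) is $\mathrm{FM}_{\pP^{\circ}}$ for the correspondence $\fM_S^{\dag}(v_2)^{\pure}\xleftarrow{\pi_2}\fH ecke(v_{\bullet})\xrightarrow{\pi_1}\fM_S^{\dag}(v_1)^{\pure}$ and the finite-type open substacks in~(\ref{hecke:fin}), whose nesting condition $\fH ecke(v_{\bullet})^{\fin'}=\pi_1^{-1}(\fM_S^{\dag}(v_1)^{\fin})\subset\pi_2^{-1}(\fM_S^{\dag}(v_2)^{\fin'})$ is exactly the hypothesis required by Lemma~\ref{lem:restrict}.

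By Proposition~\ref{prop:FM} and Lemma~\ref{lem:restrict} it then suffices to prove, writing $g=(\pi_2,\pi_1)\colon\fH ecke(v_{\bullet})\to\fM_S^{\dag}(v_2)^{\pure}\times\fM_S^{\dag}(v_1)^{\pure}$ and taking the maps $h_1\colon t_0(\Omega_g[-2])\to t_0(\Omega_{\fM_S^{\dag}(v_2)^{\pure}}[-1])=\mM_X^{\dag}(v_2)^{\pure}$, $h_2\colon t_0(\Omega_g[-2])\to t_0(\Omega_{\fM_S^{\dag}(v_1)^{\pure}}[-1])=\mM_X^{\dag}(v_1)^{\pure}$ from~(\ref{dia:evf}) (here~(\ref{isom:dag}) identifies the targets), the inclusion
\[
h_2\bigl(h_1^{-1}(\zZ_{P\us}(\beta,n+1)^{\pure})\bigr)\subset\zZ_{P\us}(\beta,n)^{\pure},
\]
since restricting the two sides over the chosen finite-type opens turns $\zZ_{P\us}(\beta,n+1)^{\pure}$ into $\zZ_{P\us}(\beta,n+1)^{\fin'}$ and $\zZ_{P\us}(\beta,n)^{\pure}$ into $\zZ_{P\us}(\beta,n)^{\fin}$. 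The key step is to identify this conormal correspondence with the restriction of the evaluation diagram~(\ref{dia:Xext:dag2}) to the Hecke locus. Concretely, $\fH ecke(v_{\bullet})$ is the fibre product of $\fM_S^{\ext,\dag}(v_{\bullet})^{\pure}$ with $[S/\mathbb{C}^{\ast}]$ over $\fM_S([\mathrm{pt}])$, so via the morphism $\tau$ and the isomorphism of Proposition~\ref{prop:ev+} (restricted to the pure locus, and using Theorem~\ref{thm:D026}) one identifies $t_0(\Omega_g[-2])$, together with $h_1$ and $h_2$, with $\mM_X^{\ext,\dag}(v_{\bullet})^{\pure}$ over the locus where the $[\mathrm{pt}]$-component is supported on the zero section, and $h_1$ with $\ev_2^{X,\dag}$ and $h_2$ with $\ev_1^{X,\dag}$; the third evaluation $\ev_3^{X,\dag}$ has been absorbed into the perfect twist $\pP$ and enters only through $\pi_3$. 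This is the analogue, for the Hecke stack and with the two legs playing the opposite roles, of the identification carried out inside Corollary~\ref{cor:induce2} via Proposition~\ref{prop:ev+}.

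Granting this identification, the required inclusion is immediate from~(\ref{evX:id}): $h_1^{-1}(\zZ_{P\us}(v_2)^{\pure})$ is contained in $(\ev_2^{X,\dag})^{-1}(\zZ_{P\us}(v_2)^{\pure})$, which by~(\ref{evX:id}) equals $(\ev_1^{X,\dag},\ev_3^{X,\dag})^{-1}\bigl(\zZ_{P\us}(v_1)^{\pure}\times\mM_X(v_3)\bigr)$; applying $h_2=\ev_1^{X,\dag}$, which factors through the first projection of $(\ev_1^{X,\dag},\ev_3^{X,\dag})$, lands inside $\zZ_{P\us}(v_1)^{\pure}$. Note that~(\ref{evX:id}) is an \emph{equality} of substacks (this is Lemma~\ref{lem:evP} together with its converse already in Lemma~\ref{lem:PT}), which is precisely what lets the Hecke correspondence transport singular supports in this reversed direction. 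The main obstacle is the conormal identification of the previous paragraph: carefully tracking the $(-1)$- and $(-2)$-shifted (co)tangent complexes through the fibre product defining $\fH ecke(v_{\bullet})$ and through the tilting equivalence $\aA_X^{\le 1}\simeq\bB_S^{\le 1}$ of Theorem~\ref{thm:D026} and the $\mathbb{C}^{\ast}$-quotient, and checking that $h_1,h_2$ coincide with $\ev_2^{X,\dag},\ev_1^{X,\dag}$; once this is in place, everything else is a formal invocation of Lemma~\ref{lem:restrict}, Lemma~\ref{fiber:pi} and Lemma~\ref{lem:proper}.
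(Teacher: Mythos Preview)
Your strategy---reduce to Proposition~\ref{prop:FM}/Lemma~\ref{lem:restrict} and then invoke the identity~(\ref{evX:id})---is the same as the paper's, but two points deserve correction or completion.

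First, the claim that $\pi_{1!}=\pi_{1\ast}$ because $\pi_1$ is proper is not right: for a quasi-smooth proper morphism one has $\pi_{1!}(-)=\pi_{1\ast}((-)\otimes\omega_{\pi_1}[\vdim\pi_1])$, not equality. This is harmless for the Fourier--Mukai formalism (the dualizing twist is a line bundle, hence perfect, and can be absorbed into $\pP$), and it is exactly what the paper does: using Lemma~\ref{fiber:pi} it computes $\omega_{\pi_1}[\vdim\pi_1]=(\pi_1,\pi_3)^{\ast}\det\ffF(v_1)\otimes\pi_3^{\ast}\omega_S[1]$ and rewrites the functor with this extra factor.

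Second, the identification of $t_0(\Omega_g[-2])$ for $g=(\pi_2,\pi_1)$ with a piece of $\mM_X^{\ext,\dag}(v_{\bullet})^{\pure}$ is asserted but not carried out; you yourself flag it as the main obstacle. The paper sidesteps this computation entirely. The point is that the morphism $\tau\colon\fH ecke(v_{\bullet})\to\fM_S^{\ext,\dag}(v_{\bullet})^{\pure}$ is a \emph{trivial} square-zero extension (by Lemma~\ref{lem:trivial}), so the perfect kernel $(\pi_1,\pi_3)^{\ast}\det\ffF(v_1)[1]$ on $\fH ecke(v_{\bullet})$ extends to some $\pP\in\mathrm{Perf}(\fM_S^{\ext,\dag}(v_{\bullet})^{\pure})$. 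Base-change along the diagram~(\ref{dia:Hecke2}), exactly as in Lemma~\ref{lem:muHecke}, then rewrites~(\ref{funct:-}) as
\[
\ev_{1\ast}^{\dag}\bigl\{(\ev_2^{\dag},\ev_3^{\dag})^{\ast}\bigl((-)\boxtimes\lambda_k(\eE\otimes\omega_S)\bigr)\otimes\pP\bigr\},
\]
a Fourier--Mukai transform on $\fM_S^{\ext,\dag}(v_{\bullet})^{\pure}$ itself. There the conormal $t_0(\Omega_{\ev^{\dag}}[-2])\cong\mM_X^{\ext,\dag}(v_{\bullet})$ is already established in Proposition~\ref{prop:ev+}, and the argument of Corollary~\ref{cor:induce2} together with~(\ref{evX:id}) (Lemma~\ref{lem:evP}) finishes. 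Thus the paper trades your direct conormal calculation on $\fH ecke(v_{\bullet})$ for the single observation that $\tau$ is a trivial thickening, which lets one lift the kernel and reuse the existing Proposition~\ref{prop:ev+}.
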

\begin{proof}
By Lemma~\ref{fiber:pi}, we can compute 
$\omega_{\pi_1}[\vdim \pi_1]$ as
\begin{align*}
\omega_{\pi_1}[\vdim \pi_1]=
(\pi_1, \pi_3)^{\ast} \det \ffF(v_1) \otimes \pi_3^{\ast}\omega_S[1]. 
\end{align*}
Since $\pi_{1!}=\pi_{1\ast}(-\otimes \omega_{\pi_1}[\vdim \pi_1])$, 
the functor (\ref{funct:-}) is written as 
\begin{align*}
\pi_{1\ast}\{ \pi_2^{\ast}(-) \otimes \pi_3^{\ast}(\eE \otimes \omega_S) \otimes 
(\pi_1, \pi_3)^{\ast} \det \ffF(v_1)[1] \otimes \lL^k\}. 
\end{align*}
By Lemma~\ref{lem:trivial}, 
the morphism $\tau$ in the diagram (\ref{dia:Hecke}) is a trivial 
square zero extension. 
Therefore 
there is $\pP \in \mathrm{Perf}(\fM_S^{\ext, \dag}(v_{\bullet})^{\pure})$
whose restriction to $\fH ecke(v_{\bullet})$ is equivalent to 
$(\pi_1, \pi_3)^{\ast} \det \ffF(v_1)[1]$. 
Then by
the similar argument of Lemma~\ref{lem:muHecke}
 using base-change with respect to the diagram (\ref{dia:Hecke2}), 
the functor (\ref{funct:-})
is written as
\begin{align*}
\ev_{1\ast}^{\dag}\{(\ev_2^{\dag}, \ev_3^{\dag})^{\ast}( (-) \boxtimes \lambda_k(\eE \otimes \omega_S)) \otimes \pP \}. 
\end{align*} 
By Lemma~\ref{lem:evP}
and the argument of Corollary~\ref{cor:induce2}, 
the above functor sends  
$\cC_{\zZ_{P\us}(\beta, n+1)^{\fin'}}$ to $\cC_{\zZ_{P\us}(\beta, n)^{\fin}}$.
\end{proof}

\begin{defi}\label{defi:mu-}
We define the functor 
\begin{align}\label{mu:ann}
\mu_{\eE, k}^{-} \colon 
\mathcal{DT}^{\mathbb{C}^{\ast}}(P_{n+1}(X, \beta)) 
\to \mathcal{DT}^{\mathbb{C}^{\ast}}(P_{n}(X, \beta))
\end{align}
as a descendent of the functor (\ref{funct:-}), which 
exists uniquely by Lemma~\ref{lem:descend2}. 
\end{defi}

\subsection{Restrictions to perfect PT subcategories}\label{subsec:perfPT}
Here we introduce the notion of 
\textit{perfect PT categories}, 
and show that the operators 
$\mu_{\eE, k}^{\pm}$
restrict to perfect PT categories. 
Let us take a derived open substack 
$\fM_S^{\dag}(\beta, n)^{\fin} \subset \fM_S^{\dag}(\beta, n)^{\pure}$
as before. 
We give the following definition. 
\begin{defi}\label{def:PTperf}
We define the 
perfect PT category to be
\begin{align*}
\mathcal{DT}^{\mathbb{C}^{\ast}}_{\perf}(P_n(X, \beta))
\cneq \mathrm{Perf}(\fM_S^{\dag}(\beta, n)^{\fin})/\mathrm{Perf}(\fM_S^{\dag}(\beta, n)^{\fin}) \cap 
\cC_{\zZ_{P\us}(\beta, n)^{\fin}}. 
\end{align*} 
\end{defi}
We have the composition of functors
\begin{align}\label{com:perf}
\mathrm{Perf}(\fM_S^{\dag}(\beta, n)^{\fin})
\hookrightarrow \Dbc(\fM_S^{\dag}(\beta, n)^{\fin})
\twoheadrightarrow 
\mathcal{DT}^{\mathbb{C}^{\ast}}(P_n(X, \beta))
\end{align}
whose kernel is exactly 
$\mathrm{Perf}(\fM_S^{\dag}(\beta, n)^{\fin}) \cap 
\cC_{\zZ_{P\us}(\beta, n)^{\fin}}$. 
Therefore we have the canonical functor
\begin{align}\label{perfPT}
\mathcal{DT}^{\mathbb{C}^{\ast}}_{\perf}(P_n(X, \beta)) 
\to \mathcal{DT}^{\mathbb{C}^{\ast}}(P_n(X, \beta)).
\end{align}
The perfect PT categories 
are closely related to the moduli spaces of 
stable pairs on $S$, rather than those on 3-folds $X$
in the following way. 
Let
\begin{align}\label{PT:derived}
\fP_n(S, \beta) \subset \fM_S^{\dag}(\beta, n)^{\fin}
\end{align}
be the derived open substack of 
PT stable pairs on $S$, i.e. 
this is the substack of pairs $(F, \xi)$
such that $\xi$ is surjective in dimension one. 
\begin{lem}\label{lem:PTS}
We have an equivalence
\begin{align}\label{equiv:perf}
\mathcal{DT}^{\mathbb{C}^{\ast}}_{\perf}(P_n(X, \beta))
\stackrel{\sim}{\to}
\mathrm{Perf}(\fP_n(S, \beta)). 
\end{align}
\end{lem}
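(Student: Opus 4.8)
The plan is to show that both sides of~(\ref{equiv:perf}) are canonically equivalent to the Verdier quotient of $\mathrm{Perf}(\fM_S^{\dag}(\beta, n)^{\fin})$ by the thick subcategory of perfect complexes which are set-theoretically supported on the complement of the open substack $\fP_n(S, \beta)$, and then to invoke the Thomason--Neeman localization theorem for perfect complexes. For the right-hand side this is immediate from that localization theorem applied to the quasi-compact open immersion $\fP_n(S, \beta) \hookrightarrow \fM_S^{\dag}(\beta, n)^{\fin}$. For the left-hand side, which by Definition~\ref{def:PTperf} is $\mathrm{Perf}(\fM_S^{\dag}(\beta, n)^{\fin}) / \left( \mathrm{Perf}(\fM_S^{\dag}(\beta, n)^{\fin}) \cap \cC_{\zZ_{P\us}(\beta, n)^{\fin}} \right)$, the real content is to identify $\mathrm{Perf}(\fM_S^{\dag}(\beta, n)^{\fin}) \cap \cC_{\zZ_{P\us}(\beta, n)^{\fin}}$ with the perfect complexes supported on the complement of $\fP_n(S, \beta)$.

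For this identification I would first recall the general fact, valid on any quasi-smooth derived stack $\fM$, that a coherent complex $E$ is perfect if and only if its singular support $\Supp^{\rm{sg}}(E)$ is contained in the zero section $0_{\mM} \subset t_0(\Omega_{\fM}[-1])$, and that in this case $\Supp^{\rm{sg}}(E)$ is exactly the zero section over the classical support of $E$ (see~\cite{MR3300415, TocatDT}). Hence, for a conical closed substack $\zZ \subset t_0(\Omega_{\fM}[-1])$, an object $E \in \mathrm{Perf}(\fM)$ lies in $\cC_{\zZ}$ if and only if its classical support is disjoint from the open locus of points whose image under the zero section does not lie in $\zZ$. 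Applying this to $\fM = \fM_S^{\dag}(\beta, n)^{\fin}$ and $\zZ = \zZ_{P\us}(\beta, n)^{\fin}$, I must check that this open locus is precisely $\fP_n(S, \beta)$. A point of the zero section over $(F, \xi) \in \mM_S^{\dag}(\beta, n)^{\fin}$ corresponds, under the isomorphisms of Theorem~\ref{thm:D026}, to the $\oO_X$-module $F$ with vanishing Higgs field together with the induced section; since the $\oO_X$-submodules of such a module are exactly its $\oO_S$-submodules, this defines a PT stable pair on $X$, i.e.\ avoids $\zZ_{P\us}(\beta, n)^{\fin}$, if and only if $F$ is pure one dimensional and $\xi$ is surjective in dimension one, i.e.\ if and only if $(F, \xi)$ defines a point of $\fP_n(S, \beta)$ (cf.~(\ref{PT:derived})). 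Therefore
\[
\mathrm{Perf}(\fM_S^{\dag}(\beta, n)^{\fin}) \cap \cC_{\zZ_{P\us}(\beta, n)^{\fin}} = \ker\!\left( \mathrm{Perf}(\fM_S^{\dag}(\beta, n)^{\fin}) \longrightarrow \mathrm{Perf}(\fP_n(S, \beta)) \right),
\]
the perfect complexes supported on the complement of $\fP_n(S, \beta)$.

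It then remains to apply the Thomason--Neeman localization theorem: since $\fM_S^{\dag}(\beta, n)^{\fin}$ is of finite type (in particular quasi-compact with affine diagonal) and $\fP_n(S, \beta) \subset \fM_S^{\dag}(\beta, n)^{\fin}$ is a quasi-compact open substack, restriction induces an equivalence $\mathrm{Perf}(\fM_S^{\dag}(\beta, n)^{\fin}) / \ker\!\left( \mathrm{Perf}(\fM_S^{\dag}(\beta, n)^{\fin}) \to \mathrm{Perf}(\fP_n(S, \beta)) \right) \stackrel{\sim}{\to} \mathrm{Perf}(\fP_n(S, \beta))$ (as usual this holds up to idempotent completion, which I suppress). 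Combined with the previous paragraph and Definition~\ref{def:PTperf}, this yields~(\ref{equiv:perf}). The main obstacle is the middle step: one must carefully justify both that perfect complexes are precisely those with singular support in the zero section and that the zero-Higgs-field locus of the unstable substack $\zZ_{P\us}(\beta, n)^{\fin}$ matches the non-PT-stable locus of pairs on the surface $S$; once these are in place, the localization step is formal.
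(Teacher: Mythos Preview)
Your proposal is correct and follows essentially the same approach as the paper: both identify $\mathrm{Perf}(\fM_S^{\dag}(\beta,n)^{\fin})$ with $\cC_{0_{\mM_S^{\dag}(\beta,n)^{\fin}}}$ via~\cite[Theorem~4.2.6]{MR3300415}, deduce that $\mathrm{Perf}\cap\cC_{\zZ_{P\us}(\beta,n)^{\fin}}$ consists of perfect complexes supported on the complement of $P_n(S,\beta)$, and conclude via restriction to the open substack. You spell out in more detail why the zero section avoids $\zZ_{P\us}$ precisely over $\fP_n(S,\beta)$ and name the localization step explicitly (Thomason--Neeman), whereas the paper states these steps more tersely.
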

\begin{proof}
Note that we have
\begin{align*}
\mathrm{Perf}(\fM_S^{\dag}(\beta, n)^{\fin})=
\cC_{0_{\mM_S^{\dag}(\beta, n)^{\fin}}}
\subset \Dbc(\fM_S^{\dag}(\beta, n)^{\fin})
\end{align*}
where $0_{\mM_S^{\dag}(\beta, n)^{\fin}}$
is the zero section of 
$\pi_{\ast} \colon \mM_X^{\dag}(\beta, n) \to 
\mM_S^{\dag}(\beta, n)$
over $\mM_S^{\dag}(\beta, n)^{\fin}$
(see~\cite[Theorem~4.2.6]{MR3300415}). 
It follows that we have
\begin{align*}
&\mathrm{Perf}(\fM_S^{\dag}(\beta, n)^{\fin})
\cap \cC_{\zZ_{P\us}(\beta, n)^{\fin}} \\
&=\{ \eE \in \mathrm{Perf}(\fM_S^{\dag}(\beta, n)^{\fin}) :
\mathrm{Supp}(\eE) \subset \mM_S^{\dag}(\beta, n)^{\fin} \setminus P_n(S, \beta)\}. 
\end{align*}
Therefore the restriction functor with respect to the 
open immersion (\ref{PT:derived})
gives an equivalence (\ref{equiv:perf}). 
\end{proof}

As we see below, the 
operators $\mu_{\eE, k}^{\pm}$ restrict to the 
perfect PT categories. 
\begin{lem}\label{mu:perf}
The functors (\ref{funct:fin}), (\ref{funct:-}) restrict 
to functors
\begin{align*}
\nu_{\eE, k}^{\pm} \colon
\mathcal{DT}^{\mathbb{C}^{\ast}}_{\perf}(P_n(X, \beta))
\to  \mathcal{DT}^{\mathbb{C}^{\ast}}_{\perf}(P_{n\pm 1}(X, \beta))
\end{align*}
so that we have the commutative diagram
\begin{align}\label{commute:perf}
\xymatrix{
\mathcal{DT}^{\mathbb{C}^{\ast}}_{\perf}(P_n(X, \beta))
\ar[r] \ar[d]_-{\nu_{\eE, k}^{\pm}} &
\mathcal{DT}^{\mathbb{C}^{\ast}}(P_n(X, \beta)) \ar[d]^-{\mu_{\eE, k}^{\pm}} \\
\mathcal{DT}^{\mathbb{C}^{\ast}}_{\perf}(P_{n\pm 1}(X, \beta))
\ar[r] & 
\mathcal{DT}^{\mathbb{C}^{\ast}}(P_{n\pm 1}(X, \beta)).
}
\end{align}
Here the horizontal arrows are given by (\ref{perfPT}). 
\end{lem}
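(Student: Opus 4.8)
The plan is to check that each of the two composite functors appearing in (\ref{funct:fin}) and (\ref{funct:-}) carries $\mathrm{Perf}$ to $\mathrm{Perf}$, and then to combine this with the fact --- already established in the proof of Lemma~\ref{lem:muHecke} (via Corollary~\ref{cor:induce2} and Lemma~\ref{lem:PT}) and in Lemma~\ref{lem:descend2} --- that these functors send $\cC_{\zZ_{P\us}}$ to $\cC_{\zZ_{P\us}}$. Once both properties are known, such a functor $F$ automatically sends $\mathrm{Perf}(\fM_S^{\dag}(\beta, n)^{\fin}) \cap \cC_{\zZ_{P\us}(\beta, n)^{\fin}}$ into the corresponding intersection at level $n\pm 1$, hence descends to the Verdier quotients defining the perfect PT categories in Definition~\ref{def:PTperf}; this produces the functors $\nu_{\eE, k}^{\pm}$.

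First I would verify preservation of $\mathrm{Perf}$ for the $+$-direction functor $\pi_{2\ast}(\pi_1^{\ast}(-) \otimes \pi_3^{\ast}\eE \otimes \lL^k)$. Pullback along any morphism preserves perfect complexes, so $\pi_1^{\ast}$ does; since $S$ is smooth projective we have $\eE \in \mathrm{Perf}(S)$, so $\pi_3^{\ast}\eE$ is perfect, and $\lL^k$ is a line bundle, so tensoring by $\pi_3^{\ast}\eE \otimes \lL^k$ preserves $\mathrm{Perf}$. Finally, by Lemma~\ref{lem:proper} the morphism $\pi_2$ is proper and quasi-smooth; a quasi-smooth morphism is of finite Tor-amplitude, so $\pi_2$ is a proper morphism of finite Tor-amplitude between finite type QCA derived stacks, whence $\pi_{2\ast}$ preserves $\mathrm{Perf}$. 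For the $-$-direction functor $\pi_{1!}(\pi_2^{\ast}(-) \otimes \pi_3^{\ast}\eE \otimes \lL^k)$ the same reasoning applies, using Lemma~\ref{fiber:pi} to see that $\pi_1$ is proper and quasi-smooth, together with the identification $\omega_{\pi_1}[\vdim \pi_1] = (\pi_1, \pi_3)^{\ast}\det \ffF(v_1) \otimes \pi_3^{\ast}\omega_S[1]$ from the proof of Lemma~\ref{lem:descend2}, which shows that $\pi_{1!}(-) = \pi_{1\ast}(- \otimes \omega_{\pi_1}[\vdim \pi_1])$ differs from $\pi_{1\ast}$ only by a twist by a line bundle.

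Next I would assemble the commutative diagram (\ref{commute:perf}). The horizontal arrows are the canonical functors (\ref{perfPT}), induced from the inclusions $\mathrm{Perf}(\fM_S^{\dag}(\beta, n)^{\fin}) \hookrightarrow \Dbc(\fM_S^{\dag}(\beta, n)^{\fin})$ and the quotient functors onto $\mathcal{DT}^{\mathbb{C}^{\ast}}(P_n(X, \beta))$, while $\mu_{\eE, k}^{\pm}$ are, by Lemma~\ref{lem:muHecke} and Definition~\ref{defi:mu-}, the descendents of the very same underlying functors (\ref{funct:fin}) and (\ref{funct:-}). Therefore all four functors around the square are induced by a single functor on derived categories of coherent sheaves, so the square commutes before passing to the Verdier quotients, hence commutes afterwards. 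The only bookkeeping point is that (\ref{funct:-}) is written between the auxiliary finite type substacks $\fM_S^{\dag}(\beta, n+1)^{\fin'}$ and $\fM_S^{\dag}(\beta, n)^{\fin}$; as in Remark~\ref{rmk:open:independent} for $\mathcal{DT}^{\mathbb{C}^{\ast}}$, the perfect PT category $\mathcal{DT}^{\mathbb{C}^{\ast}}_{\perf}(P_n(X, \beta))$ is independent of the choice of such a substack (which is also visible from Lemma~\ref{lem:PTS}, the right-hand side $\mathrm{Perf}(\fP_n(S, \beta))$ being manifestly intrinsic), so these equivalences reconcile the two descriptions and $\nu_{\eE, k}^{-}$ is well defined.

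I expect the only genuinely non-formal input to be the preservation of $\mathrm{Perf}$ under the proper pushforwards $\pi_{2\ast}$ and $\pi_{1\ast}$; this rests on the quasi-smoothness of $\pi_1$ and $\pi_2$ established in Lemma~\ref{fiber:pi} and Lemma~\ref{lem:proper}, i.e. on the description of $\fH ecke(v_{\bullet})$ as a projectivization of a perfect complex of Tor-amplitude $[-1,0]$. Everything else --- the behaviour on $\cC_{\zZ_{P\us}}$, the descent to Verdier quotients, and the commutativity of (\ref{commute:perf}) --- is either already proved above in the excerpt or purely formal.
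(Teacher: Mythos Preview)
Your proposal is correct and follows essentially the same approach as the paper: the key input is that $\pi_1$ and $\pi_2$ are quasi-smooth and proper (Lemmas~\ref{fiber:pi} and~\ref{lem:proper}), so $\pi_{i\ast}$ and $\pi_{i!}$ preserve perfect complexes, which combined with the already-established compatibility with $\cC_{\zZ_{P\us}}$ gives the descent to the perfect PT categories. The paper's proof is more terse, simply citing the preservation of perfect objects under quasi-smooth proper pushforward and declaring the commutativity of (\ref{commute:perf}) obvious by construction, but the substance is the same.
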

\begin{proof}
Since $\pi_1$, $\pi_2$ are quasi-smooth and proper, 
both of $\pi_{i\ast}$, $\pi_{i!}$ preserve
perfect objects (see~\cite[Lemma~2.2]{Tproper}). 
Therefore the lemma follows  by the 
definitions of the functors (\ref{funct:fin}), (\ref{funct:-}). 
The commutative diagram (\ref{commute:perf}) is obvious 
by the construction. 
\end{proof}

\section{Commutator relations in K-theory}
In this section, we compute the commutator relation 
of the operators $\mu_{\eE, k}^+$, $\mu_{\eE, k}^-$
for elements of K-groups of PT categories, coming from
perfect PT categories. 
Our strategy is to use residue arguments 
developed by Negut~\cite{Negut}. 
We will see that the commutator relation for $k=0$ gives an 
analogue of Weyl algebra action for homologies of 
Hilbert schemes of points on locally planar curves~\cite{MR3807309}. 
\subsection{Some notation in K-theory}
Here we prepare some notation in K-theory 
following~\cite{Negut}. 
For a derived stack $\fM$, we set 
\begin{align*}
K(\fM) =K(\Dbc(\fM)), \ 
K_{\mathrm{perf}}(\fM) =K(\mathrm{Perf}(\fM)). 
\end{align*}
Note that 
we have maps given by 
tensor products
\begin{align*}
\otimes \colon 
K_{\perf}(\fM) \times
K_{\perf}(\fM) \to K_{\perf}(\fM), \ 
\otimes \colon K_{\perf}(\fM) \times K(\fM) \to K(\fM)
\end{align*}
which make 
$K_{\mathrm{perf}}(\fM)$ 
a commutative ring and 
$K(\fM)$ a module over it. 
Suppose that $\fM$ is quasi-smooth 
and let $i \colon \mM \hookrightarrow \fM$
be the natural closed immersion for $\mM=t_0(\fM)$. 
Then the following induced map is an isomorphism
\begin{align}\label{isom:K}
i_{\ast} \colon K(\mM) \stackrel{\cong}{\to} K(\fM). 
\end{align}

 For a vector bundle $\pP \to \fM$, 
we set
\begin{align*}
\wedge^{\bullet}(\pP x) \cneq 
\sum_{i\ge 0} \wedge^i \pP \cdot (-x)^i \in K_{\perf}(\fM)[x]. 
\end{align*}
Also for an element $\pP=[\pP_0]-[\pP_1] \in K_{\mathrm{perf}}(\fM)$
for vector bundles $\pP_0$, $\pP_1$, we 
define
\begin{align*}
\wedge^{\bullet}(\pP x) \cneq 
\frac{\wedge^{\bullet}(\pP_0 x)}{\wedge^{\bullet}(\pP_1 x)} \in 
K_{\perf}(\fM)(x), \ 
\wedge^{\bullet} \left(\frac{x}{\pP} \right) \cneq 
\wedge^{\bullet}(\pP^{\vee} x) \in K_{\perf}(\fM)(x). 
\end{align*}
These are rational functions in $x$. 
For a rational function $f(x) \in K_{\perf}(\fM)(x)$, we 
denote by 
\begin{align*}
f(x)|_{x=\infty} \in K_{\perf}(\fM) \lgakko 1/x \rgakko, \ 
f(x)|_{x=0} \in K_{\perf}(\fM) \lgakko x \rgakko
\end{align*}
the expansions of the rational 
function $f(x)$
at $x=\infty$, $x=0$, respectively. 
For example if $\pP$ is a rank $r$ vector bundle, 
we have
\begin{align}\label{expand:P}
\frac{1}{\wedge^{\bullet}(\pP x)}\Big|_{x=\infty}
=(-1)^r \det \pP^{\vee} x^{-r} \mathrm{Sym}^{\bullet}(\pP^{\vee} x^{-1}), \ 
\frac{1}{\wedge^{\bullet}(\pP x)}\Big|_{x=0}
=\mathrm{Sym}^{\bullet}(\pP x). 
\end{align}
We define
\begin{align*}
f(x)|_{x=\infty -0} 
\cneq f(x)|_{x=\infty}-
f(x)|_{x=0} \in K(\fM)\{x\}. 
\end{align*}
We will use the following calculation.
\begin{lem}\label{lem:calculation}
For $\pP=[\pP_0]-[\pP_1]$ as above, 
suppose that $\rank(\pP)=0$. 
Then for any line bundle $\lL$ on $\fM$, 
we have 
\begin{align}\label{expand:P2}
\wedge^{\bullet}((\lL-1)\pP x))|_{x=\infty -0}=
O(x^{-2})+(1-\lL^{\vee})\pP^{\vee} x^{-1}+
(\lL-1)\pP x+O(x^2). 
\end{align} 
\end{lem}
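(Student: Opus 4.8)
The statement is a purely formal identity in $K_{\perf}(\fM)(x)$ expanded at $x=\infty$ and $x=0$, so the plan is to reduce everything to the rank-zero hypothesis on $\pP$ and then compare the two expansions term by term in $x$. First I would write $Q \cneq (\lL-1)\pP$, noting $\rank(Q)=\rank(\lL-1)\cdot\rank(\pP)=0$ since $\rank(\pP)=0$; this is the only place the hypothesis $\rank(\pP)=0$ is used, and it guarantees that $\wedge^{\bullet}(Qx)$ is a rational function whose numerator and denominator have the same degree in $x$, hence has a finite nonzero limit as $x\to\infty$. The key structural fact I would invoke is that $\wedge^{\bullet}(-)$ is multiplicative: $\wedge^{\bullet}((A+B)x)=\wedge^{\bullet}(Ax)\wedge^{\bullet}(Bx)$ for $A,B\in K_{\perf}(\fM)$, together with $\wedge^{\bullet}(Ax)\equiv 1 + (\text{terms of positive }x\text{-degree})$ for $A$ an honest bundle, and the conventions in (\ref{expand:P}).

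The main computation is a low-order Taylor/Laurent expansion. At $x=0$: writing $Q=[Q_0]-[Q_1]$ with $\rank Q_0=\rank Q_1\eqcn r$, we have $\wedge^{\bullet}(Qx)=\wedge^{\bullet}(Q_0 x)/\wedge^{\bullet}(Q_1 x)$, and each factor equals $1 - (\text{rank})\cdot c_1(\cdot) x + O(x^2)$ in the appropriate sense; but more robustly I would just say $\wedge^{\bullet}(Qx)|_{x=0} = 1 - Q x + O(x^2)$ directly from $\wedge^{\bullet}(Ax) = 1 - Ax + \wedge^2 A\, x^2 - \cdots$ extended multiplicatively to virtual $A$, so $\wedge^{\bullet}(Qx)|_{x=0} = 1 - (\lL-1)\pP\, x + O(x^2)$, giving the terms $-( \lL-1)\pP x + O(x^2)$ visible on the right-hand side of (\ref{expand:P2}) with a sign; I must be careful that (\ref{expand:P2}) records $f|_{x=\infty}-f|_{x=0}$, so the $x=0$ part enters with an overall minus. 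At $x=\infty$: I would use that for a virtual bundle $Q$ of rank $0$, $\wedge^{\bullet}(Qx)|_{x=\infty}$ can be computed from $\wedge^{\bullet}(Qx) = \wedge^{\bullet}(Q_0 x)/\wedge^{\bullet}(Q_1 x)$ by factoring out the top $x$-powers; since the top powers cancel ($\rank Q_0=\rank Q_1$), the leading term is $\det Q_0^{\vee}/\det Q_1^{\vee} = \det(Q^{\vee})$ which for $Q=(\lL-1)\pP$ with $\rank\pP=0$ works out to $1$ (the determinant of a rank-zero virtual class built from $(\lL-1)$ is trivial), and the next term is $-Q^{\vee}x^{-1} = -(\lL^{\vee}-1)\pP^{\vee}x^{-1} = (1-\lL^{\vee})\pP^{\vee}x^{-1}$, with remainder $O(x^{-2})$. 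Subtracting, $f|_{x=\infty}-f|_{x=0} = O(x^{-2}) + (1-\lL^{\vee})\pP^{\vee}x^{-1} + 1 - (1 - (\lL-1)\pP x + O(x^2)) = O(x^{-2}) + (1-\lL^{\vee})\pP^{\vee}x^{-1} + (\lL-1)\pP x + O(x^2)$, which is exactly (\ref{expand:P2}).

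The step I expect to be the main obstacle is pinning down the constant ($x^0$) terms and the determinant factors precisely: one must check that the $x^0$ coefficient of $\wedge^{\bullet}(Qx)|_{x=\infty}$ equals the $x^0$ coefficient of $\wedge^{\bullet}(Qx)|_{x=0}$, namely $1$, so that they cancel in the difference and no constant term survives in (\ref{expand:P2}). This is where $\rank\pP=0$ is essential: it forces $\rank Q=0$, hence equal ranks of $Q_0$ and $Q_1$, hence matching leading behavior at both ends, and it makes $\det(Q^{\vee})$ collapse (a rank-zero virtual bundle of the form $(\lL-1)\pP$ has $\det = \lL^{\rank\pP}\otimes(\cdots) = $ trivial). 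Concretely I would verify this by the substitution $x\mapsto 1/x$, which interchanges the roles of $Q_0$ and $Q_1$ up to a twist by $\det Q_i$ and a power of $x$, reducing the $x=\infty$ expansion to the $x=0$ expansion of a related class; the rank-$0$ condition is exactly what makes the bookkeeping of these twists cancel. Once that symmetry is in hand, the identity (\ref{expand:P2}) follows by reading off coefficients up to $x^{\pm 1}$, everything else being absorbed into $O(x^{-2})$ and $O(x^2)$.
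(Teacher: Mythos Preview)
Your argument is correct and is essentially the paper's proof: both compute the $x=0$ and $x=\infty$ expansions of $\wedge^{\bullet}((\lL-1)\pP x)$ via multiplicativity of $\wedge^{\bullet}$ and the identities (\ref{expand:P}), using $\rank\pP_0=\rank\pP_1$ so that the sign, $x$-power, and determinant prefactors cancel and both expansions have constant term $1$; the paper merely writes this out concretely with the four bundle pieces $\lL\pP_0,\lL\pP_1,\pP_0,\pP_1$ rather than your abstract $Q=Q_0-Q_1$. One harmless slip: the leading coefficient at $x=\infty$ is $\det Q_0/\det Q_1=\det Q$, not $\det Q^{\vee}$, but both equal $1$ here so nothing changes.
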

\begin{proof}
By definition, we have the identity  
\begin{align*}
\wedge^{\bullet}((\lL-1)\pP x))
=\frac{\wedge^{\bullet}\lL \pP_0 x}{\wedge^{\bullet}\lL \pP_1 x} 
\cdot \frac{\wedge^{\bullet}\pP_1 x}{\wedge^{\bullet}\pP_0 x}. 
\end{align*}
Using the identities (\ref{expand:P})
and the assumption that $\rank(\pP)=0$, 
we have 
\begin{align*}
\wedge^{\bullet}((\lL-1)\pP x))|_{x=\infty -0}
=\wedge^{\bullet}(\lL^{\vee} \pP_0^{\vee}x^{-1})
&\wedge^{\bullet}(\pP_1^{\vee}x^{-1})
\mathrm{Sym}^{\bullet}(\lL^{\vee} \pP_1^{\vee}x^{-1})
\mathrm{Sym}^{\bullet}(\pP_0^{\vee}x^{-1}) \\
&-\wedge^{\bullet}(\lL \pP_0 x)
\wedge^{\bullet}(\pP_1x)
\mathrm{Sym}^{\bullet}(\lL \pP_1 x)
\mathrm{Sym}^{\bullet}(\pP_0 x).
\end{align*}
Therefore we obtain (\ref{expand:P2}). 
\end{proof}
We also set 
\begin{align*}
\delta(x) \cneq \left(\frac{1}{x-1}  \right)\Big|_{x=\infty -0}
=\sum_{k\in \mathbb{Z}}x^k. 
\end{align*}
We have the following relations
\begin{align*}
\delta\left(\frac{x}{y} \right)
f(x)|_{x=\infty}=\delta\left(\frac{x}{y} \right)
f(y)|_{y=\infty}, \ 
\delta\left(\frac{x}{y} \right)
f(x)|_{x=0}=\delta\left(\frac{x}{y} \right)
f(y)|_{y=0}. 
\end{align*}
Also for two rational functions $f(x)$, $g(y)$, 
we define
\begin{align*}
f(x) g(y)|_{(x, y)=\infty -0} \cneq 
f(x)|_{x=\infty} g(y)|_{y=\infty} -f(x)|_{x=0} g(y)|_{y=0}. 
\end{align*}
For a two variable rational function $h(x, y) \in K_{\perf}(\fM)(x, y)$, we denote by 
\begin{align*}
h(x, y) \Big|_{\begin{subarray}{c}
x=\infty -0 \\
y=\infty -0
\end{subarray}}
\in K_{\perf}(\fM)\{x, y\}
\end{align*}
first apply $|_{x=\infty -0}$ and 
then apply $|_{y=\infty-0}$. 
The following lemma can be checked by a direct 
calculation. 
\begin{lem}
For two rational functions $f(x)$, $g(y)$
and non-zero $\alpha$, we have 
\begin{align}\label{residue}
\frac{f(x)g(y)}{y/x-\alpha}\Big|_{\begin{subarray}{c}
x=\infty -0 \\
y=\infty-0
\end{subarray}}
-\frac{f(x)g(y)}{y/x-\alpha}\Big|_{\begin{subarray}{c}
y=\infty -0 \\
x=\infty-0
\end{subarray}}
=-\frac{1}{\alpha}\delta\left(\frac{y}{\alpha x}  \right)
\{f(x) g(y)|_{(x, y)=\infty -0}\}.
\end{align}
\end{lem}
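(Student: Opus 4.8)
The plan is to reduce the identity (\ref{residue}) to a computation with the single rational function $B(x,y)=(y/x-\alpha)^{-1}$ followed by elementary bookkeeping. Throughout I abbreviate $f^{+}=f(x)|_{x=\infty}$, $f^{-}=f(x)|_{x=0}$, $g^{+}=g(y)|_{y=\infty}$, $g^{-}=g(y)|_{y=0}$; by the definitions recalled just above the lemma, $f(x)g(y)|_{(x,y)=\infty-0}=f^{+}g^{+}-f^{-}g^{-}$. Since $f$ depends only on $x$ and $g$ only on $y$, the four iterated expansions on the left-hand side of (\ref{residue}) are controlled entirely by the corresponding expansions of $B$.

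First I would compute those expansions of $B$. Writing $B=x/(y-\alpha x)$ and expanding the geometric series in the region where $y/(\alpha x)$ is small gives $B|_{x=\infty}=B|_{y=0}=-\tfrac{1}{\alpha}\sum_{k\ge 0}(y/(\alpha x))^{k}=:P$, while expanding in the region where $\alpha x/y$ is small gives $B|_{x=0}=B|_{y=\infty}=\sum_{k\ge 0}\alpha^{k}(x/y)^{k+1}=:M$. The structural point that makes the argument work is that $P$ and $M$ are supported on the antidiagonal of bidegrees: every monomial occurring in $P$ or in $M$ has $x$-degree plus $y$-degree equal to $0$. Consequently, after the first boundary expansion the coefficient of each power of the remaining variable is still a rational function of the other variable (so the second expansion is meaningful), and a direct rewriting of the two sums as a single bilateral sum gives $P-M=-\tfrac{1}{\alpha}\sum_{k\in\mathbb{Z}}(y/(\alpha x))^{k}=-\tfrac{1}{\alpha}\,\delta(y/(\alpha x))$.

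Next, using the separation of the variables together with the degree-zero support of $P$ and $M$ (which lets the already-expanded one-variable factors pass through the remaining expansion), one gets
\[
\left(\frac{f(x)g(y)}{y/x-\alpha}\right)\Big|_{x=\infty-0}\Big|_{y=\infty-0}=\left(g^{+}-g^{-}\right)\left(f^{+}P-f^{-}M\right),
\]
\[
\left(\frac{f(x)g(y)}{y/x-\alpha}\right)\Big|_{y=\infty-0}\Big|_{x=\infty-0}=\left(f^{+}-f^{-}\right)\left(g^{+}M-g^{-}P\right).
\]
Subtracting the second line from the first and expanding the products, the monomials in $f^{-}g^{+}$ cancel in pairs, as do the monomials in $f^{+}g^{-}$, leaving exactly $(P-M)\left(f^{+}g^{+}-f^{-}g^{-}\right)$. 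Substituting $P-M=-\tfrac{1}{\alpha}\delta(y/(\alpha x))$ and $f^{+}g^{+}-f^{-}g^{-}=f(x)g(y)|_{(x,y)=\infty-0}$ then gives precisely the right-hand side of (\ref{residue}).

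The only genuinely delicate point, and the one I expect to be the main obstacle, is the noncommutativity of the operators $|_{x=\infty-0}$ and $|_{y=\infty-0}$: one must verify that after the first expansion, multiplication by the remaining rational factor and the second expansion interact with the factor that has already been expanded exactly as written in the two displays above. This is guaranteed by the antidiagonal (degree-zero) support of $P$ and $M$ established in the second step — all of the noncommutativity of the two expansion operators is concentrated in the ``diagonal part'' $P-M=-\tfrac{1}{\alpha}\delta(y/(\alpha x))$ of $B$, which is exactly why only that term survives in the commutator. Everything else is the routine algebra indicated above.
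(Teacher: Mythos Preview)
Your proof is correct and is precisely the ``direct calculation'' the paper alludes to (the paper gives no argument beyond that phrase). Your identification $B|_{x=\infty}=B|_{y=0}=P$, $B|_{x=0}=B|_{y=\infty}=M$ with $P-M=-\tfrac{1}{\alpha}\delta(y/(\alpha x))$, together with the observation that the $x^{n}$-coefficients of $f^{+}P$ and $f^{-}M$ are Laurent polynomials in $y$ (so that $|_{y=\infty-0}$ applied to $g(y)$ times such a coefficient yields $(g^{+}-g^{-})$ times that coefficient), is exactly what is needed; the cross terms $f^{\pm}g^{\mp}$ then cancel as you indicate.
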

For an object 
$\fE \in \mathrm{Perf}(\fM)$, 
suppose that 
$\fE|_x$ is 
of cohomological amplitude $[-1, 0]$
for any point $x \to \mM$. 
We have 
projectivizations
\begin{align*}
\pi \colon \mathbb{P}_{\fM}(\fE) \to \fM, \ 
\pi' \colon \mathbb{P}_{\fM}(\fE^{\vee}[1]) \to \fM
\end{align*}
which are quasi-smooth and proper. 
We will use the following lemma. 

\begin{lem}\label{lem:relation:K}
Suppose that $\mM=[Q/G]$ where $Q$ is a quasi-projective scheme 
and $G$ is a reductive algebraic group which acts on $Q$. 
Then 
for any $\alpha \in K(\fM)$, 
we have the following relations
in $K(\fM)\{z\}$
\begin{align}\notag
\pi_{\ast}\left[\delta\left( \frac{\oO_{\pi}(1)}{z} \right)  \right]
\otimes \alpha
=\wedge^{\bullet}\left(-\frac{\fE}{z}  \right)\Big|_{z=\infty -0}
\otimes \alpha, \ 
\pi'_{\ast}\left[\delta\left( \frac{\oO_{\pi'}(-1)}{z} \right)  \right]
\otimes \alpha
=\wedge^{\bullet}\left(\frac{z}{\fE}  \right)\Big|_{z=\infty -0}
\otimes \alpha.
\end{align}
Here elements before taking $\otimes \alpha$
are defined in $K_{\perf}(\fM)\{z\}$. 
\end{lem}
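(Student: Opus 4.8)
The plan is to recognize both identities as instances of a (derived, $\mathbb{C}^{\ast}$-equivariant) projective bundle push-forward formula, reducing to the classical case by presenting $\fE$ as a two-term complex of vector bundles. Since $\pi$ and $\pi'$ are quasi-smooth and proper, $\pi_{\ast}$ and $\pi'_{\ast}$ preserve perfect complexes by \cite[Lemma~2.2]{Tproper} and commute with the (doubly infinite) direct sum defining $\delta$; hence both sides of each identity already lie in the completion $K_{\perf}(\fM)\{z\}$, and it suffices to prove the equalities there and then tensor with $\alpha$. I will spell out the computation of $\pi_{\ast}[\delta(\oO_{\pi}(1)/z)]$; the computation of $\pi'_{\ast}[\delta(\oO_{\pi'}(-1)/z)]$ is entirely parallel (alternatively it follows from the first identity applied to $\fE^{\vee}[1]$ together with the substitution $z\mapsto z^{-1}$ and a bookkeeping of the conventions for $\oO_{\pi'}(\pm 1)$).

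First I would reduce to the case that $\fE$ is a vector bundle. Using that $\mM=[Q/G]$ has the resolution property, present $\fE$ as a two-term complex $\fE\simeq[\fE^{-1}\xrightarrow{d}\fE^{0}]$ of vector bundles on $\fM$, with $\fE^{-1}$ in degree $-1$. Then $\mathbb{V}(\fE)=\Spec_{\fM}S(\fE)$ is the derived zero locus, inside the vector bundle $\mathbb{V}(\fE^{0})\to\fM$, of the section of the pull-back of $(\fE^{-1})^{\vee}$ determined by $d$; passing to the $\mathbb{C}^{\ast}$-quotient, $\mathbb{P}_{\fM}(\fE)$ is the derived zero locus inside $\mathbb{P}:=\mathbb{P}_{\fM}(\fE^{0})$ of a section of $q^{\ast}(\fE^{-1})^{\vee}\otimes\oO_{q}(1)$, where $q\colon\mathbb{P}\to\fM$ is the projection, and $\oO_{\pi}(1)=j^{\ast}\oO_{q}(1)$ for the closed immersion $j\colon\mathbb{P}_{\fM}(\fE)\hookrightarrow\mathbb{P}$. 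The Koszul resolution of $j_{\ast}\oO_{\mathbb{P}_{\fM}(\fE)}$ gives, in $K_{\perf}(\mathbb{P})$,
\[
[j_{\ast}\oO_{\mathbb{P}_{\fM}(\fE)}]=\sum_{i\ge 0}(-1)^{i}\,q^{\ast}(\wedge^{i}\fE^{-1})\otimes\oO_{q}(-i),
\]
so that by the projection formula and the reindexing $k=m+i$,
\begin{align*}
\pi_{\ast}\Big[\delta\Big(\tfrac{\oO_{\pi}(1)}{z}\Big)\Big]
&=q_{\ast}\Big(\big[\textstyle\sum_{k}\oO_{q}(k)z^{-k}\big]\otimes[j_{\ast}\oO_{\mathbb{P}_{\fM}(\fE)}]\Big) \\
&=\wedge^{\bullet}\Big(\tfrac{\fE^{-1}}{z}\Big)\cdot q_{\ast}\Big[\delta\Big(\tfrac{\oO_{q}(1)}{z}\Big)\Big],
\end{align*}
where $\wedge^{\bullet}(\fE^{-1}/z)=\sum_{i}(\wedge^{i}\fE^{-1})(-z^{-1})^{i}$ is a Laurent polynomial in $z^{-1}$.

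It then remains to establish the vector bundle case
\[
q_{\ast}\Big[\delta\Big(\tfrac{\oO_{q}(1)}{z}\Big)\Big]=\sum_{m\in\mathbb{Z}}(q_{\ast}\oO_{q}(m))\,z^{-m}=\wedge^{\bullet}\Big(-\tfrac{\fE^{0}}{z}\Big)\Big|_{z=\infty-0},
\]
for which I would use the relative projective bundle formula $q_{\ast}\oO_{q}(m)=S^{m}\fE^{0}$ for $m\ge 0$, the vanishing $q_{\ast}\oO_{q}(m)=0$ for $-\rk\fE^{0}<m<0$, and relative Serre duality for the smooth proper morphism $q$ (whose relative dualizing sheaf is $\oO_{q}(-\rk\fE^{0})\otimes q^{\ast}\det\fE^{0}$) to obtain $q_{\ast}\oO_{q}(m)=(-1)^{\rk\fE^{0}-1}(\det\fE^{0})^{\vee}\otimes S^{-m-\rk\fE^{0}}(\fE^{0})^{\vee}$ for $m\le-\rk\fE^{0}$; summing against $z^{-m}$ and comparing with the two expansions of $1/\wedge^{\bullet}(\fE^{0}/z)$ at $z=\infty$ (the $m\ge0$ part) and at $z=0$ (the $m\le-\rk\fE^{0}$ part) recorded in (\ref{expand:P}) yields the identity. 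Finally, since multiplication by a Laurent polynomial commutes with the expansions $|_{z=\infty}$ and $|_{z=0}$, combining the two displays above gives
\[
\pi_{\ast}\Big[\delta\Big(\tfrac{\oO_{\pi}(1)}{z}\Big)\Big]=\wedge^{\bullet}\Big(\tfrac{\fE^{-1}}{z}\Big)\cdot\wedge^{\bullet}\Big(-\tfrac{\fE^{0}}{z}\Big)\Big|_{z=\infty-0}=\wedge^{\bullet}\Big(-\tfrac{\fE}{z}\Big)\Big|_{z=\infty-0},
\]
using $\fE=\fE^{0}-\fE^{-1}$ in $K$-theory; tensoring with $\alpha$ completes the first identity, and the second is handled the same way using the presentation $\fE^{\vee}[1]\simeq[(\fE^{0})^{\vee}\to(\fE^{-1})^{\vee}]$.

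The step I expect to be the main obstacle is this reduction: producing the \emph{global} two-term presentation of $\fE$ over the derived stack $\fM$ — which is exactly where the hypothesis $\mM=[Q/G]$ enters, via the resolution property for equivariant coherent sheaves on quotient stacks — and the verification that $\mathbb{P}_{\fM}(\fE)\hookrightarrow\mathbb{P}_{\fM}(\fE^{0})$ really is the stated derived zero locus, compatibly with the tautological line bundles and the $\mathbb{C}^{\ast}$-actions. Once that is in place the rest is the classical projective bundle formula together with Serre duality in the range $m\ll 0$, where the only genuine care needed is with the signs and the $\det$-twist.
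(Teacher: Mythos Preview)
Your proposal is correct and follows the same underlying strategy as the paper: present $\fE$ as a two-term complex of vector bundles using the resolution property of $[Q/G]$, then compute via the projective bundle formula and Serre duality. The paper, however, organizes the reduction slightly differently and more economically. Rather than working on the derived stack $\fM$ throughout, it first uses the isomorphism $i_{\ast}\colon K(\mM)\stackrel{\cong}{\to}K(\fM)$ to write $\alpha=i_{\ast}\alpha'$ and the projection formula $\beta\otimes i_{\ast}\alpha'=i_{\ast}(i^{\ast}\beta\otimes\alpha')$ to reduce the desired identities to identities in $K_{\perf}(\mM)\{z\}$ for the restriction $\eE=\fE|_{\mM}$; only then does it invoke the two-term $G$-equivariant presentation on the \emph{classical} stack $\mM=[Q/G]$, and for the resulting projective-bundle computation it simply cites \cite[Proposition~5.19]{Negut} rather than spelling out the Koszul/Serre-duality argument you give. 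This buys the paper two things: it sidesteps the issue you yourself flag --- lifting the global two-term presentation to the derived stack $\fM$ --- and it offloads the explicit push-forward calculation to the literature. Your route is more self-contained and in effect reproves Negut's proposition; the only place you should tighten is the claim that the presentation lives on $\fM$, which is unnecessary once you observe (as the paper does) that both sides of the identity are determined by their restriction to $\mM$.
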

\begin{proof}
We set $\eE=\fE|_{\mM} \in \mathrm{Perf}(\mM)$, and 
take the similar projectivizations
on classical truncations
\begin{align*}
\overline{\pi} \colon \mathbb{P}_{\mM}(\eE) \to \mM, \
\overline{\pi}' \colon \mathbb{P}_{\mM}(\eE^{\vee}[1]) 
\to \mM.
\end{align*}
By the isomorphism (\ref{isom:K}), 
we can write $\alpha=i_{\ast}\alpha'$ for 
some $\alpha' \in K(\mM)$. 
Note that for any $\beta \in K_{\perf}(\fM)$ 
we have 
\begin{align*}
\beta \otimes i_{\ast}\alpha'
=i_{\ast}(i^{\ast}\beta \otimes \alpha').
\end{align*}
Therefore 
it is enough to show the following identities
in $K_{\perf}(\mM)\{z\}$
\begin{align}\label{id:eE}
\pi_{\ast}\left[\delta\left( \frac{\oO_{\overline{\pi}}(1)}{z} \right)  \right]
=\wedge^{\bullet}\left(-\frac{\eE}{z}  \right)\Big|_{z=\infty -0}, \ 
\overline{\pi}'_{\ast}
\left[\delta\left( \frac{\oO_{\pi'}(-1)}{z} \right)  \right]
=\wedge^{\bullet}\left(\frac{z}{\eE}  \right)\Big|_{z=\infty -0}.
\end{align}
By the assumption on $\mM$, 
 we can represent $\eE$ as a two term 
complex 
\begin{align*}
\eE=(\eE^{-1} \stackrel{d}{\to} \eE^0)
\end{align*}
where $\eE^{-1}$ and $\eE^0$
are $G$-equivariant vector 
bundles on $Q$ and $d$ is $G$-equivariant. 
Then the identities (\ref{id:eE})
follow from~\cite[Proposition~5.19]{Negut}. 
\end{proof}

\begin{rmk}\label{rmk:qproj}
It is well-known that 
any finite type 
derived open substack
of $\fM_S(v)$, $\fM^{\dag}_S(v)$
satisfies the assumption of Lemma~\ref{lem:relation:K}, i.e.
its classical truncation is of the form $[Q/G]$
as in Lemma~\ref{lem:relation:K}. 
For example, see~\cite[Proposition~4.1.1]{KaVa2}. 
\end{rmk}

\subsection{Actions on K-theory}
Now we return to the situation in Section~\ref{sec:Hecke}. 
For a fixed $\beta \in \mathrm{NS}(S)$, 
let $\fM_S^{\dag}(\beta)^{\pure}$ be defined by
\begin{align*}
\fM_S^{\dag}(\beta)^{\pure} \cneq 
\coprod_{n \in \mathbb{Z}}
\fM_S^{\dag}(\beta, n)^{\pure}. 
\end{align*}
Using the notation in the diagrams (\ref{dia:Hecke}),
(\ref{dia:Hecke2}),  
we define the following maps 
on the K-theory
\begin{align}\label{def:mu+-}
&\mu^{+}(z) \cneq (\pi_{2}, \pi_3)_{\ast}\left(\pi_1^{\ast}(-) \otimes 
\delta\left(\frac{\lL}{z}\right)\right)
 \colon K(\fM_S^{\dag}(\beta)^{\pure}) \to 
K(\fM_S^{\dag}(\beta)^{\pure} \times S)\{z\}, \\
\notag
&\mu^{-}(z) \cneq (\pi_1, \pi_3)_{!}\left(\pi_2^{\ast}(-) \otimes 
\delta\left(\frac{\lL}{z}\right)\right)
 \colon K(\fM_S^{\dag}(\beta)^{\pure}) \to 
K(\fM_S^{\dag}(\beta)^{\pure} \times S)\{z\}. 
\end{align}
We denote by 
\begin{align*}
\fI^{\bullet}(\beta)= (\oO_{S \times \fM_S^{\dag}(\beta)^{\pure}} \to 
\ffF(\beta)) \in \mathrm{Perf}(S\times \fM_S^{\dag}(\beta)^{\pure})
\end{align*}
the object associated with the universal pair. 
By Lemma~\ref{lem:proper}, the map 
$\mu^-(z)$ is written as
\begin{align*}
\mu^{-}(z) = (\pi_{1}, \pi_3)_{\ast}\left(\pi_2^{\ast}(-) \otimes 
\delta\left(\frac{\lL}{z}\right)\right)
\cdot (-\det \ffF(\beta)).
\end{align*}
We write $\mu^{\pm}(z)$ as 
\begin{align*}
\mu^{\pm}(z)=\sum_{k \in \mathbb{Z}} \mu_k^{\pm}z^{-k}, \ 
\mu_k^{\pm} \colon K(\fM_S^{\dag}(\beta)^{\pure}) \to K(\fM_S^{\dag}(\beta)^{\pure} \times S).
\end{align*}
Then by Lemma~\ref{lem:muHecke} 
and Definition~\ref{defi:mu-}, 
the functors $\mu_{\eE, k}^{\pm}$
satisfy the following relations
\begin{align}\label{eqn:mu}
p_{\fM\ast}(\mu_k^{+}(-) \otimes p_S^{\ast}\eE) =\mu_{\eE, k}^{+}(-), \
p_{\fM!}(\mu_k^{-}(-) \otimes p_S^{\ast}\eE) =\mu_{\eE, k}^{-}(-).
\end{align}
Here $p_{\fM}$ and $p_S$ are the projections from 
$\fM_S^{\dag}(\beta)^{\pure} \times S$
onto corresponding factors. 
We then set
\begin{align*}
[\mu^+(z), \mu^-(w)]
 \colon K(\fM_S^{\dag}(\beta)^{\pure}) \to 
K(\fM_S^{\dag}(\beta)^{\pure} \times S \times S)\{z, w\}
\end{align*}
by the following 
\begin{align*}
[\mu^+(z), \mu^-(w)] \cneq 
(\mu^+(z) \boxtimes \id_S)\circ \mu^-(w)-
(\mu^-(w) \boxtimes \id_S)\circ \mu^+(z). 
\end{align*}
The rest of this section is devoted to 
the computation of $[\mu^+(z), \mu^-(w)]$
following the argument of~\cite{Negut}.

\subsection{Compositions of Hecke actions}
In order to compute the composition 
$\mu^{-} \circ \mu^{+}$, we 
consider the following diagram
\begin{align}\label{dia:spade}
\xymatrix{
\fH ecke^{\spadesuit}(v_{\bullet}) \ar@{}[dr]|\square
\ar@/^30pt/[rr]^-{(\overline{\pi}_1^{\spadesuit}, p_1, p_2)} 
\ar[r]^-{q_2} 
\ar@/_60pt/[dd]_-{\pi_1^{\spadesuit}} \ar[d]_-{q_1} 
& \fH ecke(v_{\bullet}) \times S \ar[r]^-{(\pi_1, \pi_3, \id_S)} 
\ar[d]^-{(\pi_2, \id_S)} &  \fM_S^{\dag}(v_1)^{\pure} 
\times S \times S \\
\fH ecke(v_{\bullet}) \ar[d]_-{\pi_1} \ar[r]^-{(\pi_2, \pi_3)} 
& \fM_S^{\dag}(v_2)^{\pure} \times S & \\
\fM_S^{\dag}(v_1)^{\pure} & 
}
\end{align}
Here $\fH ecke^{\spadesuit}(v_{\bullet})$
is defined by the top left Cartesian square. 
From the construction, it parametrizes diagrams
\begin{align}\label{dia:Hecke3}
\xymatrix{
& F_1 \ar@<-0.3ex>@{^{(}->}[rd]^-{x} &  \\
\oO_S \ar[ru] \ar[rd] &   & F_2.  \\
& F_1' \ar@<-0.3ex>@{^{(}->}[ru]_-{y}&
}
\end{align}
Here 
$F_1, F_1'$ are points of 
$\mM_S(v_1)^{\pure}$, 
$F_2$ is a point of $\mM_S(v_2)^{\pure}$, 
and $F_1 \stackrel{x}{\hookrightarrow} F_2$ means 
that $F_2/F_1=\oO_x$ for $x \in S$. 

Similarly we take 
\begin{align*}
v_{\bullet}'=(v_1', v_2', v_3'), \ 
v_1'=(\beta, n-1), v_2'=v_1=(\beta, n), \ v_3'=(0, 1)
\end{align*}
and consider the diagram 
\begin{align}\label{dia:dia}
\xymatrix{
\fH ecke^{\diamondsuit}(v'_{\bullet}) \ar@{}[dr]|\square
\ar@/^30pt/[rr]^-{(\overline{\pi}_1^{\diamondsuit}, p_1', p_2')} 
\ar[r]^-{q_2'} 
\ar@/_60pt/[dd]_-{\pi_1^{\diamondsuit}} \ar[d]_-{q_1'} 
& \fH ecke(v_{\bullet}') \times S \ar[r]^-{(\pi_2', \pi_3', \id_S)} 
\ar[d]^-{(\pi_1', \id_S)} &  \fM_S^{\dag}(v_2')^{\pure} 
\times S \times S \\
\fH ecke(v_{\bullet}') \ar[d]_-{\pi_2'} \ar[r]^-{(\pi_1', \pi_3')} 
& \fM_S^{\dag}(v_1')^{\pure} \times S & \\
\fM_S^{\dag}(v_2')^{\pure} & 
}
\end{align}
Here $\fH ecke^{\diamondsuit}(v'_{\bullet})$
is defined by the top left Cartesian square. 
It parametrizes diagrams
\begin{align}\label{dia:Hecke4}
\xymatrix{
&  &  F_1 \\
\oO_S \ar[r] & F_0 
\ar@<-0.3ex>@{^{(}->}[ru]^-{y}
\ar@<-0.3ex>@{^{(}->}[rd]_-{x} &  \\
& & F_1'. 
}
\end{align}
Here 
$F_1, F_1'$ are points of 
$\mM_S(v_1)^{\pure}$, 
$F_0$ is a point of $\mM_S(v_1')^{\pure}$.
The following is an analogue of~\cite[Claim~3.8]{Negut}
for our situation. 
\begin{lem}\label{lem:dia:equiv}
Let $(F_1, F_1', x, y)$ be given, 
where $F_1, F_1'$ are points of 
$\fM_S(v_1)^{\pure}$ and 
$x, y \in S$. 
Suppose that either $x \neq y$ or 
$F_1$ is not isomorphic to $F_1'$. 
Then giving a diagram (\ref{dia:Hecke3}) is equivalent to 
giving a diagram (\ref{dia:Hecke4}). 
\end{lem}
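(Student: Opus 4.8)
The plan is to construct explicit bijections between the two moduli problems in each direction, using the fact that subsheaves of pure one-dimensional sheaves are again pure. Starting from a diagram \eqref{dia:Hecke3}, the composite $F_1 \hookrightarrow F_2$ and $F_1' \hookrightarrow F_2$ identifies both $F_1$ and $F_1'$ with codimension-length-one subsheaves of $F_2$; their intersection $F_0 \cneq F_1 \cap F_1' \subset F_2$ (formed inside $\Coh(S)$, or equivalently as the kernel of $F_2 \to \oO_x \oplus \oO_y$ composed appropriately) is then a subsheaf with $[F_0]=v_1'$, provided we check that the two quotients $F_2/F_1 = \oO_x$ and $F_2/F_1' = \oO_y$ genuinely cut out a length-two (or the appropriate length) quotient. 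The hypothesis that either $x\neq y$ or $F_1\not\cong F_1'$ guarantees that $F_1$ and $F_1'$ are distinct subsheaves of $F_2$, so $F_0 \subsetneq F_1$ and $F_0 \subsetneq F_1'$ each have colength one, giving $F_0/F_0 \hookrightarrow F_1$ with $F_1/F_0 \cong \oO_x$ and $F_1'/F_0 \cong \oO_y$; the section $\oO_S \to F_1 \to F_2$ factors through $F_0$ since its image lies in both $F_1$ and $F_1'$ (both contain the image of $\oO_S$, as the diagram \eqref{dia:Hecke3} shows $\oO_S \to F_1$ and $\oO_S \to F_1'$ are compatible over $F_2$). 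This produces a diagram \eqref{dia:Hecke4}.

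Conversely, given \eqref{dia:Hecke4} with $F_0$ of class $v_1'$ and $F_0 \hookrightarrow F_1$, $F_0 \hookrightarrow F_1'$ each of colength one, I would form $F_2 \cneq F_1 +_{F_0} F_1'$, i.e. the pushout of $F_1 \leftarrow F_0 \rightarrow F_1'$ in $\Coh(S)$. One checks this pushout is again pure one-dimensional of class $v_2 = v_1 + v_3$: purity follows because $F_2$ receives an injection from $F_1 \oplus F_1' / F_0$ onto which it surjects, and both summands are pure; the Euler characteristic and cycle class are additive in the pushout square. The maps $F_1 \hookrightarrow F_2$ and $F_1' \hookrightarrow F_2$ are injective and have quotients $\oO_x$, $\oO_y$ respectively by the colength computation, giving a diagram \eqref{dia:Hecke3}. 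One then verifies these two constructions are mutually inverse and compatible with the sections from $\oO_S$, which is a diagram chase.

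The argument should be carried out relatively, over an arbitrary affine base $T \in dAff$ (or at least over $\mathbb{C}$-schemes $T$, as in Proposition~\ref{prop:isomM}), so that it upgrades from an equivalence of $\mathbb{C}$-points to an equivalence of derived stacks; here I would note that both $\fH ecke^{\spadesuit}(v_{\bullet})$ and $\fH ecke^{\diamondsuit}(v'_{\bullet})$ are cut out inside products of quasi-smooth stacks by the same Hom/Ext data (via the description \eqref{MSext:spec} of $\fM_S^{\ext}$ as a relative spectrum), so the bijection on points, being natural, promotes to an isomorphism of the derived structures. The main obstacle is the non-generic locus excluded by the hypothesis: when $x = y$ and $F_1 \cong F_1'$ the subsheaves $F_1, F_1'$ of $F_2$ may coincide, so $F_0$ would have colength zero rather than one, and the pushout $F_2$ would not have the right class; this is exactly why the lemma carries the hypothesis "$x \neq y$ or $F_1 \not\cong F_1'$", so it is not an obstacle to overcome but a restriction to respect, and the real care is in checking that outside this locus the colength-one conditions hold on both sides (which uses that $\oO_x$ is simple, so a nonzero map $F_2 \to \oO_x$ either kills $F_1$ or restricts to an isomorphism on the quotient, as in the purity argument of Lemma~\ref{lem:diaHecke}).
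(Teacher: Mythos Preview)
Your forward direction (from \eqref{dia:Hecke3} to \eqref{dia:Hecke4}) is correct and matches the paper: once the hypothesis forces $F_1 \neq F_1'$ as subsheaves of $F_2$, setting $F_0 = F_1 \cap F_1'$ works, and purity of $F_0$ is automatic since subsheaves of pure sheaves are pure. (A minor point: your cokernel labels are swapped; one has $F_1/F_0 \cong F_2/F_1' = \oO_y$, not $\oO_x$.)

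The genuine gap is in the converse direction. Your sentence ``purity follows because $F_2$ receives an injection from $F_1 \oplus F_1'/F_0$ onto which it surjects, and both summands are pure'' is not an argument: the pushout $F_2$ \emph{is} the cokernel $(F_1 \oplus F_1')/F_0$, and quotients of pure sheaves by pure subsheaves are not pure in general. Concretely, take a smooth curve $C \subset S$, a point $x \in C$, and $F_0 = \oO_C(-x)$, $F_1 = F_1' = \oO_C$ with the \emph{same} inclusion; then the pushout sits in an exact sequence $0 \to \oO_x \to F_2 \to \oO_C \to 0$ and is not pure. This example is excluded by the hypothesis, but your argument never invokes the hypothesis at this step, so it cannot be right as written. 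The paper uses the hypothesis precisely here: assuming $x = y$ and $F_2$ not pure, one picks $\oO_x \hookrightarrow F_2$, sets $F_2' = F_2/\oO_x$, and observes that the composite $F_1 \hookrightarrow F_1 \oplus F_1' \twoheadrightarrow F_2 \twoheadrightarrow F_2'$ is generically injective, hence injective by purity of $F_1$, hence an isomorphism by comparing $\chi$; the same for $F_1'$, forcing $F_1 \cong F_1'$ and a contradiction. You should replace your purity claim with this argument (and a one-line check for the case $x \neq y$).

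Your closing paragraph about upgrading to an equivalence of derived stacks over a base $T$ goes beyond what this lemma asserts; the paper states and proves it at the level of $\mathbb{C}$-points and defers the stack-level comparison to the next lemma.
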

\begin{proof}
Suppose that a diagram (\ref{dia:Hecke3}) is given. 
The assumption implies that 
$F_1 \neq F_1'$ inside $F_2$. 
Therefore by setting $F_0=F_1 \cap F_1'$
inside $F_2$, we obtain the diagram (\ref{dia:Hecke4}). 
Conversely suppose that a diagram (\ref{dia:Hecke4}) is given. 
We set $F_2$ by the exact sequence
\begin{align*}
0 \to F_0 \to F_1 \oplus F_1' \to F_2 \to 0. 
\end{align*}
We claim that, under
 the assumption of the lemma, 
the sheaf $F_2$ is a pure one dimensional sheaf. 
The claim is obvious if $x\neq y$, so 
we assume that $x=y$. 
If $F_2$ is not pure, 
there is an injection 
$\oO_x \hookrightarrow F_2$. 
By setting $F_2'=F_2/\oO_x$, 
we have the morphisms
\begin{align}\label{compose:F}
F_1 \stackrel{(\id, 0)}{\hookrightarrow}
 F_1 \oplus F_1' \twoheadrightarrow 
F_2 \twoheadrightarrow F_2'. 
\end{align}
The composition of the above morphisms
 is generically injective, hence injective 
as $F_1$ is pure. 
Since $\chi(F_1)=\chi(F_2')$, 
the morphism (\ref{compose:F}) is an isomorphism, 
$F_1 \cong F_2'$. Similarly 
$F_1' \cong F_2'$, which contradicts 
to the assumption that $F_1$ is not isomorphic 
to $F_1'$. Therefore $F_2$ is pure one dimensional, 
and we obtain the diagram (\ref{dia:Hecke3}). 
\end{proof}

Using diagrams (\ref{dia:spade}), (\ref{dia:dia}) and 
Lemma~\ref{lem:dia:equiv}, 
we have the following lemma, which 
is an analogue of~\cite[(3.32)]{Negut}.

\begin{lem}\label{lem:diagonal}
There exists $\gamma \in K(\fM_S^{\dag}(\beta)^{\pure} \times S)\{z, w\}$
such that we have 
the commutative diagram
\begin{align}\label{dia:gamma}
\xymatrix{
K_{\perf}(\fM_S^{\dag}(\beta)^{\pure}) \ar[r] \ar[rd]_-{\boxtimes (\mathrm{id}, \Delta)_{\ast}\gamma} & 
K(\fM_S^{\dag}(\beta)^{\pure}) \ar[d]^-{[\mu^+(z), \mu^-(w)]} \\
& K(\fM_S^{\dag}(\beta)^{\pure} \times S \times S)\{z, w\}. 
}
\end{align}
Here 
the horizontal arrow is the natural map, and 
$(\mathrm{id}, \Delta) \colon \fM_S^{\dag}(\beta)^{\pure} \times S
\to \fM_S^{\dag}(\beta)^{\pure} \times S \times S$
is the product with diagonal
$\Delta \colon S \hookrightarrow S \times S$. 
\end{lem}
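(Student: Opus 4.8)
The plan is to follow the strategy of Negut~\cite{Negut} and realize the composite operators $(\mu^+(z) \boxtimes \id_S) \circ \mu^-(w)$ and $(\mu^-(w) \boxtimes \id_S) \circ \mu^+(z)$ as pull-push along the fibre products of Hecke stacks appearing in diagrams~(\ref{dia:spade}) and~(\ref{dia:dia}). The first step is to identify these composites concretely. Composing $\mu^-(w)$ followed by $\mu^+(z)$ amounts to pulling back along $\pi_2$, tensoring with $\delta(\lL/w)$, pushing down along $\pi_1$, then pulling back again along $\pi_2$ for the next Hecke step, tensoring with $\delta(\lL/z)$, and pushing along $(\pi_2,\pi_3)$; by base change along the Cartesian squares defining $\fH ecke^{\spadesuit}(v_\bullet)$ and $\fH ecke^{\diamondsuit}(v_\bullet')$, each composite becomes a single pull-push along the corresponding fibre product stack, decorated with an appropriate power of the two tautological line bundles (and, via Lemma~\ref{lem:proper}, a determinant twist $-\det\ffF(\beta)$ coming from the shriek functor in $\mu^-$). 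So I would first write both composites as
\begin{align*}
(\pi_1^{\spadesuit}, p_1, p_2)_{\ast}\left( (\cdots)^{\ast}(-) \otimes \delta\left(\tfrac{\lL^{\spadesuit}_1}{z}\right)\delta\left(\tfrac{\lL^{\spadesuit}_2}{w}\right) \otimes (\cdots) \right)
\end{align*}
and the analogous expression over $\fH ecke^{\diamondsuit}(v_\bullet')$, taking care to track the determinant twists and the pullback of $\eE$ (which here I suppress since we are computing the universal operator before tensoring with $\eE_1,\eE_2$).

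The key geometric input is Lemma~\ref{lem:dia:equiv}: away from the locus where $x=y$ \emph{and} $F_1 \cong F_1'$, the two fibre product stacks $\fH ecke^{\spadesuit}(v_\bullet)$ and $\fH ecke^{\diamondsuit}(v_\bullet')$ are naturally identified, compatibly with their maps to $\fM_S^{\dag}(\beta)^{\pure} \times S \times S$ and with the tautological line bundles. Hence on this open locus the two composites agree, and their difference is supported on the closed substack where $x=y$ and the two subsheaves coincide — which maps into the diagonal $\fM_S^{\dag}(\beta)^{\pure} \times \Delta_S \subset \fM_S^{\dag}(\beta)^{\pure} \times S \times S$. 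This is exactly the source of the factor $(\mathrm{id},\Delta)_{\ast}$ in~(\ref{dia:gamma}). The second step, then, is to make this ``difference is diagonal-supported'' statement precise at the level of K-theory: I would use the residue identity of Lemma~(equation~(\ref{residue})) applied to the rational functions $\wedge^\bullet$ governing $\mu^+$ and $\mu^-$ via Lemma~\ref{lem:relation:K}, which converts the difference of the two iterated expansions (in orders $z$-then-$w$ versus $w$-then-$z$) into a $\delta(w/\alpha z)$ times a single-variable expansion — precisely the mechanism that produces a $\delta$ along the diagonal. The resulting coefficient is the sought-after $\gamma \in K(\fM_S^{\dag}(\beta)^{\pure} \times S)\{z,w\}$; the restriction of the source to $K_{\perf}$ is needed because $\fM_S^{\dag}(\beta)^{\pure}$ is singular and the relevant pullback/tensor operations must land in perfect complexes for Lemma~\ref{lem:relation:K} (via Remark~\ref{rmk:qproj}) to apply.

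Concretely, I would proceed as follows. First, reduce both composites to pull-push over the common fibre product away from the diagonal, using base change and Lemma~\ref{lem:dia:equiv}; second, over the diagonal, compute the normal/excess contribution: the fibre of each iterated Hecke stack over a point $((F,\xi),x,x)$ is a product of two projective spaces $\mathbb{P}(F^\vee|_x \otimes \omega_S[1])$ (from Lemma~\ref{fiber:pi}) or $\mathbb{P}(I^\bullet[1]|_x)$ (from Lemma~\ref{lem:proper}), and the interchange of the two blow-up/projectivization orders is controlled by Lemma~\ref{lem:relation:K} plus the residue formula~(\ref{residue}); third, assemble the answer, reading off that $\gamma$ is built from $\wedge^\bullet$ of $(q_S^{-1}-1)\ffF(\beta)$ shifted appropriately, matching the shape of $h(z)$ in Theorem~\ref{intro:thmC}. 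The main obstacle will be the bookkeeping at the diagonal: one must carefully compare the two excess-intersection/normal-cone contributions coming from the two different orders of Hecke modifications, keep track of all the tautological line bundle twists $\lL$, the determinant twist $\det\ffF(\beta)$ from the shriek functor, and the $\omega_S$ twists from Grothendieck duality, and verify that the singular-support constraints (so that everything descends to the categorical level, or here at least that the K-theoretic statement is valid for classes represented by perfect complexes) are respected throughout. The residue lemma does the real algebraic work, but verifying that the geometric difference is exactly the pushforward from the diagonal — and not merely supported there up to some correction on deeper strata — requires the precise scheme-theoretic statement of Lemma~\ref{lem:dia:equiv} together with a dimension/purity argument showing the two stacks agree as derived stacks, not just on points, on the complement of the diagonal.
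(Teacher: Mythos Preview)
Your first step---realizing both composites as pull-push along $\fH ecke^{\spadesuit}(v_\bullet)$ and $\fH ecke^{\diamondsuit}(v'_\bullet)$ via base change, and invoking Lemma~\ref{lem:dia:equiv} to identify them off the diagonal---is exactly what the paper does. However, you then overshoot: the residue identity~(\ref{residue}) and Lemma~\ref{lem:relation:K} are not needed for Lemma~\ref{lem:diagonal}, which only asserts the \emph{existence} of some $\gamma$. Those tools are what compute $\gamma$ explicitly, and that computation is the content of the subsequent Proposition~\ref{prop:comm}, not of this lemma.

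The paper's argument for existence is shorter and purely support-theoretic. One writes each composite as a Fourier--Mukai-type transform whose kernel lives in $K(\fM_S^{\dag}(\beta)^{\pure} \times \fM_S^{\dag}(\beta)^{\pure} \times S \times S)\{z,w\}$, namely the pushforward from the relevant iterated Hecke stack of the product of the $\delta$-classes and the determinant twist. By Lemma~\ref{lem:dia:equiv} (together with the identification of the tautological line bundles and of the pulled-back $\det\ffF(\beta)$ under that isomorphism), these two kernels agree after restriction to the open complement of the diagonal $(\Delta_{\fM} \times \Delta)$. Hence their difference lies in the image of $(\Delta_{\fM} \times \Delta)_{\ast}$ by the localization sequence in $K$-theory, which produces $\gamma$ directly, with no residue or excess-intersection computation. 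Finally, applying a transform with kernel $(\Delta_{\fM} \times \Delta)_{\ast}\gamma$ to a \emph{perfect} class $\alpha$ gives $\alpha \boxtimes (\id,\Delta)_{\ast}\gamma$ by the projection formula; this is the actual reason the source is restricted to $K_{\perf}$, rather than any appeal to Lemma~\ref{lem:relation:K}.
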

\begin{proof}
From the diagrams (\ref{dia:spade}), (\ref{dia:dia}), 
we have 
\begin{align*}
&(\mu^-(w) \boxtimes \id_S)\circ \mu^+(z)
=(\overline{\pi}^{\spadesuit}_{1}, p_1, p_2)_{\ast}
\left((\pi_1^{\spadesuit})^{\ast}(-) \otimes 
\delta\left(\frac{q_1^{\ast}\lL}{z}\right)
\otimes 
\delta\left(\frac{q_2^{\ast}\lL}{w}
\right) \otimes 
\left(-(\overline{\pi}_1^{\spadesuit}, p_2)^{\ast}\det \ffF(\beta)  \right)
\right) , \\
&(\mu^+(z) \boxtimes \id_S)\circ \mu^-(w)
=(\overline{\pi}^{\diamondsuit}_{1}, p_1', p_2')_{\ast}
\left((\pi_1^{\diamondsuit})^{\ast}(-) \otimes 
\delta\left(\frac{q_1^{'\ast}\lL}{w}\right)
\otimes 
\delta\left(\frac{q_2^{'\ast}\lL}{z}\right)
\left(-(\pi_1^{\diamondsuit}, p_1')^{\ast}\det \ffF(\beta) \right)
\right).
\end{align*}
By Lemma~\ref{lem:dia:equiv}, two derived stacks
over $\fM_S^{\dag}(v_1)^{\pure} \times \fM_S^{\dag}(v_1)^{\pure} \times 
S \times S$
\begin{align*}
\xymatrix{ 
\fH ecke^{\spadesuit}(v'_{\bullet}) 
\ar[rd]_-{(\pi_1^{\spadesuit}, \overline{\pi}_1^{\spadesuit}, p_1, p_2)}   
& & \fH ecke^{\diamondsuit}(v'_{\bullet}) 
\ar[ld]^-{(\pi_1^{\diamondsuit}, \overline{\pi}_1^{\diamondsuit}, p'_1, p'_2)}
\\
& \fM_S^{\dag}(v_1)^{\pure} \times \fM_S^{\dag}(v_1)^{\pure} \times 
S \times S.  &
}
\end{align*}
are equivalent outside 
the diagonal 
\begin{align*}
(\Delta_{\fM} \times \Delta) \colon 
\fM_S^{\dag}(v_1)^{\pure} \times 
S \hookrightarrow
\fM_S^{\dag}(v_1)^{\pure} \times \fM_S^{\dag}(v_1)^{\pure} \times 
S \times S
\end{align*}
such that $q_1^{\ast}\lL$, $q_2^{'\ast}\lL$ are identified
and $(\overline{\pi}_1^{\spadesuit}, p_2)^{\ast}\det \ffF(\beta)$, 
$(\pi_1^{\diamondsuit}, p_1')^{\ast}\det \ffF(\beta)$
are identified. 
Therefore 
the difference 
\begin{align*}
&(\pi_1^{\spadesuit}, \overline{\pi}_1^{\spadesuit}, p_1, p_2)_{\ast}
\left\{\delta\left(\frac{q_1^{\ast}\lL}{z}  \right)\otimes 
\delta\left(\frac{q_2^{\ast}\lL}{w}\right) \otimes \left(-(\overline{\pi}_1^{\spadesuit}, p_2)^{\ast}\det \ffF(\beta)  \right) \right\} \\
&-(\overline{\pi}_1^{\diamondsuit}, \pi^{\diamondsuit}_1, p'_1, p'_2)_{\ast}
\left\{\delta\left(\frac{q_1^{'\ast}\lL}{z}  \right)\otimes 
\delta\left(\frac{q_2^{'\ast}\lL}{w}\right) \otimes \left(-(
\overline{\pi}_1^{\diamondsuit}, p_1')^{\ast}\det \ffF(\beta)  \right) \right\} \end{align*}
in $K(\fM_S^{\dag}(\beta)^{\pure} \times \fM_S^{\dag}(\beta)^{\pure} \times S \times S)\{z, w\}$
is written as 
$(\Delta_{\fM} \times \Delta)_{\ast} \gamma$
for some $\gamma \in K(\fM_S^{\dag}(\beta)^{\pure} \times S)\{z, w\}$.
Then the commutator $[\mu^+(z), \mu^-(w)](-)$ 
applied for perfect complexes coincides with 
$(-) \otimes (\id, \Delta)_{\ast}\gamma$, 
therefore the lemma holds. 
\end{proof}

Below we take derived open substacks 
$\fM_S^{\dag}(\beta, n)^{\fin} \subset \fM_S^{\dag}(\beta, n)^{\pure}$
for each $n$, 
satisfying the condition (\ref{PT:qcompact}). 
Then we set
\begin{align*}
\fM_S^{\dag}(\beta)^{\fin} \cneq 
\coprod_{n \in \mathbb{Z}} \fM_S^{\dag}(\beta, n)^{\fin}
\subset \fM_S^{\dag}(\beta)^{\pure}. 
\end{align*}
Let $h^{\pm}(z) \in K_{\perf}(\fM_S^{\dag}(\beta)^{\pure} \times S)\{z\}$
be defined by 
\begin{align*}
h^+(z)=\left(1-\frac{1}{z} \right)\wedge^{\bullet} \left(\frac{(q_S^{-1}-1)\ffF(\beta)}{z}\right)
\Big|_{z=\infty}, \
h^-(z)=\left(1-\frac{1}{z} \right)\wedge^{\bullet} \left(\frac{(q_S^{-1}-1)\ffF(\beta)}{z}\right)
\Big|_{z=0}. 
\end{align*}
Here $q_S$ is the class $[\omega_S] \in K(S)$, pulled back 
to $\fM_S^{\dag}(\beta)^{\pure} \times S$. 
The restrictions of $h^{\pm}(z)$
to $\fM_S^{\dag}(\beta)^{\fin} \times S$
are also denoted by $h^{\pm}(z)$. 
We have the following proposition, which is 
an analogue of~\cite[Proposition~3.6]{Negut}.  

\begin{prop}\label{prop:comm}
We have the following diagram
\begin{align*}
\xymatrix{
K_{\perf}(\fM_S^{\dag}(\beta)^{\pure} ) \ar[rd]^-{\boxtimes (\mathrm{id}, \Delta)_{\ast}\gamma}
 \ar[d]_-{\boxtimes \rho(z, w)} &  \\
K_{\perf}(\fM_S^{\dag}(\beta)^{\pure} \times S \times S)\{z, w\}
\ar[r] & K(\fM_S^{\dag}(\beta)^{\pure}\times S \times S)\{z, w\}
}
\end{align*}
which commutes after restricting 
the both compositions to $\fM_S^{\dag}(\beta)^{\fin} \times S \times S$. 
Here $\rho(z, w)$ is given by 
\begin{align}\label{def:rho}
\rho(z, w) \cneq 
(\mathrm{id}, \Delta)_{\ast}\frac{h^+(z)-h^-(w)}{q_S-1}
\delta\left(\frac{w}{z} \right)
\in K_{\perf}(\fM_S^{\dag}(\beta)^{\pure} \times S \times S)\{z, w\}.
\end{align}
\end{prop}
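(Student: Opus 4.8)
The plan is to follow the residue argument of Negut~\cite[Proposition~3.6]{Negut}, adapted to our quasi-smooth setting. By Lemma~\ref{lem:diagonal}, the commutator $[\mu^+(z),\mu^-(w)]$ applied to a class coming from a perfect complex is given by $\boxtimes(\id,\Delta)_*\gamma$ for a well-defined $\gamma\in K(\fM_S^{\dag}(\beta)^{\pure}\times S)\{z,w\}$, and by Lemma~\ref{lem:dia:equiv} the two derived stacks $\fH ecke^{\spadesuit}(v_{\bullet}')$ and $\fH ecke^{\diamondsuit}(v_{\bullet}')$ appearing in the two composites $\mu^-(w)\circ\mu^+(z)$ and $\mu^+(z)\circ\mu^-(w)$ are equivalent away from the locus where $x=y$ and $F_1\cong F_1'$. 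Hence $\gamma$ is concentrated on the diagonal, and computing it amounts to analyzing the difference of the two composite pushforwards after restriction to $\Delta_{\fM}\times\Delta$. Throughout we work over the finite type substack $\fM_S^{\dag}(\beta)^{\fin}$ so that the projectivization formula of Lemma~\ref{lem:relation:K} applies (Remark~\ref{rmk:qproj}); this is why the asserted commutativity in Proposition~\ref{prop:comm} is only claimed after that restriction.

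First I would rewrite both composites purely in terms of $\wedge^{\bullet}$-classes. Using the description $\fH ecke(v_{\bullet})\simeq\mathbb{P}(\ffF(v_1)^{\vee}\boxtimes\omega_S[1])$ over $\fM_S^{\dag}(v_1)^{\pure}\times S$ from Lemma~\ref{fiber:pi}, the description $\fH ecke(v_{\bullet})\simeq\mathbb{P}(\fI^{\bullet}(v_2)[1])$ over $\fM_S^{\dag}(v_2)^{\pure}\times S$ from Lemma~\ref{lem:proper}, and the identification of $\lL$ with the corresponding tautological line bundles, Lemma~\ref{lem:relation:K} converts the pushforwards of the classes $\delta(\lL/z)$ and $\delta(\lL/w)$ into expansions at $z=\infty$ and $z=0$ of $\wedge^{\bullet}$ of the relevant universal classes. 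Combining these over the diagrams (\ref{dia:spade}) and (\ref{dia:dia}) and using the base change identity of Lemma~\ref{lem:muHecke}, each composite becomes a pushforward over $\fH ecke^{\spadesuit}(v_{\bullet}')$, respectively $\fH ecke^{\diamondsuit}(v_{\bullet}')$, of an explicit rational function in $z$, $w$, the tautological bundles, and the universal complexes $\ffF$ and $\fI^{\bullet}$.

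Next I would extract $\gamma$ as a residue. Off the diagonal the two rational functions have equal pushforward, so their difference is governed by the residue identity (\ref{residue}): it equals $\delta(w/z)$ times the $(z,w)=\infty-0$ expansion of the common rational function, restricted to the diagonal locus. The pole responsible for this residue comes from the mismatch between the two projectivization descriptions of $\fH ecke$: passing from the $\mathbb{P}(\fI^{\bullet}(v_2)[1])$-picture to the $\mathbb{P}(\ffF(v_1)^{\vee}\boxtimes\omega_S[1])$-picture introduces a relative twist by $\omega_S^{-1}$, which together with the two-term structure of $\fI^{\bullet}(\beta)=(\oO\to\ffF(\beta))$ produces exactly the class $(q_S^{-1}-1)\ffF(\beta)$ inside $h^{\pm}(z)$, while the $\oO(\pm1)$-weights built into the definitions (\ref{def:mu+-}) of $\mu^{\pm}$ contribute the factor $1-1/z=\wedge^{\bullet}(\oO/z)$. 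Since $\ffF(\beta)$ has rank zero, Lemma~\ref{lem:calculation} guarantees that $h^+(z)-h^-(w)$ is divisible by $q_S-1$, so that $\rho(z,w)$ of (\ref{def:rho}) is a bona fide element of $K_{\perf}(\fM_S^{\dag}(\beta)^{\pure}\times S\times S)\{z,w\}$; matching the residue computation with this expression yields $\gamma=\tfrac{h^+(z)-h^-(w)}{q_S-1}\delta(w/z)$ modulo the diagonal pushforward, which is the claim.

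\textbf{Main obstacle.} The delicate point is the bookkeeping on the diagonal: when $x=y$ and $F_1\cong F_1'$, both $\fH ecke^{\spadesuit}$ and $\fH ecke^{\diamondsuit}$ degenerate, and one must check that the difference of the two \emph{derived} pushforwards is computed correctly, i.e.\ that the excess contributions along the diagonal are precisely those recorded by the residue formula (\ref{residue}) and produce no spurious terms. Controlling the shifts $[1]$, the duals, the signs in (\ref{expand:P}), and the exact way in which $q_S=[\omega_S]$ enters (it plays the role of Negut's equivariant parameter) is the error-prone part; the remainder is a formal manipulation within the K-theoretic calculus set up in this section.
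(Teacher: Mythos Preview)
Your overall strategy—reduce to a residue computation à la Negut via Lemma~\ref{lem:diagonal} and Lemma~\ref{lem:relation:K}—matches the paper's, but you are missing the concrete device that makes the computation go through, and the obstacle you flag at the end is precisely the one the paper's argument is designed to avoid.

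The paper does \emph{not} compare the two composite pushforwards over $\fH ecke^{\spadesuit}$ and $\fH ecke^{\diamondsuit}$ directly. Since Lemma~\ref{lem:diagonal} already guarantees that the commutator on perfect classes is $(-)\otimes(\id,\Delta)_*\gamma$ for a fixed $\gamma$, one simply applies both sides to the class $1=[\oO]$ and reads off $(\id,\Delta)_*\gamma=[\mu^+(z),\mu^-(w)]\cdot 1$. This completely sidesteps any excess-intersection bookkeeping on the diagonal: the derived degenerations you worry about never enter.

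The actual evaluation of $[\mu^+(z),\mu^-(w)]\cdot 1$ uses two ingredients you do not mention. First, the tautological triangle (\ref{dist:I}),
\[
(\pi_1,\id_S)^*\fI^{\bullet}(v_1)[1]\to(\pi_2,\id_S)^*\fI^{\bullet}(v_2)[1]\to(\id_{\fH},\pi_3)_*\lL,
\]
is what converts $\mu^-(w)\big(\wedge^{\bullet}(-\fI^{\bullet}(\beta)[1]/z)\big)$ into a product containing the factor $\wedge^{\bullet}(-\tfrac{w}{z}\oO_{\Delta})$; this factor is the sole source of poles in $w/z$, and by \cite[(3.5),(3.6)]{Negut} it equals $1+[\oO_\Delta]\tfrac{w/z}{(1-w/z)(1-(w/z)q_S)}$. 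Second, there are \emph{two} simple poles, at $w/z=1$ and $w/z=q_S^{-1}$, not one; the paper checks that the residue at $q_S^{-1}$ vanishes (it reduces via (\ref{dist:Ibeta}) to $\wedge^{\bullet}(\oO/z)|_{z=\infty-0}=0$), while the residue at $1$ yields exactly $\tfrac{[\oO_{\Delta}]}{q_S-1}(1-\tfrac{1}{z})\wedge^{\bullet}\big((q_S^{-1}-1)\ffF(\beta)/z\big)\big|_{z=\infty-0}\cdot\delta(w/z)$. Your heuristic about the $\omega_S^{-1}$ twist between the two projectivizations is in the right spirit, but the $(q_S^{-1}-1)\ffF(\beta)$ factor actually arises by combining $\wedge^{\bullet}(\ffF(\beta)/zq_S)$ from $\mu^-(w)\cdot 1$ with $\wedge^{\bullet}(-\fI^{\bullet}(\beta)[1]/z)$ from $\mu^+(z)\cdot 1$ and simplifying with (\ref{dist:Ibeta}).
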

\begin{proof}
It is enough to show the following identity in
$K(\fM_S^{\dag}(\beta)^{\fin} \times S)\{z, w\}$
\begin{align}\label{gamma:form}
\gamma=\frac{\delta\left(w/z\right)}{q_S-1} \left[
\left(1-\frac{1}{z} \right)\wedge^{\bullet} \left(\frac{(q_S^{-1}-1)\ffF(\beta)}{z}\right)
\Big|_{z=\infty}
-\left(1-\frac{1}{w} \right)\wedge^{\bullet} \left(\frac{(q_S^{-1}-1)\ffF(\beta)}{w}\right)
\Big|_{w=0}
 \right]. 
\end{align}
We compute $\gamma$
by the identity
from Lemma~\ref{lem:diagonal}
\begin{align*}
[\mu^+(z), \mu^-(w)] \cdot 1=(\id, \Delta)_{\ast}\gamma. 
\end{align*}
Here $1$ is the class of the structure sheaf of $\fM_S^{\dag}(\beta)^{\pure}$. 
Let $\lL$ be the line bundle defined in (\ref{def:lL}). 
By Lemma~\ref{fiber:pi} and Lemma~\ref{lem:proper},
 we have 
\begin{align*}
\lL=\oO_{(\pi_1, \pi_3)}(-1)=\oO_{(\pi_2, \pi_3)}(1). 
\end{align*}
Therefore by Lemma~\ref{lem:proper}, 
Lemma~\ref{lem:relation:K} and Remark~\ref{rmk:qproj}, 
after restricting to 
$\fM_S^{\dag}(\beta)^{\fin} \times S$, 
we have 
\begin{align*}
\mu^+(z) \cdot 1&=\wedge^{\bullet}\left( -\frac{\fI^{\bullet}(\beta)[1]}{z} \right)\Big|_{z=\infty -0}.
\end{align*}
Below when we write some identity such as above, 
it always means the identity after restricting to 
$\fM_S^{\dag}(\beta)^{\fin}$. 
Similarly by Lemma~\ref{fiber:pi} and Lemma~\ref{lem:proper}, 
we have 
\begin{align*}
\mu^-(w) \cdot 1=\wedge^{\bullet} \left( \frac{wq_S}{\ffF(\beta)} 
\right)\Big|_{w=\infty-0} \cdot (-\det \ffF(\beta))=
\wedge^{\bullet} \left( \frac{\ffF(\beta)}{wq_S} 
\right)\Big|_{w=\infty-0} . 
\end{align*}
On the other hand using the diagram
\begin{align*}
\xymatrix{
\fH ecke(v_{\bullet}) \ar[r]^-{(\id_{\fH}, \pi_3)} & \fH ecke(v_{\bullet}) \times S 
\ar[d]_-{(\pi_1, \id_S)} \ar[r]^-{(\pi_2, \id_S)} &
\fM_S^{\dag}(v_2)^{\pure} \times S \\
& \fM_S^{\dag}(v_1)^{\pure} \times S
}
\end{align*}
we have the distinguished triangle 
\begin{align}\label{dist:I}
(\pi_1, \id_S)^{\ast}\fI^{\bullet}(v_1)[1] \to (\pi_2, \id_S)^{\ast}
\fI^{\bullet}(v_2)[1] \to 
(\id_{\fH}, \pi_3)_{\ast}\lL. 
\end{align}
Then using (\ref{dist:I}), we have 
\begin{align}\notag
\mu^-(w) \mu^+(z) \cdot 1 &=
\mu^-(w) \wedge^{\bullet}\left(-\frac{\fI^{\bullet}(\beta)[1]} {z}
\right)\Big|_{z=\infty-0} \\
\notag
&=\wedge^{\bullet}\left(-\frac{\fI^{\bullet(2)}(\beta)[1]}{z} \right)
\wedge^{\bullet}\left( -\frac{w}{z}\oO_{\Delta} \right) \mu^-(w) 
\cdot 1 \Big|_{z=\infty-0}  \\
\label{mu+-1}
&=\wedge^{\bullet}\left(\frac{\ffF^{(1)}(\beta)}{wq_S^{(1)}}\right)
\wedge^{\bullet}\left(-\frac{\fI^{\bullet(2)}(\beta)[1]}{z} \right)
\wedge^{\bullet}\left( -\frac{w}{z}\oO_{\Delta} \right)\Big|_{\begin{subarray}{c}
z=\infty-0 \\
w=\infty-0
\end{subarray}}. 
\end{align}
Here the subscript $(1)$, $(2)$ means 
pulling back to 
$\fM_S^{\dag}(\beta)^{\pure} \times S \times S$
by the first and second projection, the first and third projection, 
respectively. 
A similar computation shows that 
\begin{align}\label{mu+-2}
\mu^+(z) \mu^-(w) \cdot 1
=\wedge^{\bullet}\left(\frac{\ffF^{(1)}(\beta)}{wq_S^{(1)}}\right)
\wedge^{\bullet}\left(-\frac{\fI^{\bullet(2)}(\beta)[1]}{z} \right)
\wedge^{\bullet}\left( -\frac{w}{z}\oO_{\Delta} \right)\Big|_{\begin{subarray}{c}
w=\infty-0 \\
z=\infty-0
\end{subarray}}.
\end{align}
The difference between (\ref{mu+-1}) and (\ref{mu+-2}) is that, 
in the former we first 
expand by $z$ and then expand by $w$, while 
in the latter we do in the opposite order. 
Therefore by (\ref{residue}), 
the difference comes from the poles in 
$w/z$, 
\begin{align*}
[\mu^+(z), \mu^-(w)] \cdot 1=
\sum_{\alpha \in \mathrm{Pol}(\xi^S)} \mathop{\mathrm{Res}}\limits_{x=\alpha} 
\xi^S(x) 
\cdot \frac{1}{\alpha}
\delta\left( \frac{w}{\alpha z}\right) \left\{\wedge^{\bullet}\left(\frac{\ffF^{(1)}(\beta)}{wq_S^{(1)}}\right)
\wedge^{\bullet}\left(-\frac{\fI^{\bullet(2)}(\beta)[1]}{z} \right) \right\}\Big|_{(z, w)=\infty-0}.
\end{align*}
Here $\xi^S(x) \in K(S \times S)(x)$ is defined by
\begin{align*}
\xi^S(x)=\wedge^{\bullet}(-x \cdot \oO_{\Delta})
=1+\frac{[\oO_{\Delta}] \cdot x}{(1-x)(1-xq_S)}
\end{align*}
where the second identity is due 
to~\cite[Equation~(3.5), (3.6)]{Negut}. 
Also $\mathrm{Pol}(\xi^S(x))$ is the set of poles of $\xi^S(x)$,
i.e. $\{1, q_S^{-1}\}$. 
Therefore we have 
\begin{align}\notag
[\mu^+(z), \mu^-(w)] \cdot 1
=&\delta\left( \frac{w}{z}\right) \wedge^{\bullet}\left(\frac{\ffF(\beta)}{zq_S}\right)
\wedge^{\bullet}\left(-\frac{\fI^{\bullet}(\beta)[1]}{z} \right)
\frac{[\oO_{\Delta}]}{q_S-1}
\Big|_{z=\infty-0} \\
\label{cu:pm}
&+\delta\left( \frac{wq_S}{z}\right) \wedge^{\bullet}\left(\frac{\ffF(\beta)}{z}\right)
\wedge^{\bullet}\left(-\frac{\fI^{\bullet}(\beta)[1]}{z} \right)
\frac{q_S[\oO_{\Delta}]}{q_S-1}
\Big|_{z=\infty-0}. 
\end{align}
Here we have identified
$\ffF^{(1)}(\beta)=\ffF(\beta)$, 
$\fI^{\bullet(2)}(\beta)=\fI^{\bullet}(\beta)$ because of the factor $[\oO_{\Delta}]$.  
By the distinguished triangle
\begin{align}\label{dist:Ibeta}
\ffF(\beta) \to \fI^{\bullet}(\beta)[1] \to \oO_{\fM_S^{\dag}(\beta)^{\pure} \times S}[1]
\end{align}
the second term of (\ref{cu:pm}) is 
\begin{align*}
\delta\left( \frac{wq_S}{z}\right) \wedge^{\bullet} 
\left(\frac{[\oO_{\fM_S^{\dag}(\beta)^{\pure} \times S \times S}]}{z} \right)
\frac{q_S[\oO_{\Delta}]}{q_S-1}
\Big|_{z=\infty-0}=0. 
\end{align*}
By computing the first term of (\ref{cu:pm})
using (\ref{dist:Ibeta}), 
we have 
\begin{align*}
[\mu^+(z), \mu^-(w)] \cdot 1=
\delta\left( \frac{w}{z}\right)
\frac{[\oO_{\Delta}]}{q_S-1}
\left(1-\frac{1}{z} \right)\wedge^{\bullet}\left(\frac{(q_S^{-1}-1)\ffF(\beta)}{z} \right)
\Big|_{\infty -0}. 
\end{align*}
Therefore we obtain (\ref{gamma:form}). 
\end{proof}


For $\eE \in \Dbc(S)$, 
let $\mu_{\eE, k}^{\pm}$ be the functors 
defined in Definition~\ref{def:muk}, Definition~\ref{defi:mu-}.
We use the same notation for the induced maps
on K-theories
\begin{align*}
\mu_{\eE, k}^{\pm} \colon K(\dD \tT^{\mathbb{C}^{\ast}}(P_n(X, \beta)))
\to K(\dD \tT^{\mathbb{C}^{\ast}}(P_{n\pm 1}(X, \beta))).
\end{align*}
Then we set
\begin{align*}
\mu_{\eE}^{\pm}(z)
\cneq \sum_{k\in\mathbb{Z}} \frac{\mu_{\eE, k}^{\pm}}{z^{k}} \colon \bigoplus_{n \in \mathbb{Z}}
K(\dD \tT^{\mathbb{C}^{\ast}}(P_n(X, \beta)))
\to \bigoplus_{n\in \mathbb{Z}}
K(\dD \tT^{\mathbb{C}^{\ast}}(P_{n}(X, \beta)))\{z\}. 
\end{align*}
We have the following
result. 

\begin{thm}\label{thm:relation}
We have the following commutative diagram
\begin{align}\label{com:eqn:cor}
\xymatrix{
\bigoplus_{n\in \mathbb{Z}}K(\mathcal{DT}_{\perf}^{\mathbb{C}^{\ast}}(P_n(X, \beta))) \ar[r] \ar[d]_-{\otimes \rho_{\eE_1, \eE_2}(z, w)} & \bigoplus_{n\in \mathbb{Z}}K(\mathcal{DT}^{\mathbb{C}^{\ast}}(P_n(X, \beta))) \ar[d]^-{[\mu_{\eE_1}^+(z), \mu_{\eE_2}^-(w)]} \\
\bigoplus_{n\in \mathbb{Z}}K(\mathcal{DT}_{\perf}^{\mathbb{C}^{\ast}}(P_n(X, \beta)))\{z, w\} \ar[r] & 
\bigoplus_{n\in \mathbb{Z}}K(\mathcal{DT}^{\mathbb{C}^{\ast}}(P_n(X, \beta)))\{z, w\}. 
}
\end{align}
Here 
the horizontal arrows are natural maps, and 
$\rho_{\eE_1, \eE_2}(z, w)$ is given by 
\begin{align}\notag
\rho_{\eE_1, \eE_2}(z, w)=
p_{\fP\ast}\left(
(\eE_1 \otimes \eE_2 \otimes \omega_S) \boxtimes 
\frac{h^+(z)-h^-(w)}{q_S-1}
\delta\left(\frac{w}{z}\right) \right)
\in \bigoplus_{n\in \mathbb{Z}}K(\mathcal{DT}_{\perf}^{\mathbb{C}^{\ast}}(P_n(X, \beta)))\{z, w\}
\end{align}
where $p_{\pP} \colon \fP_n(S, \beta) \times S \to \fP_n(S, \beta)$
is the projection
and we have used the equivalence (\ref{equiv:perf}). 
\end{thm}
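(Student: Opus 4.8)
The plan is to deduce Theorem~\ref{thm:relation} from the stack-level commutator computation already carried out in Lemma~\ref{lem:diagonal} and Proposition~\ref{prop:comm}, by twisting and pushing forward the resulting diagonal-supported class along the two auxiliary $S$-factors. First I would reduce everything to $\fM_S^{\dag}(\beta)^{\fin}$: by the relations (\ref{eqn:mu}), together with the commutative square (\ref{descend:PT2}), Lemma~\ref{lem:descend2} and Definition~\ref{defi:mu-}, the operators $\mu_{\eE, k}^{+}$ (resp. $\mu_{\eE, k}^{-}$) on $K(\dD \tT^{\mathbb{C}^{\ast}}(P_n(X, \beta)))$ are the descents of $p_{\fM\ast}(\mu_k^{+}(-) \otimes p_S^{\ast}\eE)$ (resp. $p_{\fM!}(\mu_k^{-}(-) \otimes p_S^{\ast}\eE)$) on $K(\fM_S^{\dag}(\beta, n)^{\fin})$, where $\mu_k^{\pm}$ are the maps induced by the Hecke diagrams (\ref{dia:Hecke1.5}), (\ref{dia:Hecke2}). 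Hence it is enough to compute $[\mu_{\eE_1}^{+}(z), \mu_{\eE_2}^{-}(w)]$ on $K(\fM_S^{\dag}(\beta)^{\fin})$ for classes coming from $K_{\perf}$, and then transport the identity through the quotient functors $\Dbc(\fM_S^{\dag}(\beta, n)^{\fin}) \to \mathcal{DT}^{\mathbb{C}^{\ast}}(P_n(X, \beta))$ and through the compatibility (\ref{com:eqn:cor}) provided by Lemma~\ref{mu:perf} and the equivalence (\ref{equiv:perf}).

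Second, I would unwind the commutator exactly as in the proof of Lemma~\ref{lem:diagonal} and Proposition~\ref{prop:comm}, but now carrying the extra classes $p_S^{\ast}\eE_1$, $p_S^{\ast}\eE_2$ along: the two compositions $\mu_{\eE_1}^{+}(z)\circ\mu_{\eE_2}^{-}(w)$ and $\mu_{\eE_2}^{-}(w)\circ\mu_{\eE_1}^{+}(z)$ are realized by the stacks $\fH ecke^{\spadesuit}(v_{\bullet})$ and $\fH ecke^{\diamondsuit}(v_{\bullet}')$ of (\ref{dia:spade}), (\ref{dia:dia}), and the difference, computed by the residue identity (\ref{residue}) and Lemma~\ref{lem:relation:K}, is controlled by the diagonal class $\gamma$ of Proposition~\ref{prop:comm}. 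Concretely, after restricting to $\fM_S^{\dag}(\beta)^{\fin}$ one has $[\mu^{+}(z), \mu^{-}(w)]\cdot 1 = (\id, \Delta)_{\ast}\gamma$ with $\gamma = \tfrac{h^{+}(z)-h^{-}(w)}{q_S-1}\delta(w/z)$, and since $\mu^{\pm}$ are $K_{\perf}$-module maps the projection formula upgrades this to $[\mu^{+}(z), \mu^{-}(w)](\alpha) = \alpha \otimes (\id, \Delta)_{\ast}\gamma$ for every $\alpha \in K_{\perf}(\fM_S^{\dag}(\beta)^{\fin})$. This is precisely the step where the hypothesis that $(-)$ is represented by perfect complexes is essential: the moduli stacks are singular, so $\mu^{\pm}$ are only module maps and not ring maps, and the displayed identity fails for a general K-class.

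Third, the heart of the bookkeeping: I would compute the effect of twisting by $\eE_1$, $\eE_2$ on the two $S$-factors and pushing forward. The class $(\id, \Delta)_{\ast}\gamma$ on $\fM_S^{\dag}(\beta)^{\pure}\times S \times S$ is supported on the diagonal, so by the projection formula for $(\id, \Delta)$ the twist $\eE_1 \boxtimes \eE_2$ collapses to $\eE_1 \otimes \eE_2$, giving $(\id, \Delta)_{\ast}(\gamma \otimes (\eE_1\otimes\eE_2))$. Pushing the doubled $S$ onto $\fM$, the star-pushforward along one factor and the shriek-pushforward along the other (coming from $\mu^{-}$ via Lemma~\ref{lem:descend2}) compose to $p_{\fM\ast}$ on $\fM_S^{\dag}(\beta)^{\pure}\times S$, and since $p_{\fM}$ is smooth of relative dimension $2$ the shriek contributes an extra $\otimes\,\omega_S$, the even shift being invisible in K-theory. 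This yields exactly $p_{\fM\ast}\big((\eE_1\otimes\eE_2\otimes\omega_S)\boxtimes \tfrac{h^{+}(z)-h^{-}(w)}{q_S-1}\delta(w/z)\big)$, which under (\ref{equiv:perf}) is $\rho_{\eE_1, \eE_2}(z, w)$; together with the surjectivity of the quotient functors on the relevant K-classes and Lemma~\ref{mu:perf}, this gives the commutativity of (\ref{com:eqn:cor}). I expect the main obstacle to be exactly this last bookkeeping step—tracking where the relative dualizing sheaf $\omega_S$ of $p_{\fM}$ enters through the shriek pushforward in $\mu^{-}$, and verifying that the diagonal-supportedness of $\gamma$ correctly collapses $\eE_1 \boxtimes \eE_2$ to $\eE_1 \otimes \eE_2$—rather than the residue computation itself, which is already encapsulated in Proposition~\ref{prop:comm}.
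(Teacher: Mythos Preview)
Your proposal is correct and follows essentially the same approach as the paper: express $\mu_{\eE}^{\pm}$ via (\ref{eqn:mu}) as pushforwards of $\mu^{\pm}$ twisted by $p_S^{\ast}\eE$ (with the $\omega_S$ arising from $p_{\fM!}=p_{\fM\ast}(-\boxtimes \omega_S[2])$), then invoke Lemma~\ref{lem:diagonal} and Proposition~\ref{prop:comm} to identify the commutator on perfect classes with $(-)\boxtimes(\id,\Delta)_{\ast}\gamma$, and finally push down along the two $S$-factors so that the diagonal collapses $\eE_1\boxtimes\eE_2$ to $\eE_1\otimes\eE_2$. The paper's own proof is terser---it simply writes out the two compositions as in (\ref{mu:comu}) and refers to Lemma~\ref{lem:diagonal} and Proposition~\ref{prop:comm}---but the content is the same.
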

\begin{proof}
Let $q_{\fM}$, $r_{S1}$, $r_{S2}$ be the projections 
from $\fM_S^{\dag}(\beta)^{\pure} \times S \times S$ onto 
the corresponding factors. 
By (\ref{eqn:mu}) and noting that 
$p_{\fM!}(-)=p_{\fM\ast}(- \boxtimes \omega_S[2])$
for the projection $p_{\fM} \colon \fM_S^{\dag}(\beta)^{\pure} \times S \to 
\fM_S^{\dag}(\beta)^{\pure}$, 
we have 
\begin{align}\label{mu:comu}
&\mu^+_{\eE_1}(z) \circ \mu_{\eE_2}^-(w)(-)
=q_{\fM\ast}\{ (\mu^+(z) \circ \mu^-(w)(-))
\otimes r_{S1}^{\ast}(\eE_2 \otimes \omega_S) \otimes
r_{S2}^{\ast}(\eE_1) \}, \\
\notag
&\mu^-_{\eE_2}(w) \circ \mu^+_{\eE_1}(z)(-)
=q_{\fM\ast}\{ (\mu^-(w) \circ \mu^+(z)(-))
\otimes r_{S1}^{\ast}(\eE_1) \otimes
r_{S2}^{\ast}(\eE_2  \otimes \omega_S) \}.
\end{align}
Then the commutative diagram
 (\ref{com:eqn:cor}) follows from Lemma~\ref{lem:diagonal} 
and 
Proposition~\ref{prop:comm}.
\end{proof}

By taking the commutator for $k=0$, we 
obtain the following corollary. 
\begin{cor}\label{cor:com:k=0}
We have the following commutative diagram
\begin{align*}
\xymatrix{
\bigoplus_{n\in \mathbb{Z}}K(\mathcal{DT}_{\perf}^{\mathbb{C}^{\ast}}(P_n(X, \beta))) \ar[r] \ar[d]_-{\otimes \rho_{\eE_1, \eE_2}} & \bigoplus_{n\in \mathbb{Z}}K(\mathcal{DT}^{\mathbb{C}^{\ast}}(P_n(X, \beta))) \ar[d]^-{[\mu_{\eE_1, k=0}^+, \mu_{\eE_2, k=0}^-]} \\
\bigoplus_{n\in \mathbb{Z}}K(\mathcal{DT}_{\perf}^{\mathbb{C}^{\ast}}(P_n(X, \beta))) \ar[r] & 
\bigoplus_{n\in \mathbb{Z}}K(\mathcal{DT}^{\mathbb{C}^{\ast}}(P_n(X, \beta))). 
}
\end{align*}
Here $\rho_{\eE_1, \eE_2}$ is given by 
\begin{align}\label{com:eqn:2}
\rho_{\eE_1, \eE_2}=
p_{\fP\ast}((\eE_1 \otimes \eE_2 \otimes \omega_S) 
\boxtimes \ffF(\beta)^{\vee})
\in \bigoplus_{n\in \mathbb{Z}}K(\mathcal{DT}_{\perf}^{\mathbb{C}^{\ast}}(P_n(X, \beta))).
\end{align}
\end{cor}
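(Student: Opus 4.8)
The plan is to extract the $k=0$ coefficient of the operator identity in Theorem~\ref{thm:relation}. Recall that $\mu_{\eE}^{\pm}(z)=\sum_k \mu_{\eE,k}^{\pm}z^{-k}$, so the commutator $[\mu_{\eE_1}^+(z),\mu_{\eE_2}^-(w)]$ expands as $\sum_{k,l}[\mu_{\eE_1,k}^+,\mu_{\eE_2,l}^-]z^{-k}w^{-l}$. The right-hand side of (\ref{intro:commu}) contains the factor $\delta(w/z)=\sum_{j\in\mathbb{Z}}(w/z)^j$, which forces the two Laurent variables to be coupled: the coefficient of $z^{-k}w^{-l}$ vanishes unless $k=-l$, and extracting the coefficient of $z^0w^0$ (equivalently, $k=l=0$) just amounts to setting $z=w=1$ in the coefficient of $\delta(w/z)$ on the right-hand side. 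Concretely, I would take $\rho_{\eE_1,\eE_2}(z,w)$ from Theorem~\ref{thm:relation}, write it as $p_{\fP\ast}\bigl((\eE_1\otimes\eE_2\otimes\omega_S)\boxtimes \tfrac{h^+(z)-h^-(w)}{q_S-1}\delta(w/z)\bigr)$, and observe that the $z^0w^0$-component is $p_{\fP\ast}\bigl((\eE_1\otimes\eE_2\otimes\omega_S)\boxtimes \tfrac{(h^+(z)-h^-(w))}{q_S-1}\big|_{z=w=1}\bigr)$, where the restriction is taken along the support of $\delta(w/z)$.

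The main computation is therefore to evaluate $\dfrac{h^+(z)-h^-(w)}{q_S-1}$ at $z=w=1$ in $K$-theory. By definition $h^{\pm}(z)$ are the expansions at $z=\infty$ and $z=0$ of the single rational function $h(z)=\bigl(1-\tfrac1z\bigr)\wedge^{\bullet}\bigl(\tfrac{(q_S^{-1}-1)\ffF(\beta)}{z}\bigr)$. The difference $h^+(z)-h^-(w)$, when we collapse $z=w$ via the delta function, is the "jump" of this rational function, which is governed by its poles; this is precisely the content of the residue calculus set up in Lemma~\ref{lem:calculation}. Using Lemma~\ref{lem:calculation} with the rank-zero class $\pP=(q_S^{-1}-1)\ffF(\beta)$ — note $\rank((q_S^{-1}-1)\ffF(\beta))=0$ since $q_S^{-1}-1$ has rank $0$ — and the line bundle absorbed into the bracket, the relevant expansion produces $O(z^{-2})+(1-q_S)\ffF(\beta)^{\vee}z^{-1}+\cdots$ terms; dividing by $q_S-1$ and setting $z=1$ isolates $-\ffF(\beta)^{\vee}$ (the contribution of the simple pole), while the higher-order terms contribute $0$ to the $z^0w^0$-coefficient. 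This yields $\rho_{\eE_1,\eE_2}=p_{\fP\ast}((\eE_1\otimes\eE_2\otimes\omega_S)\boxtimes\ffF(\beta)^{\vee})$, which is (\ref{com:eqn:2}).

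Finally, the commutativity of the square is immediate: it is obtained from the square in Theorem~\ref{thm:relation} by taking the $z^0w^0$-coefficient of every arrow, and the perfectness hypothesis (elements coming from $\mathcal{DT}_{\perf}^{\mathbb{C}^{\ast}}(P_n(X,\beta))$) is inherited verbatim from that theorem, since $\rho_{\eE_1,\eE_2}$ still lies in $K(\mathcal{DT}_{\perf}^{\mathbb{C}^{\ast}}(P_n(X,\beta)))$ by Lemma~\ref{mu:perf}. The only genuinely delicate point is the bookkeeping in the residue extraction — keeping track of which poles of $\wedge^{\bullet}$ survive at $z=w=1$ and confirming that the $O(z^{\pm 2})$ and constant-in-$z$ terms do not contaminate the $z^0w^0$-coefficient — but this is a routine application of Lemma~\ref{lem:calculation} together with the shape of $\delta(w/z)$, so I do not expect any conceptual obstacle.
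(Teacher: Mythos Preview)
Your strategy matches the paper's exactly: take the $z^0w^0$-coefficient in Theorem~\ref{thm:relation} and evaluate it via Lemma~\ref{lem:calculation}. However, two points in your execution would derail the calculation if carried out as written. First, ``setting $z=w=1$'' is not the correct operation and is not well-defined on these formal series. The $z^0w^0$-coefficient of $\frac{h^+(z)-h^-(w)}{q_S-1}\,\delta(w/z)$ is simply $\bigl([h^+]_{z^0}-[h^-]_{w^0}\bigr)/(q_S-1)$, since only the $j=0$ summand of $\delta(w/z)=\sum_j (w/z)^j$ can contribute. Writing $h=(1-1/z)g$ with $g(z)=\wedge^{\bullet}\bigl((q_S^{-1}-1)\ffF(\beta)/z\bigr)$, this constant term equals $\bigl([g|_{z=\infty-0}]_{z^0}-[g|_{z=\infty-0}]_{z^1}\bigr)/(q_S-1)$, so it is the $z^0$- and $z^1$-coefficients of $g|_{z=\infty-0}$ that matter, not any value at $z=1$.

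Second, your invocation of (\ref{expand:P2}) misidentifies the variable. The lemma is stated for $\wedge^{\bullet}\bigl((\lL-1)\pP\,x\bigr)$, and here one must take $x=1/z$, $\lL=q_S^{-1}$, $\pP=\ffF(\beta)$ (not $\pP=(q_S^{-1}-1)\ffF(\beta)$, which would leave no $\lL$). Consequently the $x^{-1}$-term of (\ref{expand:P2}) becomes the $z^{+1}$-coefficient of $g|_{z=\infty-0}$, not the $z^{-1}$-coefficient you wrote; this is exactly the paper's expansion $g|_{z=\infty-0}=O(z^{-1})+(1-q_S)\ffF(\beta)^{\vee}z+O(z^2)$. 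It is this $z^1$-term that the prefactor $(1-1/z)$ shifts down to the constant term of $h$; your $z^{-1}$-term would instead feed into degrees $-1$ and $-2$ and contribute nothing. Once these two points are fixed, the computation is the paper's one-liner and the constant term of $\rho(z,w)$ is $(\id,\Delta)_{\ast}\ffF(\beta)^{\vee}$, yielding (\ref{com:eqn:2}).
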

\begin{proof}
Using the expansion (\ref{expand:P2}), we see that 
\begin{align*}
\wedge^{\bullet}\left( \frac{(q_S^{-1}-1)\ffF(\beta)}{z} \right)\Big|_{z=\infty -0}
=O(z^{-1})+
(1-q_S)[\ffF(\beta)^{\vee}]z+ O(z^2).
\end{align*}
Therefore the constant term of 
$\rho(z, w)$ in (\ref{def:rho}) 
is given by 
$(\id, \Delta)_{\ast}\ffF(\beta)^{\vee}$.
By taking the constant term of the diagram (\ref{com:eqn:cor}), 
we obtain the desired commutative diagram (\ref{com:eqn:2}). 
\end{proof}

\begin{exam}\label{exam:commute}
In Corollary~\ref{thm:relation}, 
let us take $\eE_1=\oO_S$ and $\eE_2=\oO_H$
for an ample divisor $H \subset S$.
Suppose that $H$ intersects with any curve
$C \subset S$ with $[C]=\beta$ transversely. 
Then by 
setting $\mu_S^{\pm}=\mu_{\oO_S, 0}^{\pm}$, 
$\mu_H^{\pm}=\mu_{\oO_H, 0}^{\pm}$, 
the diagram (\ref{com:eqn:2}) implies that 
\begin{align}\label{relation:muSH}
[\mu_H^-, \mu_S^+](-)=
[\mu_S^-, \mu_H^+](-)= (-)\otimes \vV(\beta)
\end{align}
for a vector bundle $\vV(\beta)$ on $\fM_S^{\dag}(\beta)^{\pure}$ of 
rank $\beta \cdot H$
and $(-)$ is a K-group element coming from 
the perfect PT categories. 
We also have obvious vanishings 
$[\mu_{S}^+, \mu_S^+]=[\mu_H^+, \mu_H^-]=0$. 
However $[\mu_S^+, \mu_S^-]$, $[\mu_H^+, \mu_H^-]$
do not necessary vanish (though the latter is a nilpotent operator). 
\end{exam}

\begin{rmk}\label{rmk:planar}
In~\cite{MR3807309}, Rennemo 
proved the following. 
Let $C$ be an irreducible curve with at worst planar singularities, 
and 
$C^{[n]}$ the Hilbert scheme of $n$-points on $C$. 
Then there exist linear maps
\begin{align*}
\mu^{\pm}_{C} \colon 
H_{\ast}(C^{[n]})
\to H_{\ast +1 \pm 1}(C^{[n+1]}), \ 
\mu^{\pm}_{\pt} \colon H_{\ast}(C^{[n]}) \to 
H_{\ast -1 \pm 1}(C^{[n+1]})
\end{align*}
using the moduli space 
parameterizing $(Z, Z')$, 
$Z \in C^{[n]}$, $Z' \in C^{[n+1]}$ with $Z \subset Z'$.
They  
satisfy the following relations of Weyl algebras
\begin{align}\label{relation:planar}
[\mu^{-}_{\pt}, \mu^+_{C}]=[\mu^-_{C}, \mu^+_{\pt}]=\id
\end{align}
and all other pairs of operators commute. 

Since the moduli spaces of stable pairs 
much generalize $C^{[n]}$, 
the result of Corollary~\ref{thm:relation}
may be regarded as a categorification of 
the above result by Rennemo.  
Indeed the relation (\ref{relation:muSH}) 
is regarded as a categorification of (\ref{relation:planar}).
However 
contrary to the vanishing of commutators of $[\mu_{C}^+, \mu_{C}^-]$, 
$[\mu_{\pt}^+, \mu_{\pt}^-] $, 
the commutators $[\mu_S^+, \mu_S^-]$, $[\mu_H^+, \mu_H^-]$ 
in Example~\ref{exam:commute}
do not 
necessary vanish. 
\end{rmk}


\section{Some other actions of DT categories}
\label{sec:other}
In this section, we construct some other 
actions of DT categories
in the following cases: 
the left action of 
DT categories of zero dimensional sheaves 
to MNOP categories, 
the right/left actions of 
DT categories of one dimensional semistable 
sheaves to DT categories for stable 
D0-D2-D6 bound states. 
Almost all the arguments are similar to those 
in Section~\ref{sec:catPT}, 
so we omit details in several places. 
\subsection{Another stacks of extensions}
For $v_{\bullet}=(v_1, v_2, v_3) \in N_{\le 1}(S)^{\times 3}$, we 
define the derived stack 
$\fM_S^{\ext, \ddag}(v_{\bullet})$ by the 
Cartesian square
\begin{align*}
\xymatrix{
\fM_S^{\ext, \ddag}(v_{\bullet}) \ar[r]^-{\ev_2^{\ddag}} \ar[d] 
\ar@{}[rd]|\square
& \fM_S^{\dag}(v_2) \ar[d] \\
\fM_S^{\ext}(v_{\bullet}) \ar[r]^-{\ev_2} & \fM_S(v_2). 
}
\end{align*}
For $T \in dAff$, the $T$-valued points of $\fM_S^{\dag}$ form the 
$\infty$-groupoid of diagrams
\begin{align}\label{diagram:ddag}
\xymatrix{
 & \oO_{S \times T}\ar[d]^-{\xi}  &  \\
\ffF_1 \ar[r] & \ffF_2 \ar[r] & \ffF_3.
}
\end{align}
Here the bottom sequence is a $T$-valued point of 
$\fM_S^{\ext}(v_{\bullet})$. 
We have the following diagram
\begin{align}\label{diagram:ddag2}
\xymatrix{
\fM_S^{\ext, \ddag}(v_{\bullet}) \ar[r]^{\ev_2^{\ddag}} 
\ar[d]_{(\ev_1^{\ddag}, \ev_3^{\ddag})} & \fM_S^{\dag}(v_2) \\
\fM_S(v_1) \times \fM_S^{\dag}(v_3). & 
}
\end{align}
Here $\ev_1^{\ddag}$ sends a diagram (\ref{diagram:ddag}) to $\ffF_1$, 
and $\ev_3^{\ddag}$ sends a diagram (\ref{diagram:ddag})
to the composition 
$\oO_{S \times T} \stackrel{\xi}{\to} \ffF_2 \to \ffF_3$. 
Note that $\ev_2^{\ddag}$ is proper by definition. 

\begin{lem}\label{lem:ddag}
The morphism $(\ev_1^{\ddag}, \ev_3^{\ddag})$ is quasi-smooth. 
In particular, the derived stack 
$\fM_S^{\ext, \ddag}(v_{\bullet})$ is quasi-smooth. 
\end{lem}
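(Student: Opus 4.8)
The plan is to show that $(\ev_1^{\ddag}, \ev_3^{\ddag})$ is quasi-smooth by exhibiting $\fM_S^{\ext, \ddag}(v_{\bullet})$ as built from quasi-smooth morphisms in a way analogous to the treatment of $\fM_S^{\ext, \dag}(v_{\bullet})$ earlier. First I would recall the defining Cartesian square
\begin{align*}
\xymatrix{
\fM_S^{\ext, \ddag}(v_{\bullet}) \ar[r] \ar[d] \ar@{}[rd]|\square & \fM_S^{\dag}(v_2) \ar[d]^-{\rho^{\dag}} \\
\fM_S^{\ext}(v_{\bullet}) \ar[r]^-{\ev_2} & \fM_S(v_2)
}
\end{align*}
and observe that $\rho^{\dag} \colon \fM_S^{\dag}(v_2) \to \fM_S(v_2)$ is quasi-smooth, being the linear derived stack $\mathbb{V}((p_{\mM\ast}\ffF(v_2))^{\vee})$, whose fibres over a point $F_2$ are $\Hom_S(\oO_S, F_2)$ placed in degrees $[0,1]$ after passing to the cotangent complex — concretely, $\mathbb{L}_{\rho^{\dag}}$ restricted to a point is $\RHom_S(\oO_S, F_2)^{\vee}$ shifted, of amplitude $[-1,1]$. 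Hence its base change $\fM_S^{\ext, \ddag}(v_{\bullet}) \to \fM_S^{\ext}(v_{\bullet})$ is quasi-smooth as well.

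Next I would factor $(\ev_1^{\ddag}, \ev_3^{\ddag})$ through this base change. There is the commutative triangle whose slanted edge is $(\ev_1^{\ddag}, \ev_3^{\ddag})$, one leg is the quasi-smooth projection $\fM_S^{\ext, \ddag}(v_{\bullet}) \to \fM_S^{\ext}(v_{\bullet})$, and the other factor should compose with the quasi-smooth morphism $(\ev_1, \ev_3) \colon \fM_S^{\ext}(v_{\bullet}) \to \fM_S(v_1) \times \fM_S(v_3)$ (quasi-smoothness of the latter was recorded right after~(\ref{MSext:spec})). The only subtlety is that the target of $(\ev_1^{\ddag}, \ev_3^{\ddag})$ is $\fM_S(v_1) \times \fM_S^{\dag}(v_3)$, not $\fM_S(v_1) \times \fM_S(v_3)$, because $\ev_3^{\ddag}$ remembers the composed section $\oO_{S\times T} \to \ffF_3$. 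So I would instead compare $\fM_S^{\ext, \ddag}(v_{\bullet})$ with the fibre product $\fM_S^{\ext}(v_{\bullet}) \times_{\fM_S(v_3)} \fM_S^{\dag}(v_3)$: a $T$-point of the latter is a short exact sequence $\ffF_1 \to \ffF_2 \to \ffF_3$ together with a section $\oO_{S\times T} \to \ffF_3$, whereas a $T$-point of the former additionally supplies a lift of that section to $\ffF_2$. Thus there is a natural morphism $\fM_S^{\ext, \ddag}(v_{\bullet}) \to \fM_S^{\ext}(v_{\bullet}) \times_{\fM_S(v_3)} \fM_S^{\dag}(v_3)$, and I would identify it (or its fibres) with the linear derived stack of such lifts, which over a point is the total space of $\RHom_S(\oO_S, \ffF_1[\ast])$-type data of amplitude $[-1,1]$, hence quasi-smooth. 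Composing with the quasi-smooth base change $\fM_S^{\ext}(v_{\bullet}) \times_{\fM_S(v_3)} \fM_S^{\dag}(v_3) \to \fM_S(v_1) \times \fM_S^{\dag}(v_3)$ of $(\ev_1, \ev_3)$ then gives the claim.

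Alternatively, and perhaps more cleanly, I would mirror the earlier lemma verbatim: deduce quasi-smoothness of $(\ev_1^{\ddag}, \ev_3^{\ddag})$ from the fact that $(\ev_1, \ev_3) \colon \fM_S^{\ext}(v_\bullet) \to \fM_S(v_1) \times \fM_S(v_3)$ is quasi-smooth together with the fact that $\rho^\dag \colon \fM_S^\dag(v_3) \to \fM_S(v_3)$ is quasi-smooth, by writing $(\ev_1^{\ddag}, \ev_3^{\ddag})$ as the composite of the quasi-smooth base-change morphism and a quasi-smooth pullback. The second assertion — that $\fM_S^{\ext, \ddag}(v_{\bullet})$ itself is quasi-smooth — then follows since it maps by a quasi-smooth morphism to $\fM_S(v_1) \times \fM_S^{\dag}(v_3)$, and both factors are quasi-smooth ($\fM_S(v_i)$ is smooth, and $\fM_S^{\dag}(v_3)$ is quasi-smooth by~\cite[Lemma~6.1]{TocatDT} as recalled above), and composites of quasi-smooth morphisms are quasi-smooth. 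The main obstacle I anticipate is purely bookkeeping: correctly matching up the amplitude intervals of the relevant cotangent complexes and checking that the section-lifting fibres genuinely have amplitude in $[-1,1]$ rather than accidentally $[-1,0]$ or $[0,1]$, which is where one should be careful; everything else is formal stability of quasi-smoothness under base change and composition.
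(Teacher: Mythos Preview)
Your approach is correct but takes a different route from the paper. The paper proceeds by directly identifying $\fM_S^{\ext,\ddag}(v_{\bullet})$ as a linear derived stack over the target of $(\ev_1^{\ddag},\ev_3^{\ddag})$, namely
\[
\fM_S^{\ext,\ddag}(v_{\bullet}) \;\simeq\; \mathbb{V}\bigl(\hH om_{p_{\fM\times\fM^{\dag}}}(\fI^{\bullet}(v_3),\ffF(v_1))^{\vee}\bigr) \longrightarrow \fM_S(v_1)\times\fM_S^{\dag}(v_3),
\]
and then bounds the amplitude of the perfect complex $\hH om(\fI^{\bullet}(v_3),\ffF(v_1))$ at each point via the distinguished triangle with outer terms $p_{\fM\ast}\ffF(v_1)$ and $\hH om(\ffF(v_3),\ffF(v_1))[1]$. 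This is the exact analogue of the description~(\ref{MSext:spec}) with $\ffF(v_3)$ replaced by $\fI^{\bullet}(v_3)$. By contrast, you factor $(\ev_1^{\ddag},\ev_3^{\ddag})$ as the composite
\[
\fM_S^{\ext,\ddag}(v_{\bullet}) \longrightarrow \fM_S^{\ext}(v_{\bullet})\times_{\fM_S(v_3)}\fM_S^{\dag}(v_3) \longrightarrow \fM_S(v_1)\times\fM_S^{\dag}(v_3),
\]
where the second arrow is the base change of $(\ev_1,\ev_3)$ along $\id\times\rho^{\dag}$ (hence quasi-smooth), and the first arrow is the map of linear stacks $\mathbb{V}((p_{\ast}\ffF_2)^{\vee})\to\mathbb{V}((p_{\ast}\ffF_3)^{\vee})$ over $\fM_S^{\ext}(v_{\bullet})$, whose relative cotangent complex is the pullback of $(p_{\ast}\ffF_1)^{\vee}$ and therefore has amplitude $[-1,0]$ at every point. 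Both arguments are valid; the paper's has the advantage of yielding a concrete formula for the relative cotangent complex of $(\ev_1^{\ddag},\ev_3^{\ddag})$ in one step, which is in the same spirit as the computations used just afterwards for the conormal description~(\ref{isom:Mddag}), whereas your factorization is slightly more hands-off but requires assembling two separate amplitude checks. Your first paragraph (factoring through $\fM_S^{\ext}(v_{\bullet})$ via the base change of $\rho^{\dag}$) does not by itself reach the correct target, as you noticed; the substantive argument is the one in your second paragraph.
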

\begin{proof}
Similarly to (\ref{MSext:spec}),
we have 
\begin{align*}
\fM_S^{\ext, \ddag}(v_{\bullet})=
\mathbb{V}(\hH om_{p_{\fM \times \fM^{\dag}}}(\fI^{\bullet}(v_3), \ffF(v_1))^{\vee})
\to \fM(v_1) \times \fM^{\dag}(v_3). 
\end{align*} 
Here $p_{\fM \times \fM^{\dag}}$ is the projection 
from $S \times \fM_S(v_1) \times \fM_S^{\dag}(v_3)$ to 
$\fM_S(v_1) \times \fM_S^{\dag}(v_3)$. 
From the distinguished triangle
\begin{align*}
p_{\fM \ast}\ffF(v_1)
\boxtimes \oO_{\fM_S^{\dag}(v_3)} \to 
\hH om_{p_{\fM \times \fM^{\dag}}}(\fI^{\bullet}(v_3), \ffF(v_1))
\to \hH om_{p_{\fM \times \fM^{\dag}}}(\ffF(v_3), \ffF(v_1))[1]
\end{align*}
the middle term is perfect 
whose restriction to any 
point $x \to \mM_S(v_1) \times \mM_S^{\dag}(v_3)$
is 
of cohomological amplitude $[-1, 1]$. 
Therefore $(\ev_1^{\ddag}, \ev_3^{\ddag})$ is quasi-smooth. 
The last statement holds as 
$\fM_S(v_1) \times \fM_S^{\dag}(v_3)$ is also quasi-smooth. 
\end{proof}

We take $v_{\bullet}=(v_1, v_2, v_3) \in N_{\le 1}(S)^{\times 3}$
and define the classical 
stack
\begin{align*}
\mM_X^{\ext, \ddag}(v_{\bullet}) \colon Aff^{op} \to Groupoid
\end{align*}
by sending $T \in Aff$ to the groupoid 
of distinguished triangles
$\eE_1[-1] \to \eE_2 \stackrel{j}{\to} \eE_3$
together with commutative diagrams
\begin{align}\notag
\xymatrix{
\eE_2 \dotimes \oO_{S_{\infty} \times T} \ar[r]^-{j} 
\ar[d]_-{\lambda_2}^-{\cong} & 
\eE_3 \dotimes \oO_{S_{\infty} \times T} \ar[d]^-{\lambda_3}_-{\cong} \\
\oO_{S_{\infty} \times T} \ar@{=}[r] & \oO_{S_{\infty} \times T}. 
}
\end{align}
Here $(\eE_i, \lambda_i)$ for $i=2, 3$ are $T$-valued points of 
$\mM_X^{\dag}(v_i)$
and $\eE_1$ is a $T$-valued point of $\mM_X(v_1)$. 
We also have the evaluation morphisms
\begin{align}\label{dia:Xext:dagA}
\xymatrix{
\mM_X^{\ext, \ddag}(v_{\bullet}) 
\ar[r]^-{\ev_2^{X, \ddag}} \ar[d]_-{(\ev_1^{X, \ddag}, \ev_3^{X, \ddag})} 
& \mM_X^{\dag}(v_2) \\
\mM_X(v_1) \times \mM_X^{\dag}(v_3) & 
}
\end{align}
where $\ev_i^{X, \dag}$ sends
 $\eE_{\bullet}$ to $\eE_i$. 

Let $\ev^{\ddag}$ be the morphism
\begin{align*}
\ev^{\ddag}=(\ev_1^{\ddag}, \ev_2^{\ddag}, \ev_3^{\ddag}) \colon 
\fM_S^{\ddag}(v_{\bullet}) \to 
\fM_S(v_1) \times \fM_S^{\dag}(v_2) \times \fM_S^{\dag}(v_3). 
\end{align*}
Similarly to Proposition~\ref{prop:ev+}, 
we can show the isomorphism over $\mM_S^{\ext, \ddag}(v_{\bullet})$
\begin{align}\label{isom:Mddag}
\mM_X^{\ext, \ddag}(v_{\bullet})
\stackrel{\cong}{\to}
t_0(\Omega_{\ev^{\ddag}}[-2]). 
\end{align}

\subsection{Actions on MNOP categories}
For $(\beta, n) \in N_{\le 1}(S)$, 
we denote by 
\begin{align}\notag
I_n(X, \beta)
\end{align}
the moduli space 
compactly supported closed subschemes 
$Z \subset X$ 
satisfying $[\pi_{\ast}\oO_Z]=(\beta, n)$. 
The moduli space $I_n(X, \beta)$ is a Hilbert scheme of 
one or zero dimensional subschemes of $X$, 
so it is a quasi-projective scheme. 

We have the open immersion
\begin{align*}
I_n(X, \beta) \subset \mM_X^{\dag}(\beta, n)
\end{align*}
sending a closed subscheme $Z \subset X$ to the
ideal sheaf $I_{Z} \subset \oO_{\overline{X}}$. 
We define the following 
conical closed substack
\begin{align*}
\zZ_{I\us}(\beta, n) \cneq \mM_X^{\dag}(\beta, n) \setminus I_n(X, \beta)
\subset \mM_X^{\dag}(\beta, n). 
\end{align*}
Since $I_n(X, \beta)$ is a quasi-projective scheme, 
there is a 
derived open substack 
$\mathfrak{M}_S^{\dag}(\beta, n)^{\fin}
\subset \mathfrak{M}_S^{\dag}(\beta, n)$ 
of finite type
such that 
\begin{align}\label{I:qcompact}
I_n(X, \beta) \subset 
t_0(\Omega_{\mathfrak{M}_S^{\dag}(\beta, n)^{\fin}}[-1])
\subset \mM_X^{\dag}(\beta, n).  
\end{align}

\begin{defi}\label{def:catPT2}\emph{(\cite[Definition~6.6]{TocatDT})}
The $\mathbb{C}^{\ast}$-equivariant 
categorical MNOP theory is defined by
\begin{align*}
\dD \tT^{\mathbb{C}^{\ast}}(I_n(X, \beta)) \cneq 
D^b_{\rm{coh}}(\mathfrak{M}_S^{\dag}(\beta, n)^{\fin})/ 
\cC_{\zZ_{I\us}(\beta, n)^{\fin}}.
\end{align*}
\end{defi}

Let us take $v_{\bullet} \in N_{\le 1}(S)^{\times 3}$ to be 
\begin{align*}
v_1=(0, m)=m[\pt], \ v_2=(\beta, n+m), \ 
v_3=(\beta, n).
\end{align*}
We also take 
derived open substacks 
of finite type 
\begin{align*}
\fM_S(v_1)^{\fin}=\fM_S(m[\pt]), \ 
\fM_S^{\dag}(v_2)^{\fin} \subset \fM_S^{\dag}(v_2), \ 
\fM_S^{\dag}(v_3)^{\fin} \subset \fM_S^{\dag}(v_3)
\end{align*}
satisfying (\ref{I:qcompact}) for $v_2$, $v_3$ and 
the condition 
\begin{align}\label{cond:ev2}
\fM_S^{\ddag, \ext}(v_{\bullet})^{\fin} \cneq 
(\ev^{\ddag}_2)^{-1}(\fM^{\dag}_S(v_2)^{\fin}) 
\subset (\ev^{\ddag}_1, \ev^{\ddag}_3)^{-1}(\fM_S(v_1)^{\fin} 
\times \fM^{\ddag}_S(v_3)^{\fin}). 
\end{align}
Then the diagram (\ref{diagram:ddag2}) restricts to the diagram
\begin{align}\notag
\xymatrix{
\fM_S^{\ext, \ddag}(v_{\bullet})^{\fin} \ar[r]^-{\ev^{\ddag}_2}
 \ar[d]_-{(\ev_1^{\ddag}, \ev_3^{\ddag})} & \fM_S^{\dag}(v_2)^{\fin} \\
\fM_S(v_1)^{\fin} \times \fM_S^{\dag}(v_3)^{\fin}. & 
}
\end{align}
The vertical arrow is quasi-smooth and 
the horizontal arrow is proper. 
Therefore we have the induced functor
\begin{align}\label{cat:COHAI2.5}
\ev^{\ddag}_{2\ast}(\ev^{\ddag}_1, \ev^{\ddag}_3)^{\ast}
 \colon \Dbc(\fM_S(v_1)^{\fin}) \times \Dbc(\fM_S^{\dag}(v_3)^{\fin})
\to \Dbc(\fM_S^{\dag}(v_2)^{\fin}). 
\end{align}
Similarly to Theorem~\ref{thm:PTaction}, we have the 
following. 
\begin{thm}\label{thm:Iaction}
The functor (\ref{cat:COHAI2.5})
descends to the functor
\begin{align}\label{descend:I}
\dD \tT^{\mathbb{C}^{\ast}}(\mM_X^{\sigma \sss}(m[\mathrm{pt}])) \times 
\dD \tT^{\mathbb{C}^{\ast}}(I_n(X, \beta))
\to \dD \tT^{\mathbb{C}^{\ast}}(I_{n+m}(X, \beta)). 
\end{align}
\end{thm}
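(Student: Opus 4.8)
The plan is to mimic the proof of Theorem~\ref{thm:PTaction}, now in the MNOP setting. By Lemma~\ref{lem:ddag} the vertical arrow $(\ev_1^{\ddag}, \ev_3^{\ddag})$ is quasi-smooth and $\ev_2^{\ddag}$ is proper by construction, so the functor (\ref{cat:COHAI2.5}) is well-defined. Using the isomorphism (\ref{isom:Mddag}) together with the identifications (\ref{shifted:cot}), (\ref{isom:dag}), the diagram (\ref{diagram:ddag2}) at the level of classical truncations of $(-1)$-shifted cotangents is identified with the evaluation diagram (\ref{dia:Xext:dagA}). Hence by Proposition~\ref{prop:FM} and Lemma~\ref{lem:restrict} (exactly as in Corollary~\ref{cor:induce2}), it suffices to verify the inclusion of conical closed substacks
\begin{align*}
(\ev_1^{X, \ddag}, \ev_3^{X, \ddag})^{-1}\bigl((\mM_X(v_1) \times \zZ_{I\us}(v_3))\bigr)
\subset (\ev_2^{X, \ddag})^{-1}(\zZ_{I\us}(v_2)),
\end{align*}
together with the analogous inclusion coming from the first factor; but since $v_1 = m[\mathrm{pt}]$ and $\mM_X(m[\mathrm{pt}])$ is the stack of zero dimensional sheaves (so $\zZ_{\sigma\us}(m[\mathrm{pt}]) = \emptyset$), only the displayed inclusion is needed.

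To prove this inclusion I would establish the analogue of Lemma~\ref{lem:PT}, namely: for an exact sequence $0 \to Q[-1] \to E_2 \to E_3 \to 0$ in $\aA_X$ with $Q$ a zero dimensional sheaf and $E_3 = I_Z$ the ideal sheaf of a compactly supported one or zero dimensional subscheme $Z \subset X$, the object $E_2$ is again the ideal sheaf of such a subscheme. The argument is dual to Lemma~\ref{lem:PT}: from the triangle one gets a surjection $\oO_{\overline X} \twoheadrightarrow \hH^0(E_3) = \oO_{\overline X}/I_Z$; chasing the long exact cohomology sequence and using $\Hom(\oO_{\overline X}, Q[\text{shift}]) $ vanishing in the relevant degrees, one shows $E_2$ is quasi-isomorphic to a sheaf $I_W \subset \oO_{\overline X}$ which is the ideal sheaf of a subscheme $W$ with $Z \subset W$ and $\oO_W/\oO_Z \cong Q$; since $Q$ is zero dimensional, $W$ is again one or zero dimensional, hence $E_2 \in I_{n+m}(X, \beta)$. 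Reformulating this statement for families (i.e. in terms of the stack $\mM_X^{\ext, \ddag}(v_{\bullet})$ and the substacks $\zZ_{I\us}$) gives exactly the required inclusion.

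The main subtlety, as the excerpt already flags by saying ``we omit details in several places'', is bookkeeping: one must check that the evaluation morphisms in (\ref{dia:Xext:dagA}) really correspond, under (\ref{isom:Mddag}), to the maps $g_1, g_2, h_1, h_2$ of the diagram (\ref{diagram:fN})/(\ref{dia:evf}) applied to (\ref{diagram:ddag2}) — this is the step that makes Proposition~\ref{prop:FM} applicable, and it requires matching the $(-2)$-shifted conormal description $t_0(\Omega_{\ev^{\ddag}}[-2])$ with the moduli-of-extensions description, in the style of the proof of Corollary~\ref{cor:induce}. The remaining point is purely homological-algebra (the dual of Lemma~\ref{lem:PT}), and presents no real obstacle beyond being careful about which complexes are sheaves and in which degrees. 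Once both are in place, taking the descended functor (\ref{descend:I}) is automatic, and, exactly as in Corollary~\ref{cor:K2}, passing to K-theory yields a left action of the K-theoretic Hall algebra (\ref{intro:K0}) of zero dimensional sheaves on $\bigoplus_n K(\dD\tT^{\mathbb{C}^{\ast}}(I_n(X, \beta)))$.
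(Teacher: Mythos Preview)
Your overall strategy (reduce to an inclusion of unstable loci via the $(-2)$-shifted conormal identification, as in Corollary~\ref{cor:induce2}) is exactly what the paper does, and you state the correct inclusion. The gap is in the supporting lemma: you have the implication backwards.

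The inclusion
\[
(\ev_1^{X,\ddag},\ev_3^{X,\ddag})^{-1}\bigl(\mM_X(v_1)\times \zZ_{I\us}(v_3)\bigr)\subset (\ev_2^{X,\ddag})^{-1}(\zZ_{I\us}(v_2))
\]
says pointwise: if $E_3$ is \emph{not} an ideal sheaf then $E_2$ is \emph{not} an ideal sheaf. Equivalently (contrapositive): if $E_2=I_{Z_2}$ then $E_3=I_{Z_3}$. This is what the paper proves as Lemma~\ref{lem:I}, and the argument is short: from the triangle $E_2\to E_3\to Q$ one sees $E_3\in \aA_X\cap \Coh(\overline X)$, and any such object is torsion free, hence an ideal sheaf trivial along $S_\infty$.

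What you propose instead is the \emph{converse}: assuming $E_3=I_Z$, conclude $E_2$ is an ideal sheaf. This is not equivalent to the needed inclusion, and in fact it is false. In the triangle $Q[-1]\to E_2\to I_Z\to Q$, the long exact cohomology sequence gives $\hH^1(E_2)=\Cok(I_Z\to Q)$, and there is no reason for the connecting map $I_Z\to Q$ to be surjective: already for $Z=\emptyset$ and $Q=\oO_p^{\oplus 2}$ the map $\oO_{\overline X}\to Q$ given by $(1,0)$ is not surjective, so $\hH^1(E_2)\neq 0$ and $E_2$ is not a sheaf at all. Your appeal to a vanishing of $\Hom(\oO_{\overline X},Q[\text{shift}])$ does not hold in the relevant degree, since $\Hom(\oO_{\overline X},Q)=H^0(Q)\neq 0$. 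Once you replace your lemma by the correct direction (Lemma~\ref{lem:I}), the rest of your outline goes through verbatim.
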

\begin{proof}
By Lemma~\ref{lem:PT} below, we have 
the following inclusion
\begin{align*}
(\ev_1^{X, \ddag}, \ev_3^{X, \ddag})^{-1}(
\mM_X(v_1) \times 
\zZ_{I\us}(v_3))
\subset (\ev_2^{X, \ddag})^{-1}(\zZ_{I\us}(v_2)). 
\end{align*}
Therefore the result follows from 
Corollary~\ref{cor:induce2}. 
\end{proof}

Here we have used the following lemma. 
\begin{lem}\label{lem:I}
For an exact sequence $0 \to E_1 \to E_2 \to E_3 \to 0$
in $\aA_X$, 
suppose that 
$E_1=Q[-1]$ for a zero dimensional 
sheaf $Q$ and $E_2=I_{Z_2}$ for a compactly supported
one or zero dimensional subscheme $Z_2 \subset X$. 
Then $E_3=I_{Z_3}$ for a compactly supported
one or zero dimensional subscheme $Z_3 \subset X$.
\end{lem}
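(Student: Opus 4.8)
This statement is the analogue of Lemma~\ref{lem:PT} but with the roles of the
zero-dimensional sheaf and the PT/MNOP object exchanged, so the plan is to
mimic that proof while working with ideal sheaves instead of stable pairs. The
key observation is that $E_1=Q[-1]$ is concentrated in degree one while
$E_2=I_{Z_2}=(\oO_{\overline X}\twoheadrightarrow \oO_{Z_2})$ is concentrated in
degrees $[0,1]$; the short exact sequence $0\to E_1\to E_2\to E_3\to 0$ in
$\aA_X$ gives a distinguished triangle $Q[-1]\to I_{Z_2}\to E_3$ in
$\Dbc(\overline X)$, and hence also a triangle $I_{Z_2}\to E_3\to Q$. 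First I
would apply $\hH^{\bullet}$ to this triangle to get the long exact sequence of
cohomology sheaves. Since $\hH^0(I_{Z_2})=\oO_{\overline X}$ (a rank one torsion
free sheaf), $\hH^0(Q)=Q$ and $\hH^1(Q)=0$, one reads off $\hH^0(E_3)$ sitting in
$0\to \oO_{\overline X}\to \hH^0(E_3)\to Q\to 0$ — but actually the map
$\hH^0(I_{Z_2})\to\hH^0(E_3)$ must be an isomorphism because $E_3\in\aA_X$ has
rank one object $\hH^0$ a subsheaf of $\oO_{\overline X}$; so the map
$E_3\to Q$ factors and the relevant content is in $\hH^1$.

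More precisely, I would run the argument exactly as in Lemma~\ref{lem:PT}: form
the commutative diagram with vertical distinguished triangles
\[
\xymatrix{
\oO_{Z_1}[-1] \ar[d] & E_1 \ar[d] & \\
\oO_{Z_2}[-1] \ar[r] \ar[d]_-{\alpha[-1]} & I_{Z_2} \ar[r] \ar[d] & \oO_{\overline X} \\
Q[-1] \ar@{=}[r] & Q[-1], &
}
\]
Wait — here I must be careful about direction: since $E_1=Q[-1]\hookrightarrow
I_{Z_2}$, the structure sheaf $\oO_{\overline X}$ appears as the cone in degree
$0$ and $Q[-1]$ is a subobject of $\oO_{Z_2}[-1]$ inside $I_{Z_2}$. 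So I would
instead observe that $\Hom(Q[-1],\oO_{\overline X})=0$ forces the composition
$E_1\to E_2\to \oO_{\overline X}$ to vanish, hence $E_1\to E_2$ factors through
$\oO_{Z_2}[-1]\to I_{Z_2}$, giving a map $Q[-1]\to \oO_{Z_2}[-1]$, i.e. a
surjection $\beta\colon \oO_{Z_2}\twoheadrightarrow Q$ (surjectivity because on
$\hH^1$ the outer terms of the triangle defining $E_3$ match up). Let
$\oO_{Z_3}\cneq\ker\beta$; this is the structure sheaf of a closed subscheme
$Z_3\subset Z_2\subset X$, still compactly supported of dimension $\le 1$. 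Taking
cones in the diagram then yields a distinguished triangle
$\oO_{Z_3}[-1]\to E_3\to \oO_{\overline X}$, so $E_3=(\oO_{\overline X}\to
\oO_{Z_3})$, and one checks this map is the canonical surjection (it agrees with
$I_{Z_2}\to\oO_{Z_2}$ away from $\Supp Q$, which has codimension $\ge 1$, and
both source and $\hH^1$-target are reflexive/torsion-free appropriately, so it is
surjective everywhere). Therefore $E_3=I_{Z_3}$ for the compactly supported
one-or-zero-dimensional subscheme $Z_3$.

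The main obstacle — and the only place needing genuine care — is verifying that
the composed map $\oO_{\overline X}\to\oO_{Z_3}$ really is the quotient map
defining $Z_3$ as a subscheme of $X$, equivalently that $E_3\in\aA_X$ and that
its $\hH^0$ is an ideal sheaf rather than some larger torsion-free rank one
sheaf. This is handled by noting that $Z_3=\ker(\oO_{Z_2}\twoheadrightarrow Q)$
is automatically an honest subscheme with $I_{Z_3}\supset I_{Z_2}$, that
$I_{Z_3}/I_{Z_2}\cong Q$, and that the inclusion $I_{Z_2}\hookrightarrow I_{Z_3}$
lifts to a triangle of the required shape in $\aA_X$; compatibility with the
trivialization at $S_\infty$ is automatic since $Q$ is supported on the compact
part $X$. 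I would also remark that, exactly as in the PT case
(Theorem~\ref{thm:Iaction}'s proof), this lemma is what yields the inclusion of
unstable loci needed to invoke Corollary~\ref{cor:induce2}, and the family
version over an arbitrary test scheme $T$ follows by the same argument applied
fiberwise together with flatness, as was done implicitly for all the other
``extension'' lemmas in the paper.
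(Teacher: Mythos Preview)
Your argument contains a genuine error in direction. The factorization you correctly obtain is a map $Q[-1]\to\oO_{Z_2}[-1]$, but this is a morphism $Q\to\oO_{Z_2}$, \emph{not} a surjection $\beta\colon\oO_{Z_2}\twoheadrightarrow Q$ as you write. Consequently your definition $\oO_{Z_3}\cneq\ker\beta$ does not make sense; with the correct direction one should take $\oO_{Z_3}\cneq\oO_{Z_2}/Q$ (a quotient, not a kernel), and then the octahedral axiom applied to $Q[-1]\to\oO_{Z_2}[-1]\to I_{Z_2}$ does yield the triangle $\oO_{Z_3}[-1]\to E_3\to\oO_{\overline X}$. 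With that fix the argument can be salvaged, though you would still need to justify that the resulting map $\oO_{\overline X}\to\oO_{Z_3}$ is the structure surjection (your remark about agreement away from $\Supp Q$ and reflexivity is not quite the right mechanism).

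More to the point, you are working much harder than necessary by forcing the argument into the mould of Lemma~\ref{lem:PT}. The paper's proof exploits the fact that, unlike a PT pair, the ideal sheaf $I_{Z_2}$ is already a genuine sheaf concentrated in degree~$0$. From the rotated triangle $I_{Z_2}\to E_3\to Q$ one sees immediately (long exact sequence of cohomology sheaves) that $E_3$ is itself a sheaf, so $E_3\in\aA_X\cap\Coh(\overline X)$. The paper then invokes the general fact that every object of $\aA_X\cap\Coh(\overline X)$ is torsion free: indeed $\Hom(T,\oO_{\overline X})=0$ and $\Hom(T,F[-1])=0$ for any torsion sheaf $T$ and any $F\in\Coh_{\le 1}(X)$, so by the definition of $\aA_X$ as an extension closure one gets $\Hom(T,E)=0$ for all $E\in\aA_X$. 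Thus $E_3$ is a rank-one torsion-free sheaf on $\overline X$, hence an ideal sheaf $I_{Z_3}$; triviality along $S_\infty$ (automatic for objects of $\aA_X$) forces $Z_3\subset X$. No explicit construction of $Z_3$ is required, and the whole proof is three lines.
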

\begin{proof}
We have the distinguished triangle 
$E_2 \to E_3 \to E_1[1]$, so 
we have $E_3 \in \aA_X \cap \Coh(\overline{X})$. 
By the definition of $\aA_X$, 
any object in $\aA_X \cap \Coh(\overline{X})$ is torsion 
free, so $E_3$ is of the form $E_3=I_{Z_3}$
for a closed subscheme $Z_3 \subset \overline{X}$. 
Since $E_3 \in \aA_X$, it must be trivial along the divisor 
at the infinity, so $Z_3 \subset X$. 
\end{proof}
Similarly to Corollary~\ref{cor:K2}, 
we have the following corollary. 
\begin{cor}\label{cor:K2A}
The functors (\ref{descend:I}) induce the left action of 
the K-theoretic Hall-algebra
of zero dimensional sheaves to the
direct sum of MNOP categories for a fixed $\beta$
\begin{align*}
\bigoplus_{m\ge 0}
K(\mathcal{DT}^{\mathbb{C}^{\ast}}(\mM_X^{\sigma \sss}(m[\mathrm{pt}])))
\times 
\bigoplus_{n \in \mathbb{Z}}
K(\mathcal{DT}^{\mathbb{C}^{\ast}}(I_n(X, \beta))) 
\to \bigoplus_{n\in \mathbb{Z}}K(\mathcal{DT}^{\mathbb{C}^{\ast}}(I_n(X, \beta))). 
\end{align*}
\end{cor}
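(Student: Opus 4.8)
The plan is to run the argument of Corollary~\ref{cor:K2} essentially verbatim, substituting Theorem~\ref{thm:Iaction} and Lemma~\ref{lem:I} for Theorem~\ref{thm:PTaction} and Lemma~\ref{lem:PT}. First I would record that every zero dimensional sheaf is $\sigma$-semistable, so the unstable locus $\zZ_{\sigma\us}(m[\mathrm{pt}])$ is empty and, by Example~\ref{exam:zero}(i), $\mathcal{DT}^{\mathbb{C}^{\ast}}(\mM_X^{\sigma\sss}(m[\mathrm{pt}]))=\Dbc(\fM_S(m[\mathrm{pt}]))$; in particular $\bigoplus_{m\ge 0}K(\mathcal{DT}^{\mathbb{C}^{\ast}}(\mM_X^{\sigma\sss}(m[\mathrm{pt}])))$ is literally the K-theoretic Hall algebra of points on $S$ of \cite{Zhao}. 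Next, exactly as in the proof of Corollary~\ref{cor:K}, I would observe that the functor (\ref{descend:I}) fits into the commutative square
\begin{align*}
\xymatrix{
\Dbc(\fM_S(m[\mathrm{pt}]))\times\Dbc(\fM_S^{\dag}(\beta,n)^{\fin}) \ar[r] \ar[d] & \Dbc(\fM_S^{\dag}(\beta,n+m)^{\fin}) \ar[d] \\
\dD \tT^{\mathbb{C}^{\ast}}(\mM_X^{\sigma\sss}(m[\mathrm{pt}]))\times\dD \tT^{\mathbb{C}^{\ast}}(I_n(X,\beta)) \ar[r] & \dD \tT^{\mathbb{C}^{\ast}}(I_{n+m}(X,\beta))
}
\end{align*}
whose top row is the functor (\ref{cat:COHAI2.5}), whose left vertical arrow is the identity on the first factor together with the quotient functor on the second, and whose right vertical arrow is the quotient functor defining the MNOP category; commutativity of this square is precisely what the descent in Theorem~\ref{thm:Iaction} establishes.

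I would then use that the quotient functor of a Verdier localization is surjective on $K$-theory, so the vertical arrows above are surjective on $K_0$. Taking direct sums over $m\ge 0$ and $n\in\mathbb{Z}$ produces a surjection
\begin{align*}
\Big(\bigoplus_{m\ge 0}K(\fM_S(m[\mathrm{pt}]))\Big)\otimes\Big(\bigoplus_{n\in\mathbb{Z}}K(\fM_S^{\dag}(\beta,n)^{\fin})\Big)\twoheadrightarrow\Big(\bigoplus_{m\ge 0}K(\mathcal{DT}^{\mathbb{C}^{\ast}}(\mM_X^{\sigma\sss}(m[\mathrm{pt}])))\Big)\otimes\Big(\bigoplus_{n\in\mathbb{Z}}K(\dD \tT^{\mathbb{C}^{\ast}}(I_n(X,\beta)))\Big)
\end{align*}
which intertwines the two action maps. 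It remains to note that the left hand side already carries a genuine left module structure: the action of $\bigoplus_m K(\fM_S(m[\mathrm{pt}]))$ on $\bigoplus_n K(\fM_S^{\dag}(\beta,n))$ coming from (\ref{cat:COHAI2.5}) is associative, being a restriction of the module structure of the Porta-Sala categorified COHA on the derived moduli of pairs \cite{PoSa}; alternatively, on $K$-theory one checks associativity directly by comparing both bracketings of $Q_1\cdot(Q_2\cdot I_Z)$ with the single pull-push along the stack of two-step filtrations with zero dimensional successive quotients, as in the standard associativity argument for K-theoretic Hall algebras. Granting this, for $\bar z$ in the image of the module quotient and $a_1,a_2$ in the algebra the identity $(a_1a_2)\bar z=a_1(a_2\bar z)$ follows by pushing the corresponding identity in $\bigoplus_n K(\fM_S^{\dag}(\beta,n))$ through the surjection and using commutativity of the action square; surjectivity of the quotient then upgrades this to the asserted left action on $\bigoplus_n K(\dD \tT^{\mathbb{C}^{\ast}}(I_n(X,\beta)))$.

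The step I expect to be the only real content beyond Theorem~\ref{thm:Iaction} is the associativity of the action on $\Dbc(\fM_S^{\dag})$: this uses the auxiliary stack $\fM_S^{\ext,\ddag}(v_{\bullet})$ and the conormal identification (\ref{isom:Mddag}), and is checked — just as for the categorified COHA in Section~\ref{sec:catPT} — by building the stack of triple filtrations and verifying that the two ways of associating compute the same correspondence, using that $\ev_2^{\ddag}$ is proper and $(\ev_1^{\ddag},\ev_3^{\ddag})$ is quasi-smooth (Lemma~\ref{lem:ddag}). I would also flag the one genuinely new feature compared with Corollary~\ref{cor:K2}: in the relevant short exact sequences $0\to E_1\to E_2\to E_3\to 0$ of Lemma~\ref{lem:I} the zero dimensional object $E_1=Q[-1]$ sits as a \emph{subobject}, so one obtains a \emph{left} module rather than the right module of Corollary~\ref{cor:K2}; this is exactly why the $\ddag$-stacks of Section~\ref{sec:other} replace the $\dag$-stacks of Section~\ref{sec:catPT}, and it is the incarnation of wall-crossing mentioned in the introduction.
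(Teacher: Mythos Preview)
Your proposal is correct and follows the same line the paper takes: the paper gives no explicit proof of this corollary beyond ``Similarly to Corollary~\ref{cor:K2}'', and that corollary in turn rests on the argument of Corollary~\ref{cor:K}, which is exactly the descent-via-surjective-quotient argument you spell out. Your added detail (the associativity of the action on $\Dbc(\fM_S^{\dag})$ via the stack of two-step filtrations, and the explanation of why the $\ddag$-diagram yields a \emph{left} action) is a faithful unpacking of what the paper leaves implicit.
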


\subsection{Categorical DT theories for stable D0-D2-D6 bound states}
In this subsection, we recall 
categorical DT theories for stable 
D0-D2-D6 bound states. 
They depend on a choice of a stability 
parameter $t\in \mathbb{R}$, and 
the wall-crossing phenomena with respect to 
the above stability parameter is relevant for 
the rationality of generating series of PT invariants 
and GV formula. See~\cite{MR2892766} for details. 

Below, we fix an element $\sigma=iH$ in (\ref{sigma:BH})
for $B=0$ and an ample divisor $H$. 
We define the map $\mu$ by 
\begin{align*}
\mu \colon N_{\le 1}(S) \to \mathbb{Q} \cup \{\infty\}, \ 
(\beta, n) \mapsto \frac{n}{\beta \cdot H}. 
\end{align*}
Here $\mu(\beta, n)=\infty$ if the denominator is zero. 
For a non-zero $F \in \Coh_{\le 1}(X)$, 
it is $\sigma$-(semi)stable if and only if 
we have the inequality 
$\mu(F') \le (<) \mu(F)$ for any 
non-zero subsheaf $F' \subsetneq F$. 

For each $t \in \mathbb{R}$, we also 
define the following map 
\begin{align*}
\mu_t^{\dag} \colon 
\mathbb{Z} \oplus N_{\le 1}(S)
 \to \mathbb{R} \cup \{\infty\}, \ 
(r, \beta, n) \mapsto \left\{ \begin{array}{cl}
t, & r\neq 0, \\
\mu(\beta, n), & r=0.
\end{array} \right. 
\end{align*}
For $E \in \aA_X$, we set
$\mu_t^{\dag}(E)=\mu_t^{\dag}(\cl(E))$. 

\begin{defi}\label{def:stability}
An object $E \in \aA_X$ is $\mu_t^{\dag}$-(semi)
stable if for any exact sequence 
$0 \to E' \to E \to E'' \to 0$ in 
$\aA_X$ we have the inequality 
$\mu_t^{\dag}(E')<(\le) \mu_t^{\dag}(E'')$. 
\end{defi}
We have the substacks
\begin{align*}
P_n(X, \beta)_t 
\subset \mM_X^{\dag}(\beta, n)
\end{align*}
corresponding to $\mu_t^{\dag}$-stable objects. 
The result of~\cite[Proposition~3.17]{MR2683216} shows that 
the above substack is an algebraic space of finite type. 
Moreover there is a finite set of walls
$W \subset \mathbb{Q}$ such that
$P_n(X, \beta)_t$ is constant 
if $t$ lies on 
on a connected component of $\mathbb{R} \setminus W$.
We say that $t \in \mathbb{R}$ lies in a chamber if $t \notin W$. 

We define the following 
conical closed substack
\begin{align*}
\zZ_{\mu_t^{\dag}\us}(\beta, n) \cneq \mM_X^{\dag}(\beta, n) \setminus 
P_n(X, \beta)_t
\subset \mM_X^{\dag}(\beta, n). 
\end{align*}
Since $P_n(X, \beta)_t$ is of finite type, 
there is a 
derived open substack 
$\mathfrak{M}_S^{\dag}(\beta, n)^{\fin}
\subset \mathfrak{M}_S^{\dag}(\beta, n)$ 
of finite type
such that 
\begin{align}\label{Pt:qcompact}
P_n(X, \beta)_t \subset 
t_0(\Omega_{\mathfrak{M}_S^{\dag}(\beta, n)^{\fin}}[-1])
\subset \mM_X^{\dag}(\beta, n).  
\end{align}

\begin{defi}\label{def:cat026}\emph{(\cite[Definition~6.6]{TocatDT})}
The $\mathbb{C}^{\ast}$-equivariant 
DT theory for stable D0-D2-D6 bound states 
is defined by
\begin{align*}
\dD \tT^{\mathbb{C}^{\ast}}(P_n(X, \beta)_t) \cneq 
D^b_{\rm{coh}}(\mathfrak{M}_S^{\dag}(\beta, n)^{\fin})/ 
\cC_{\zZ_{\mu_t^{\dag}\us}(\beta, n)^{\fin}}.
\end{align*}
\end{defi}

\subsection{Actions on categorical DT theories for stable D0-D2-D6 bound states}
Here we construct actions of 
DT categories of one dimensional semistable sheaves
to the DT categories in Definition~\ref{def:cat026}. 
We fix $t=t_0 \in \mathbb{R}$, and 
set $t_{\pm}=t_0 \pm \varepsilon$ for $0<\varepsilon \ll 1$. 
Then for 
$\overline{\chi}=m+t_0 \in \mathbb{Q}[m]$, 
we have
\begin{align*}
N(S)_{\overline{\chi}}
=\{(\beta, n) \in N_{\le 1}(S) : \mu(\beta, n)=t_0\} \cup \{0\}.
\end{align*} 
We take $v_{\bullet}=(v_1, v_2, v_3) \in N_{\le 1}(S)$
by 
\begin{align*}
v_1=(\beta, n), \ v_2=(\beta+\beta', n+n'), \ 
v_3=(\beta', n') \in N(S)_{\overline{\chi}}. 
\end{align*}
We then take derived open substacks 
\begin{align*}
\fM_S^{\dag}(v_1)^{\fin} \subset
\fM_S^{\dag}(v_1), \ 
\fM_S^{\dag}(v_2)^{\fin} \subset \fM_S^{\dag}(v_2), \ 
\fM_S(v_3)^{\fin} \subset \fM_S(v_3)
\end{align*}
satisfying the conditions (\ref{Pt:qcompact})
for $v_1$, $v_2$, the condition  
(\ref{take:open}) for $v_3$ and (\ref{cond:ev1.5}). 
Then we have the following. 
\begin{thm}\label{thm:Pt1}
The functor (\ref{cat:COHA2.5})
descends to the functor
\begin{align*}
\dD \tT^{\mathbb{C}^{\ast}}(P_n(X, \beta)_{t_{-}}) 
\times \dD \tT^{\mathbb{C}^{\ast}}(\mM_{n'}^{\sigma \sss}(X, \beta'))
\to \dD \tT^{\mathbb{C}^{\ast}}(P_{n+n'}(X, \beta+\beta')_{t_{-}}). 
\end{align*}
\end{thm}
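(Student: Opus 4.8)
\textbf{Proof plan for Theorem~\ref{thm:Pt1}.}
The plan is to mirror exactly the argument for Theorem~\ref{thm:PTaction}, replacing the PT stability condition with the $\mu_{t_-}^{\dag}$-stability of Definition~\ref{def:stability}. First I would note that the relevant diagram is the restriction of (\ref{dia:MSdag}) to the finite-type open substacks chosen in the statement, so that the functor (\ref{cat:COHA2.5}) is defined; since the vertical arrow $(\ev_1^{\dag},\ev_3^{\dag})$ is quasi-smooth and $\ev_2^{\dag}$ is proper by Lemma~\ref{lem:evproper}, this is legitimate. Then I would invoke Proposition~\ref{prop:ev+}, which identifies $\mM_X^{\ext,\dag}(v_{\bullet})$ with $t_0(\Omega_{\ev^{\dag}}[-2])$ over $\mM_S^{\ext,\dag}(v_{\bullet})$, together with the isomorphisms (\ref{isom:dag}) of Theorem~\ref{thm:D026}; exactly as in the proof of Corollary~\ref{cor:induce2}, this matches the diagram coming from (\ref{diagram:fN}) with the diagram (\ref{dia:Xext:dag}). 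So by Corollary~\ref{cor:induce2} it suffices to verify the inclusion
\begin{align*}
(\ev_1^{X,\dag},\ev_3^{X,\dag})^{-1}\!\left(\zZ_{\mu_{t_-}^{\dag}\us}(v_1)\times\mM_X(v_3)\right)
\subset (\ev_2^{X,\dag})^{-1}\!\left(\zZ_{\mu_{t_-}^{\dag}\us}(v_2)\right),
\end{align*}
which in turn follows from the corresponding \emph{stability} statement about exact sequences in $\aA_X$.

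The key step is therefore the analogue of Lemma~\ref{lem:PT}: for an exact sequence $0\to E_1\to E_2\to E_3\to 0$ in $\aA_X$ with $(E_1,\lambda_1)$, $(E_2,\lambda_2)$ points of $\mM_X^{\dag}$ and $E_3$ a one-dimensional semistable sheaf of class $v_3\in N(S)_{\overline\chi}$, I would show that $E_2$ is $\mu_{t_-}^{\dag}$-semistable whenever $E_1$ is. This is where the choice $t_-=t_0-\varepsilon$ (rather than $t_+$) enters: by the definition of $\mu_t^{\dag}$, objects of rank $0$ with slope $t_0=\mu(v_3)$ are \emph{strictly larger} than rank-one objects of slope $t_-$, so a rank-zero destabilizing subobject of $E_2$ of slope $\le t_0$ can only destabilize through a quotient of rank one; chasing the destabilizing sequence through $0\to E_1\to E_2\to E_3\to 0$ and using semistability of $E_3$ and of $E_1$, together with the see-saw property of $\mu_t^{\dag}$, should yield a contradiction. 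Concretely, given $0\to E'\to E_2\to E''\to 0$ with $\mu_{t_-}^{\dag}(E')\ge\mu_{t_-}^{\dag}(E'')$, I would form the induced subobjects $E'\cap E_1\subset E_1$ and the image of $E'$ in $E_3$, apply $\mu_{t_-}^{\dag}$-semistability of $E_1$ and $\mu$-semistability of $E_3$ to each, and then reassemble using the facts that the Harder–Narasimhan/see-saw inequalities add and that $t_-<t_0$; the strict inequality $t_-<t_0$ is exactly what prevents the rank-zero part of $E''$ from producing an obstruction.

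I expect the main obstacle to be the bookkeeping in this stability lemma: unlike Lemma~\ref{lem:PT}, where $E_3=Q[-1]$ is a \emph{shifted} zero-dimensional sheaf and the argument is essentially about cokernels of sections, here $E_3$ is a genuine one-dimensional semistable sheaf sitting in $\aA_X$, and one must carefully track how a sub/quotient of $E_2$ in the abelian category $\aA_X$ interacts with both the section data $\lambda_i$ and the slope function $\mu_t^{\dag}$, which is only a weak (non-additive in the naive sense) stability function. The cleanest route is probably to phrase everything in terms of the heart $\aA_X$ and the weak stability function $Z_t^{\dag}$ of~\cite{MR2892766}, so that the see-saw property is available verbatim, and then the inequality is a short formal computation. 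Once this lemma is in place, the rest of the proof is a mechanical transcription of the proof of Theorem~\ref{thm:PTaction}, and independence of the chosen finite-type open substacks follows as in Remark~\ref{rmk:indep}.
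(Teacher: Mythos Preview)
Your reduction to Corollary~\ref{cor:induce2} is exactly the paper's approach, and the displayed inclusion you write is the correct one. But the stability lemma you then propose runs in the \emph{wrong direction}, and in that direction it is false. The inclusion
\[
(\ev_1^{X,\dag},\ev_3^{X,\dag})^{-1}\bigl(\zZ_{\mu_{t_-}^{\dag}\us}(v_1)\times\mM_X(v_3)\bigr)\subset(\ev_2^{X,\dag})^{-1}\bigl(\zZ_{\mu_{t_-}^{\dag}\us}(v_2)\bigr)
\]
says, by contrapositive, that $E_2$ being $\mu_{t_-}^{\dag}$-stable forces $E_1$ to be $\mu_{t_-}^{\dag}$-stable --- not that stability of $E_1$ (and semistability of $F_3$) forces stability of $E_2$. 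Your direction fails already for the split extension $E_2=E_1\oplus F_3[-1]$: here $F_3[-1]\hookrightarrow E_2$ is a subobject with $\mu_{t_-}^{\dag}(F_3[-1])=t_0>t_-=\mu_{t_-}^{\dag}(E_1)$, so $E_2$ is strictly destabilized no matter how stable $E_1$ is. More generally your see-saw sketch cannot succeed because $\mu$-semistability of $F_3$ bounds slopes of its subsheaves only from \emph{above} by $t_0$, which lies on the wrong side of $t_-$. The paper's Lemma~\ref{exact:AP}(i) instead assumes $E_2$ is $\mu_{t_-}^{\dag}$-stable and deduces both that $E_1$ is $\mu_{t_-}^{\dag}$-stable and that $F_3$ is $\mu$-semistable; each conclusion is a one-line chase through a single sub or quotient of $E_2$, with no see-saw bookkeeping needed.

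A second, smaller gap: you drop the $\mM_X^{\dag}(v_1)\times\zZ_3$ term from the hypothesis of Corollary~\ref{cor:induce2}. In Theorem~\ref{thm:PTaction} this was harmless because $v_3=m[\pt]$ and every zero-dimensional sheaf is semistable, so $\zZ_3=\emptyset$. Here $v_3=(\beta',n')$ is one-dimensional and $\zZ_{\sigma\us}(v_3)$ is non-empty in general, so you also need that $\mu_{t_-}^{\dag}$-stability of $E_2$ forces $\mu$-semistability of $F_3$. That is exactly the other conclusion of Lemma~\ref{exact:AP}(i), and once the implication is oriented correctly it follows immediately: a destabilizing quotient $F_3\twoheadrightarrow F$ with $\mu(F)<t_0$ gives a quotient $E_2\twoheadrightarrow F[-1]$ with $\mu(F)\le t_-$, contradicting stability of $E_2$.
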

\begin{proof}
Similarly to Theorem~\ref{thm:PTaction}, the result follows from 
Lemma~\ref{exact:AP} (i). 
\end{proof}
Here we have used the following lemma, 
which is an analogue of Lemma~\ref{lem:PT}
and Lemma~\ref{lem:I}. 
\begin{lem}\label{exact:AP}
Let $0 \to E_1 \to E_2 \to E_3 \to 0$ be an 
exact sequence in $\aA_X$. 

(i) Suppose that $E_3=F_3[-1]$ for $F_3 \in \Coh_{\le 1}(X)$
with $\mu(F_3)=t_0$. 
Then if $E_2$ is $\mu_{t_-}^{\dag}$-stable, 
then $F_3$ is $\mu$-semistable and $E_1$ is $\mu_{t_-}^{\dag}$-stable. 

(ii) Suppose that $E_1=F_1[-1]$ for $F_1 \in \Coh_{\le 1}(X)$
with $\mu(F_1)=t_0$. 
Then if $E_2$ is $\mu_{t_+}^{\dag}$-stable, 
then $F_1$ is $\mu$-semistable and $E_2$ is $\mu_{t_+}^{\dag}$-stable. 
\end{lem}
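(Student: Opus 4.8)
The plan is to prove both parts by the same mechanism: translate sub- and quotient objects in $\aA_X$ into the familiar category $\Coh_{\le 1}(X)$, bookkeep ranks via $\cl$, and then play the $\mu_{t_\pm}^{\dag}$-stability of $E_2$ against the elementary see-saw inequalities for the slope $\mu$, using that $t_\pm=t_0\pm\varepsilon$ with $\varepsilon$ smaller than any gap between $t_0$ and a rational whose denominator is one of the finitely many numbers $\beta'\cdot H$ with $\beta'$ effective and bounded above by the curve classes occurring in $v_\bullet$. First I would record two structural facts about $\aA_X=\langle \oO_{\overline{X}},\Coh_{\le 1}(X)[-1]\rangle_{\mathrm{ex}}$, both contained in \cite{MR2669709}: (a) the rank $r(E)\in\{0,1\}$ (the first component of $\cl(E)$) is additive on short exact sequences, so in $0\to E_1\to E_2\to E_3\to 0$ with $r(E_2)=1$ exactly one of $E_1,E_3$ has rank $0$; and (b) a rank-$0$ object lies in $\Coh_{\le 1}(X)[-1]$, short exact sequences in $\aA_X$ between rank-$0$ objects correspond after the shift to short exact sequences in $\Coh_{\le 1}(X)$, $\mu_t^{\dag}(G[-1])=\mu(G)$, and $\mu_t^{\dag}$ of any rank-$1$ object equals $t$. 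Consequently, pulling back (resp. pushing out) the given extension along a subsheaf (resp. quotient sheaf) of $F_3$ — equivalently, composing $E_2\twoheadrightarrow F_3[-1]$ with an epimorphism, or pulling it back along a monomorphism — produces a genuine sub- or quotient object of $E_2$ to which stability of $E_2$ applies.

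For the semistability of $F_3$ in (i): if $F_3'\subsetneq F_3$ had $\mu(F_3')>\mu(F_3)=t_0$ (note $\mu(F_3)=\mu(v_3)=t_0$ by the choice of $v_3$; a $0$-dimensional subsheaf is included here since its slope is $\infty$), then the composite surjection $E_2\twoheadrightarrow F_3[-1]\twoheadrightarrow (F_3/F_3')[-1]$ has a rank-$1$ kernel $K\neq 0$, and $\mu_{t_-}^{\dag}$-stability of $E_2$ forces $t_-<\mu_{t_-}^{\dag}((F_3/F_3')[-1])=\mu(F_3/F_3')$; but see-saw gives $\mu(F_3/F_3')<t_0$, and since $\mu(F_3/F_3')$ is a rational with bounded denominator this contradicts $t_-=t_0-\varepsilon$ once $\varepsilon$ is small. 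For the $\mu_{t_-}^{\dag}$-stability of $E_1$: given a nonzero proper $0\to A\to E_1\to B\to 0$, if $r(A)=0$ then $A\hookrightarrow E_1\hookrightarrow E_2$ with rank-$1$ cokernel, so stability of $E_2$ already yields $\mu_{t_-}^{\dag}(A)=\mu(F_A)<t_-=\mu_{t_-}^{\dag}(B)$, not a destabilizer; if $r(A)=1$ then $A\hookrightarrow E_2$ has a rank-$0$ cokernel $F_Q[-1]$ fitting into $0\to F_B\to F_Q\to F_3\to 0$ (with $B=F_B[-1]$), stability of $E_2$ gives $t_-<\mu(F_Q)$, and a destabilizing $A$ would force $\mu(F_B)\le t_-$, hence $\mu(F_B)<\mu(F_Q)$ and so $\mu(F_Q)<t_0$ by see-saw — again impossible for small $\varepsilon$ since $F_Q$ has bounded curve class. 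Part (ii) is entirely parallel with the roles of sub and quotient interchanged and $t_+=t_0+\varepsilon$: the semistability of $F_1$ and the "$r(A)=0$" subcase of the stability of $E_3$ use the $\varepsilon$-estimate in exactly the same way, while the "$r(A)=1$" subcase is immediate from stability of $E_2$. (The displayed statement of (ii) should read "$E_3$ is $\mu_{t_+}^{\dag}$-stable", matching (i).)

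The main obstacle is not any individual step but making the estimate uniform: one must verify that all the auxiliary sheaves that appear ($F_3/F_3'$, $F_Q$, $F_B$, the destabilizing subsheaves of $F_1$, etc.) have curve class bounded above by a fixed effective class determined by $v_\bullet$, so that their slopes range over a set of rationals with uniformly bounded denominators and a single choice of $\varepsilon$, depending only on $v_\bullet$ and $t_0$, works throughout. This boundedness follows from fact (a) and the observation that objects of $\aA_X$ of rank $1$ have effective D2-charge, and it is precisely the place where the finiteness of the wall set $W$ enters; it is the content of "$0<\varepsilon\ll 1$" in the statement. A secondary point to check is that the morphisms used to extract sub/quotient objects of $E_2$ — a priori only morphisms of complexes in $\Dbc(\overline{X})$ — actually land in $\aA_X$ and are mono/epi there, which is immediate from fact (b) and the closure of $\aA_X$ under kernels and cokernels of maps between rank-$0$ objects.
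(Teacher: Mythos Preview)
Your proof is correct and follows essentially the same route as the paper's. The paper also proves only (i) and declares (ii) similar; for the semistability of $F_3$ it tests quotients $F_3\twoheadrightarrow F$ directly (rather than subsheaves plus see-saw as you do), obtaining $\mu(F)>t_-$ from stability of $E_2$ and then asserting $\mu(F)\ge t_0$; for the stability of $E_1$ it splits into the two cases of a rank-$0$ subobject and a rank-$0$ quotient of $E_1$, which is exactly your $r(A)=0$/$r(A)=1$ dichotomy, and in the latter case arrives at the same extension $0\to F\to F'\to F_3\to 0$ (your $F_B,F_Q$) and deduces $\mu(F)\ge t_0>t_-$ via $\mu(F')\ge t_0$ and see-saw. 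The only substantive difference is presentational: the paper leaves the passage from a strict inequality with $t_-$ to a non-strict one with $t_0$ implicit, whereas you spell out the discreteness-of-slopes argument and the boundedness of the auxiliary curve classes that justify it. Your observation that the conclusion of (ii) should read ``$E_3$ is $\mu_{t_+}^{\dag}$-stable'' is also correct; this is a typo in the stated lemma.
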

\begin{proof}
We only prove (i), as the proof of (ii) is similar. 
First we show that $F_3$ is $\mu$-semistable. 
Let $F_3 \twoheadrightarrow F$ be a non-trivial 
surjection in $\Coh_{\le 1}(X)$. 
Then we have the surjection $E_2 \twoheadrightarrow F[-1]$ in 
$\aA_X$. 
By the $\mu_{t_-}$-stability of $E_2$, 
we have $\mu(F)>t_-$, hence $\mu(F) \ge t_0=\mu(F_3)$. 
Therefore $F_3$ is $\mu$-semistable. 

Next we show that $E_1$ is $\mu_{t_-}^{\dag}$-stable. 
Let $F[-1] \hookrightarrow E_1$ be an injection in $\aA_X$
for $F \in \Coh_{\le 1}(X)$. Then 
we have an injection $F[-1] \hookrightarrow E_2$ 
hence $\mu(F)<t_-$ by the $\mu_{t_-}$-stability of $E_2$. 
Let $E_1 \twoheadrightarrow F[-1]$ be a surjection in $\aA_X$. 
Then we have exact sequences in $\aA_X$
and $\Coh_{\le 1}(X)$
\begin{align*}
0 \to E_1' \to E_2 \to F'[-1] \to 0, \ 
0 \to F \to F' \to F_3 \to 0. 
\end{align*}
By the $\mu_{t_-}^{\dag}$-stability of $E_2$, we have 
$\mu(F')\ge t_0$. 
Then as $\mu(F_3)=t_0$, we have 
$\mu(F) \ge t_0>t_-$. 
Therefore $E_1$ is $\mu_{t-}$-stable. 
\end{proof}
We see that after crossing the wall at 
$t=t_0$, we obtain the left action of DT categories 
of one dimensional semistable sheaves. 
We take 
$v_{\bullet}=(v_1, v_2, v_3) \in N_{\le 1}(S)$
as
\begin{align*}
v_1=(\beta', n') \in N(S)_{\overline{\chi}}, 
\ v_2=(\beta+\beta', n+n'), \ 
v_3=(\beta, n). 
\end{align*}
We then take derived open substacks 
\begin{align*}
\fM_S(v_1)^{\fin} \subset
\fM_S(v_1), \ 
\fM_S^{\dag}(v_2)^{\fin} \subset \fM_S^{\dag}(v_2), \ 
\fM_S^{\dag}(v_3)^{\fin} \subset \fM_S^{\dag}(v_3)
\end{align*}
satisfying the conditions (\ref{Pt:qcompact}) for $v_2$, $v_3$,
the condition 
(\ref{take:open}) for $v_1$ and (\ref{cond:ev2}). 
Then we have the following. 
\begin{thm}\label{thm:Pt2}
The functor (\ref{cat:COHAI2.5})
descends to the functor
\begin{align}\notag
\dD \tT^{\mathbb{C}^{\ast}}(\mM_{n'}^{\sigma \sss}(X, \beta'))
\times 
\dD \tT^{\mathbb{C}^{\ast}}(P_n(X, \beta)_{t_{+}}) 
\to \dD \tT^{\mathbb{C}^{\ast}}(P_{n+n'}(X, \beta+\beta')_{t_{+}}). 
\end{align}
\end{thm}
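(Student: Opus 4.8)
The plan is to argue exactly as for Theorem~\ref{thm:Iaction}, replacing the inclusion of unstable loci used there by Lemma~\ref{exact:AP}(ii). First I would record the $\ddag$-analogue of Corollary~\ref{cor:induce2}: for conical closed substacks $\zZ_1 \subset \mM_X(v_1)$ and $\zZ_i \subset \mM_X^{\dag}(v_i)$ for $i=2,3$ satisfying
\begin{align*}
(\ev_1^{X,\ddag}, \ev_3^{X,\ddag})^{-1}\bigl((\zZ_1 \times \mM_X^{\dag}(v_3)) \cup (\mM_X(v_1) \times \zZ_3)\bigr) \subset (\ev_2^{X,\ddag})^{-1}(\zZ_2),
\end{align*}
the functor (\ref{cat:COHAI2.5}) descends to a functor on the corresponding quotient categories. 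This follows from the argument of Corollary~\ref{cor:induce} and Corollary~\ref{cor:induce2}: by Lemma~\ref{lem:ddag} the vertical arrow of (\ref{diagram:ddag2}) is quasi-smooth and $\ev_2^{\ddag}$ is proper, so by (\ref{diagram:fN}) the diagram (\ref{diagram:ddag2}) induces a diagram of classical truncations of $(-1)$-shifted cotangent stacks, which by the isomorphisms (\ref{shifted:cot}), (\ref{isom:dag}) and (\ref{isom:Mddag}) is identified with the diagram (\ref{dia:Xext:dagA}); one then applies Proposition~\ref{prop:FM} and Lemma~\ref{lem:restrict}, using that the chosen finite-type open substacks satisfy (\ref{cond:ev2}).

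Next I would unwind the points of $\mM_X^{\ext,\ddag}(v_{\bullet})$ for $v_1 = (\beta',n')\in N(S)_{\overline{\chi}}$, $v_2 = (\beta+\beta', n+n')$, $v_3=(\beta,n)$: such a point is a distinguished triangle $\eE_1[-1] \to \eE_2 \to \eE_3$ in $\aA_X$ with $\eE_1 \in \Coh_{\le 1}(X)$ satisfying $\mu(\eE_1)=t_0$ and with $\eE_2, \eE_3$ rank one objects, equivalently a short exact sequence $0 \to E_1 \to E_2 \to E_3 \to 0$ in $\aA_X$ with $E_1 = \eE_1[-1]$. Lemma~\ref{exact:AP}(ii), read in contrapositive form, then says that if $E_2$ is $\mu_{t_+}^{\dag}$-stable then $\eE_1$ is $\mu$-semistable (equivalently $\sigma$-semistable, since $\sigma = iH$) and $E_3$ is $\mu_{t_+}^{\dag}$-stable. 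Hence
\begin{align*}
(\ev_1^{X,\ddag}, \ev_3^{X,\ddag})^{-1}\bigl((\zZ_{\sigma\us}(v_1) \times \mM_X^{\dag}(v_3)) \cup (\mM_X(v_1) \times \zZ_{\mu_{t_+}^{\dag}\us}(v_3))\bigr) \subset (\ev_2^{X,\ddag})^{-1}(\zZ_{\mu_{t_+}^{\dag}\us}(v_2)),
\end{align*}
which is precisely the hypothesis of the $\ddag$-analogue of Corollary~\ref{cor:induce2} above. Combined with the compatibility conditions (\ref{Pt:qcompact}), (\ref{take:open}) and (\ref{cond:ev2}) imposed on the finite-type open substacks, this gives the descent of (\ref{cat:COHAI2.5}) to the asserted functor.

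The main obstacle, as in the $\dag$-case treated in Section~\ref{sec:catPT}, is the bookkeeping in the first step: checking that the diagram of $(-1)$-shifted cotangent stacks induced by (\ref{diagram:ddag2}) is indeed identified with (\ref{dia:Xext:dagA}) under (\ref{isom:Mddag}), and that passing to finite-type open substacks interacts with the pull-back/push-forward exactly as in Lemma~\ref{lem:restrict}. Since all of this is strictly parallel to the $\dag$-version already carried out, I would only indicate the necessary modifications rather than repeat the argument; the genuinely new ingredient, Lemma~\ref{exact:AP}(ii), is already established, so no further hard input is required.
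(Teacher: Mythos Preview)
Your proposal is correct and follows exactly the paper's approach: the paper's own proof is the single line ``Similarly to Theorem~\ref{thm:Iaction}, the result follows from Lemma~\ref{exact:AP}(ii),'' and you have simply unpacked this by spelling out the $\ddag$-analogue of Corollary~\ref{cor:induce2} (via (\ref{isom:Mddag}), Proposition~\ref{prop:FM} and Lemma~\ref{lem:restrict}) and the contrapositive of Lemma~\ref{exact:AP}(ii). Note that the statement of Lemma~\ref{exact:AP}(ii) in the paper contains a typo (the conclusion should read ``$E_3$ is $\mu_{t_+}^{\dag}$-stable'' rather than ``$E_2$''), which you have silently corrected to the intended meaning.
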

\begin{proof}
Similarly to Theorem~\ref{thm:Iaction}, the result follows from 
Lemma~\ref{exact:AP} (ii). 
\end{proof}

As a corollary of Theorem~\ref{thm:Pt1} and Theorem~\ref{thm:Pt2}, 
we have the following:
\begin{cor}\label{cor:right/left}
For each $t_0 \in \mathbb{R}$, 
the K-theoretic 
Hall-algebra of one dimensional 
semistable sheaves 
with slope $t_0$
\begin{align*}
\bigoplus_{\mu(\beta, n)=t_0}
K(\mathcal{DT}^{\mathbb{C}^{\ast}}
(\mM_{X}^{\sigma\mathchar`-\rm{ss}}(\beta, n)))
\end{align*}
acts on 
\begin{align}\label{act:lr}
\bigoplus_{(\beta, n) \in N_{\le 1}(S)}
K(\dD \tT^{\mathbb{C}^{\ast}}(P_n(X, \beta)_{t_{-}})), \ 
\bigoplus_{(\beta, n) \in N_{\le 1}(S)}
K(\dD \tT^{\mathbb{C}^{\ast}}(P_n(X, \beta)_{t_{+}}))
\end{align}
from right, left, respectively. 
\end{cor}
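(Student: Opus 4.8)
The plan is to obtain the assertion directly from Theorem~\ref{thm:Pt1} and Theorem~\ref{thm:Pt2} by passing to Grothendieck groups, in the same way that Corollary~\ref{cor:K2} was deduced from Theorem~\ref{thm:PTaction}. First note that, for $\overline{\chi}=m+t_0$, one has $N(S)_{\overline{\chi}}=\{(\beta,n):\mu(\beta,n)=t_0\}\cup\{0\}$, so by Corollary~\ref{cor:K} the direct sum $\bigoplus_{\mu(\beta,n)=t_0}K(\dD\tT^{\mathbb{C}^{\ast}}(\mM_X^{\sigma\sss}(\beta,n)))$ is already an associative algebra. What remains is to check that the maps on K-theory induced by the functors of Theorem~\ref{thm:Pt1} (resp.\ Theorem~\ref{thm:Pt2}) form a right (resp.\ left) action of this algebra on the direct sums in~(\ref{act:lr}), i.e.\ that the two possible orders of applying the Hecke-type operators agree.

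For this I would use, exactly as in the proofs of Theorem~\ref{thm:PTaction} and Theorem~\ref{thm:Iaction}, that the functors of Theorem~\ref{thm:Pt1} and Theorem~\ref{thm:Pt2} are descendents of the pull--push functors~(\ref{cat:COHA2.5}) and~(\ref{cat:COHAI2.5}) attached to the correspondences~(\ref{dia:circ1.5}) and~(\ref{diagram:ddag2}) on the derived moduli stacks of pairs on $S$. These fit into commutative squares with the quotient functors $\Dbc(\fM_S^{\dag}(v)^{\fin})\to\dD\tT^{\mathbb{C}^{\ast}}(P_n(X,\beta)_{t_{\pm}})$ and $\Dbc(\fM_S(v)^{\fin})\to\dD\tT^{\mathbb{C}^{\ast}}(\mM_X^{\sigma\sss}(\beta,n))$, which are surjective on K-theory. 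Hence it is enough to verify the module relations one level below, i.e.\ on $\bigoplus_n K(\fM_S^{\dag}(\beta,n))$ regarded as a bimodule over $\bigoplus_v K(\fM_S(v))$ under~(\ref{cat:COHA2.5}) and~(\ref{cat:COHAI2.5}); this is the K-theoretic shadow of the categorified COHA module structure on derived moduli of pairs, whose associativity is governed by the (higher) Waldhausen stacks as in~\cite[Section~4]{PoSa}. Concretely, one introduces the derived moduli stack of two-step filtrations $0\subset F_1\subset F_2\subset F_3$ of coherent sheaves on $S$, equipped with a section from $\oO_S$ landing in $F_1$ (the $t_-$ case, where the pair structure propagates through subobjects) or in $F_2$ (the $t_+$ case, where it propagates through quotients), together with its evaluation maps realizing the two iterated correspondences; the module diagram then commutes by a base-change argument identical to those in Lemma~\ref{lem:muHecke} and Corollary~\ref{cor:induce2}, using the quasi-smoothness of the $(\ev_1,\ev_3)$-type maps (cf.\ Lemma~\ref{lem:ddag}) and the properness of the $\ev_2$-type maps.

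Throughout the reduction one also needs that $\mu_{t_\pm}^{\dag}$-stability is preserved along every subquotient appearing in the iterated correspondences; this is supplied by Lemma~\ref{exact:AP}, applied repeatedly, so that the descent argument of Corollary~\ref{cor:induce2} carries over verbatim to the three-term situation and the right/left module axioms pass to the DT categories. The one point that demands care is the bookkeeping of assembling the three-term correspondence and checking its base-change compatibilities with the identifications~(\ref{isom:ev+}) and~(\ref{isom:Mddag})---this is where I expect the bulk of the work to lie---but since the entire computation takes place in K-theory, where only the classes of the relevant complexes need to be matched, and since it is formally parallel to the constructions of Section~\ref{sec:catPT}, no genuinely new difficulty arises.
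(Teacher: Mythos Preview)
Your proposal is correct and follows essentially the same approach as the paper: the corollary is deduced from Theorem~\ref{thm:Pt1} and Theorem~\ref{thm:Pt2} by passing to K-theory exactly as Corollary~\ref{cor:K2} was deduced from Theorem~\ref{thm:PTaction}, with the module axioms inherited from the Porta--Sala categorified COHA via the surjectivity of the quotient functors on K-groups (as in the proof of Corollary~\ref{cor:K}). The paper in fact gives no proof beyond the phrase ``As a corollary of Theorem~\ref{thm:Pt1} and Theorem~\ref{thm:Pt2}'', so your write-up is more detailed than the paper itself, but entirely in line with its intended argument.
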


\begin{exam}\label{exam:conifold}
Let $S \to \mathbb{C}^2$ be the blow-up at the origin, 
and $C \subset S$ is the exceptional curve.  
Then
\begin{align*}
X=\mathrm{Tot}_S(\omega_S)=
\mathrm{Tot}_{\mathbb{P}^1}(\oO_{\mathbb{P}^1}(-1)^{\oplus 2}) 
\end{align*}
is the resolved conifold. 
In this case, the set of walls 
is given by $W=\mathbb{Z}_{>0} \subset \mathbb{Q}$
since any one dimensional stable sheaf on $X$ is 
of the form $\oO_C(m)$.  
Then we set 
 \begin{align*}
P_n(X, d)_m \cneq P_n(X, d[C])_{t}, \ 
t \in (m, m+1)
\end{align*}
which makes sense by the 
above description of walls. 
By Corollary~\ref{cor:right/left}
for $t_0=m, m+1$, 
the direct sum for each $m \in \mathbb{Z}_{>0}$
\begin{align*}
\bigoplus_{(n, d)\in \mathbb{Z}^2}
K(\mathcal{DT}^{\mathbb{C}^{\ast}}(P_n(X, d)_m))
\end{align*}
admits right/left actions of the algebra (\ref{K:simple}). 
\end{exam}

\newcommand{\etalchar}[1]{$^{#1}$}
\providecommand{\bysame}{\leavevmode\hbox to3em{\hrulefill}\thinspace}
\providecommand{\MR}{\relax\ifhmode\unskip\space\fi MR }
\providecommand{\MRhref}[2]{%
  \href{http://www.ams.org/mathscinet-getitem?mr=#1}{#2}
}
\providecommand{\href}[2]{#2}


\vspace{5mm}

Kavli Institute for the Physics and 
Mathematics of the Universe (WPI), University of Tokyo,
5-1-5 Kashiwanoha, Kashiwa, 277-8583, Japan.

\textit{E-mail address}: yukinobu.toda@ipmu.jp

\end{document}